\numberwithin{equation}{section}
\theoremstyle{plain}
	\newtheorem{theo}[equation]{Theorem}
	\newtheorem{prop}[equation]{Proposition}
	\newtheorem{lemm}[equation]{Lemma}
	\newtheorem{coro}[equation]{Corollary}
	\newtheorem{lem/defn}[equation]{Lemma/Definition}
\theoremstyle{definition}
	\newtheorem{defi}[equation]{Definition}
        \newtheorem{term}[equation]{Terminology}
        \newtheorem{rema}[equation]{Remark}
	\newtheorem*{ackn}{Acknowledgements}
    	\newtheorem*{notationandconvention}{Notation and Conventions}
	\newtheorem{ques}[equation]{Question}
        \newtheorem{conj}[equation]{Conjecture}
	\newtheorem{cons}[equation]{Construction}
        \newtheorem{observation}[equation]{Observation}
	\newtheorem{nota}[equation]{Notation}
\theoremstyle{remark}
\def\nc{\newcommand}
\def\on{\operatorname}
\def\co{\colon\thinspace}
\newcommand*{\rom}[1]{\expandafter\@slowromancap\romannumeral #1@}
\newcommand{\mmod}{/\!\!/}
\newcommand{\sslash}{\mmod}
\nc{\edit}[1]{\marginpar{\footnotesize{#1}}}
\newcommand{\lv}{\lvert}
\newcommand{\rv}{\rvert}
\nc{\Z}{\mathbb{Z}}
\nc{\z}{\mathbb{Z}}
\nc{\zp}{\mathbb{Z}_p}
\nc{\zpl}{\mathbb{Z}_{(p)}}
\nc{\PP}{\mathbb{P}}
\nc{\R}{\mathbb{R}}
\nc{\ot}{\otimes}
\nc{\I}{\mathbb{I}}
\nc{\f}{\mathbb{F}}
\nc{\fp}{\mathbb{F}_p}
\nc{\hz}{H\mathbb{Z}}
\nc{\hzp}{H\mathbb{Z}_p}
\nc{\hzpl}{H\mathbb{Z}_{(p)}}
\nc{\fq}{\mathbb{F}_q}
\nc{\fpn}{\mathbb{F}_{p^n}}
\nc{\CP}{\mathbb{CP}}
\nc{\wfq}{W(\mathbb{F}_q)}
\nc{\wfpn}{W(\mathbb{F}_{p^n})}
\nc{\pmot}{\frac{p-1}{2}}
\nc{\sph}{\mathbb{S}}
\nc{\sphtriv}{\mathbb{S}^{\on{triv}}}
\nc{\sphp}{\mathbb{S}_{p}}
\nc{\sphpl}{\mathbb{S}_{(p)}}
\nc{\sphwq}{\mathbb{S}_{W(\mathbb{F}_q)}}
\nc{\sphwpn}{\mathbb{S}_{W(\mathbb{F}_{p^n})}}
\nc{\sphwk}{\mathbb{S}_{W(k)}}
\nc{\sphxk}{\sph[x_k]}
\nc{\sphxm}{\sph[x_{m}]}
\nc{\sphxmk}{\sph[x_{mk}]}
\nc{\sphsk}{\sph[\sigma_k]}
\nc{\sphsmk}{\sph[\sigma_{mk}]}
\nc{\sphsell}{\sph[\sigma_\ell]}
\nc{\sphpsk}{\sph_{(p)}[\sigma_k]}
\nc{\sphpzk}{\sph_{(p)}[z_k]}
\nc{\sphpzpmone}{\sph_{(p)}[z_{2(p-1)}]}
\nc{\sphpztwo}{\sph_{(p)}[z_2]}
\nc{\sphpsmk}{\sph_{(p)}[\sigma_{mk}]}
\nc{\sphpsell}{\sph_{(p)}[\sigma_\ell]}
\nc{\sphzmk}{\sph[z_{mk}]}
\nc{\sphzk}{\sph[z_k]}
\nc{\sphzpmone}{\sph[z_{2(p-1)}]}
\nc{\sphztwo}{\sph[z_2]}
\nc{\musgmpn}{MU[\sigma_{2p^n-2}]}
\nc{\muallspi}{MU[\sigma_{2p^i-2} \mid i< n]}
\nc{\muallspinpo}{MU[\sigma_{2p^i-2} \mid i< n+1]}
\nc{\wdgmuallspi}{\wedge_{MU[\sigma_{2p^i-2} \mid i< n]}}
\nc{\thhmuallspi}{\ensuremath{\textup{THH}^{MU[\sigma_{2p^i-2} \mid i<n]}}}
\nc{\musi}{MU[\sigma_i]}
\nc{\musk}{MU[\sigma_k]}
\nc{\muspi}{MU[\sigma_{2p^i-2}]}
\nc{\sgmk}{\sigma_k}
\nc{\sgmmk}{\sigma_{mk}}
\nc{\sgml}{\sigma_{\ell}}
\nc{\sgmpi}{\sigma_{2p^i-2}}
\nc{\xrtmx}{X(\sqrt[m]{x})}
\nc{\artma}{A(\sqrt[m]{a})}
\nc{\hzplsk}{H\mathbb{Z}_{(p)}[\sigma_{k}]}
\nc{\hzplsmk}{H\mathbb{Z}_{(p)}[\sigma_{mk}]}
\nc{\wdgsphsmk}{\wdg_{\sph[\sigma_{mk}]}}
\nc{\wdgsphsk}{\wdg_{\sph[\sigma_{k}]}}
\nc{\wdgsphpsmk}{\wdg_{\sph_{(p)}[\sigma_{mk}]}}
\nc{\wdgsphpsk}{\wdg_{\sph_{(p)}[\sigma_{k}]}}
\nc{\bpn}{BP \langle n \rangle}
\nc{\hfp}{H\mathbb{F}_p}
\nc{\T}{\mathbb{T}}
\nc{\vo}{V(1)}
\nc{\vos}{V(1)_*}
\nc{\ttw}{T(2)}
\nc{\pis}{\pi_*}
\nc{\ttws}{T(2)_*}
\nc{\gdna}{\gdn(A\hmod)}
\nc{\xtn}{x_{2n}}
\nc{\gdnr}{\gdn(R\hmod)}
\nc{\wdg}{\wedge}
\nc{\wdgp}{\wedge_{\mathbb{S}_p}}
\nc{\wdgfp}{\wedge_{H\mathbb{F}_p}}
\nc{\wdgmu}{\wedge_{MU}}
\nc{\AAA}{\mathbb{A}}
\nc{\LL}{\mathbb{L}}
\nc{\OO}{\mathcal{O}}
\nc{\X}{\EuScript{X}}
\nc{\sZ}{\EuScript{Z}}
\nc{\id}{{\on{id}}}
\nc\cone{{\on{cone}}}
\nc{\Rep}{{\on{Rep}}}
\nc\Ob{{\on{Ob}}}
\nc\Spec{{\on{Spec}}}
\newcommand{\Mod}{\mathrm{Mod}}
\mathchardef\mhyphen="2D
\newcommand{\hmod}{\mhyphen\mathsf{Mod}}
\nc\coMod{{\on{coMod}}}
\nc\Perf{{\on{Perf}}}
\nc\End{{\on{End}}}
\nc{\into}{\hookrightarrow}
\nc{\tr}{\on{tr}}
\nc{\ev}{\on{ev}}
\nc{\im}{\on{im}}
\nc{\hfps}{H{\mathbb{F}_p}_*}
\nc{\Mot}{\on{Mot}}
\nc{\pt}{\on{pt}}
\nc{\coker}{\on{coker}}
\nc{\rk}{\on{rank}}
\nc{\TOP}{\on{Top}_{\mathbb{C}}^{s}}
\nc{\gr}{\on{Gr}}
\nc{\Catperf}{\text{Cat}^{\text{perf}}}
\nc{\Sym}{\on{Sym}}
\nc{\xra}{\xrightarrow}
\nc{\lra}{\xleftarrow}
\nc{\Bet}{\mathbf{Betti}_{X}}
\nc{\codim}{\on{codim}}
\nc{\Fred}{\on{Fred}}
\nc{\colim}{\on{colim}}
\nc{\KK}{{\bf K}}
\nc{\onto}{\twoheadrightarrow}
\nc{\A}{\mathbb{A}}
\nc{\Aff}{\on{Aff}}
\nc{\SH}{\on{SH}}
\nc{\QCoh}{\on{QCoh}}
\nc{\Alg}{\on{Alg}}
\nc{\alg}{\on{Alg}}
\nc{\lmod}{\on{LMod}}
\nc{\snm}{S_{2n}^m}
\nc{\sn}{S_{2n}}
\nc{\tnm}{T_{2n}^m}
\nc{\tn}{T_{2n}}
\nc{\rmod}{\on{RMod}}
\nc{\Br}{\on{Br}}
\nc{\ta}{\widetilde{\a}}
\nc{\Shv}{\on{Shv}}
\nc{\GG}{\mathbb{G}}
\nc{\red}{\color{red}}
\nc{\green}{\color{orange}}
\nc{\blue}{\color{blue}}
\nc{\an}{\on{an}}
\nc{\Pre}{\on{Pre}}
\nc{\assact}{\on{Ass}_{\on{act}}^{\otimes }}
\nc{\spact}{\on{Sp}_{\on{act}}^{\otimes }}
\nc{\spactprod}{{\on{Sp}^{\Z}_{\on{act}}}^{\otimes }}
\nc{\qc}{\on{qc}}
\nc{\op}{\on{op}}
\nc{\shEnd}{{\mathcal End}}
\nc{\Sph}{\mathbb{S}}
\nc{\Top}{\on{Top}}
\nc{\Map}{\on{Map}}
\nc{\Vect}{\on{Vect}}
\nc{\holim}{\on{holim}}
\nc{\fun}{\on{Fun}}
\nc{\leqgrsp}{\ensuremath{\textup{Leq}}\big(\grm(\sp^{BS^1}),(-)^{tC_p}\big)}
\newcommand{\cat}[1]{\ensuremath{\EuScript #1}}
\DeclareMathOperator{\map}{\ensuremath{\textup{Map}}}
\DeclareMathOperator{\Mor}{\ensuremath{\textup{map}}}
\DeclareMathOperator{\thh}{\ensuremath{\textup{THH}}}
\DeclareMathOperator{\ho}{\ensuremath{\textup{Ho}}}
\DeclareMathOperator{\grm}{\ensuremath{\textup{Gr}}_m}
\DeclareMathOperator{\hh}{\ensuremath{\textup{HH}}}
\nc{\C}{\cat C}
\nc{\D}{\cat D}
\nc{\V}{\cat V}
\def\A{\mathcal{A}}
\def\a{\alpha}
\def\Perf{\on{Perf}}
\def\sp{\on{Sp}}
\def\sp{\on{Sp}}
\def\fib{\on{fib}}
\def\V{\EuScript{V}}
\nc{\W}{\mathbb{W}}
\def\QCoh{\on{QCoh}}
\nc{\bE}{\mathbb{E}}
\nc{\An}{\mathrm{An}}
\nc{\Grp}{\mathrm{Grp}}
\nc{\Mon}{\mathrm{Mon}}
\nc{\Sp}{\mathrm{Sp}}
\nc{\Ztr}{\mathfrak{Z}}
\nc{\Fun}{\mathrm{Fun}}
\nc{\Cobar}{\mathrm{Cobar}}
\nc{\Barc}{\mathrm{Bar}}
\nc{\Ab}{\mathrm{Ab}}
\nc{\Q}{\mathbb{Q}}
\nc{\MU}{\mathrm{MU}}
\theoremstyle{plain}
\newcounter{zaehler}
\newtheorem{introthm}[zaehler]{Theorem}
\newtheorem{introcor}[zaehler]{Corollary}
\title{Towards the classification of DGAs with polynomial homology} 
\author[H.~\"O.~Bay\i nd\i r]{Haldun \"Ozg\"ur Bay{\i}nd{\i}r} 
\address{Departament de Matemàtiques i Informàtica, Universitat de Barcelona, Gran Via 585, 08007 Barcelona, Spain}
\email{ozgurbayindir@gmail.com}
\author[M.~Land]{Markus Land}
\address{Mathematisches Institut, Ludwig-Maximilians-Universit\"at M\"unchen, Theresienstra\ss e 39, 80333 M\"unchen, Germany}
\email{markus.land@math.lmu.de}
\begin{document}

\begin{abstract}
We study the classification of $\mathbb{Z}$-DGAs   with polynomial homology $\mathbb{F}_p[x]$  with $\lvert x \rvert >0$, motivated by computations in algebraic $K$-theory. This classification problem was left open in work of  Dwyer, Greenlees, and Iyengar.
We prove that there are infinitely many such DGAs  for even $\lvert x \rvert$ and that for $\lvert x \rvert \geq 2p-2$ any such DGA is formal as a ring spectrum. Through this, we obtain examples of triangulated categories with infinitely many DG-enhancements and a classification of prime DG-division rings.

Combining our results with earlier work of the second author and Tamme, we obtain new (relative) algebraic $K$-theory computations for rings such as the mixed characteristic coordinate axes $\mathbb{Z}[x]/px$ and the group ring $\mathbb{Z}[C_{p^n}]$.
\end{abstract}

\maketitle

\tableofcontents

\section{Introduction}

In this paper, we study the classification of differential graded algebras, DGAs for short, whose homology is a polynomial algebra over $\fp$ (or more generally over $\Z/m$) on a single generator in a positive degree. Our motivation originally stems from the goal to perform explicit computations in algebraic $K$-theory, where work of the second author and Tamme \cite{land2023kthrypushouts} gives a number of examples of such DGAs whose algebraic $K$-theory is closely related to the algebraic $K$-theory of certain ordinary rings. We obtain such computations at the end of this paper.
However, the core of this paper consists of general results about DGAs with polynomial homology.

Let us first discuss what is known about the classification of DGAs with polynomial homology $\fp[x_k]$ with $\lvert x_k \rvert= k$.\footnote{The subscripts of the generators in graded rings will always denote the homological degree.} This is a natural question in homological algebra and studied in work of Dwyer, Greenlees, and Iyengar \cite{DGI} as we explain in more detail below. 

To set the stage, let us briefly lay out the setup which we will work in, namely in
the $\infty$-category of DGAs, which is obtained from the 1-category of DGAs by formally inverting quasi-isomorphisms of DGAs, i.e.\ maps that induce isomorphism in homology. As we work in this $\infty$-categorical setting, when we mention equivalences or uniqueness of DGAs or when we say there are infinitely many DGAs etc.\ we mean up to quasi-isomorphism unless stated otherwise; similarly, maps, tensor products and (co)limits are always understood in the derived sense.  

By \cite{shipley2007hzalgaredgas, lurie2016higher}, the $\infty$-category of DGAs identifies with $\Alg_\Z(\Sp)$, the $\infty$-category of $\bE_1$-$\Z$-algebras in the $\infty$-category $\Sp$ of spectra.\footnote{We suppress notation for the fully faithful, lax symmetric monoidal functor $\Ab \to \Sp$, often referred to as the Eilenberg--Mac Lane functor.}  Via the forgetful functor $\Alg_\Z(\Sp) \to \Alg(\Sp)$, which takes a DGA to its underlying ring spectrum, there is also the notion of topological equivalences between DGAs: two DGAs are said to be \emph{topologically equivalent} if their underlying ring spectra are equivalent. It follows that quasi-isomorphic DGAs are topologically equivalent, however conversely, there are examples of DGAs that are topologically equivalent but not quasi-isomorphic \cite{dugger2007topological}.

Importantly for us, a graded ring can and will be viewed as a DGA by equipping it with trivial differentials.  A DGA is called \emph{formal} if it is quasi-isomorphic to its homology considered as a graded ring (and hence as a DGA as just described) and \emph{topologically formal} if its underlying ring spectrum is equivalent to its homology, again viewed as a graded ring.

Since a number of invariants of DGAs, including algebraic $K$-theory, are invariants of the underlying ring spectrum, we follow the philosophy of Dugger and Shipley \cite{dugger2007topological} and  study the classification of DGAs up to quasi-isomorphism as well as up to topological equivalence. 

\subsection{Previous results}
Using a Koszul duality argument, Dwyer, Greenlees, and Iyengar \cite{DGI} prove the following:

\begin{prop}\label{prop intro Koszul duality poly to exterior}
    Let  $k\neq 0,1$. There is a canonical bijection between 
\begin{enumerate}
    \item[-] quasi-isomorphism classes of DGAs with homology $\Lambda_{\fp}[x_{k}]$ and
    \item[-] quasi-isomorphism classes of DGAs with homology $\fp[x_{-k-1}]$
\end{enumerate}
The same also holds for topological equivalence classes  in place of quasi-isomorphism classes.
\end{prop}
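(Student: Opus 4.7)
The plan is to construct the claimed bijection via Koszul duality: send a DGA $A$ with an augmentation $\epsilon\colon A \to \fp$ to the derived endomorphism DGA $\mathcal{K}(A) := \on{REnd}_A(\fp)$, where $\fp$ is regarded as an $A$-module through $\epsilon$. First I would show that any DGA $A$ with $H_*A \cong \Lambda_{\fp}[x_k]$ (respectively $B$ with $H_*B \cong \fp[x_{-k-1}]$) admits an essentially unique augmentation lifting the canonical augmentation of its homology. For $k \geq 2$ this is produced by the Postnikov tower $A \to \tau_{\leq 0}A \simeq \fp$; for $k \leq -2$ one argues via obstruction theory, using that the relevant Andr\'e--Quillen cohomology of $\Lambda_{\fp}[x_k]$ (or $\fp[x_{-k-1}]$) with coefficients in $\fp$ is concentrated in degrees where the obstructions to existence and uniqueness must vanish. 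The exclusion of $k=0,1$ guarantees the relevant Ext-groups do not collide with $H_0 = \fp$.

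Next I would compute $\mathcal{K}$ on homology. A Koszul-type resolution of $\fp$ over $\Lambda_{\fp}[x_k]$ (whose $i$-th term is a copy of $\Lambda_{\fp}[x_k]$ in internal degree $ik$, with differentials given by multiplication by $x_k$) produces an Ext-spectral sequence converging to $H_*(\mathcal{K}(A)) \cong \on{Ext}^*_{\Lambda_{\fp}[x_k]}(\fp,\fp) \cong \fp[y_{-k-1}]$. The symmetric computation with the Koszul resolution of $\fp$ over $\fp[x_{-k-1}]$ gives $H_*(\mathcal{K}(B)) \cong \Lambda_{\fp}[x_k]$. Thus $\mathcal{K}$ sends one class into the other in both directions.

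To see $\mathcal{K}$ is an involution on equivalence classes, I would apply Schwede--Shipley Morita theory once $\fp$ is shown to be a compact generator of $A\hmod$ (and of $B\hmod$). Compactness follows from the finite-type Koszul resolution, while generation comes from showing that the action of the homology generator on the homology of any module is sufficiently nilpotent that arbitrary modules can be built from copies of $\fp$ via iterated cofibers. Morita theory then yields $A \simeq \on{REnd}_{\mathcal{K}(A)}(\fp) = \mathcal{K}(\mathcal{K}(A))$, and analogously for $B$. For the topological equivalence assertion, I would rerun the same argument in $\Alg(\Sp)$; the required inputs (augmentations, Koszul resolutions, compact generation, Morita theory) all pass through the forgetful functor $\Alg_\Z(\Sp) \to \Alg(\Sp)$, so the bijection descends to topological equivalence classes.

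The main obstacle I anticipate is establishing compact generation of $\fp$ in $A\hmod$ when $k<0$: the Postnikov filtration is unavailable, and one must build the generation statement directly from the action of the (negatively graded) generator on the homology of arbitrary modules. Similar care is needed for the construction and uniqueness of the augmentation in that case, as well as for convergence of the Ext-spectral sequence, where one cannot rely on standard connectivity bounds.
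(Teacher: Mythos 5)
Note first that the paper does not prove this proposition: it cites \cite{DGI} and only sketches in Section~4 that the bijection is Koszul duality $A\mapsto\Mor_A(\fp,\fp)$, so your overall strategy is the intended one. The principal gap is your use of Schwede--Shipley Morita theory to establish the double-duality $A\simeq\mathcal{K}(\mathcal{K}(A))$: that theorem requires $\fp$ to be a \emph{compact generator}, and this fails on both sides for complementary reasons, so your two justifications cannot hold simultaneously. Over the polynomial DGA $B$ with $\pi_*B\cong\fp[x_{-k-1}]$, the two-term Koszul complex does make $\fp$ compact (equivalently $\fp[x]$ is regular), but $\fp$ is \emph{not} a generator: $B[x_{-k-1}^{-1}]\neq 0$ yet $\Mor_B(\fp,B[x_{-k-1}^{-1}])\simeq 0$, and the polynomial generator is of course not nilpotent, so the nilpotence argument you propose has nothing to say here. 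Over the exterior DGA $A$ with $\pi_*A\cong\Lambda_{\fp}[x_k]$, $\fp$ is a generator but \emph{not} compact: since $x_k$ is a zero divisor, $\on{cofib}(\Sigma^k A\xrightarrow{x_k}A)$ has nonzero $\pi_{2k+1}$ and is not $\fp$, the honest Koszul resolution of $\fp$ over $\Lambda_{\fp}[x_k]$ is infinite, and $\Lambda_{\fp}[x_k]$ is not regular. So Schwede--Shipley gives at best an equivalence between $\mathcal{K}(B)\hmod$ and the localizing subcategory of $B\hmod$ generated by $\fp$, which does not contain $B$, and double-duality does not follow from it.

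That the gap is genuine can be seen from degree $0$: the double Koszul dual of the discrete ring $\fp[x_0]$ is the power-series ring $\fp[[x]]$, so a Morita-type argument blind to degree considerations would in particular assert a bijection at $k=-1$ (allowed by the stated hypotheses) with the single quasi-isomorphism class $\fp[x_0]$, contradicting the infinite CDVR classification of \cite{DGI}. What actually makes $A\simeq\mathcal{K}(\mathcal{K}(A))$ work when the polynomial generator has nonzero degree is that $\fp[x_{-k-1}]$ is then automatically complete along its augmentation ideal, since each homotopy degree carries at most one monomial; establishing this graded-completeness replacement for the Morita step is the real technical content of the argument. A secondary concern is your unspecified obstruction-theoretic construction of the augmentation in the coconnective case: the paper explicitly cautions that a DGA with exterior homology on a negative-degree generator need not be a square-zero extension of $\fp$, so this step is not routine. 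It is cleaner to define $\mathcal{K}$ starting from whichever side of the duality the DGA is connective, where the Postnikov truncation furnishes the canonical augmentation, and deduce surjectivity afterwards, as \cite{DGI} do.
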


 Earlier, in \cite[Example 3.15]{dugger2007topological}, Dugger--Shipley classified DGAs with homology $\Lambda_{\fp}[x_k]$ for  $k>0$. By viewing them as square-zero extensions of $\fp$ by $\fp$, they deduce that there is a unique such DGA if $k>0$ is odd and that there are two quasi-isomorphism classes if $k>0$ is even and these two are topologically equivalent if $k\geq 2p-2$. The case $k<0$ is more complicated as such DGAs may not be given by square-zero extensions. Nevertheless, the case $k=-1$ is the main result of the work of Dwyer, Greenlees, and Iyengar \cite{DGI}. Using again a Koszul duality argument they prove:
\begin{theo}
    There is canonical bijection between
\begin{enumerate}
    \item[-] equivalence classes of DGAs with homology $\Lambda_{\fp}[x_{-1}]$ and
    \item[-] isomorphism classes of complete discrete valuation rings with residue field $\fp$.
\end{enumerate}
Here, equivalence refers to quasi-isomorphism or topological equivalence. 
\end{theo}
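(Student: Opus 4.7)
The approach is Koszul duality. To a DGA $A$ with $H_*(A) = \Lambda_{\fp}[x_{-1}]$ I would associate the derived endomorphism ring $R(A) := \End_A(\fp)$, where $\fp$ is viewed as an $A$-module via the canonical augmentation $A \to H_0(A) \cong \fp$. Conversely, to a complete DVR $R$ with residue field $\fp$ I would associate $B(R) := \End_R(\fp)$, computed in the derived $\infty$-category of $R$-modules.

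The first step is to show that $R(A)$ is concentrated in degree zero and recovers a complete DVR. For this I would use the Koszul/bar spectral sequence
\[ E_2^{*,*} = \mathrm{Ext}^{*,*}_{H_*(A)}(\fp, \fp) \Longrightarrow \pi_* R(A). \]
The standard Koszul resolution of $\fp$ over the graded exterior algebra $\Lambda_{\fp}[x_{-1}]$ yields $E_2 \cong \fp[y]$ with $y$ dual to $x_{-1}$. Since $|x_{-1}| = -1$, the bidegrees of $y$ balance to total degree zero, so the $E_2$-page is concentrated on a single line, all differentials vanish for degree reasons, and completeness of the filtration (inherent in the inverse-limit description of $\End_A(\fp)$ in the stable $\infty$-category) identifies $\pi_0 R(A)$ with a complete local ring whose associated graded is $\fp[y]$. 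Such a ring is Noetherian regular local of Krull dimension one with residue field $\fp$, hence a complete DVR.

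The converse direction is immediate: a complete DVR $R$ with uniformizer $\pi$ has a length-two projective resolution $R \xrightarrow{\pi} R \to \fp$, so $\pi_* B(R) = \mathrm{Ext}^*_R(\fp, \fp) = \Lambda_{\fp}[\sigma_{-1}]$, placing $B(R)$ in the target category. To show that $R$ and $B$ are mutually inverse, I would invoke a double-centralizer theorem of the kind developed in \cite{DGI}: $\fp$ is a compact generator of the relevant subcategory of modules on each side (for $A$, the modules built from $\fp$ via iterated cofibers coming from the two-term exterior filtration; for $R$, all $R$-modules, noting that $\fp = \cone(R \xrightarrow{\pi} R)$ is manifestly compact). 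Then the natural maps $A \to \End_{R(A)}(\fp)$ and $R \to \End_{B(R)}(\fp)$ are equivalences, giving the bijection on quasi-isomorphism classes; the statement for topological equivalences follows because $R(-)$ and $B(-)$ depend only on the underlying ring spectra of their inputs.

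The main obstacle is the double-centralizer step: one must pin down the correct subcategory of $A$-modules on which $\fp$ is a compact generator, something like the \emph{$\fp$-nilpotent complete $A$-modules}, and then verify the Morita-type equivalence. A secondary technicality is justifying that the Koszul spectral sequence converges to a \emph{complete} local ring rather than an ordinary polynomial ring, which reflects the infinite products inherent in derived endomorphisms in the stable $\infty$-category.
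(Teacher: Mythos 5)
The paper does not prove this theorem itself; it quotes the main result of \cite{DGI}, whose argument is the Koszul duality you outline: take $R(A)=\End_A(\fp)$ in one direction, $B(R)=\End_R(\fp)$ in the other, and close the loop by a double-centralizer theorem. So the strategy is the right one, but two of your supporting claims are incorrect as stated, and they sit exactly where the real work of \cite{DGI} lies.

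The compactness claims in the double-centralizer step do not hold. Over $A$ with $\pi_*A \cong \Lambda_{\fp}[x_{-1}]$, the module $\fp$ has an infinite minimal free resolution (the exterior algebra is not regular), so $\fp$ is not perfect, hence not compact, in $\Mod(A)$; and since $\pi_*A$ is a finite-dimensional $\fp$-vector space, $A$ itself is finitely built from $\fp$, so the localizing subcategory generated by $\fp$ is all of $\Mod(A)$ — there is no subcategory in which $\fp$ becomes a compact generator and from which one can still recover $A$. On the other side, $\fp$ is indeed perfect over a complete DVR $R$, but it is only a compact generator of the $\pi$-power torsion $R$-modules; $R$ itself does not lie in the localizing subcategory generated by $\fp$, so ``$\fp$ is a compact generator of all $R$-modules'' is false. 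What actually makes the biduality maps $A \to \End_{\End_A(\fp)}(\fp)$ and $R \to \End_{\End_R(\fp)}(\fp)$ equivalences is a \emph{completeness} condition: $R$ is $\fp$-complete because it is $\pi$-adically complete, and $A$ is $\fp$-complete because it is proxy-small over $\fp$ (finitely built from $\fp$). Setting this up and proving it is precisely the machinery of \cite{DGI}, and it cannot be replaced by a Morita argument for a compact generator, because the smallness runs in opposite directions on the two sides. A second, smaller gap: the spectral sequence only shows that $\pi_0\End_A(\fp)$ is a complete local ring with associated graded $\fp[y]$; a priori this ring is noncommutative, so you cannot yet invoke ``regular local of Krull dimension one.'' Commutativity does hold — for instance because $\pi_0\End_A(\fp)$ receives a central map from $\Z$ and is, by topological Nakayama, topologically generated over $\Z$ by a single lift of $y$, hence is a quotient of a commutative power series ring — but this needs to be argued rather than asserted.
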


It follows that there are countably infinitely many DGAs with homology $\Lambda_{\fp}[x_{-1}]$. For the classification of DGAs with homology $\Lambda_{\fp}[x_{k}]$, the remaining case is therefore the case $k<-1$. Equivalently, what remains is the classification of DGAs with polynomial homology $\fp[x_{k}]$ for $k>0$ and the authors of \cite{DGI} leave this problem open. 

This classification question was the subject of earlier work of the first author \cite{bayindir2019dgaswithpolynomial}, in which the main result states that there is a unique non-formal DGA with homology $\fp[x_{2p-2}]$ and a non-formal $(2p-2)$-Postnikov section, providing the first example of a non-formal DGA with polynomial homology $\fp[x_k]$ with $k>0$ in the literature. Around the same time, Irakli Patchkoria also constructed a non-formal DGA given by the DGA quotient $\z \sslash p$, whose homology is  $\fp[x_2]$. Incidentally, $\Z\sslash p$ also appears in \cite[Ex.\ 4.31]{land2023kthrypushouts} as the $\odot$-ring (first introduced in \cite{LT}) associated to the Milnor square describing ${\Z \times_{\fp} \Z}$. In \cite{davis2023cyclic} the authors compute the negative cyclic homology of $\z \sslash p$ and in his dissertation, Julius Frank also studied the classification of DGAs with polynomial homology and proved that $\z \sslash p$ is not even topologically formal for $p>2$, whereas the second author and Tamme \cite[Remark 4.33]{land2023kthrypushouts} proved that $\z \sslash 2$ is in fact topologically formal.

\subsection{Classification results} 
Let us now summarise our main results in regards to the classification problem alluded to above. First, we discuss under what circumstances we can show that a DGA is formal.  We stress again that maps of DGAs always refers to derived maps, that is, maps in the $\infty$-category of DGAs as described above and that the term quasi-isomorphism is used for an equivalence in the $\infty$-category of DGAs. In what follows, let $m>1$ be an integer and $p$ be a prime.

\begin{introthm}[Formality]\label{ThmA}
      Let $n>0$ and $A$ be a DGA.
    \begin{enumerate}
        \item Assume that the homology of $A$ is $\Z/m[x_{2n}]$. If there is a map  $\Z/m \to A$ of DGAs, then $A$ is formal.
        \item Assume that the homology of $A$ is $\fp[x_{2n}]$. If there is a map $\fp \to A$ of ring spectra, then $A$ is topologically formal. 
        
        Moreover, if $\tau_{\leq 2p-4}A$ is topologically formal,  then there exists a map $\fp \to A$ of  ring spectra. In particular, $A$ is topologically formal if and only if $\tau_{\leq 2p-4} A$ is. As a result, $A$ is topologically formal if  $n \geq p-1$.
    \end{enumerate}
   
\end{introthm}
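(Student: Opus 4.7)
The unifying strategy is to use the given unit map to view $A$ as an algebra over the base ring and produce an explicit equivalence from a free polynomial algebra into $A$. For part (1), the DGA map $\Z/m \to A$ makes $A$ into an $E_1$-algebra over $\Z/m$. A generator $x$ of $H_{2n}(A) = \pi_{2n}(A)$ corresponds to a map $\Z/m[2n] \to A$ of $\Z/m$-modules, which the free-forgetful adjunction turns into a map of $E_1$-$\Z/m$-algebras $F^{E_1}_{\Z/m}(\Z/m[2n]) \to A$. Since the generator sits in an even degree, the source coincides as a DGA with the formal polynomial algebra $\Z/m[x_{2n}]$: the tensor algebra on a single even-degree generator equals the polynomial algebra with no signs. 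The induced map on homology sends $x\mapsto x$ and is therefore an isomorphism, so $A \simeq \Z/m[x_{2n}]$ is formal. The first statement of part (2) follows from the identical argument carried out in $E_1$-$\fp$-algebras in spectra: from $\fp \to A$ and a generator of $\pi_{2n}(A)$ one produces a ring spectrum map $\fp[x_{2n}] \simeq F^{E_1}_\fp(\fp[2n]) \to A$ that is an equivalence on homotopy.

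\textbf{Plan for the converse in part (2).} The hypothesis that $\tau_{\leq 2p-4}A$ is topologically formal yields a ring spectrum equivalence to $R := \tau_{\leq 2p-4}(\fp[x_{2n}])$, a truncated polynomial ring and hence a graded $\fp$-algebra; composing with the $\fp$-algebra structure map of $R$ gives a ring spectrum map $\fp \to \tau_{\leq 2p-4}A$. The plan is then to lift this inductively up the Postnikov tower $A = \lim_k \tau_{\leq k}A$. At each step, $\tau_{\leq k+1}A \to \tau_{\leq k}A$ is an $E_1$-square-zero extension by $\pi_{k+1}(A)[k+1]$, and by the standard obstruction theory for $E_1$-algebra maps the obstruction to lifting $\fp \to \tau_{\leq k}A$ lives in $\pi_{-1}$ of a mapping space of $\fp$-bimodule spectra controlled by the $E_1$-cotangent complex $L^{E_1}_{\fp/\sph}$. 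One intends to kill these obstructions stage by stage, using (a) that $\pi_{k+1}(A) = 0$ unless $k+1$ is a positive multiple of $2n$, and (b) that $L^{E_1}_{\fp/\sph}$, whose homotopy is governed by the dual Steenrod algebra, has sparse structure below degree $2p-3$, reflecting that $\sph/p$ and $\fp$ agree as $E_1$-rings in degrees $\leq 2p-4$, the first discrepancy being the $\alpha_1$-cell in degree $2p-3$. Once a compatible lift exists we obtain the desired ring spectrum map $\fp \to A$.

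\textbf{Final consequence.} For $n \geq p-1$ we have $2n > 2p-4$, so $\tau_{\leq 2p-4}A$ has no positive-degree homology and is automatically equivalent to $\fp$ as a ring spectrum, hence topologically formal. Combined with the converse and the first statement of part (2), this yields the concluding assertion that $A$ is topologically formal whenever $n \geq p-1$.

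\textbf{Main obstacle.} The technical heart is the bimodule-Ext computation in the converse. The first candidate obstruction, in degree $2p-3$, vanishes for parity reasons ($2p-3$ is odd and $\pi_i(A) = 0$ in odd degrees). However, one must still verify that all later obstructions at degrees $2nj \geq 2p-3$ vanish, which requires a careful analysis of the interaction between $L^{E_1}_{\fp/\sph}$ and the $\fp$-bimodule structure on the fibers $\pi_{2nj}(A)[2nj]$ induced by the current lift. This is precisely where the numerical threshold $2p-4$ enters the argument essentially, and where the bulk of the technical work is expected to lie.
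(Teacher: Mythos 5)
The central step of your argument for part (1) contains a genuine gap. You claim that the DGA map $\Z/m \to A$ makes $A$ into an $\bE_1$-$\Z/m$-algebra, i.e.\ an object of $\Alg_{\bE_1}(\Mod_{\Z/m})$. This is false in general: a map of $\bE_1$-rings $R \to A$ does not by itself endow $A$ with the structure of an $\bE_1$-$R$-algebra. That extra structure requires, in Lurie's formulation, an $\bE_2$-ring map $\Z/m \to \Ztr^{\Z}(A)$ into the $\bE_1$-center of $A$, which is strictly more data than the $\bE_1$-map $\Z/m\to A$. Precisely this subtlety is why the paper works with centralizers. The paper's argument does \emph{not} produce a $\Z/m$-algebra structure on $A$; instead, it uses the evenness of $\hh_\Z(\Z/m,\Z/m)$ (Lemma~\ref{lemma:THH-even}) to show that the map $\Ztr^\Z(f) = \thh_\Z(\Z/m,A) \to A$ is surjective on homotopy (via Lemma~\ref{lemma:THH}), picks a lift $\bar x_{2n}$ of the generator in the centralizer, and from this gets a map $\Z/m[X_{2n}]\to A$ \emph{under} $\Z/m$ sending $X_{2n}\mapsto x_{2n}$ (Proposition~\ref{prop:maps-to-centralizer} and Corollary~\ref{cor:map-implies-formal}). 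This is weaker than a $\Z/m$-algebra structure on $A$ but sufficient. Your shortcut skips exactly the computation that makes the claim true. The same gap recurs in the first statement of (2): a ring spectrum map $\fp\to A$ does not make $A$ an $\bE_1$-$\fp$-algebra; the paper again goes through centralizers, using B\"okstedt's computation that $\thh^\sph(\fp)$ is even.

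For the converse in part (2) your proposed route is entirely different from the paper's and, as you yourself note, you do not complete it: the obstruction computation at degrees $2nj \geq 2p-3$ is exactly where all the work lies and you leave it open. The paper's argument does not run a Postnikov obstruction theory for lifting $\fp\to\tau_{\leq 2p-4}A$. Instead it passes through the auxiliary DGAs $S_{2n}^p$: Observation~\ref{ob:p-local-truncated-rings} converts topological formality of $\tau_{\leq 2p-4}A$ into DGA-formality, Theorem~\ref{theo maps from snk to dgas with polynomial homology} then gives a DGA map $S_{2l}^p\to A$ with $2l > 2p-4$, and Theorem~\ref{thm:topological-formality} (whose key input is that $p=0$ in $\pi_0\Ztr^\Z(S_{2p-2}^p)$, Lemma~\ref{lemm center has p equals zero}, combined with the Hopkins--Mahowald theorem) supplies a ring-spectrum map $\fp\to S_{2l}^p$, whence $\fp\to A$. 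You would need to supply the missing obstruction analysis, which seems comparably involved to what the paper actually does and still needs some input playing the role of the Hopkins--Mahowald theorem, before your plan constitutes a proof.

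Your deduction of the final clause (if $n\geq p-1$ then $\tau_{\leq 2p-4}A \simeq \fp$ so the converse applies) is fine once the rest is repaired.
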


The final statement of Theorem~\ref{ThmA} in fact generalizes to the odd degree generator case:
\begin{introthm}[Topological formality]\label{ThmB}
        Let $n\geq 2p-2$. Every DGA with homology $\fp[x_{n}]$ is topologically formal.
\end{introthm}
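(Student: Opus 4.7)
When $n$ is even, I would write $n = 2m$ with $m \geq p-1$ and note that $\tau_{\leq 2p-4} A = H\fp$, since $\pi_i A = 0$ for $0 < i < n$; Theorem~\ref{ThmA}(2) then immediately yields topological formality. The substance of Theorem~\ref{ThmB} is therefore the odd case, $n$ odd with $n \geq 2p-1$. The plan here is to parallel the proof of Theorem~\ref{ThmA}(2): first produce a ring spectrum map $H\fp \to A$, and then use it to exhibit an equivalence between $A$ and the formal model $H(\fp[x_n])$.

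For the first step, the Postnikov fiber between $\tau_{\leq k-1} A$ and $\tau_{\leq k} A$ is trivial unless $k$ is a positive multiple of $n$, and the lowest non-trivial fiber is $\Sigma^n H\fp$ at stage $\tau_{\leq n} A$. The trivial map $H\fp \to \tau_{\leq 2p-4} A = H\fp$ is then to be extended up the Postnikov tower, with the successive obstructions sitting in topological André--Quillen (equivalently, $\thh$-type) cohomology of $H\fp$ with coefficients in shifts of $H\fp$. Via Bökstedt's computation of $\thh_\ast(H\fp) = \fp[\mu_2]$ these groups live in specific bi-degrees, and the bound $n \geq 2p-1$ is designed precisely to place the obstructions at each stage $\tau_{\leq kn} A$ in a vanishing range; the argument is that of Theorem~\ref{ThmA}(2), carefully accounting for the odd-parity shifts.

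Granted such a map $H\fp \to A$, the underlying ring spectrum of $A$ acquires the structure of an $\bE_1$-$H\fp$-algebra, and the class $x_n \in \pi_n A$ lifts to an $H\fp$-module map $\Sigma^n H\fp \to A$. By the universal property of the free $\bE_1$-$H\fp$-algebra, this extends to a map $T^{\bE_1}_{H\fp}(\Sigma^n H\fp) \to A$. The source has underlying $H\fp$-module $\bigoplus_{k \geq 0} \Sigma^{kn} H\fp$ and its homotopy ring is the tensor algebra on one generator, which coincides with $\fp[x_n]$. The comparison map therefore induces an isomorphism on $\pi_\ast$ and is hence an equivalence of ring spectra, establishing topological formality.

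The main obstacle is extending the obstruction-theoretic argument from Theorem~\ref{ThmA}(2) across the parity barrier. The bidegrees and signs entering the relevant $\thh$-type computations depend delicately on whether the generator sits in even or odd degree, and verifying that the sharper bound $n \geq 2p-1$ suffices to kill all Hochschild obstructions in the odd case, combined with a careful comparison of the resulting $\bE_1$-$H\fp$-algebra structure with the tensor algebra model, is where the technical heart of the argument lies.
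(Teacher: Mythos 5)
Your reduction of the even case to Theorem~\ref{ThmA}(2) via the observation that $\tau_{\leq 2p-4}A \simeq \fp$ is correct and matches what the paper does implicitly. But both steps of your argument for the odd case have genuine gaps, and the paper's actual proof (Theorem~\ref{theo topological formality odd degrees}) takes a different route precisely to avoid them.

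First, the obstruction-theoretic claim is not right as stated. The groups $\thh_{\sph}^{*}(\fp,\fp)$ are the $\fp$-linear dual of B\"okstedt's $\thh_*(\fp)\cong\fp[\mu_2]$, so they are nonzero in \emph{every} nonnegative even degree; there is no bidegree ``vanishing range'' controlled by the inequality $n\geq 2p-1$. The obstruction at the $k$-th Postnikov stage would sit in $\thh_\sph^{kn+2}(\fp,\fp)$, and already for $k=2$ (with $n$ odd) this group is $\fp\neq 0$. What actually vanishes for $n\geq 2p-2$ is the image of the comparison map $\hh^{*}_{\Z}(\fp,\fp)\to\thh^{*}_{\sph}(\fp,\fp)$, which is the $\fp$-dual of $\fp[u_2]\to\Gamma_{\fp}[u_2]$ and hence zero in degrees $\geq 2p$; but exploiting this requires knowing that the relevant obstruction class comes from the $\Z$-linear world, which your sketch does not argue. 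The paper avoids this entirely: instead of running obstruction theory on $\fp$, it uses the odd cell decomposition $S_2^p\to S_4^p\to\cdots$ of $\fp$ (Remark~\ref{remark:extensions-to-S2n}) to produce a \emph{DGA} map $S_{2p-2}^p\to A$ for free, since $\pi_j A=0$ in degrees $0<j<n\leq n$ forces each attaching map to die.

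Second, and more seriously, the step ``granted a ring spectrum map $\fp\to A$, the underlying ring spectrum of $A$ acquires the structure of an $\bE_1$-$\fp$-algebra'' is unjustified and is in fact the crux of the whole problem. An $\bE_1$-map $\fp\to A$ gives $A$ only a (one-sided) $\fp$-module structure together with an $\bE_1$-ring map; to upgrade to an object of $\Alg_{\bE_1}(\Mod_{\fp})$ one must exhibit the map as central, i.e.\ produce an $\bE_2$-map $\fp\to\Ztr^{\sph}(\fp\to A)$. This is exactly what Proposition~\ref{prop:maps-to-centralizer} of the paper is designed to deliver, and in the even case it works with $A=\fp$ over $\sph$ because $\thh_\sph(\fp,\fp)$ is even. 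But in the odd case the required boundedness $\thh_\sph(\fp,\pi_t B)\simeq\tau_{[-n,0]}\thh_\sph(\fp,\pi_t B)$ \emph{fails}: $\thh_\sph(\fp,\fp)$ has homotopy in all nonpositive even degrees. The paper's solution is to run the centralizer argument not for $\fp$ over $\sph$ but for $S_{2p-2}^p$ over $\Z$, where Lemma~\ref{lemm hh cohomology of snm} gives $\hh_\Z^{*}(S_{2p-2}^p,\fp)\cong\fp[w_2]/w_2^p$, which \emph{is} concentrated in $[-(2p-2),0]\subseteq[-n,0]$. This yields a DGA map $S_{2p-2}^p\otimes_\Z\Z[X_n]\to B$ sending $X_n\mapsto x_n$, and one then base changes along the (independently established) ring spectrum map $\fp\to S_{2p-2}^p$ from Theorem~\ref{thm:topological-formality} to land the composite $\fp[X_n]\to B$ that witnesses topological formality. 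In short, the paper's essential trick — replacing $\fp$ by the intermediate ring $S_{2p-2}^p$ and working over $\Z$ to get a bounded Hochschild centralizer — is exactly what is missing from your plan, and without it both the existence of a map out of $\fp[X_n]$ and even the existence of your initial $\fp\to A$ remain unproven.
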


\begin{rema}
    Both Theorems \ref{ThmA} and \ref{ThmB} in fact hold true more generally in case the homology of $A$ is a truncated polynomial algebra $\Z/m[x_{2n}]/x_{2n}^k$ for any $k>0$. 
\end{rema}

This fully resolves the topological classification of DGAs with (truncated) polynomial homology over $\fp$ in a sufficiently large degree generator, and in particular with exterior homology over $\fp$ in a sufficiently small degree generator. More precisely, equivalent to Theorem~\ref{ThmB} is the statement that every DGA with homology $\Lambda_{\fp}[x_n]$  is topologically formal whenever $n< -(2p-2)$ (Corollary \ref{coro later exterior dgas are topologically formal}).
\newline

Theorems~\ref{ThmA} and \ref{ThmB} above say nothing about the existence (and uniqueness) of non-formal DGAs with polynomial homology. Our next result remedies this. To state it, we need to digress briefly: For $m>1$ and $n>0$, in the body of the text we construct canonical DGAs $S_{2n}^m$ in an inductive manner (over $n$) whose homology is $\Z/m[x_{2n}]$. These DGAs are in fact also essential in the proof of Theorems \ref{ThmA} and \ref{ThmB}. Moreover, for these DGAs, we show that it is possible to adjoin suitable roots of the polynomial generator; we explain this in some more detail in Section~\ref{sec:proof-ingredients} below. In particular, for each $l\geq 1$, we construct DGAs $S_{2nl}^m\big[\sqrt[l]{x_{2nl}}\big]$ whose homology is isomorphic to $\Z/m[x_{2n}]$, see Construction~\ref{cons adj root to snm} for details.

\begin{introthm}[Existence]\label{ThmC}
    Let $n>0$ and $m>1$ and $p$ be a prime. 
    
    \begin{enumerate}
        \item  The collection $\{S_{2nl}^m \big[\sqrt[l]{x_{2nl}}\big]\}_{l\geq 1}$ consists of pairwise non-quasi-isomorphic DGAs.
        In particular, up to quasi-isomorphism, there are infinitely many pairwise distinct DGAs with homology $\Z/m[x_{2n}]$.
        \item For $l \geq \frac{p-1}{n}$, the DGA $S_{2nl}^p \big[\sqrt[l]{x_{2nl}}\big]$ is topologically equivalent to $\fp[x_{2n}]$, i.e.\ is topologically formal. 
    \end{enumerate}
\end{introthm}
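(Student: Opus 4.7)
For part (2), the plan is to produce a ring spectrum map $\fp \to S_{2nl}^p[\sqrt[l]{x_{2nl}}]$ and then invoke Theorem~\ref{ThmA}(2). The hypothesis $l \geq (p-1)/n$ is equivalent to $2nl \geq 2p-2$, so Theorem~\ref{ThmB} applies to $S_{2nl}^p$, whose homology is $\fp[x_{2nl}]$, and yields topological formality. Its underlying ring spectrum is therefore equivalent to $\fp[x_{2nl}]$, which carries the canonical unit $\fp \to \fp[x_{2nl}]$. Composing this unit with the structural map $S_{2nl}^p \to S_{2nl}^p[\sqrt[l]{x_{2nl}}]$ from Construction~\ref{cons adj root to snm} furnishes a ring spectrum map $\fp \to S_{2nl}^p[\sqrt[l]{x_{2nl}}]$. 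Since the target has homology $\fp[x_{2n}]$, Theorem~\ref{ThmA}(2) concludes the argument.

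For part (1), write $A_l := S_{2nl}^m[\sqrt[l]{x_{2nl}}]$. The plan is to distinguish the $A_l$ up to quasi-isomorphism by a moduli-theoretic invariant sensitive to the degree of the adjoined root. For any DGA $A$ with homology $\Z/m[x_{2n}]$, let $\mathcal{L}(A)$ denote the set of positive integers $l'$ for which there exists a DGA map $\phi \co S_{2nl'}^m \to A$ inducing $x_{2nl'} \mapsto u\,x_{2n}^{l'}$ on homology, with $u \in (\Z/m)^\times$. This set is a quasi-isomorphism invariant of $A$. Using the inductive construction of $S_{2nl}^m$ together with the universal property of the root adjunction, one sees that every divisor of $l$ lies in $\mathcal{L}(A_l)$; in particular $l \in \mathcal{L}(A_l)$.

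The crucial complementary statement is that $\mathcal{L}(A_l)$ excludes values $l'$ exceeding $l$ (enough so that the invariant distinguishes the $A_l$ as $l$ varies). I would attempt this via obstruction theory: the DGA $S_{2nl'}^m$ admits a description in terms of $\bE_1$-$\Z$-cells arising from its inductive construction, and the obstruction to promoting a chosen homology generator of $A_l$ to a DGA map from $S_{2nl'}^m$ lives in a specific homotopy group of $A_l$. The guiding idea is that the root adjunction in $A_l$ provides exactly the trivialization data required at level $l$ (and its divisors), so the analogous obstructions corresponding to strictly larger $l'$ remain nonzero.

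The main obstacle is this upper-bound computation: pinning down the relevant obstruction class in $\pi_* A_l$ (which I expect to take the form of a higher Toda bracket or Massey product built from $m$ and $x_{2n}$) and proving its non-triviality. This in turn hinges on a precise understanding of the interaction between the inductive cell structure of $S_{2nk}^m$ and an explicit model for the root adjunction as a pushout in $\Alg_\Z(\Sp)$; once this data is assembled, I expect the non-vanishing to follow from the fact that the adjoined root records divisibility of $m \cdot x_{2n}^l$ only, leaving the corresponding higher obstructions detectable.
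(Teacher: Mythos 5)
Your argument for part (2) is correct and tracks the paper's proof: in both cases one deduces topological formality of $S_{2nl}^p$ (you via Theorem~\ref{ThmB}, the paper via its internal Theorem~\ref{thm:topological-formality}), obtains a ring spectrum map $\fp \to S_{2nl}^p$, composes with the structural map of the root adjunction, and applies the ``map from $\fp$ implies topological formality'' criterion.

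For part (1), your proposal has a genuine gap and also a mistaken claim. The claim that every divisor $l'$ of $l$ lies in $\mathcal{L}(A_l)$ is false for proper divisors: the paper's Theorem~\ref{theo maps from snk to dgas with polynomial homology} (applied with $A=A_l$ and the structural map $S_{2nl}^m \to A_l$, which hits $x_{2n}^l \neq 0$) shows that $nl$ is the \emph{smallest} integer $j$ with $\tau_{\leq 2j}A_l$ non-formal, and consequently that for every $j<nl$ any map $S_{2j}^m\to A_l$ is necessarily trivial on $\pi_{2j}$. So $\mathcal{L}(A_l)=\{l\}$; the divisors contribute nothing. You are right, though, that the invariant you want is a ``first nonformal truncation level,'' and that is exactly what the paper extracts; but the upper bound you are struggling to establish by obstruction theory (non-existence of non-trivial maps from $S_{2nl'}^m$ for $l'>l$) is precisely what you cannot get without the paper's uniqueness mechanism. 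The decisive missing ingredient is the observation that $S_{2j}^m$ has a cell structure with cells only in odd degrees, whence $\Map_{\Alg(\Z)}(S_{2j}^m,B)$ has homotopy concentrated in odd degrees for any $B$ with even homology (Lemma~\ref{lemm mapping spaces for pushout dgas} and Corollary~\ref{coro s2k maps are unique up to homotopy}). This uniqueness kills all the Toda/Massey obstruction data you were trying to manipulate: any map $S_{2j}^m \to A$ must agree with its postcomposition with a formal truncation, and comparing the two immediately forces either formality below level $2j$ or non-existence of the map. Once you have the uniqueness of maps, the argument is short: for $l'<l$ the $2nl'$-truncation of $A_l$ is formal (since $2nl'<2nl$) while that of $A_{l'}$ is not, so $A_l\not\simeq A_{l'}$. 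Without the uniqueness statement your obstruction-theoretic sketch will not close, since you would still need to rule out an ``exotic'' choice of map $S_{2nl'}^m\to A_l$ hitting a nonzero class in $\pi_{2nl'}A_l$.
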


Consequently, for $n>0$ we also obtain infinitely many pairwise distinct DGAs with homology $\Lambda_{\fp}[x_{-2n-1}]$, where all but finitely many are topologically equivalent to $\Lambda_{\fp}[x_{-2n-1}]$. 
To the best of our knowledge, these are the first examples of infinitely many pairwise non-quasi-isomorphic DGAs that are all topologically equivalent, i.e.\ infinitely many pairwise distinct $\Z$-algebra structures on a single ring spectrum. We will later also leverage this result to construct exotic dg enhancements of certain triangulated categories, see Section~\ref{sec:intro:dg-enhancements} in this introduction.

\begin{rema}
Let us remark on the case where we are given a DGA $A$ with homology $\z/p^s[x_{2n}]$ for $n>0$ and $s\geq 3$ or $s\geq 2 $ for $p$ odd. By the (surprising) recent results of Burklund \cite{burklund2022multiplicative}, $\sph/p^s$ is a ring spectrum (where $\sph$ denotes the sphere spectrum). If there is a map of ring spectra $\z/p^s \to A$, then the adjoint of the canonical composite
\[\sph/p^s \to \z/p^s \to A\]
is a DGA map $\z/p^s\to A$. From Theorem \ref{ThmA}, we deduce that $A$ is topologically formal if and only if it is formal. More generally, we prove (Proposition~\ref{prop infty many dgas up to topological equiv}):
\begin{enumerate}
   \item[(1)] The collection $\{S_{2nl}^{p^s} \big[\sqrt[l]{x_{2nl}}\big]\}_{l\geq 1}$ consists of pairwise non-topologically-equivalent DGAs with homology $\Z/p^s[x_{2n}]$.
\end{enumerate}

In particular, there are infinitely many topological equivalence classes of DGAs with homology $\Z/p^s[x_{2n}]$. Since $\MU/m$ is an $\MU$-algebra \cite{angeltveit2008thhofainftyringspectra} for each $m>1$, the same arguments prove the following (Proposition~\ref{prop infty many dgas up to topological equiv}).
\begin{enumerate}
    \item[(2)] The collection $\{S_{2nl}^m \big[\sqrt[l]{x_{2nl}}\big]\}_{l\geq 1}$ consists of DGAs that are  pairwise non-$\MU$-algebra-equivalent.
\end{enumerate}

In addition, we show that for $l,l'< \frac{p-1}{n}$, the DGAs $S_{2nl}^p\big[\sqrt[l]{x_{2nl}}\big]$ and $S_{2nl'}^p\big[\sqrt[l']{x_{2nl'}}\big]$ are topologically equivalent if and only if $l=l'$ and neither are topologically formal (Proposition \ref{prop root adj guys that are topologically distinct for all m}), showing that part (2) of Theorem~\ref{ThmC} above is sharp.
\end{rema}

The following is still open:

\begin{conj}
    There exist DGAs that are not quasi-isomorphic but equivalent as $\MU$-algebras.
\end{conj}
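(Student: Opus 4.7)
The plan is to exhibit two DGAs $A$ and $B$ that are not quasi-isomorphic but whose underlying $\MU$-algebra structures agree. Since the root-adjunction DGAs $S_{2nl}^p\bigl[\sqrt[l]{x_{2nl}}\bigr]$ from Theorem~\ref{ThmC} are pairwise non-$\MU$-algebra-equivalent by the remark following the theorem, they cannot themselves witness the conjecture, and so a new input is required. Equivalently, the goal is to show that the forgetful functor $\Alg_\Z(\Sp) \to \Alg_{\MU}(\Sp)$ is not conservative on DGAs with, say, polynomial homology $\fp[x_{2n}]$.

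The most structured route I would pursue is obstruction theory relative to the map $\MU \to H\Z$. Fix a candidate DGA $R$, for instance the formal $\fp[x_{2n}]$ with $n \geq p-1$, where Theorem~\ref{ThmB} already ensures topological formality, and study the moduli of DGA structures compatible with the $\MU$-algebra underlying $R$. This moduli sits as the homotopy fiber of the forgetful map on moduli spaces of DGAs and $\MU$-algebras, and its $\pi_0$ (modulo the action of $\MU$-algebra self-equivalences of $R$) detects exactly the DGAs that witness the conjecture. The controlling obstructions live in an André--Quillen cohomology computation for $R$ viewed as a $H\Z$-algebra refinement of its $\MU$-algebra structure; the concrete task is to produce a non-trivial class therein and realize it as a genuine DGA deformation whose image in $\Alg_{\MU}(\Sp)$ is split.

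A complementary line of attack is to construct a new DGA directly, as an $\MU$-algebra analogue of the quotient $\Z \sslash p$: form a DGA quotient whose defining null-homotopy data is canonical only after $\MU$-linearization. Showing that such a quotient is $\MU$-algebra-formal but DGA-non-formal would immediately produce the desired pair. This requires a delicate comparison between obstructions supported in $\pi_*H\Z$ and those supported in $\pi_*\MU$, exploiting the fact that $\pi_*\MU$ is polynomial on even-degree generators while $\pi_*H\Z$ is concentrated in degree zero, so that some homotopy refinements available $\MU$-linearly may be absent $H\Z$-linearly.

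The main obstacle, and the reason the conjecture is non-trivial, is that all currently known constructions of DGAs with polynomial homology are sharp enough that their $\MU$-algebra structure already distinguishes them; even the subtle modification of adjoining a root of the polynomial generator leaves a detectable trace at the $\MU$-algebra level. Producing a distinction visible to quasi-isomorphism but \emph{invisible} after base change along $\MU \to H\Z$ appears to require genuinely new input, perhaps from power operations on $\MU$-algebras or from the non-connectedness of the space of DGA lifts of a fixed $\MU$-algebra, and the conjecture likely resists a direct application of the root-adjunction and quotient techniques developed in this paper.
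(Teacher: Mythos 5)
The statement you were asked about is a \emph{conjecture}, and the paper itself flags it as such (``The following is still open''); there is no proof in the paper for you to match. Your proposal correctly recognizes this: you do not claim a proof, you explain why the root-adjunction families $S_{2nl}^m\bigl[\sqrt[l]{x_{2nl}}\bigr]$ cannot witness it (they are already distinguished as $\MU$-algebras, per the remark after Theorem~\ref{ThmC} and Proposition~\ref{prop infty many dgas up to topological equiv}), and you sketch two plausible directions (deformation-theoretic analysis of the fiber of $\Alg_\Z(\Sp)\to\Alg_{\MU}(\Sp)$ over a fixed $\MU$-algebra, and an $\MU$-linear analogue of the quotient construction $\Z\sslash p$). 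That is the appropriate response to an open problem, and your assessment of why the techniques in this paper fall short is accurate: Corollary~\ref{coro mu formality is formality} shows that $\MU$-algebra formality already implies DGA formality for the homology rings considered here, so any counterexample must either have a different homology ring or involve two non-formal DGAs whose difference is erased $\MU$-linearly; nothing in the paper constructs such a pair. One small caution: your André--Quillen framing is reasonable in outline, but one should be careful that the relevant deformation problem is for $\bE_1$-structures (so the controlling object is a version of topological Hochschild cohomology relative to $\MU$ versus relative to $\Z$, not commutative André--Quillen cohomology), and the comparison map between the two is exactly what the paper computes in special cases (e.g.\ Corollary~\ref{coro technical map of hh cohomology}) without finding the needed collapse.
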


Also, we do not know whether every DGA with homology $\Z/p^s[x_{2k}]$ is equivalent to one of the form $S_{2nl}^{p^s}\big[\sqrt[l]{x_{2nl}}\big]$. In particular, Theorem~\ref{ThmC} is not a complete classification.

\begin{ques}
    Is every DGA with homology $\z/p^s[x_{2n}]$ ($n>0$) quasi-isomorphic to  a DGA of the form $S_{2nl}^{p^s}[\sqrt[l]{x_{2nl}}]$ for some $l$?
\end{ques} 

However, there is one situation in which we can say something in this direction:
\begin{introthm}[Uniqueness]\label{ThmD}
    Let $n>0$ and $p$ be a prime. The DGA $S_{2n}^p$ is the unique DGA with homology $\fp[x_{2n}]$ and non-formal $2n$-Postnikov section.
\end{introthm}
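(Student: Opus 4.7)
The argument has two main steps: first, identify the $2n$-Postnikov section of $A$ using the Dugger--Shipley classification; second, upgrade this identification to a quasi-isomorphism $A \simeq S_{2n}^p$ using the inductive construction of $S_{2n}^p$.

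For Step 1, the homology of $\tau_{\leq 2n}A$ is concentrated in degrees $0$ and $2n$, with $\pi_0 = \fp$ and $\pi_{2n} = \fp \cdot x_{2n}$. The product $x_{2n} \cdot x_{2n}$ lands in degree $4n > 2n$, which has been truncated away, so the algebra structure on $H_*(\tau_{\leq 2n}A)$ is the exterior algebra $\Lambda_{\fp}[x_{2n}]$. By the Dugger--Shipley classification recalled in the introduction, for even $2n>0$ there are exactly two quasi-isomorphism classes of DGAs with this homology: the formal one and a unique non-formal one. Since $\tau_{\leq 2n}A$ is non-formal by hypothesis, $\tau_{\leq 2n}A \simeq \tau_{\leq 2n}S_{2n}^p$.

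For Step 2, I would use the inductive construction of $S_{2n}^p$ (given in the body of the paper) to promote the equivalence from Step 1 to a quasi-isomorphism $A \simeq S_{2n}^p$. Concretely, $S_{2n}^p$ is built from its $2n$-Postnikov section in a universal manner, so one should be able to construct a comparison DGA map $S_{2n}^p \to A$ (or $A \to S_{2n}^p$) lifting the Step 1 equivalence. Checking that this map is a quasi-isomorphism then reduces to confirming it is an isomorphism on homology, which follows because both sides have polynomial homology on a single generator $x_{2n}$ and the lift matches the generators in degree $2n$ by construction.

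The main obstacle is constructing the lift in Step 2. This is an obstruction-theoretic problem: the obstructions to extending the Step 1 equivalence further up the Postnikov tower of $A$ live in appropriate cohomology groups (of André--Quillen or MacLane type). The crux is to show that the non-trivial $k$-invariant encoded in the non-formal $2n$-Postnikov section rigidifies the higher $k$-invariants of $A$, so that the obstructions either vanish or are uniquely determined; in effect, the single non-trivial piece of Postnikov data in degree $2n$ forces all further extension data to agree with that of $S_{2n}^p$. This should directly generalize the strategy of \cite{bayindir2019dgaswithpolynomial}, which handled the case $n=p-1$, with the inductive construction of $S_{2n}^p$ providing the clean framework in which the argument extends to arbitrary $n>0$.
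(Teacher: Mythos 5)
Your Step 1 is correct and coincides with the paper's (Corollary \ref{theo later uniquely defined nonformal dgas} reduces to Theorem \ref{thm:main-theorem} via precisely the uniqueness of the non-formal DGA with homology $\Lambda_{\fp}[x_{2n}]$).

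Step 2, however, has a genuine gap, and you acknowledge it yourself: constructing the lift is ``the main obstacle'' and you do not resolve it. The obstruction-theoretic framing you propose (obstructions in Andr\'e--Quillen or MacLane cohomology, rigidification of $k$-invariants) is not carried out, and it also misreads how $S_{2n}^p$ is built: $S_{2n}^p$ is \emph{not} constructed ``from its $2n$-Postnikov section in a universal manner.'' Rather, it is built from $\Z$ by an inductive pushout (Construction \ref{cons of the dgas s2km}) along free DGAs on \emph{odd-degree} generators, via the sequence $\Z \to S_2^p \to S_4^p \to \cdots \to S_{2n}^p$. The paper's mechanism exploits this odd cell decomposition against the fact that $\pis A$ is concentrated in even degrees: Lemma \ref{lemm mapping spaces for pushout dgas} and Corollary \ref{coro s2k maps are unique up to homotopy} show that DGA maps $S_{2l}^p \to A$ are unique up to homotopy whenever they exist, and that they extend to $S_{2l+2}^p \to A$ exactly when $x_{2l}$ maps to zero. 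The centralizer-based formality criterion (Corollary \ref{cor:map-implies-formal}, resting on the evenness of $\hh_\Z^*(\fp,\fp)$) then governs exactly how far one can extend: $x_{2l} \mapsto 0$ for all $l<n$ because $\tau_{\leq 2l}A$ is formal, and $x_{2n}$ must hit a nonzero element because otherwise one would produce a map $\fp \to \tau_{\leq 2n}A$, contradicting non-formality. This is packaged as Theorem \ref{theo maps from snk to dgas with polynomial homology}, from which Theorem \ref{thm:main-theorem} follows by observing that the resulting map $S_{2n}^p \to A$ hits a generator in degree $2n$ and therefore is a $\pi_*$-isomorphism by multiplicativity.

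In short: you correctly identify that the proof should produce a comparison map $S_{2n}^p \to A$ and that the inductive construction of $S_{2n}^p$ is the tool, but you neither construct the map nor identify the actual reasons (odd cells versus even homology, plus the Hochschild-cohomological formality criterion) that force the map to exist and to be unique. Citing the strategy of \cite{bayindir2019dgaswithpolynomial} without explaining how it generalizes leaves the argument incomplete; in fact the present paper replaces the Postnikov-tower obstruction analysis of that reference with the cleaner cell-attachment argument just described.
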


This result generalizes the previously mentioned earlier work of the first author \cite{bayindir2019dgaswithpolynomial} which treated the case $n=p-1$. 

\subsection{Proof Ingredients}\label{sec:proof-ingredients}
We now spell out the basic constructions and ideas that go into the proofs of the above results. After this, we end the introduction with a number of applications.

As indicated earlier, our results build on the construction of certain DGAs $S_{2n}^m$ whose homology is $\Z/m[x_{2n}]$. We briefly explain this construction.
For the rest of this paper, we denote the homology of a DGA $A$ by $\pis(A)$, as it is also the homotopy of the underlying ring spectrum.
\begin{nota}
Let $A$ be a DGA and $x\in \pi_k(A)$. We define $A\sslash x$ as the  pushout $\Z \amalg_{\Z[X_k]} A$ in the $\infty$-category of DGAs\footnote{I.e.\ in more classical terminology a homotopy pushout of DGAs.} 
where $\Z[X_k]$ is the free DGA on a generator of degree $k$ which is sent to $0$ in $\Z$ and to $x$ in $A$.
\end{nota}
Then we set $S_2^m := \Z\sslash m$ which has homology $\Z/m[x_2]$ as indicated above and inductively define
 \[S_{2n}^{m} := S_{2n-2}^m\sslash x_{2n-2}.\]
 Part of this definition is, of course, to show that indeed $\pis(S_{2n}^m) =\Z/m[x_{2n}]$. We then obtain a sequence of DGAs
 \[ S_2^m \to S_4^m \to \dots \to S_{2n}^m \to \dots \]
 whose colimit is $\Z/m$.
By construction, we therefore obtain an odd cell decomposition of each $S_{2n}^m$ and consequently also of $\colim_n S_{2n}^m \simeq \z/m$ (the reader may want to contrast this with the Hopkins--Mahowald theorem stating that $\fp$ is obtained from $\sph$ by attaching a single $\bE_2$-cell in dimension $1$). This gives obstructions to the existence of maps $S_{2n}^m \to A$ for another DGA $A$. If $\pis(A)$ is concentrated in even degrees, this also implies that if there is a map $S_{2n}^m\to A$ (or $\z/m\to A$) then it is unique up to homotopy.  We then show that if $A$ has homology $\Z/m[x_{2n}]/x_{2n}^k$, then there is a map $S_{2l}^m \to A$ carrying the generator $x_{2l}$ to a non-trivial element in $\pi_{2l}(A)$ if and only if $l$ is the smallest integer such that $\tau_{\leq 2l}(A)$ is not formal (Theorem \ref{theo maps from snk to dgas with polynomial homology}). Theorem~\ref{ThmD} is an immediate consequence of this result.

For the proof of Theorem \ref{ThmA} (1), given a DGA map $\z/m\to A$, we would like to extend it to an equivalence $\z/m\otimes_{\z}\z[x_{2n}]\xrightarrow{\simeq} A$. If these were classical associative rings, such an extension exists if there is a map $\z[x_{2n}]\to A$ whose image commutes with the image of $\z/m$ in $A$ (which is, of course, automatic). This idea also applies to DGAs (and ring spectra) through the theory of centralizers \`a la Lurie; in the case at hand the centralizers identify with (topological) Hochschild cohomology. We then compute enough about the centralizer of the given map $\z/m\to A$ to run the above argument and obtain Theorem \ref{ThmA} (1). 

Moreover, we compute enough about the Hochschild cohomology of $S_{2n}^m$ (i.e.\ the centralizer of the identity of $S_{2n}^m$) as to construct a $\z[X_{2n}]$-algebra structure on $S_{2n}^m$ where $X_{2n}$ acts via $x_{2n}$ and then define for $l\geq 1$
\[S_{2nl}^m[\sqrt[l]{x_{2nl}}] := S_{2nl}^m \otimes_{\z[X_{2nl}]} \z[X_{2n}]\]
following \cite{ausoni2022adjroot} where $\z[X_{2nl}] \to \z[X_{2n}]$ carries $X_{2nl}$ to $X_{2n}^l$; this is then a DGA with homology $\z/m[x_{2n}]$. By considering the maps they receive from $S_{2nl}^m$, we deduce $S_{2nl}^m[\sqrt[l]{x_{2nl}}] \not \simeq S_{2nl'}^m[\sqrt[l']{x_{2nl'}}]$ for $l \neq l'$ which gives  Theorem \ref{ThmC} (1).

We then show that $p=0$ in the Hochschild cohomology of $S_{2p-2}^p$ and deduce from the Hopkins-Mahowald theorem mentioned earlier that there is a map of ring spectra $\fp \to S_{2p-2}^p$. By our earlier results, this implies the topological formality of $S_{2p-2}^p$. For a DGA $A$ with homology $\fp[x_{2n}]$ ($n>0$) and formal $(2p-4)$-Postnikov truncation, the odd cell decomposition of $S_{2p-2}^p$ gives a map $S_{2p-2}^p\to A$ providing the desired map $\fp\to A$ of ring spectra for Theorem \ref{ThmA} (2). Theorem \ref{ThmB} (in the case of odd degree generators) is obtained similarly by  analyzing the centralizer of the map $S_{2p-2}^p \to A$ itself.
%constructing a map $S_{2p-2}^p \to A$ using the odd cell decomposition of $S_{2p-2}^p$ and analyzing its centralizer. 
\newline

We finish this introduction with a number of applications of the aforementioned classification results. 

\subsection{Exotic DG-enhancements} \label{sec:intro:dg-enhancements}
Triangulated categories are ubiquitous in many areas such as representation theory, homotopy theory, and algebraic geometry. However, often it is advantaguous (or necessary) to enhance a triangulated category with a higher categorical structure; classically these arise in the form of DG-enhancements and after the work of Lurie in the form of stable $\infty$-categories. In this language, a DG-enhancement amounts to equipping a stable $\infty$-categorical lift with a $\z$-linear structure. It is then natural to ask whether a given triangulated category admits an enhancement, and if so, how many. For instance, in \cite{MSS, RvdB} triangulated categories without enhancements are constructed, and in \cite{schlichting2002kthryoftriangulated,dugger2009curious,rizzardo2019nonuniqueenhancements} examples of triangulated categories with non-unique DG-enhancements are constructed.

Here, we obtain (to our knowledge) the first example of a  triangulated category with infinitely many distinct DG-enhancements. 
Let  $F_{2n}^p(l)$ denote the localization $S_{2nl}^p[\sqrt[l]{x_{2nl}}][x_{2nl}^{-1}]$; a DGA with homology $\fp[x_{2n}^{\pm1}]$. As an application of Theorem~\ref{ThmC}, we obtain:
\begin{introcor}\label{CorE}
The collection 
\[\{\Mod(F_{2n}^p(l))\}_{l\geq \frac{p-1}{n}}\] 
consists of pairwise distinct $\Z$-linear structures on the stable $\infty$-category $\Mod(\fp[x_{2n}^{\pm1}])$. In particular, the triangulated category $\ho(\Mod(\fp[x_{2n}^{\pm1}]))$ admits infinitely many pairwise distinct DG-enhancements.
\end{introcor}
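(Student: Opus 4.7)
The plan is to reduce the statement to Theorem~\ref{ThmC}(1). First, Theorem~\ref{ThmC}(2) yields that for $l \geq \frac{p-1}{n}$ the underlying ring spectrum of $S_{2nl}^p[\sqrt[l]{x_{2nl}}]$ is $\fp[x_{2n}]$, so after inverting $x_{2nl} = x_{2n}^l$ we obtain an equivalence of ring spectra $F_{2n}^p(l) \simeq \fp[x_{2n}^{\pm 1}]$. In particular there is an equivalence of underlying stable $\infty$-categories $\Mod(F_{2n}^p(l)) \simeq \Mod(\fp[x_{2n}^{\pm1}])$, so the $\Mod(F_{2n}^p(l))$ are indeed $\Z$-linear structures on a common stable $\infty$-category, and the distinctness statement reduces to the claim that $\Mod(F_{2n}^p(l)) \not\simeq \Mod(F_{2n}^p(l'))$ as $\Z$-linear stable $\infty$-categories whenever $l \neq l'$.

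Suppose for contradiction that $\Phi \colon \Mod(F_{2n}^p(l)) \xrightarrow{\simeq} \Mod(F_{2n}^p(l'))$ is a $\Z$-linear equivalence. Then $P := \Phi(F_{2n}^p(l))$ is a compact generator of $\Mod(F_{2n}^p(l'))$ whose endomorphism $\bE_1$-$\Z$-algebra is $F_{2n}^p(l)$. Because $\pi_* F_{2n}^p(l') = \fp[x_{2n}^{\pm 1}]$ is a graded field, every graded module is free, and lifting a homogeneous basis of $\pi_* P$ yields an equivalence $P \simeq \bigoplus_{i=1}^N \Sigma^{k_i} F_{2n}^p(l')$ with $N$ finite by compactness of $P$. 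The endomorphism DGA of such a $P$ has $\pi_0$ of $\fp$-dimension at least $N$ (one contribution from each diagonal hom-spectrum $\Map(\Sigma^{k_i} F_{2n}^p(l'), \Sigma^{k_i} F_{2n}^p(l'))$), while $\pi_0 F_{2n}^p(l) \cong \fp$ is one-dimensional. This forces $N = 1$, so $P \simeq \Sigma^k F_{2n}^p(l')$ and its endomorphism DGA is $F_{2n}^p(l')$, yielding $F_{2n}^p(l) \simeq F_{2n}^p(l')$ as DGAs.

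To conclude, apply the lax symmetric monoidal connective cover functor $\tau_{\geq 0}$ to this equivalence to obtain an equivalence $\tau_{\geq 0} F_{2n}^p(l) \simeq \tau_{\geq 0} F_{2n}^p(l')$ of $\bE_1$-$\Z$-algebras. The canonical localization map $S_{2nl}^p[\sqrt[l]{x_{2nl}}] \to F_{2n}^p(l)$ is an isomorphism on $\pi_{\geq 0}$ from a connective source, hence identifies $S_{2nl}^p[\sqrt[l]{x_{2nl}}]$ with $\tau_{\geq 0} F_{2n}^p(l)$; so we obtain a DGA equivalence $S_{2nl}^p[\sqrt[l]{x_{2nl}}] \simeq S_{2nl'}^p[\sqrt[l']{x_{2nl'}}]$, contradicting Theorem~\ref{ThmC}(1) whenever $l \neq l'$. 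The main obstacle is the Morita-theoretic step in the second paragraph: ruling out non-trivial compact generators of $\Mod(F_{2n}^p(l'))$. The graded-field structure of $\pi_* F_{2n}^p(l')$ is what makes this clean, since in its absence one would have to grapple with matrix algebras potentially arising as endomorphism DGAs of exotic compact generators.
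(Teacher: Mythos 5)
Your proof is correct and matches the paper's strategy closely. The Morita-theoretic step is identical to the paper's Proposition~\ref{prop:different-module-categories}: both proofs use that $\pis F_{2n}^p(l')$ is a graded field to decompose $\Phi(F_{2n}^p(l))$ as a sum of shifted copies of $F_{2n}^p(l')$, and then argue from the endomorphism DGA being $F_{2n}^p(l)$ (you make the $\dim_{\fp}\pi_0 \geq N$ count explicit, where the paper just says ``since the map is an equivalence'').

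The one genuine difference is the final step, after reducing to a DGA equivalence $F_{2n}^p(l)\simeq F_{2n}^p(l')$. The paper rules this out directly by comparing the two maps $S_{2nl}\to F_{2n}^p(l)\simeq F_{2n}^p(l')\leftarrow S_{2nl}$ and invoking Corollary~\ref{coro s2k maps are unique up to homotopy} (the right composite would have to send $x_{2nl}$ to zero for $l'>l$, contradicting uniqueness). You instead apply $\tau_{\geq 0}$, observe that the localization map exhibits $S_{2nl}^p[\sqrt[l]{x_{2nl}}]$ as the connective cover of $F_{2n}^p(l)$, and then quote Theorem~\ref{ThmC}(1) (i.e.\ Corollary~\ref{coro root adjs are distinct}). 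Both conclusions rest on the same underlying uniqueness statement for maps out of the $S_{2nl}$, so the routes are ultimately equivalent in content; your version has the small advantage of reducing to a result already in hand rather than re-running the argument, at the cost of the observation that $\tau_{\geq 0}F_{2n}^p(l)\simeq S_{2nl}^p[\sqrt[l]{x_{2nl}}]$, which is a correct but extra lemma.
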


\subsection{Classification of prime DG-fields} The fields $\fp$ and $\Q$ are prime fields: Any map of fields with codomain $\fp$ or $\Q$ is an isomorphism. 
We reflect this idea at the level of DGAs: 
We say a DGA is a \emph{DG-division ring} (DGDR) if every non-zero homogeneous element in its homology is a unit (we warn, however, that our definitions are different than the recent ones given in \cite{zimmermann2024differential}). For instance, $F_{2n}^p := F_{2n}^p(1) = S_{2n}^p[x_{2n}^{-1}]$ is a DGDR. Furthermore, we say a DGDR $D$ is \emph{prime} if every map of DGDRs with codomain $D$ is an equivalence.

\begin{introcor}\label{CorF}
The collection of all prime DGDRs is given by 
    \[ \{ \Q,\fp, F_{2n}^p \;|\; p \  \textrm{prime and } n>0 \}.\]
 Every DGDR receives a map from at least one of the prime DGDRs. Furthermore, every DGDR with even homology receives a map from exactly one of the prime DGDRs and this map is unique up to homotopy.
\end{introcor}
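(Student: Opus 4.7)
The plan is to classify DGDRs by their homology and, for each DGDR, identify the prime DGDR(s) admitting a map into it. For any DGDR $D$, the ring $\pi_0(D)$ is a field $k$, and the set of degrees where $\pi_*(D) \neq 0$ forms a subgroup of $\Z$ (closed under addition of degrees of units); hence $\pi_*(D)$ is either $k$ or $k[y_d^{\pm 1}]$ for some $d > 0$ (after possibly inverting the generator). This reduces the problem to a case analysis by characteristic, generator parity, and formality.

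For primality, I would check each candidate in turn. A DGDR map $D' \to H\ell$ with $\ell \in \{\Q, \fp\}$ restricts on $\pi_0$ to a ring map from a field into a prime field, forcing $\pi_0(D') = \ell$; and any unit in a nonzero degree of $\pi_*(D')$ would map to $0$ in $\pi_*(H\ell)$, forcing $1 = 0$. Hence $D' \simeq H\ell$. For $F_{2n}^p$, the key input is the identification of its connective cover with $S_{2n}^p$: a DGA map $H\fp \to F_{2n}^p$ would restrict to $H\fp \to S_{2n}^p$, which by Theorem~\ref{ThmA}(1) combined with Theorem~\ref{ThmD} is excluded. A DGDR $D'$ with homology $\fp[y_d^{\pm 1}]$ must send $y_d$ to a unit of $F_{2n}^p$, forcing $d = 2nk$ with $k \geq 1$. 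The case $k > 1$ is excluded by applying $\tau_{\leq 2n}\tau_{\geq 0}$ to the restriction $\tau_{\geq 0}(D') \to S_{2n}^p$, which yields a DGA map $H\fp \to \tau_{\leq 2n} S_{2n}^p$, contradicting the non-formality of $\tau_{\leq 2n} S_{2n}^p$; while $k = 1$ yields an isomorphism on $\pi_*$, hence an equivalence.

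For existence of a prime map to a DGDR $D$, I argue case by case. In characteristic zero, the initial map $\Z \to D$ extends uniquely to $\Q \to D$ since each prime is invertible in $\pi_0(D)$. In characteristic $p$ with $\pi_*(D) = k$, we have $D \simeq Hk$ and the composite $\fp \to k$ yields $\fp \to D$. In characteristic $p$ with $\pi_*(D) = k[y_{2n}^{\pm 1}]$ and $D$ formal, $\fp \to k \to \pi_*(D) \simeq D$ works. In characteristic $p$ with $D$ non-formal of this form, I let $2nm$ be the smallest degree where $\tau_{\leq 2nm}(\tau_{\geq 0}(D))$ is non-formal (well-defined because $D$ is non-formal, and forced to be a multiple of $2n$ by the grading). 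Theorem~\ref{theo maps from snk to dgas with polynomial homology} then supplies a map $S_{2nm}^p \to \tau_{\geq 0}(D)$ sending $x_{2nm}$ to the unit $y_{2n}^m$, and localizing yields $F_{2nm}^p \to D$. The odd-generator case is handled analogously using $y_d^2 \in \pi_{2d}(D)$ as an even-degree unit, establishing at least one prime map.

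For the even case, mutual exclusion of the prime follows from characteristic (for $\Q$ versus $\fp$) and formality considerations: $\fp \to D$ forces formality of $D$ via Theorem~\ref{ThmA}(1), whereas $F_{2nm}^p \to D$ forces non-formality by the characterization of maps from $S_{2l}^p$, and in the non-formal case $m$ is uniquely determined as the first non-formality index. Uniqueness of the map up to homotopy then follows by a centralizer/Hochschild-cohomology computation mirroring the proof of Theorem~\ref{ThmA}(1), exploiting the even concentration of $\pi_*(D)$ to ensure vanishing of the relevant obstruction groups. The main obstacle is expected to be the primality of $F_{2n}^p$: making the connective-cover identification and the truncation argument watertight requires careful application of the maps-from-$S_{2l}^p$ theorem together with the structural results on $S_{2n}^p$, in order to rule out all non-equivalence DGDR maps into $F_{2n}^p$.
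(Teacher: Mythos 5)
Your primality argument for $F_{2n}^p$ is essentially the paper's (Proposition \ref{prop f2n are prime}): use injectivity on homotopy, then truncate a putative map of DGDRs to produce a $\Z$\nobreakdash-algebra map $\fp\to\tau_{\leq 2n}S_{2n}^p$, contradicting non-formality via Corollary~\ref{cor:map-implies-formal}. The issues lie in the existence and uniqueness parts, and stem from a structural over-restriction at the outset.

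\textbf{Classification of $\pi_*(D)$.} For a general DGDR, $\pi_0(D)$ is only a division ring (not necessarily a field), and $\pi_*(D)$ is a graded division ring which is in general a \emph{twisted} Laurent extension $\pi_0(D)[y^{\pm 1};\sigma]$, not a commutative polynomial ring over a field. The reduction ``$\pi_*(D)$ is $k$ or $k[y_d^{\pm 1}]$ for a field $k$'' is therefore not valid in the generality the statement requires.

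\textbf{The non-formal existence case.} You invoke Theorem~\ref{theo maps from snk to dgas with polynomial homology} on $\tau_{\geq 0}(D)$, but that theorem is stated (and proved) for DGAs whose homology is $\Z/m[x_{2n}]/x_{2n}^k$, i.e.\ $\pi_0 = \Z/m$; similarly Theorem~\ref{ThmA}(1)/Corollary~\ref{cor:map-implies-formal} require $\pi_*(A)\cong\Z/m[x_{2n}]/x_{2n}^k$. A DGDR of characteristic $p$ need not have $\pi_0 = \fp$ — e.g.\ $\pi_0(D)$ could be $\fpn$, or a non-commutative division ring — so these results do not apply as cited. The same gap recurs in your uniqueness argument (``$\fp\to D$ forces formality via Theorem~\ref{ThmA}(1)'', ``$F^p_{2nm}\to D$ forces non-formality by the characterization of maps from $S^p_{2l}$''), since both invocations presuppose $\fp$-coefficients on the homology.

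\textbf{How the paper sidesteps this.} For existence, the paper never classifies $\pi_*(D)$ nor invokes Theorem~\ref{theo maps from snk to dgas with polynomial homology}. It runs the cell-by-cell extension process directly on $D$: since $p=0$ in $\pi_0(D)$ there is a map $S_2^p\to D$, and by Remark~\ref{remark:extensions-to-S2n} one inductively either finds that $x_{2n}$ goes to a nonzero (hence invertible) element, producing $F_{2n}^p\to D$ by the universal property of localization, or extends to $S_{2n+2}^p\to D$; if the process never terminates, the colimit gives $\fp\simeq\colim_n S_{2n}^p\to D$. This argument imposes no hypothesis on $\pi_*(D)$ beyond characteristic $p$. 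For uniqueness in the even case, the paper uses Corollary~\ref{coro s2k maps are unique up to homotopy} (uniqueness of maps out of $S_{2n}^p$ into even targets, a cell-decomposition/pullback argument on mapping spaces, Lemma~\ref{lemm mapping spaces for pushout dgas}) together with Corollary~\ref{prop map of dgas fp to a}, not a fresh centralizer computation. Your centralizer sketch is aimed at a different kind of statement (formality of the target), whereas what is needed is uniqueness of the \emph{map}, and the relevant tool is already Corollary~\ref{coro s2k maps are unique up to homotopy}.

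To repair your argument you would either need to prove the analogues of Theorems~\ref{ThmA}(1) and \ref{theo maps from snk to dgas with polynomial homology} with coefficients in an arbitrary division ring of characteristic $p$, or abandon the classification-first strategy and argue as the paper does, directly along the tower $S_2^p\to S_4^p\to\cdots$.
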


\subsection{Applications to algebraic $K$-theory}
As a first application, we obtain a computation of the algebraic $K$-theory of the mixed characteristic coordinate axes $\Z[x]/px = \Z \times_{\fp} \fp[x]$:
\begin{introcor}\label{CorG}
    Let $p$ be a prime. Then we have
    \[ K_n(\Z[x]/px) = \begin{cases} K_n(\Z) \oplus \mathbb{W}_{\frac{n}{2}}(\fp) & \text{ if $n$ is even } \\ K_n(\Z) & \text{ if $n$ is odd} \end{cases}\]
    where $\mathbb{W}_r(\fp)$ denotes the ring of big Witt vectors of length $r$.
\end{introcor}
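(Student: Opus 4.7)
The plan is to apply the $K$-theoretic pullback theorem of Land and Tamme \cite{land2023kthrypushouts} to the Milnor square
\[ \Z[x]/px = \Z \times_{\fp} \fp[x], \]
which yields a cartesian square of connective $K$-theory spectra
\[
\begin{tikzcd}
K(\Z[x]/px) \ar[r] \ar[d] & K(\Z) \ar[d] \\
K(\fp[x]) \ar[r] & K(W),
\end{tikzcd}
\]
where $W$ is the associated $\odot$-ring of the Milnor square. The strategy is then to identify $W$ with an explicit formal DGA using the classification results of this paper, to compute $K(W)$ by trace methods, and to extract the answer from the resulting long exact sequence using Quillen's computation of $K_*(\Z)$ together with the identification $K_*(\fp[x]) = K_*(\fp)$ (by $\mathbb{A}^1$-invariance for the regular ring $\fp$).

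For the identification of $W$, I would use the fact, recalled in the paper, that the $\odot$-ring of the simpler Milnor square $\Z = \Z \times_\fp \Z$ is the DGA $\Z \sslash p$, which has homology $\fp[x_2]$. The Milnor square for $\Z[x]/px$ receives a canonical map from the square for $\Z \times_\fp \Z$, and base change for the $\odot$-construction along this map identifies $W$ as a DGA with homology $\fp[x_2] \otimes_\fp \fp[x] \cong \fp[x, x_2]$. Since $W$ is naturally a $\Z[x]/px$-algebra and hence receives a DGA map from $\Z$, Theorem~\ref{ThmA}(1), together with its truncated-polynomial variant noted in the remark after Theorem~\ref{ThmB}, applies and shows that $W$ is formal, i.e.\ $W \simeq \fp[x, x_2]$ as ring spectra.

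For the computation of $K(W)$, $W$ is an $\fp$-algebra and the map $W \to \fp$ killing $x$ and $x_2$ can be analyzed via its effect on $\TC$; standard cyclotomic calculations, of the type carried out by Hesselholt--Madsen for truncated polynomial $\fp$-algebras, produce big Witt vectors $\mathbb{W}_{n/2}(\fp)$ in degree $2n$. Plugging this into the long exact sequence of the cartesian square above and verifying that the map $K(\Z) \to K(W)$ lands in the part coming from $K(\fp[x])$ in the relevant degrees yields the direct-sum decomposition of the statement. The main technical obstacle is precisely the $\TC$-computation, namely extracting the explicit big Witt vector expression in the correct degrees and checking the splitting of the long exact sequence; once these are in hand, the pullback theorem assembles the rest automatically.
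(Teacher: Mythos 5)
Your overall architecture—Land–Tamme motivic pullback square, formality of the $\odot$-ring via Theorem~\ref{ThmA}, then deduce the answer from the long exact sequence using $K(\fp[x])\simeq K(\fp)$ and a splitting—is exactly the paper's approach. However, there is a concrete error in the identification of the $\odot$-ring.

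You claim that base-changing the $\odot$-ring $\Z\sslash p$ (for $\Z\times_\fp\Z$) along the inclusion of Milnor squares gives a DGA with homology $\fp[x_2]\otimes_\fp\fp[x]\cong\fp[x,x_2]$. This is not correct: the actual $\odot$-ring has homology $\fp[t_2]$, polynomial on a \emph{single} degree-$2$ generator, with no degree-zero polynomial variable surviving. The right way to compute it is to apply \cite[Lemma 4.30]{land2023kthrypushouts} to $R=\Z[X]$ with the regular sequence $(X,p)$, which gives $\pi_*(\odot)\cong R/(X,p)[t_2]=\fp[t_2]$; equivalently, one base-changes the universal coordinate-axes square $\eqref{eq coordinate axis mtvc pullback}$ along $\Z[x,y]\to\Z[X]$, $x\mapsto X$, $y\mapsto p$, obtaining $\odot\simeq\Z[t_2]\otimes_{\Z[x,y]}\Z[X]$, in which $X$ is identified with the image of $x$, which is $0$ in $\Z[t_2]$. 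Your map of Milnor squares does exist, but the $\odot$-construction is not a base change along it in the way your tensor-product computation assumes, and the resulting homology is genuinely smaller than what you wrote. Note also that your proposed use of Theorem~\ref{ThmA}(1) (or its truncated-polynomial variant) would not even apply to $\fp[x,x_2]$ since that theorem addresses polynomial algebras on a \emph{single} generator; with the correct homology $\fp[t_2]$, Theorem~\ref{ThmA}(1) applies directly and cleanly, using the evident map $\Z/p\to\odot$.

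Once the homology is corrected, the rest of your outline is sound: the $\odot$-ring is the formal DGA $\fp[t_2]$, whose $K$-theory is known, and the splitting $K(\Z[x]/px)\simeq K(\Z)\oplus\fib\big(K(\fp)\to K(\fp[t_2])\big)$ follows because the map $K(\Z)\to K(\fp[t_2])$ factors through the split-injective map $K(\fp)\to K(\fp[t_2])$. You propose to recompute $K(\fp[t_2])$ from scratch via cyclotomic/Hesselholt--Madsen-style methods; the paper instead cites \cite{BM} (equivalently \cite[Example 4.29]{land2023kthrypushouts}), where $K_*(\fp[t_2])$ is computed in terms of big Witt vectors (using the identification of $\fp[t_2]$ with $\thh(\fp)$ as ring spectra). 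That reference is the natural thing to invoke rather than redoing the trace computation, and it is not a Hesselholt--Madsen truncated-polynomial calculation in the classical sense, so your description of that step is somewhat off even though the output is of the expected Witt-vector form.
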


Indeed, a special case of \cite[Lemma 4.30]{land2023kthrypushouts} gives a pullback diagram
\[\begin{tikzcd}
    K(\z[x]/px) \ar[r] \ar[d] & K(\z) \ar[d] \\
    K(\fp[x]) \ar[r] & K(\odot)
    \end{tikzcd}\]
with $\pis(\odot) \cong \fp[t_2]$. By construction, there is a map $\fp \to \odot$ and by Theorem~\ref{ThmA}, we deduce that $\odot$ is the formal DGA $\fp[t_2]$. The $K$-theory of $\fp[t_2]$ is known due to \cite{BM} or independently due to \cite[Example 4.29]{land2023kthrypushouts} giving the above corollary.

As another application, we have the following. Let $l\geq 1$ and $f \in \Z[x]$ be a polynomial in $x^l$ with constant term $p$, e.g.\ $f=x^l-p$.
\begin{introcor}
    If $l\geq p-1$ in the situation above, there is a pullback diagram
    \[\begin{tikzcd}
        K(\Z[x]/xf) \ar[r] \ar[d] & K(\Z) \ar[d] \\
        K(\Z[x]/f) \ar[r] & K(\fp[t_2])
    \end{tikzcd}\]
\end{introcor}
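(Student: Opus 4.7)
The plan is to apply \cite[Lemma~4.30]{land2023kthrypushouts} to the Milnor square of ordinary rings
\[ \Z[x]/xf \;\cong\; \Z \times_\fp \Z[x]/f, \]
in which the two surjections onto $\fp$ are the mod-$p$ reduction $\Z \twoheadrightarrow \fp$ and the quotient by $(x)$, namely $\Z[x]/f \twoheadrightarrow \Z[x]/(x,f) = \fp$; this Milnor presentation uses $f \equiv p \pmod x$ and $\gcd(x,f) = 1$ in $\Z[x]$. It yields a cartesian square
\[
\begin{tikzcd}
K(\Z[x]/xf) \ar[r] \ar[d] & K(\Z) \ar[d] \\
K(\Z[x]/f) \ar[r] & K(\odot),
\end{tikzcd}
\]
where $\odot := \Z \otimes_{\Z[x]/xf} \Z[x]/f$ is the associated $\odot$-ring, an $\bE_\infty$-$\Z$-algebra (tensor products being derived throughout, per the paper's conventions). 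The corollary reduces to proving $\odot \simeq \fp[t_2]$ as ring spectra when $l \geq p-1$.

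I would first verify $\pi_*\odot \cong \fp[t_2]$ with $|t_2|=2$ by a direct Tor calculation. With $R = \Z[x]/xf$, the identities $\on{ann}_R(x) = (f)$ and $\on{ann}_R(f) = (x)$ yield a $2$-periodic free resolution of $\Z[x]/f$ over $R$ alternating multiplication by $x$ and $f$. Tensoring with $\Z = R/(x)$ turns $\cdot x$ into $0$ and $\cdot f$ into $\cdot p$ (since $f \equiv p \pmod x$); the homology of the resulting complex is $\fp$ in each even degree and $0$ in odd degrees, and the polynomial ring structure is forced for degree reasons. Importantly, this computation does not depend on $l$, so the $l$-dependence of $\odot$ lives entirely in its higher multiplicative structure.

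It remains to show that $\odot$ is topologically formal when $l \geq p-1$. Since $\pi_*\odot \cong \fp[x_{2n}]$ with $n=1$, Theorem~\ref{ThmA}(2) reduces the task to exhibiting a ring spectrum map $\fp \to \odot$. My plan is to produce one by comparison with the root-adjunction DGA $S_{2l}^p\bigl[\sqrt[l]{x_{2l}}\bigr]$ of Theorem~\ref{ThmC}: the relation $xp + x^{l+1}h(x^l) = 0$ holding in $\Z[x]/xf$, where $f = p + x^l h(x^l)$, supplies precisely the root datum needed to construct a ring spectrum map $S_{2l}^p\bigl[\sqrt[l]{x_{2l}}\bigr] \to \odot$. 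Concretely, one inductively lifts the unit $\Z \to \odot$ through the iterated $\sslash$-presentation $S_{2l}^p = \Z \sslash p \sslash x_2 \sslash \cdots \sslash x_{2l-2}$ of Section~\ref{sec:proof-ingredients} (obstructions at each stage vanish because $\pi_{\on{odd}}\odot = 0$), and then extends across the root adjunction using its universal property. A $\pi_*$-comparison shows that the resulting map is an equivalence, and Theorem~\ref{ThmC}(2), applied with $n=1$ and $l \geq (p-1)/n = p-1$, then yields $S_{2l}^p\bigl[\sqrt[l]{x_{2l}}\bigr] \simeq \fp[t_2]$ as ring spectra; composing with the unit produces the required map $\fp \to \odot$.

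The main obstacle is the construction of the map $S_{2l}^p\bigl[\sqrt[l]{x_{2l}}\bigr] \to \odot$ and the verification that it is a $\pi_*$-isomorphism: while the vanishing of $\pi_{\on{odd}}\odot$ kills the obstructions to the iterative $\sslash$-lifts, compatibility with the root adjunction must be arranged using the specific relation $xp = -x^{l+1}h(x^l)$ in $\Z[x]/xf$, and the hypothesis $l \geq p-1$ is exactly what Theorem~\ref{ThmC}(2) requires to finish the argument.
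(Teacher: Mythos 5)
Your Milnor-square setup and the $\pi_*$-computation of $\odot$ are correct and essentially match the paper's (the paper invokes \cite[Lemma~4.30]{land2023kthrypushouts} directly with $R=\Z[X]$ and the regular sequence $(X,f)$, but this amounts to the same square). The reduction, via Corollary~\ref{cor:map-implies-formal}, to producing a ring spectrum map $\fp\to\odot$ is also the right move. But the way you propose to produce that map has a genuine gap in two places.

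First, the claim that ``obstructions at each stage vanish because $\pi_{\mathrm{odd}}\odot=0$'' misidentifies where the obstructions live. By Remark~\ref{remark:extensions-to-S2n}, to extend a map $S_{2k}^p\to\odot$ to $S_{2k+2}^p=S_{2k}^p\sslash x_{2k}\to\odot$ one must have $x_{2k}\mapsto 0$ in $\pi_{2k}\odot\cong\fp$; this is an \emph{even} homotopy group, and there is no a priori reason it should vanish. Odd vanishing only gives uniqueness of extensions when they exist (Corollary~\ref{coro s2k maps are unique up to homotopy}), not existence. Equivalently, by Theorem~\ref{theo maps from snk to dgas with polynomial homology}, a map $S_{2l}^p\to\odot$ nontrivial on $\pi_{2l}$ exists if and only if $l$ is the first non-formality degree of $\odot$, and you have not determined this.

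Second, even granting the lifting, your conclusion $\odot\simeq S_{2l}^p\bigl[\sqrt[l]{x_{2l}}\bigr]$ as DGAs cannot be right in general. The hypothesis only requires $f$ to be a polynomial in $X^l$ with constant term $p$; taking $f=p$ (admissible for every $l$), Corollary~\ref{cor:first-example-formality} shows $\odot$ is the \emph{formal} DGA $\fp[t_2]$, while $S_{2l}^p\bigl[\sqrt[l]{x_{2l}}\bigr]$ is not formal (its $2l$-truncation is non-formal by Corollary~\ref{coro truncations of root adj to snm}). So the quasi-isomorphism type of $\odot$ depends on $f$, not just on $l$, and no fixed $S_{2l}^p\bigl[\sqrt[l]{x_{2l}}\bigr]$ can serve as a model. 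The attempt to ``extend across the root adjunction'' would also require a $\Z[X_2]$-algebra structure on $\odot$ in which $X_2$ acts by $t_2$; the ring-theoretic relation $xf=0$ does not supply one (indeed $X$ acts by $0$ on $\odot=\Z\otimes_{\Z[x]/xf}\Z[x]/f$).

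The paper avoids all of this with a grading trick: placing $X$ in weight $1$ equips $\odot$ with a $\Z/l$-grading in which $t_2$ has weight $1$, so $\gr_0(\odot)$ has homotopy $\fp[t_2^l]$, polynomial on a generator of degree $2l\geq 2p-2$. Theorem~\ref{ThmB} then yields a ring spectrum map $\fp\to\gr_0(\odot)$, which composed with the inclusion $\gr_0(\odot)\to\odot$ gives the required map, entirely sidestepping the question of the DGA structure of $\odot$ itself. You should replace the comparison with $S_{2l}^p\bigl[\sqrt[l]{x_{2l}}\bigr]$ by this weight-$0$ argument.
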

We prove this by showing that the DGA $\odot$ obtained from \cite[Lemma 4.30]{land2023kthrypushouts} in the present situation is topologically formal, as an application of Theorem~\ref{ThmB} using a grading trick involving the assumption $l\geq p-1$. We note that, a priori, neither of the two maps $\Z[x]/f \to \odot \leftarrow \Z$ factors through the unit $\fp \to \fp[t_2]$.

Finally, we have:
\begin{introcor}\label{coro I}
    For each $n\geq 1$, there is a pullback square:
    \[\begin{tikzcd}
    K(\z[C_{p^n}]) \ar[r] \ar[d] & K(\z[\zeta_{p^n}]) \ar[d]\\
    K(\z[C_{p^{n-1}}]) \ar[r] & K(\fp[t_2]).
\end{tikzcd}\]
\end{introcor}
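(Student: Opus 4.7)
The plan is to apply \cite[Lemma 4.30]{land2023kthrypushouts} to a Milnor square realizing $\z[C_{p^n}]$ as a pullback, and then to identify the resulting $\odot$-DGA with the formal algebra $\fp[t_2]$ via Theorem~\ref{ThmA}, in direct analogy with the proofs of the two preceding corollaries.

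For the first step, I would verify that
\[
\begin{tikzcd}
\z[C_{p^n}] \ar[r] \ar[d] & \z[\zeta_{p^n}] \ar[d]\\
\z[C_{p^{n-1}}] \ar[r] & \fp[C_{p^{n-1}}]
\end{tikzcd}
\]
is a classical Milnor square. This uses the factorization $x^{p^n}-1 = \Phi_{p^n}(x)\cdot(x^{p^{n-1}}-1)$ in $\z[x]$ together with the congruence $\Phi_{p^n}(x)\equiv p \pmod{x^{p^{n-1}}-1}$, from which both Cartesianness of the square and surjectivity of the bottom and right maps follow at once. Then \cite[Lemma 4.30]{land2023kthrypushouts} yields a K-theory pullback of the desired shape with $K(\odot_n)$ in place of $K(\fp[t_2])$, where $\odot_n$ denotes the $\odot$-DGA associated to the Milnor square.

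The remaining work, which is the heart of the proof, is to identify $\odot_n$ with $\fp[t_2]$ as a ring spectrum. For $n=1$ the bottom-right of the Milnor square is already $\fp$, and one computes $\pi_*\odot_1 \cong \fp[t_2]$ via the periodic two-term resolution of $\z[\zeta_p]$ as a $\z[C_p]$-module; this together with the natural ring map $\fp \to \odot_1$ supplied by the Milnor square and Theorem~\ref{ThmA}(2) yields topological formality of $\odot_1$, exactly as in the proof of Corollary~\ref{CorG}. For $n\geq 2$ the bottom-right is the nontrivial Artinian local ring $\fp[C_{p^{n-1}}]\cong \fp[y]/y^{p^{n-1}}$, making the identification of $\pi_*\odot_n$ with $\fp[t_2]$ the main obstacle: the key point is to show that the Land-Tamme $\odot$-construction effectively collapses the $\fp[C_{p^{n-1}}]$-structure on $\pi_0$ down to $\fp$. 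Once this computation is in place, Theorem~\ref{ThmA}(2) applied to the canonical ring map $\fp\to\odot_n$ gives $\odot_n\simeq\fp[t_2]$ as ring spectra, and the claimed K-theory pullback follows immediately from \cite[Lemma 4.30]{land2023kthrypushouts}.
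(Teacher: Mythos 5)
Your proposal follows the same overall template as the paper's proof: realize $\Z[C_{p^n}]$ as a pullback via a Milnor square, invoke \cite[Lemma 4.30]{land2023kthrypushouts} to get a motivic pullback square with the $\odot$-ring in the corner, and apply the paper's formality criteria to identify $\odot$ with $\fp[t_2]$. But your argument has a genuine gap at exactly the step you flag as the ``main obstacle.'' You correctly observe that for $n\geq 2$ the bottom-right corner of the Milnor square is $\fp[C_{p^{n-1}}]$, and then propose that the $\odot$-construction ``effectively collapses the $\fp[C_{p^{n-1}}]$-structure on $\pi_0$ down to $\fp$.'' It does not. By the formula $\pi_*(\odot)\cong R/(x,y)[t]$ recalled after \eqref{eq quotient motivic pullback square}, with $R=\Z[X]$, $x=X^{p^{n-1}}-1$, $y=\Phi_{p^n}(X)$, and using $\Phi_{p^n}(X)\equiv p \pmod{X^{p^{n-1}}-1}$, one gets $R/(x,y)=\fp[X]/(X^{p^{n-1}}-1)\cong\fp[C_{p^{n-1}}]$, so $\pi_0(\odot_n)\cong\fp[C_{p^{n-1}}]$. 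Since $\pi_0$ is an invariant, $\odot_n$ cannot be equivalent to $\fp[t_2]$ as a ring spectrum once $n\geq 2$, and the hypothesis of Corollary~\ref{cor:map-implies-formal} (homology of the form $\fp[x]/x^k$) fails for $\odot_n$. The phrase ``once this computation is in place'' therefore conceals an impossible step, not a routine one.

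You should also be aware that the paper's own argument (Corollary~\ref{coro motivic for second cyclotomic extensions} and its specialization Corollary~\ref{coro mtvc generalizing the rim square}) asserts ``$\pi_*\odot\cong\fp[t_2]$'' while the cited formula produces $\fp[C_{p^{k}}][t]$, and so appears to run into the same difficulty. The paper's mechanism for producing the ring map $\fp\to\odot_n$ is genuinely different from yours --- rather than computing $\pi_*\odot_n$ directly, it maps the $n=1$ $\odot$-ring, which really is $\fp[t_2]$, into the general $\odot_n$ via a commuting diagram of rings, and that step is sound. But having a map $\fp\to\odot_n$ of ring spectra does not change $\pi_*\odot_n$, so it does not place $\odot_n$ within the scope of Theorem~\ref{ThmA}(2). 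Unless one of us is misreading the $\odot$-construction, the corollary should be revisited for $n\geq 2$.
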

The case $n=1$ is \cite[Example 4.32]{land2023kthrypushouts} and independently due to Krause--Nikolaus - we do not reprove it here. Before applying $K$-theory, the square for $n=1$ maps to the square for general $n$; therefore, the $\odot$-ring for the case $n=1$ maps to the general one. In particular, for all $n$, we obtain a map $\fp \to \odot$ of ring spectra and deduce formality of $\odot$ by Theorem~\ref{ThmA} (2). 

\begin{rema}
    We remark that the above Corollaries about $K$-theory hold similarly for any localizing invariant $E$ of stable $\infty$-categories (most helpful if $E(\fp[t_2])$ has been computed).
\end{rema}

\begin{ackn}
    The authors would like to thank Joana Cirici and Oscar Randal-Williams  for useful discussions on this work.  The first author is  supported by the Beatriu de Pinós Programme (2023 BP 00043) of the Ministry of Research and Universities of the Government of Catalonia and both authors were supported by the Deutsche Forschungsgemeinschaft (DFG) grant ``Symmetrien in Topologie und Algebra" (527329998).
\end{ackn}

\begin{notationandconvention}

\begin{enumerate}
\item We work in the $\infty$-categorical setting, so all tensor products, maps, (co)limits, mapping spaces/spectra etc.\ are  derived. 
\item When we speak of DGAs, we shall always mean $\Z$-DGAs.
\item For a (commutative) ring $R$, there is a corresponding (commutative) ring spectrum characterized by the property that all of its homotopy groups are trivial except degree zero where it is given by $R$. This ring spectrum is often denoted by $HR$ but we drop $H$ from our notation and denote  $HR$ also by $R$. 
\item With this notation, for commutative $R$, the $\infty$-category $\Alg(R)$\footnote{The ordinary category of discrete $R$-algebras does not appear in this paper.} of $R$-algebra spectra is equivalent to the $\infty$-category of $R$-DGAs,  that is of differential graded $R$-algebras with quasi-isomorphisms formally inverted. We shall not distinguish between $R$-DGAs and $R$-algebra spectra for that reason. For $A$ an $R$-algebra spectrum, we also denote by $A$ its underlying ring spectrum. 
\item We denote the sphere spectrum by $\sph$, this is the monoidal unit of the $\infty$-category of spectra and hence, an $\sph$-algebra is the same thing as a ring spectrum.
\item For a DGA $A$, the homology ring of $A$ agrees with the homotopy ring $\pis A$ of the corresponding $\z$-algebra (or $\sph$-algebra). For this reason, we denote the homology of $A$ also by $\pis A$. 
\item We regard a graded ring as a DGA by equipping it with trivial differentials. DGAs which are equivalent to their homology (viewed as DGAs in the manner just mentioned) are called formal. 
\item For generators of homotopy rings (or homology rings), we will use subscripts to indicated the homotopical (or homological) degree.
\item The $k=\infty$ case of the truncated polynomial algebra $R[y]/y^{k}$ denotes the usual polynomial algebra $R[y]$.
\item Whenever we write $\z/m$, we assume $m>1$.

\end{enumerate}
\end{notationandconvention} 

\section{Quasi-isomorphism classes of DGAs with polynomial homology}\label{sect quasiiso classes of DGAs}

The first goal of this section is to construct the DGAs $S_{2n}^m$ mentioned in the introduction. To that end, we will first recall some basic results on the homology of $\Omega \CP^n$ which will be needed (\S\ref{sec:3.1}). Then we will construct the DGAs $S_{2n}^m$ and prove first basic properties about them (\S\ref{sec:3.2}). Upon proving our main formality/non-formality criteria for DGAs with polynomial homology (\S\ref{sec:3.3} and \S\ref{sect nonformality via maps from s guys}) and establishing the root adjunctions for $S_{2n}^m$, we prove Theorem \ref{ThmC} (1) in \S\ref{sec adj roots}.

\subsection{The homology of $\Omega\CP^k$}\label{sec:3.1}
To begin, let us recall the following well known results, starting with the fibre sequence
\[ S^1 \to S^{2n+1} \xrightarrow{p} \CP^n.\]
Note that it can be extended to the right once by the inclusion $\CP^n \to \CP^\infty$ and it can of course also be extended to the left to give a fibre sequence
\[ \Omega S^{2n+1} \xrightarrow{\Omega p} \Omega \CP^n \to S^1\]
in which the latter map induces an isomorphism on $\pi_1$ and identifies with the map $\Omega \CP^n \to \Omega \CP^\infty \simeq S^1$, showing that it is an $\bE_1$-map.

When $n=1$, we have $H_*(\Omega \CP^1;\Z) = H_*(\Omega S^2) \cong \Z[x_1]$ as $\Omega S^2$ is the free $\bE_1$-group on the pointed space $S^1$. For $n\geq 2$, since the above fibration sequence is on of $\bE_1$-spaces, the associated homological Serre spectral sequence is multiplicative. For degree reasons there are no differentials and no extension problems in this spectral sequence, and we obtain an isomorphism of rings
\[ H_*(\Omega \CP^n;\Z) \cong \Z[u_{2n}] \otimes_\Z \Lambda_\Z[e]\]
with $|e|=1$. By the same argument or by base-change, for any discrete commutative ring $R$ we thus obtain:

\begin{lemm}\label{lemma:pontryagin-ring-for-CPn}
For a discrete commutative ring $R$, there is an isomorphism of $R$-algebras 
\[ H_*(\Omega \CP^n;R) \cong \begin{cases} R[x_1] & \text{ for } n=1 \\ R[u_{2n}] \otimes_R \Lambda_R[e] & \text{ for } n \geq 2 .\end{cases}\]    
\end{lemm}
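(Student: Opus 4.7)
The approach is to bootstrap the general coefficient result from the already established $\Z$-coefficient computation by a base-change argument, as hinted at by the aside ``or by base-change'' in the paragraph above. The essential point is that in both cases the integral Pontryagin ring is degreewise free as a graded $\Z$-module: the polynomial algebra $\Z[x_1]$ appearing for $n=1$ is free on $\{x_1^i\}_{i\geq 0}$, while for $n\geq 2$ the algebra $\Z[u_{2n}]\otimes_\Z\Lambda_\Z[e]$ is free on $\{u_{2n}^i,\, u_{2n}^i e\}_{i\geq 0}$.

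Given this degreewise freeness, the universal coefficient theorem supplies an isomorphism of graded $R$-modules
\[H_*(\Omega\CP^n; R) \cong H_*(\Omega\CP^n;\Z)\otimes_\Z R,\]
because the relevant $\mathrm{Tor}$ terms vanish. This identifies $H_*(\Omega\CP^n; R)$ as a graded $R$-module with $R[x_1]$ for $n=1$ and with $R[u_{2n}]\otimes_R\Lambda_R[e]$ for $n\geq 2$, matching the stated answer on the level of $R$-modules.

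It remains to promote this to an isomorphism of $R$-algebras. The Pontryagin product on $H_*(X;R)$ is by definition the composition of the homology cross product
\[H_*(X;R)\otimes_R H_*(X;R)\to H_*(X\times X;R)\]
with the map induced by the $\bE_1$-multiplication $\mu\co X\times X\to X$. Since $H_*(\Omega\CP^n;\Z)$ is free, $H_*(\Omega\CP^n;R)$ is free (in particular flat) over $R$, so the Kunneth map is an isomorphism and is natural in the coefficient ring. Consequently the multiplicative structure on $H_*(\Omega\CP^n;R)$ is simply the base change of the one on $H_*(\Omega\CP^n;\Z)$, yielding the desired identification as $R$-algebras.

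The only nontrivial step is the last one, namely the verification that the universal coefficient isomorphism intertwines the Pontryagin products; but this is entirely formal once freeness of the integral Pontryagin ring (and hence the Kunneth isomorphism with $R$-coefficients) is in hand.
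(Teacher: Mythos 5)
Your argument is correct and fills in the ``by base-change'' alternative that the paper's surrounding text explicitly offers (the other option it mentions is to rerun the multiplicative Serre spectral sequence argument directly with $R$-coefficients). The two points you check --- degreewise freeness of the integral Pontryagin ring forcing the universal coefficient and K\"unneth maps to be isomorphisms, and naturality of the cross product in the coefficient ring making these isomorphisms multiplicative --- are exactly what makes the base-change claim go through.
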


By construction, we also find the following.
\begin{lemm}\label{lemma:1-truncation-CP}
For all $n\geq 1$, the $\bE_1$-map $\Omega \CP^n \to S^1$ induces an $R$-algebra map
\[ R[\Omega \CP^n] \to R[S^1]\]
which exhibits the target as the Postnikov $1$-truncation of the source.    
\end{lemm}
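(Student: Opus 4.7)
The plan is to verify that the induced map $\varphi\colon R[\Omega \CP^n] \to R[S^1]$ satisfies the universal property of Postnikov $1$-truncation in the $\infty$-category $\Alg(R)$ of $\bE_1$-$R$-algebras. By standard facts about the Postnikov $t$-structure on $\bE_1$-algebras (see \cite{lurie2016higher}), an object of $\Alg(R)$ is $n$-truncated if and only if its underlying $R$-module spectrum is, and the truncation functor in $\Alg(R)$ agrees with the spectrum-level truncation. Thus it suffices to check on homotopy groups that (i) $R[S^1]$ is $1$-truncated, and (ii) $\varphi$ induces isomorphisms on $\pi_0$ and $\pi_1$.

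For (i), since $S^1$ is a $1$-dimensional CW complex we have $\pi_i R[S^1] = H_i(S^1; R) = 0$ for $i \geq 2$. For (ii), both $\Omega \CP^n$ and $S^1$ are path-connected, so $\varphi$ is an isomorphism on $\pi_0$. For $\pi_1$, the excerpt already records that $\Omega \CP^n \to S^1$ is an isomorphism on $\pi_1$ (coming from the identification with the loop of the highly connected inclusion $\CP^n \hookrightarrow \CP^\infty$). Since $\pi_1$ of both spaces is abelian, the Hurewicz theorem identifies $\pi_1(-)$ with $H_1(-; \Z)$; base-changing to $R$ (e.g.\ via the universal coefficient theorem) yields the required isomorphism $H_1(\Omega \CP^n; R) \xrightarrow{\cong} H_1(S^1; R)$.

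The main point to nail down is the compatibility between Postnikov truncation in $\Alg(R)$ and in $\Mod(R)$; once that is cited, the argument reduces to the two elementary homology computations above. As a sanity check, for $n \geq 2$ this is also visible directly from Lemma~\ref{lemma:pontryagin-ring-for-CPn}: the map on homotopy rings is the evident quotient $R[u_{2n}] \otimes_R \Lambda_R[e] \twoheadrightarrow \Lambda_R[e]$ killing the polynomial generator, which is manifestly the $1$-truncation.
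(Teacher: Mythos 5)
Your proposal is correct and fills in the details that the paper leaves to the phrase ``by construction'' (the paper does not supply an explicit proof): the relevant facts are exactly what you cite, namely that Postnikov truncation in $\Alg(R)$ is computed on underlying $R$-modules (e.g.\ \cite[7.1.3.15]{lurie2016higher}), that $R[S^1]$ is $1$-truncated as an $R$-module, and that $\Omega\CP^n\to S^1$ is a $\pi_0$- and $\pi_1$-isomorphism of spaces (the latter already recorded in \S\ref{sec:3.1}), which yields isomorphisms on $\pi_0$ and $\pi_1$ of group rings. One small terminological remark: $\Alg(R)$ is not a stable $\infty$-category, so ``Postnikov $t$-structure'' is not quite the right phrase; what you want is the compatibility of the Postnikov \emph{truncation functors} on $\Alg(R)$ with those on $\Mod(R)$, which is the content of the Lurie reference you intend.
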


Next, recall that for any $M$ in $\Mon(\An)$\footnote{We follow recent trends and denote the $\infty$-category of anima, spaces, $\infty$-groupoids by $\An$.} there is a functorial equivalence
\[ R \otimes_{R[M]} R \simeq R[BM] \]
since the functor $R[-] \colon \Mon(\An) \to \Alg(R)$ is monoidal and commutes with colimits; here $BM$ denotes the Bar construction of the monoid $M$. In particular, we obtain:
\begin{coro}\label{coro bar const on cpn}
The $\bE_1$-map $\Omega \CP^n \to \Omega \CP^\infty \simeq S^1$ induces a commutative diagram
\[\begin{tikzcd}
    R \otimes_{R[\Omega \CP^n]} R \ar[r] \ar[d,"\simeq"] & R \otimes_{R[S^1]} R \ar[d,"\simeq"]\\
    R[\CP^n] \ar[r] & R[\CP^\infty]
\end{tikzcd}\]
where the lower horizontal map is induced by the canonical inclusion $\CP^n \to \CP^\infty$.
\end{coro}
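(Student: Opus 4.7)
The plan is to apply naturality of the equivalence $R \otimes_{R[M]} R \simeq R[BM]$ (recalled immediately before the statement) to the $\bE_1$-map $\Omega \CP^n \to \Omega \CP^\infty \simeq S^1$. Concretely, the functor $R[-]\colon \Mon(\An) \to \Alg(R)$ is symmetric monoidal and colimit-preserving, so it sends the Bar construction of an $\bE_1$-monoid $M$ in $\An$, computed as the geometric realization of the simplicial object $[n] \mapsto M^{\times n}$ with the usual face and degeneracy maps, to the two-sided bar construction $R \otimes_{R[M]} R$ computed in $\Alg(R)$. Applied to the monoid map $\Omega \CP^n \to \Omega \CP^\infty$, this gives a commutative square of $R$-algebras
\[\begin{tikzcd}
R \otimes_{R[\Omega \CP^n]} R \ar[r] \ar[d,"\simeq"] & R \otimes_{R[\Omega \CP^\infty]} R \ar[d,"\simeq"]\\
R[B\Omega \CP^n] \ar[r] & R[B\Omega \CP^\infty]
\end{tikzcd}\]
which is the naturality square for the monoidality isomorphism of $R[-]$.

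The remaining task is to identify the bottom row with $R[\CP^n] \to R[\CP^\infty]$, in a way compatible with the map induced by $\CP^n \hookrightarrow \CP^\infty$. For this I would use that each $\CP^n$ is simply connected, so its loop space $\Omega \CP^n$ is a group-like $\bE_1$-monoid, hence the Bar--loop adjunction gives a natural equivalence $B\Omega \CP^n \simeq \CP^n$ (for all $1 \le n \le \infty$). Naturality of this equivalence in the pointed map $\CP^n \to \CP^\infty$ then identifies the bottom arrow of the diagram above with $R[\CP^n \hookrightarrow \CP^\infty]$, as required.

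No real obstacles are expected here: everything follows from two formal ingredients, namely that $R[-]$ is a symmetric monoidal colimit-preserving functor (so $R[BM] \simeq R \otimes_{R[M]} R$ naturally in $M \in \Mon(\An)$), and that the Bar construction inverts the loop space functor on pointed connected (hence group-like) spaces. The only point requiring mild care is to note that the identification $\Omega \CP^\infty \simeq S^1$ is as $\bE_1$-monoids, so that under it the map $\Omega \CP^n \to \Omega \CP^\infty$ becomes the $\bE_1$-map $\Omega \CP^n \to S^1$ of Lemma~\ref{lemma:1-truncation-CP}; this is automatic from the fact that $\CP^n \to \CP^\infty$ is a pointed map and the equivalence $B\Omega X \simeq X$ is natural in pointed connected $X$.
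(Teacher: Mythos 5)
Your proposal is correct and follows exactly the route the paper intends (the paper leaves the proof implicit, relying on the immediately preceding observation that $R\otimes_{R[M]}R\simeq R[BM]$ is functorial in $M\in\Mon(\An)$ and on the identification $B\Omega X\simeq X$ for pointed connected $X$). You spell out the two naturality steps the paper compresses, and the extra care about $\Omega\CP^\infty\simeq S^1$ being an identification of $\bE_1$-monoids is exactly the right remark.
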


Finally, we will need the following. 
\begin{lemm}\label{lemma:pushout-loop-CPk}
    The diagram
    \[\begin{tikzcd}
        R[x_{2n}] \ar[r,"u_{2n}"] \ar[d] & R[\Omega \CP^n] \ar[d] \\
        R \ar[r] & R[\Omega \CP^{n+1}]
    \end{tikzcd}\]
    is a pushout in $\Alg_{\bE_1}(R)$. 
\end{lemm}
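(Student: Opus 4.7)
The plan is to reduce the statement to the analogous pushout in $\Mon(\An)$, and then prove the latter by a classical loop-space argument. The key input is that $R[-]\colon \Mon(\An) \to \Alg_{\bE_1}(R)$ is a left adjoint, being induced on $\bE_1$-algebra objects by the symmetric monoidal left adjoint $R[-]\colon \An \to \Mod_R$; it therefore preserves pushouts. Combined with the identification $R[x_{2n}] \simeq R[\Omega S^{2n+1}]$ (valid because $\Omega S^{2n+1}$ is the free $\bE_1$-group on $S^{2n}$) and the fact that the augmentation $R[x_{2n}] \to R$ corresponds to the terminal map $\Omega S^{2n+1} \to *$, it suffices to show that
\[\begin{tikzcd}
\Omega S^{2n+1} \ar[r,"\Omega\alpha"] \ar[d] & \Omega \CP^n \ar[d] \\
* \ar[r] & \Omega \CP^{n+1}
\end{tikzcd}\]
is a pushout in $\Mon(\An)$, where $\alpha$ is the attaching map of the $(2n+2)$-cell of $\CP^{n+1}$.

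To obtain this, I first use the CW decomposition of $\CP^{n+1}$, which presents it as the pushout in $\An_*$ of $\CP^n \xleftarrow{\alpha} S^{2n+1} \to *$. Since $S^{2n+1}$, $\CP^n$, $\CP^{n+1}$ and $*$ are all simply connected for $n \geq 1$, this is also a pushout in the full subcategory $\An_*^{\geq 1}$ of connected pointed spaces. Applying the equivalence $\Omega\colon \An_*^{\geq 1} \xrightarrow{\simeq} \Grp(\An)$ of \cite[HA 5.2.6.15]{lurie2016higher} yields the looped diagram as a pushout in $\Grp(\An)$.

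The main obstacle is promoting this to a pushout in $\Mon(\An)$: the inclusion $\Grp(\An) \hookrightarrow \Mon(\An)$ is a right adjoint (with left adjoint the group completion), so colimits in the two categories may a priori differ. Let $P$ denote the pushout in $\Mon(\An)$; by the previous paragraph its group completion equals $\Omega\CP^{n+1}$, so it remains to verify that $P$ is itself group-like. For this, $\pi_0 \colon \Mon(\An) \to \Mon(\mathrm{Set})$ is a left adjoint and hence preserves colimits, so $\pi_0(P)$ is the pushout in $\Mon(\mathrm{Set})$ of trivial monoids---indeed $\pi_0(\Omega S^{2n+1})$ and $\pi_0(\Omega\CP^n)$ both vanish for $n \geq 1$ by simple connectivity of $S^{2n+1}$ and $\CP^n$. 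Thus $\pi_0(P)$ is trivial, so $P$ is group-like and therefore agrees with the pushout in $\Grp(\An)$, which is $\Omega\CP^{n+1}$. Applying $R[-]$ concludes the proof.
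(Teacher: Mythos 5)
Your proof is correct and follows the same overall skeleton as the paper's: apply $R[-]$ to reduce to a pushout in $\Mon(\An)$, then to one in $\Grp(\An)$, then use the equivalence $\Grp(\An)\simeq\An_*^{\geq1}$ and the CW pushout for $\CP^{n+1}$. The one genuine difference is in how you pass between $\Mon(\An)$ and $\Grp(\An)$. The paper observes that the inclusion $\Grp(\An)\subseteq\Mon(\An)$ admits both adjoints (the right adjoint being the group-of-units functor) and hence preserves colimits, so the pushout can simply be computed in $\Grp(\An)$. You instead use only the left adjoint, group completion, to identify the group completion of the $\Mon(\An)$-pushout $P$ with $\Omega\CP^{n+1}$, and then argue that $P$ is already group-like by computing $\pi_0(P)$ via the colimit-preserving functor $\pi_0\colon\Mon(\An)\to\Mon(\mathrm{Set})$: since the three corners are simply connected loop spaces, $\pi_0(P)$ is a pushout of trivial monoids, hence trivial. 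Both routes work; the paper's is shorter, while yours avoids having to know that the inclusion of group-like objects admits a right adjoint, relying instead on the more familiar fact that $\pi_0$ is a left adjoint.

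One small omission worth fixing: you carefully match the left vertical map $R[x_{2n}]\to R$ with $R[-]$ applied to $\Omega S^{2n+1}\to *$, but you should likewise verify that the top horizontal arrow labeled $u_{2n}$ agrees with $R[\Omega\alpha]$, where $\alpha\colon S^{2n+1}\to\CP^n$ is the projection. The paper does this explicitly: since $R[x_{2n}]$ is free as an $\bE_1$-$R$-algebra, the map is determined by the image of the generator, and $\Omega\alpha$ sends it to $u_{2n}\in H_{2n}(\Omega\CP^n;R)$, so the two maps coincide.
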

\begin{proof}
Note that $R[\Omega S^{2n+1}] \simeq R[x_{2n}]$ is the free $\bE_1$-$R$-algebra on a single generator $x_{2n}$ in degree $2n$ and that $H_{2n}(\Omega \CP^{n+1};R) = 0$. Under the equivalence $R[x_{2n}] \simeq R[\Omega S^{2n+1}]$, the upper horizontal arrow becomes the map induced by $\Omega p\colon \Omega S^{2n+1} \to \Omega \CP^n$ since its induced map on homology also hits $u_{2n} \in H_{2n}(\Omega \CP^n;R)$ by construction. The diagram under investigation is hence equivalent to the diagram obtained from the diagram 
\[ 
\begin{tikzcd}
    \Omega S^{2n+1} \ar[r,"\Omega p"] \ar[d] & \Omega \CP^n \ar[d,"\Omega i"] \\
    \ast \ar[r] & \Omega \CP^{n+1}
\end{tikzcd}
\]
upon applying the (left adjoint) functor $R[-]\colon \Mon(\An) \to \Alg(R)$. It hence suffices to prove that this diagram is a pushout in $\Mon(\An)$. Since it consists of group-like monoids, and the inclusion $\Grp(\An) \subseteq \Mon(\An)$ admits both adjoints and hence in particular preserves pushouts, it suffices to prove that it is a pushout in $\Grp(\An)$. Now, loop and Bar construction induce inverse equivalences $\Grp(\An) \simeq \An_*^{\geq1}$
between $\bE_1$-groups in anima and pointed connected anima, so the result follows from the well-known pushout:
\[\begin{tikzcd}
    S^{2n+1} \ar[r,"p"] \ar[d] & \CP^n \ar[d,"i"] \\
    \ast \ar[r] & \CP^{n+1}
\end{tikzcd}\qedhere\]
\end{proof}

\subsection{The DGAs $S_{2n}^m$}\label{sec:3.2}

Here, our goal is to construct the non-formal DGAs $S_{2n}^m$  with homology $\z/m[x_{2n}]$ that we mentioned in the introduction. We will do so inductively by forming appropriate pushouts of DGAs. We introduce the following notation:

\begin{nota}\label{cons z mod mod m}
Let $A$ be a DGA and $x\in \pi_k(A)$. We define a DGA $A\sslash x$ as the pushout of DGAs
\[\begin{tikzcd}
    \Z[X_{k}] \ar[r,"x"] \ar[d,"0"'] & A \ar[d] \\
    \Z \ar[r] & A\sslash x
\end{tikzcd}\]
where $\Z[X_k]$ is the free DGA on a generator of degree $k$, the top horizontal arrow classifies the element $x \in \pi_k(A)$  and the left vertical map classifies the $0$ element. 
\end{nota}

\begin{lemm}\label{lemm homology of z mod mod m}
There is 
\begin{enumerate}
    \item an isomorphism of graded rings $\pis (\z \sslash m) \cong \z/m[x_2]$, and
    \item a canonical equivalence of $\Z/m$-algebras $\Z/m \otimes_\Z (\Z\sslash m) \simeq \Z/m[\Omega \CP^1]$
\end{enumerate}

\end{lemm}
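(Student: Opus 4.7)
The plan is to compute the derived pushout defining $\Z\sslash m$ via an explicit cofibrant model, from which both parts of the lemma can then be read off.

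First I would replace the map $\Z[X_0]\to \Z,\; X_0\mapsto m$, by the free DGA extension $\Z[X_0]\hookrightarrow \Z[X_0, Y_1]$, where $Y_1$ is a generator of homological degree $1$ with $dY_1 = X_0 - m$. This inclusion is a cofibration of DGAs by construction, and the augmentation $\Z[X_0, Y_1]\to \Z$, $X_0\mapsto m$, $Y_1 \mapsto 0$, is a quasi-isomorphism since the underlying complex is the Koszul resolution of $\Z[X_0]/(X_0 - m) \cong \Z$. Therefore the derived pushout equals the strict pushout $\Z \amalg_{\Z[X_0]} \Z[X_0, Y_1]$, which in turn equals the base change $\Z \otimes_{\Z[X_0]} \Z[X_0, Y_1]$ along $X_0 \mapsto 0$. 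As a $\Z$-DGA this gives $\Z\langle Y_1\rangle$ with $|Y_1| = 1$ and differential $dY_1 = -m$, obtained by setting $X_0 = 0$ in $dY_1 = X_0 - m$.

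For part (1), I would compute the homology of $\Z\langle Y_1\rangle$ directly. Iterated application of the graded Leibniz rule
\[
d(Y_1^n) \;=\; -m\,Y_1^{n-1} \;-\; Y_1 \cdot dY_1^{n-1},
\]
together with the sign cancellation $d(Y_1^2) = -mY_1 + mY_1 = 0$, gives $d(Y_1^{2k}) = 0$ and $d(Y_1^{2k+1}) = -m\,Y_1^{2k}$. The underlying chain complex, with $Y_1^n$ in degree $n$, is therefore
\[
\cdots \longrightarrow \Z\xrightarrow{-m}\Z \xrightarrow{0} \Z \xrightarrow{-m}\Z \xrightarrow{0}\Z,
\]
whose homology is $\Z/m$ in each even nonnegative degree and $0$ in each odd degree; the multiplicative relation $[Y_1^2]^k = [Y_1^{2k}]$ identifies $\pis(\Z\sslash m)$ with the polynomial ring $\Z/m[x_2]$, where $x_2 := [Y_1^2]$.

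For part (2), I would base-change the same model along $\Z \to \Z/m$: the differential becomes $dY_1 = -m \equiv 0$, so the result is $(\Z/m)\langle Y_1\rangle$ with trivial differential, i.e.\ the free $\bE_1$-$\Z/m$-algebra on a degree-$1$ generator. To identify this with $\Z/m[\Omega \CP^1] = \Z/m[\Omega S^2]$, I would use that $\Omega \Sigma S^1$ is the free $\bE_1$-monoid on $S^1$ in pointed anima, and that the left adjoint $\Z/m[-]\colon \Mon(\An) \to \Alg_{\bE_1}(\Z/m)$ takes it to the free $\bE_1$-$\Z/m$-algebra on $\Z/m[1]$, i.e.\ on a degree-$1$ generator. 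By Yoneda the two algebras are then canonically equivalent.

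The step requiring the most care is justifying that the strict pushout along the free extension $\Z[X_0]\hookrightarrow \Z[X_0, Y_1]$ computes the derived pushout in $\Alg_{\bE_1}(\Z)$; this is standard once the cofibration is recognised, but merits explicit mention. After that, the remainder is the direct differential computation together with the universal property of the free $\bE_1$-algebra.
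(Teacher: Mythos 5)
Your proof is correct in substance, and it takes a genuinely different (and more self-contained) route than the paper. For part (1) the paper simply cites \cite[Lemma 4.30]{land2023kthrypushouts} and \cite[Section 2]{davis2023cyclic}, whereas you produce an explicit cofibrant model and compute directly, which is worthwhile. For part (2) the paper applies $\Z/m\otimes_\Z(-)$ to the defining pushout square, notes that the resulting square is the free-algebra functor applied to the span $0\leftarrow\Z/m\to0$ of $\Z/m$-modules, and identifies this with $\Z/m[X_1]\simeq\Z/m[\Omega S^2]$; your argument base-changes the explicit model and identifies $(\Z/m)\langle Y_1\rangle$ with that same free algebra. These are the same idea at different levels of abstraction, with the paper's version avoiding the need to exhibit a model.

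Two small imprecisions. First, the displayed chain complex has the maps shifted by one degree: since $d(Y_1^{2k+1})=-m\,Y_1^{2k}$ and $d(Y_1^{2k})=0$, the differential $\Z_{2k+1}\to\Z_{2k}$ is $-m$ and the one $\Z_{2k}\to\Z_{2k-1}$ is $0$, the reverse of what is drawn; your stated formulas and the conclusion $H_{2k}\cong\Z/m$, $H_{2k+1}=0$ are nonetheless correct. Second, calling the underlying complex of $\Z[X_0,Y_1]$ ``the Koszul resolution of $\Z[X_0]/(X_0-m)$'' is not quite right in the associative setting: you are working in $\Alg_{\bE_1}(\Z)$, so $\Z[X_0,Y_1]$ is the \emph{tensor} algebra on $X_0$ and $Y_1$, which is infinite-dimensional in each homological degree, unlike the two-term Koszul complex. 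The correct justification for the quasi-isomorphism $\Z[X_0,Y_1]\to\Z$ is that, after the change of variables $X_0'=X_0-m$, this DGA is the free associative algebra on the contractible two-term complex $\Z Y_1\xrightarrow{\ \mathrm{id}\ }\Z X_0'$; since that complex is contractible (not merely acyclic), each tensor power is contractible and the free algebra is quasi-isomorphic to $\Z$. With that adjustment the argument is complete.
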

\begin{proof}
For the first claim, see \cite[Lemma 4.30]{land2023kthrypushouts} or \cite[Section 2]{davis2023cyclic} (the argument in loc.\ cit.\ applies verbatim to our case). For the second claim, note that the functor $\Z/m\otimes_\Z - \colon \Alg(\Z) \to \Alg(\Z/m)$ preserves colimits. Therefore, the induced diagram of $\Z/m$-DGAs
\[ \begin{tikzcd}
    \Z/m[X_0] \ar[r,"m"] \ar[d] & \Z/m \ar[d] \\
    \Z/m \ar[r] & \Z/m \otimes_\Z \Z\sslash m
\end{tikzcd}\]
is again a pushout. Since $m=0$ in $\Z/m$, this pushout is obtained by applying the free $\Z/m$-algebra functor to the diagram of $\Z/m$-modules $0 \leftarrow \Z/m \to 0$, showing that the above pushout is given by the free $\Z/m$-algebra $\Z/m[X_1] \simeq \Z/m[\Omega S^2] \simeq \Z/m[\Omega \CP^1]$ on the $\Z/m$-module $\Sigma \Z/m$.
\end{proof}

The following lemma will be the key input in our inductive definition of the DGAs $S_{2n}^m$ from the introduction.
\begin{lemm}\label{lemm pushout homology ring for s2k}
    Let $A$ be a DGA with homology $\Z/m[x_{2n}]$ equipped with an equivalence of $\Z/m$-algebras
    $\Z/m \otimes_{\Z}A \simeq \Z/m[\Omega \mathbb{CP}^n]$.
    Then there is 
    \begin{enumerate}
        \item a preferred equivalence of $\Z/m$-algebras $\Z/m\otimes_{\z}(A\sslash x_{2n})\simeq \Z/m[\Omega \mathbb{CP}^{n+1}]$
        \item and an isomorphism of graded rings $\pis\big(A\sslash x_{2n} \big) \cong \Z/m[x_{2(n+1)}]$.
    \end{enumerate}
\end{lemm}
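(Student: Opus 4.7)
The plan is to prove part (1) first, and then to use it together with a long-exact-sequence argument to deduce part (2).

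For part (1), apply the base-change functor $\Z/m \otimes_\Z -\colon \Alg_\Z(\Sp) \to \Alg_{\Z/m}(\Sp)$ to the defining pushout of $A\sslash x_{2n}$. This functor preserves pushouts, being a left adjoint to the forgetful functor, and combined with the hypothesized equivalence $\Z/m\otimes_\Z A \simeq \Z/m[\Omega\CP^n]$, one obtains the pushout in $\Z/m$-DGAs of the span $\Z/m \xleftarrow{0} \Z/m[X_{2n}] \to \Z/m[\Omega \CP^n]$, where the right-hand map sends $X_{2n}$ to the image of $x_{2n}$ in $\pi_{2n}(\Z/m[\Omega\CP^n])= \Z/m\cdot u_{2n}$. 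Since $x_{2n}$ generates $\pi_{2n}A \cong \Z/m$, this image is automatically a unit multiple of $u_{2n}$; absorbing this unit via an automorphism of $\Z/m[X_{2n}]$ (which does not change the pushout) brings the span into the form of Lemma~\ref{lemma:pushout-loop-CPk}, providing the preferred equivalence $\Z/m\otimes_\Z(A\sslash x_{2n}) \simeq \Z/m[\Omega \CP^{n+1}]$.

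For part (2), I set $B = A\sslash x_{2n}$ and record two preliminary facts: $B$ is connective (connective $\bE_1$-$\Z$-algebras are closed under pushouts in $\Alg_\Z(\Sp)$), and $m$-multiplication annihilates $\pi_* B$, since the ring map $A\to B$ forces $m\cdot 1_B = 0$ in $\pi_0 B$. Tensoring the cofiber sequence $\Z \xrightarrow{m}\Z\to\Z/m$ of $\Z$-modules with $B$ then yields a cofiber sequence $B\xrightarrow{m}B\to B/m \simeq \Z/m\otimes_\Z B$, whose long exact sequence on homotopy collapses (using the vanishing of $m$-multiplication) into short exact sequences of $\Z/m$-modules
\[ 0 \to \pi_k B \to \pi_k(B/m) \to \pi_{k-1} B \to 0. \]
Combining part (1) with Lemma~\ref{lemma:pontryagin-ring-for-CPn} gives $\pi_*(B/m) = \Z/m[u_{2(n+1)}] \otimes_{\Z/m}\Lambda_{\Z/m}[e]$, and starting from $\pi_{-1}B = 0$, a degree-by-degree induction then pins down $\pi_k B = \Z/m$ when $2(n+1)\mid k$ and $\pi_k B = 0$ otherwise; the key observation is that any surjection $\Z/m\twoheadrightarrow\Z/m$ is automatically an isomorphism.

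To upgrade this to the graded ring identification, I will use that $\pi_{2k(n+1)-1}B = 0$ makes the SES degenerate in degree $2k(n+1)$, so that $B\to B/m$ induces an isomorphism $\pi_{2k(n+1)} B \xrightarrow{\sim} \pi_{2k(n+1)}(B/m)$ of $\Z/m$-modules. Hence, for any generator $y\in \pi_{2(n+1)}B$, the power $y^k$ maps to a generator of $\pi_{2k(n+1)}(B/m)$, which in turn forces $y^k$ to generate $\pi_{2k(n+1)}B$; this gives $\pi_* B\cong \Z/m[x_{2(n+1)}]$ as graded rings. The main subtle point in the whole argument is the unit-rescaling step in part (1); it is ultimately harmless because pushouts are unchanged under pre-composition with automorphisms, but it requires a moment's care when asserting the canonicity of the equivalence.
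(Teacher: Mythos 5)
Your proof is correct and follows essentially the same route as the paper: base-change along $\Z\to\Z/m$ for part (1), then the cofiber sequence $B\xrightarrow{m}B\to B/m$ for part (2), using that $m=0$ in $\pi_0 B$. The only cosmetic difference is that the paper packages the $m=0$ observation as a splitting $B/m\simeq B\oplus\Sigma B$ and reads off $\pi_*B$ from that, whereas you run the long exact sequence degree by degree; your explicit handling of the unit rescaling in part (1), which the paper compresses into "the image of $x_{2n}$ is a generator," is a welcome bit of care.
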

\begin{proof}
Recall that $A\sslash x_{2n}$ is the pushout $\Z \amalg_{\Z[X_{2n}]} A$ and that the functor $\Z/m \otimes_\Z -\colon \Alg(\Z) \to \Alg({\Z/m})$ preserves colimits, in particular pushouts. Consequently, there is a preferred equivalence
\begin{align*}
    \Z/m \otimes_\Z (\z \amalg_{\Z[X_{2n}]} A) & \simeq  \Z/m\amalg_{\Z/m[X_{2n}]}(\Z/m \otimes_\Z A)  \\
    & \simeq \Z/m \amalg_{\Z/m[X_{2n}]} \Z/m[\Omega \CP^n]  \\
    & \simeq \Z/m[\Omega \CP^{n+1}]
\end{align*} 
where the final equivalence follows from
Lemma~\ref{lemma:pushout-loop-CPk} and the fact that the image of $x_{2n} $ under the map $\pi_{2n}(A) \to \pi_{2n}(\Z/m \otimes_\Z A)$ is a generator; this shows the first claim. 

Using this equivalence, we then consider the map of DGAs
\begin{equation}\label{comparison-map} A\sslash x_{2n} \to \Z/m \otimes_\Z (A\sslash x_{2n}) \simeq \Z/m[\Omega \CP^{n+1}]
\end{equation}
induced by the unit map $\Z \to \Z/m$. The ring map $A \to A\sslash x_{2n}$ shows that $m=0$ in $\pi_0(A\sslash x_{2n})$. In particular, the map \eqref{comparison-map} induces an injective ring homomorphism on graded homotopy groups and an equivalence of spectra $\Z/m[\Omega\CP^{n+1}] \simeq A\sslash x_{2n} \oplus \Sigma A\sslash x_{2n}$. Now recall from Lemma~\ref{lemma:pontryagin-ring-for-CPn} that $\pis(\Z/m[\Omega\CP^{n+1}]) = \Lambda_{\Z/m}[z_1] \otimes_{\Z/m} \Z/m[u_{2(n+1)}]$. It follows that the map \eqref{comparison-map} induces an isomorphism on $\pi_{2\ast}$, showing the second claim.
\end{proof}

Finally, we are ready to construct the DGAs $S_{2n}^m$.

\begin{cons}\label{cons of the dgas s2km}
We inductively define DGAs $S_{2n}^m$ with the following properties:
\begin{itemize}
    \item $\pis S_{2n}^m \cong\z/m[\xtn]$ as graded rings and 
    \item $\z/m \otimes_{\z} S_{2n}^m \simeq \z/m[\Omega \CP^n]$ as $\z/m$-algebras.
\end{itemize}
For the inductive start, we set 
\[S_{2}^m := \z\sslash m,\]
which satisfies the properties listed above by Lemma~\ref{lemm homology of z mod mod m}.
For the inductive step, we define  $S_{2n+2}^m$ to be $S_{2n}^m\sslash x_{2n}$ which satisfies the properties listed above by the induction hypothesis and an application of Lemma~\ref{lemm pushout homology ring for s2k}.
\end{cons}

By construction, we obtain a sequence of DGAs
\[\z \sslash m =S^m_2 \to S^{m}_4 \to \cdots \to S^m_{2n} \to S^m_{2n+2} \to \cdots \to \Z/m\]
whose colimit  over $n$ is $\Z/m$ as homotopy groups commute with filtered colimits and each map $\pis S_{2n}^m \to \pis S_{2n+2}^m$ is trivial on positive homotopy groups for degree reasons and an isomorphism on $\pi_0$.  

\begin{rema}\label{remark:extensions-to-S2n}
For a given DGA $B$ with $m=0$ in $\pi_0 B$, one obtains a map of DGAs $\z \sslash m \to B$ and such extensions are parametrized by $\pi_1(B)$. By the pushout description of $S_4^{m}$ above, this map extends to a DGA map $S_{4}^m\to B$ if and only if it carries $x_2 \in \pis(\z \sslash m)$ to zero  and again such extensions are parametrized by $\pi_3(B)$. This process continues inductively and provides lifts to $S^{m}_{2n}\to B$ whenever the previous generators are mapped to zero. If the homology of $B$ is concentrated in even degrees, all of these extensions are unique up to homotopy whenever they exist (see Corollary \ref{coro s2k maps are unique up to homotopy}) as we show next. 
\end{rema}

\begin{lemm}\label{lemm mapping spaces for pushout dgas}
    Let $A$ and $B$ be DGAs and let $x_{2n} \in \pi_{2n}(A)$ for some $n$. Assume that the homology of $B$ is concentrated in even degrees and that  
    $\textup{Map}_{\Alg(\Z)}(A,B)$
    has homotopy groups concentrated in odd degrees (in particular, it is connected). Then
    \[\textup{Map}_{\Alg(\Z)}(A\sslash x_{2n},B)\]
    is either empty or has homotopy groups concentrated in odd degrees (in particular, it is empty or connected). 
    Furthermore, the induced map 
    \[\pi_1 \Map_{\Alg(\Z)}(A\sslash x_{2n},B) \to \pi_1 \Map_{\Alg(\Z)}(A,B)\]
    is surjective.
\end{lemm}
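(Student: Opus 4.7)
The plan is to exploit the pushout defining $A\sslash x_{2n}$. Since $A\sslash x_{2n} = \Z \amalg_{\Z[X_{2n}]} A$ in $\Alg(\Z)$, applying the contravariant functor $\Map_{\Alg(\Z)}(-,B)$ turns this pushout into a pullback of anima, the upper-left corner of which is $\Map_{\Alg(\Z)}(\Z,B) \simeq \ast$ because $\Z$ is initial in $\Alg(\Z)$. The remaining corner is $\Map_{\Alg(\Z)}(\Z[X_{2n}],B) \simeq \Omega^\infty(B[-2n])$ by the free-algebra adjunction, so it is an anima with $\pi_j \cong \pi_{j+2n}B$; the even-homotopy hypothesis on $B$ implies its homotopy groups are concentrated in \emph{even} degrees.

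With these identifications, $\Map_{\Alg(\Z)}(A\sslash x_{2n}, B)$ is the homotopy fiber, at the zero map, of the evaluation-at-$x_{2n}$ map
\[
\operatorname{ev}_{x_{2n}}\colon \Map_{\Alg(\Z)}(A,B) \longrightarrow \Map_{\Alg(\Z)}(\Z[X_{2n}],B).
\]
If this fiber is empty the conclusion is vacuous; otherwise I would choose a basepoint $\phi$ of it, which determines compatible basepoints in the total and base of the fibration, and invoke the associated long exact sequence
\[
\cdots \to \pi_{i+1}\Map(\Z[X_{2n}],B) \to \pi_i \Map(A\sslash x_{2n}, B) \to \pi_i \Map(A,B) \to \pi_i \Map(\Z[X_{2n}],B) \to \cdots.
\]

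The rest is parity. For even $i$, both $\pi_i\Map(A,B) = 0$ (by hypothesis) and $\pi_{i+1}\Map(\Z[X_{2n}],B) = \pi_{i+2n+1}B = 0$ (since $i+2n+1$ is odd and $B$ is even), so exactness forces $\pi_i\Map(A\sslash x_{2n}, B) = 0$. For surjectivity of the map on $\pi_1$, it suffices that the obstruction group $\pi_1\Map(\Z[X_{2n}],B) = \pi_{2n+1}B$ vanish, which again holds by parity. The argument is essentially formal once the pullback description is in hand; the only point that requires care is verifying that the chosen basepoint induces compatible basepoints throughout the fiber sequence before applying the long exact sequence, but no substantial obstacle arises.
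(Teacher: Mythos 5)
Your proposal is correct and follows exactly the same route as the paper: both rewrite $A\sslash x_{2n}$ as the pushout $\Z \amalg_{\Z[X_{2n}]} A$, apply $\Map_{\Alg(\Z)}(-,B)$ to obtain a pullback of anima with corners $\ast \simeq \Map(\Z,B)$ and $\Omega^{\infty+2n}B \simeq \Map(\Z[X_{2n}],B)$, and read off the claims from the long exact sequence of the resulting fiber sequence using the parity of the homotopy groups of $B$ and of $\Map(A,B)$. The paper states this more tersely but the argument is the same.
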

\begin{proof}
Since $A\sslash x_{2n} = \Z \amalg_{\Z[X_{2n}]} A$ is a pushout, the diagram 
\[\begin{tikzcd}
    \Map_{\Alg(\Z)}(A\sslash x_{2n},B) \ar[r] \ar[d] & \Map_{\Alg(\Z)}(A,B) \ar[d] \\
    \ast\simeq \Map_{\Alg(\Z)}(\Z,B) \ar[r] & \Map_{\Alg(\Z)}(\Z[X_{2n}],B) \simeq \Omega^{\infty+2n}B
\end{tikzcd}\]
is a pullback. The associated long exact sequence in homotopy groups then implies all the claims.
\end{proof}

\begin{coro}\label{coro s2k maps are unique up to homotopy}
Let $B$ be a DGA whose homology is concentrated in even degrees. Then $\Map_{\Alg(\Z)}(S^m_{2n},B)$ is either empty or has homotopy concentrated in odd degrees. In particular,
if there exists a map of DGAs $S_{2n}^m\to B$, then it is unique up to homotopy.
\end{coro}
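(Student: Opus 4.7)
The plan is to establish the first claim by induction on $n$, using the inductive description $S_{2n+2}^m = S_{2n}^m \sslash x_{2n}$ from Construction~\ref{cons of the dgas s2km} together with Lemma~\ref{lemm mapping spaces for pushout dgas}. The ``in particular'' statement will then be immediate: a space whose homotopy groups are concentrated in odd degrees has $\pi_0 = 0$ and is therefore path-connected, so any two maps $S_{2n}^m \to B$ lie in the same path component whenever the mapping space is non-empty.

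For the base case $n=1$, I would unwind the pushout defining $S_2^m = \Z \sslash m$ to a pullback of mapping spaces
\[\Map_{\Alg(\Z)}(\Z \sslash m, B) \simeq \Map_{\Alg(\Z)}(\Z, B) \times_{\Map_{\Alg(\Z)}(\Z[X_0], B)} \Map_{\Alg(\Z)}(\Z, B).\]
Since $\Z$ is the initial DGA and $\Z[X_0]$ is the free DGA on a degree-$0$ generator, the factors identify with $\ast$ and $\Omega^\infty B$, so the mapping space out of $\Z \sslash m$ is the (homotopy) fibre over a pair of points in $\Omega^\infty B$. The long exact sequence then shows that it is either empty or has $\pi_i$ equal to $\pi_{i+1}(B)$, and the latter is concentrated in odd degrees by hypothesis on $B$.

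For the inductive step, suppose the statement holds for $S_{2n}^m$. The canonical map $S_{2n}^m \to S_{2n+2}^m$ coming from the pushout description gives, by precomposition, a restriction map $\Map_{\Alg(\Z)}(S_{2n+2}^m, B) \to \Map_{\Alg(\Z)}(S_{2n}^m, B)$. If the target is empty, so is the source, and there is nothing to check. Otherwise the induction hypothesis puts the target's homotopy in odd degrees, and Lemma~\ref{lemm mapping spaces for pushout dgas}, applied to $A = S_{2n}^m$ and the class $x_{2n} \in \pi_{2n}(S_{2n}^m)$, delivers the desired conclusion for $\Map_{\Alg(\Z)}(S_{2n}^m \sslash x_{2n}, B) = \Map_{\Alg(\Z)}(S_{2n+2}^m, B)$.

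I do not anticipate any real obstacle here: all of the substantive technical work has already been done in Lemma~\ref{lemm mapping spaces for pushout dgas}, and the corollary is a direct induction on top of it. The only minor point worth flagging is the bookkeeping around emptiness, namely that once the mapping space from some $S_{2n_0}^m$ is empty, the same is automatically true for every $S_{2n}^m$ with $n \geq n_0$ via the restriction maps, so that the two alternatives in the statement are preserved under the inductive step.
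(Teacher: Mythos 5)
Your proposal is correct and follows the same route as the paper: an induction on $n$ built directly on Lemma~\ref{lemm mapping spaces for pushout dgas}. The only cosmetic difference is that the paper starts the induction one step earlier by setting $S_0^m = \Z$ and $x_0 = m$, so that the base case is the contractible space $\Map_{\Alg(\Z)}(\Z,B)$ and even $n=1$ is handled by the lemma, whereas you verify the $n=1$ case by hand via the pullback $\ast \times_{\Omega^\infty B} \ast$ (a path space, hence empty or with $\pi_i \cong \pi_{i+1}B$); both are fine.
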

\begin{proof}
This follows from Lemma~\ref{lemm mapping spaces for pushout dgas} by induction over $n$, where we set $S_0^m=\Z$ and $x_0 =m$, since $S_{2n+2}^m = S^m_{2n}\sslash x_{2n}$.
\end{proof}

Since  $\colim_{n} S_{2n}^m \simeq \z/m$, the tower of DGAs given by $S_{2n}^m$ is an odd cell decomposition of $\z/m$ in the $\infty$-category of DGAs. As a result, we obtain the following.
\begin{coro}\label{prop map of dgas fp to a}
    Let $B$ be a DGA whose homology is concentrated in even degrees. If there exists a map $\z/m \to B$, then it is unique up to homotopy. 
\end{coro}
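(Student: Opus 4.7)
The plan is to convert the existence/uniqueness question into a statement about the mapping spectrum $\Map_{\Alg(\Z)}(\Z/m,B)$ and then exploit the tower
\[
S_2^m \to S_4^m \to \dots \to S_{2n}^m \to \dots
\]
together with the equivalence $\Z/m \simeq \colim_n S_{2n}^m$ established in Construction~\ref{cons of the dgas s2km}. Since $\Map_{\Alg(\Z)}(-,B)$ turns colimits into limits, we obtain a canonical equivalence
\[
\Map_{\Alg(\Z)}(\Z/m,B) \;\simeq\; \lim_n \Map_{\Alg(\Z)}(S_{2n}^m,B).
\]
The goal is thus to show that $\pi_0$ of the right-hand side is a singleton (under the hypothesis that it is non-empty).

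First I would observe that, by the hypothesis that a map $\Z/m \to B$ exists, each space $\Map_{\Alg(\Z)}(S_{2n}^m,B)$ is non-empty, since we may restrict along $S_{2n}^m \to \Z/m$. Then, by Corollary~\ref{coro s2k maps are unique up to homotopy}, each of these mapping spaces has homotopy groups concentrated in odd degrees; in particular each $\pi_0 \Map_{\Alg(\Z)}(S_{2n}^m,B)$ is a singleton, so the inverse system $\{\pi_0 \Map_{\Alg(\Z)}(S_{2n}^m,B)\}$ has trivial limit.

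Next I would invoke the Milnor-type fibre sequence for an inverse limit of a tower of spaces, which upon choosing a basepoint $f\colon \Z/m \to B$ yields an exact sequence of pointed sets
\[
\ast \to {\lim_n}^{1}\, \pi_1 \Map_{\Alg(\Z)}(S_{2n}^m,B) \to \pi_0 \lim_n \Map_{\Alg(\Z)}(S_{2n}^m,B) \to \lim_n \pi_0 \Map_{\Alg(\Z)}(S_{2n}^m,B) = \ast.
\]
Thus the desired uniqueness reduces to the vanishing of the $\lim^1$-term. For this I would apply Lemma~\ref{lemm mapping spaces for pushout dgas} inductively to the identifications $S_{2n+2}^m = S_{2n}^m \sslash x_{2n}$: it gives that the transition map
\[
\pi_1 \Map_{\Alg(\Z)}(S_{2n+2}^m,B) \;\twoheadrightarrow\; \pi_1 \Map_{\Alg(\Z)}(S_{2n}^m,B)
\]
is surjective for every $n$, so the tower satisfies the Mittag--Leffler condition and $\lim^1$ vanishes.

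The main conceptual point, and the step that required the most preparation, is the combination of the odd-cell decomposition $\Z/m \simeq \colim_n S_{2n}^m$ together with the homotopy-theoretic information about $\Map_{\Alg(\Z)}(S_{2n}^m,B)$ encoded in Corollary~\ref{coro s2k maps are unique up to homotopy} and the surjectivity clause of Lemma~\ref{lemm mapping spaces for pushout dgas}; once both are in hand, the Milnor sequence delivers the result with no further work. There is no substantive obstacle at this final step beyond invoking these previously established facts.
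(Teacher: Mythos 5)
Your proof is correct and matches the paper's argument essentially verbatim: express $\Map_{\Alg(\Z)}(\Z/m,B)$ as $\lim_n \Map_{\Alg(\Z)}(S_{2n}^m,B)$, use Corollary~\ref{coro s2k maps are unique up to homotopy} to see each term is connected, and kill $\lim^1$ via the surjectivity clause of Lemma~\ref{lemm mapping spaces for pushout dgas}.
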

\begin{proof}
   
    If there is a map $\z/m \to B$, then it provides maps $S_{2n}^m \to B$ by precomposition. Commuting colimits, we have 
    \[\Map_{\Alg(\Z)}(\z/m , B) \simeq \on{lim}_{n} \Map_{\Alg(\Z)}(S_{2n}^m , B). \]
    All the spaces in the limit above are connected by Corollary \ref{coro s2k maps are unique up to homotopy}. Furthermore, the relevant $\lim^1$ term vanishes again due to (the second statement of) Lemma \ref{lemm mapping spaces for pushout dgas} applied to  Construction \ref{cons of the dgas s2km}.
\end{proof}

As an immediate consequence, we obtain the following non-formality result:
\begin{coro}\label{coro:truncations-of-S-not-formal}
   Let  $1 \leq k \leq \infty$, then $\tau_{\leq 2nk}S_{2n}^m$ is not formal.
\end{coro}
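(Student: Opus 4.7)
The plan is to derive a contradiction from the assumption that $B := \tau_{\leq 2nk}S_{2n}^m$ is formal. Note first that the homology of $B$ is the truncated polynomial algebra $\Z/m[x_{2n}]/x_{2n}^{k+1}$ (with the convention $x_{2n}^\infty = 0$ when $k = \infty$, in which case $B = S_{2n}^m$), which is concentrated in even degrees. Hence Corollary~\ref{coro s2k maps are unique up to homotopy} applies to $B$ and tells us that any two maps $S_{2n}^m \to B$ of DGAs are homotopic.

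Now I will exhibit two such maps that disagree on $\pi_{2n}$. The first is the Postnikov truncation map $\tau\colon S_{2n}^m \to \tau_{\leq 2nk}S_{2n}^m = B$, which by construction is an isomorphism on $\pi_i$ for $i \leq 2nk$; in particular, since $k \geq 1$, it sends the generator $x_{2n} \in \pi_{2n}(S_{2n}^m)$ to the nonzero class $x_{2n} \in \pi_{2n}(B)$. For the second map, I will use the hypothesis of formality: a quasi-isomorphism $B \simeq \Z/m[x_{2n}]/x_{2n}^{k+1}$ together with the unit map $\Z/m \to \Z/m[x_{2n}]/x_{2n}^{k+1}$ of the target viewed as a DGA yields a DGA map $\Z/m \to B$, and precomposing with the canonical map $S_{2n}^m \to \colim_i S_{2i}^m \simeq \Z/m$ from Construction~\ref{cons of the dgas s2km} produces a map $\varphi\colon S_{2n}^m \to B$. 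Since $\pi_{2n}(\Z/m) = 0$, the map $\varphi$ sends $x_{2n}$ to zero in $\pi_{2n}(B)$.

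The two maps $\tau$ and $\varphi$ therefore induce different homomorphisms on $\pi_{2n}$, so they cannot be homotopic. This contradicts the uniqueness statement from Corollary~\ref{coro s2k maps are unique up to homotopy}, and we conclude that $B = \tau_{\leq 2nk}S_{2n}^m$ cannot be formal. No step in this argument looks difficult; the only mild subtlety is ensuring that the formal model $\Z/m[x_{2n}]/x_{2n}^{k+1}$ really does receive a unital DGA map from $\Z/m$, which is automatic since it is a $\Z/m$-algebra with trivial differential.
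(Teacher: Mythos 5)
Your proof is correct and follows the same strategy as the paper: construct the truncation map $S_{2n}^m \to \tau_{\leq 2nk}S_{2n}^m$ and a second map factoring through $\Z/m$ (which exists precisely when the target is formal), observe that they differ on $\pi_{2n}$, and invoke the uniqueness of DGA maps $S_{2n}^m \to B$ from Corollary~\ref{coro s2k maps are unique up to homotopy} (applicable since the homology of $B$ is concentrated in even degrees). The only stylistic quibble is the phrasing "$x_{2n}^\infty = 0$" for the $k=\infty$ convention, but your conclusion $B=S_{2n}^m$ there is what matters and is correct.
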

\begin{proof}
    If $\tau_{\leq 2nk}S_{2n}^m$ were formal, then there would be a DGA map $S_{2n}^m \to \Z/m \to \tau_{\leq 2nk}(S_{2n}^m)$ which differs from the truncation map $S_{2n}^m \to \tau_{\leq 2nk} S_{2n}^m$, contradicting the uniqueness of such maps.
\end{proof}

\subsection{A formality criterion for DGAs with polynomial homology}\label{sec:3.3}
In this section, we will give a sufficient condition for a DGA with polynomial homology to be formal. It will be based on exploiting the notion of centralizers of maps of $\bE_1$-ring spectra \`a la Lurie \cite[\S 5.3]{lurie2016higher}, which we will also use again later. We briefly review the relevant notions here.
\newline

To set the stage, let $R$ be a base $\bE_\infty$-ring. For a map of $\bE_k$-$R$-algebras $f \co A \to B$, Lurie constructs what is called the $\bE_k$-\emph{centralizer} of $f$, denoted by $\mathfrak{Z}^R(f)$, see \cite[Theorem 5.3.1.30]{lurie2016higher}. It is the terminal $\bE_k$-$R$-algebra fitting into the following commuting diagram of $\bE_k$-$R$-algebras.
\begin{equation}\label{eq definition of center of a map}
    \begin{tikzcd}
     & \Ztr^R(f) \otimes_R A \ar[dr]& \\
     A \ar[ur," u \otimes_R id_A"] \ar[rr,"f"] & & B
    \end{tikzcd}
\end{equation}
Here, $u\co R \to {\mathfrak{Z}^R(f)} $ is the unit map of the centralizer. The \emph{center} of an  $\bE_k$-$R$-algebra $B$ is by definition the centralizer of $\id_B$, we write $\Ztr^R(B)$ instead of $\Ztr^R(\id_B)$. It is naturally an $\bE_{k+1}$-$R$-algebra and $B$ is naturally an $\bE_{k}$-$\Ztr^R(B)$-algebra. When $A$ is the $\bE_k$-$R$-algebra underlying an $\bE_{k+1}$-$R$-algebra, extensions of the $\bE_k$-$R$-algebra structure on $B$ to an $\bE_k$-$A$-algebra structure are equivalently described as $\bE_{k+1}$-$R$-algebra maps $A \to \Ztr^R(B)$.

In this paper, we will only consider centralizers when $k=1$ in which case, the underlying $R$-module of $\Ztr^R(f)$ is given by the $R$-based topological Hochschild \emph{cohomology} spectrum
\[\mathfrak{Z}^R(f) \simeq \thh_R(A,B) := \map_{A\otimes_R A^{op}}(A,B). \]
Following the usual homological vs cohomological indexing conventions, we write $\thh^t_R(A,B)$ for $\pi_{-t}\thh_R(A,B)$.
\begin{rema}\label{Rmk:cohomology-dual-of-homology}
    For $R$ an $\bE_\infty$-ring and $f\colon A \to B$ a map of $R$-algebras with $B$ an $\bE_2$-$R$-algebra, there is in particular a canonical map of $R$-algebras $A \otimes_R A^{\op} \to B$. From the definition of $\thh^R(A,B)$, the $R$-based topological Hochschild \emph{homology} spectrum, we obtain the following equivalence:
    \begin{align*} 
    \thh_R(A,B) & = \Mor_{A \otimes_R A^{\op}}(A,B) \\
        & \simeq \Mor_B(A \otimes_{A\otimes_R A^{\op}} B,B) \\ 
        & =  \Mor_B(\thh^R(A,B),B)
         =: \thh^R(A,B)^{\vee_B}.
    \end{align*}
\end{rema}

 In what follows, we say that a spectrum is even if its odd homotopy groups vanish.
\begin{lemm}\label{lemma:THH}
    Assume $R$ and $A$ are connective and that $B$ is bounded below and even. Then
    \begin{enumerate}
        \item If $\thh_R(A,\pi_t(B))$ is even for all $t \in \Z$, then $\thh_R(A,B)$ is even.
        \item If furthermore the canonical map $\thh_R^0(A,\pi_t(B)) \to \pi_t(B)$ is surjective for all $t \in \Z$, then the canonical map $\thh_R^{-t}(A,B) \to \pi_t(B)$ is surjective for all $t\in \Z$.
    \end{enumerate}
\end{lemm}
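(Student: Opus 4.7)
The plan is to induct over the Postnikov tower $B \simeq \lim_m \tau_{\leq m} B$, which converges because $B$ is bounded below. Since $\thh_R(A,-) = \Mor_{A \otimes_R A^{\op}}(A,-)$ is a mapping spectrum functor, it commutes with this limit and preserves fibre sequences in its second variable.

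At each truncation I would argue by induction along the fibre sequences $\pi_m(B)[m] \to \tau_{\leq m} B \to \tau_{\leq m-1} B$. For $m$ odd the fibre vanishes since $B$ is even, so the induction is trivial. For $m$ even, applying $\thh_R(A,-)$ and taking the long exact sequence, the fibre contribution is
\[ \thh_R^{-t}(A, \pi_m(B)[m]) \cong \pi_{t-m}\thh_R(A, \pi_m(B)),\]
which vanishes for $t$ odd since $m$ is even and $\thh_R(A,\pi_m(B))$ is even by hypothesis; combined with the inductive hypothesis this gives part (1) at each truncation. For (2), I would form the naturality ladder coming from the canonical map $\thh_R(A,-) \to (-)$ and perform a diagram chase in three cases. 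The case $t > m$ is vacuous; the case $t = m$ reduces to the hypothesised surjectivity $\thh_R^0(A,\pi_m(B)) \twoheadrightarrow \pi_m(B)$ after using that $\Mor_{A \otimes_R A^{\op}}(A,-)$ preserves coconnectivity (a standard $t$-structure argument via a cell presentation of $A$ over the connective ring $A \otimes_R A^{\op}$), which forces $\thh_R^{-m}(A, \tau_{\leq m-1} B) = 0$; and the case $t < m$ kills the connecting map in the top row of the ladder by the same parity vanishing used for (1).

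Finally, to pass from truncations to $B$ I use the Milnor sequence
\[ 0 \to \lim{}^1 \thh_R^{-t-1}(A, \tau_{\leq m} B) \to \thh_R^{-t}(A, B) \to \lim \thh_R^{-t}(A, \tau_{\leq m} B) \to 0.\]
The same parity computation shows that for each transition map in the tower, the next group appearing in the associated long exact sequence vanishes by evenness, so each transition is surjective, the tower is Mittag--Leffler, and $\lim^1 = 0$; this gives part (1) for $B$. For part (2), the analogous surjectivity of the transitions on $\thh_R^{-t}(A, \tau_{\leq m} B)$ for even $t$, combined with the bounded case of (2) and the identification $\pi_t(B) \cong \pi_t(\tau_{\leq m} B)$ for $m \geq t$, yields the claim. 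I expect the main obstacle to be the bookkeeping across these layers of exact and Milnor sequences; the underlying parity vanishings themselves are immediate.
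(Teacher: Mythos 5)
Your proof is correct and follows essentially the same strategy as the paper's: filter $B$ by its Postnikov tower, use the fibre sequences $\pi_m(B)[m] \to \tau_{\le m}B \to \tau_{\le m-1}B$ together with the parity vanishings from the evenness hypotheses, and conclude by Milnor's $\lim$--$\lim^1$ sequence. The paper compresses part (2) to ``the same filtration argument, using assumption (2) and the snake lemma''; you usefully make explicit the three-case diagram chase and, in particular, the fact that $\Mor_{A\otimes_R A^{\op}}(A,-)$ preserves coconnectivity (since $A$ is a connective module over the connective ring $A\otimes_R A^{\op}$), which gives $\thh_R^{-m}(A,\tau_{\le m-1}B)=0$ in the crucial $t=m$ case; this step is genuinely needed and is left implicit in the paper.
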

\begin{proof}
    Since $A \otimes_R A^{\op}$ is connective, its category of modules comes with the usual Postnikov $t$-structure with truncation functors $\tau_{\leq k}$. In particular, $\pi_t(B)$ is indeed an $A$-bimodule and we have $\thh_R(A,B) \simeq \lim_{t} \thh_R(A,\tau_{\leq 2t}(B))$. Since $\thh_R(A,-)$ is an exact functor on $A$-bimodules, for each $t$ we find a fibre sequence
    \[ \thh_R(A,\pi_{2t}(B))[2t] \to \thh_R(A,\tau_{\leq 2t}(B)) \to \thh_R(A,\tau_{\leq 2t-2}(B))\]
    from which, together with assumption (1), we inductively deduce that the middle term is even for all $t$ and the latter map induces a surjection on even homotopy groups. It then follows form Milnor's $\lim$-$\lim^1$-sequence that $\thh_R(A,B)$ is even, and that $\thh_R(A,B) \to \thh_R(A,\tau_{\leq 2t}(B))$ is surjective on homotopy groups for all $t$. Therefore, to see the second statement, it suffices to show that the maps 
    \[ \thh_R(A,\tau_{\leq 2t}(B)) \to \tau_{\leq 2t}(B)\]
    are surjective on homotopy groups, which follows by the same filtration argument, making use of assumption (2) and the snake lemma.
\end{proof}
We will also use the following variant of Lemma~\ref{lemma:THH}

\begin{lemm}\label{lemm centralizer for the odd degree top formality}
     Assume that $R$, $A$, and $B$ are connective and that $\pis B$ is concentrated in degrees divisible by some $n>0$. Assume further that for every $t$:
     \[\thh_R(A,\pi_t(B)) \simeq \tau_{[-n,0]} \thh_R(A,\pi_t(B)).\] 
     If the canonical map $\thh_R^0(A,\pi_t(B)) \to \pi_t(B)$ is surjective for all $t \in \Z$, then the canonical map $\thh_R^{-t}(A,B) \to \pi_t(B)$ is also surjective for all $t\in \Z$.
\end{lemm}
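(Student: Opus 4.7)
The plan is to adapt the Postnikov filtration argument from the proof of Lemma \ref{lemma:THH} to the non-even setting. Set $F_t := \tau_{\leq nt}(B)$; since $\pi_*(B)$ is concentrated in degrees divisible by $n$, the Postnikov tower provides fibre sequences $\pi_{nt}(B)[nt] \to F_t \to F_{t-1}$ with $B \simeq \lim_t F_t$, and applying the exact functor $\thh_R(A,-)$ yields fibre sequences
\[ \thh_R(A,\pi_{nt}(B))[nt] \to \thh_R(A,F_t) \to \thh_R(A,F_{t-1}), \]
whose leftmost term has homotopy concentrated in degrees $[n(t-1),nt]$ by the width hypothesis.

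The first step is to show by induction on $t$, using the associated long exact sequences, that $\pi_k\thh_R(A,F_t)=0$ for $k>nt$. In particular $\pi_{nk}\thh_R(A,F_{k-1})=0$, which forces the long exact sequence above (with $t=k$) to produce a surjection $\thh_R^0(A,\pi_{nk}(B)) \twoheadrightarrow \pi_{nk}\thh_R(A,F_k)$. Combining this with the surjectivity hypothesis $\thh_R^0(A,\pi_{nk}(B)) \twoheadrightarrow \pi_{nk}(B)$ and the canonical map of fibre sequences into the Postnikov fibre sequence of $B$, a short diagram chase shows that $\pi_{nk}\thh_R(A,F_k) \to \pi_{nk}(F_k) = \pi_{nk}(B)$ is also surjective.

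The second step is to exploit the complementary fibre sequence $\tau_{>nk}(B) \to B \to F_k$ in order to relate $\thh_R(A,B)$ to $\thh_R(A,F_k)$. Running the analogous Postnikov filtration on the bounded-below spectrum $\tau_{>nk}(B)$, whose $\thh_R$-slices $\thh_R(A,\pi_{nj}(B))[nj]$ for $j \geq k+1$ all have homotopy concentrated in $[n(j-1), nj] \subseteq [nk,\infty)$, together with a routine Mittag--Leffler argument handling the relevant $\lim^1$-term, one deduces $\pi_{nk-1}\thh_R(A,\tau_{>nk}(B)) = 0$. The long exact sequence of the fibre sequence then yields a surjection $\pi_{nk}\thh_R(A,B) \twoheadrightarrow \pi_{nk}\thh_R(A,F_k)$, and composition with the first step produces the desired $\thh_R^{-nk}(A,B) \twoheadrightarrow \pi_{nk}(B)$. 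For $t$ not divisible by $n$ the claim is vacuous since $\pi_t(B)=0$.

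The main obstacle is the book-keeping of the tight width bounds: the hypothesis $\tau_{[-n,0]}$ leaves no slack, so each invocation of a long exact sequence must match precisely with the degree range of the relevant $\thh_R$-slice. Unlike in Lemma \ref{lemma:THH}, where evenness implies that $\thh_R(A,B)$ is itself even (and one can conclude purely within the ascending Postnikov tower of $B$), here only surjectivity is sought and the complementary top piece $\tau_{>nk}(B)$ must be brought in to control $\thh_R(A,B)$ above $\thh_R(A,F_k)$ in the relevant degree.
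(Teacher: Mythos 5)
Your proposal is correct and takes essentially the same approach as the paper: both filter $B$ by its Postnikov tower, use the $\tau_{[-n,0]}$-width hypothesis on $\thh_R(A,\pi_*B)$ to localize the long exact sequence contributions, and control the passage from the truncations to $B$ itself by a Milnor $\lim$--$\lim^1$/Mittag--Leffler argument. The only deviation is organizational: the paper proves that the transition maps $\thh_R(A,\tau_{\leq ns}B)\to\thh_R(A,\tau_{\leq n(s-1)}B)$ are surjective on homotopy groups and then takes the limit over $s$, while you instead run the analogous Postnikov analysis on the cofibre $\tau_{>nk}B$ to show $\pi_{nk-1}\thh_R(A,\tau_{>nk}B)=0$ and use the fibre sequence $\tau_{>nk}B\to B\to \tau_{\leq nk}B$ --- both amount to the same width bookkeeping.
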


\begin{proof}
    We  do induction on $s$ using the fiber sequence: 
    \[ \thh_R(A,\pi_{ ns}(B))[ns] \to \thh_R(A,\tau_{\leq ns}(B)) \to \thh_R(A,\tau_{\leq n(s-1)}(B)),\]
    to   prove  that the second map above is surjective in homotopy and that 
    \[\thh_R(A,\tau_{\leq ns}(B)) \to \tau_{\leq ns}(B)\]
    is also surjective in homotopy. For  $s=0$, the first statement follows since the right hand term is trivial and the second statement follows by hypothesis. For the inductive step, the first statement follows by the fact that $\thh_R(A,\tau_{\leq n(s-1)}B)$ is bounded above in homotopy degree $n(s-1)$ (by the Ext spectral sequence) and the hypothesis on the boundedness of the left hand term.  The second statement follows by the first statement, the induction hypothesis and the last hypothesis of the lemma.

    From this, the result follows by noting that $\thh_R(A,-)$ commute with limits and by considering Milnor's lim-$\textup{lim}^1$ sequence.
    
\end{proof}

\begin{rema}\label{rema grdd cmmtativ is suffcnt}
    A sufficient condition for the assumption in (2) in Lemma~\ref{lemma:THH} (or equivalently the last assumption in Lemma \ref{lemm centralizer for the odd degree top formality}) to hold is that $\pi_0B$ lies in the center of $\pis B$, and that $R$ and $A$ are connective. Indeed, we need to argue that every element $x$ in $\pi_t(B)$ can be represented as the image of $1 \in \pi_0(A)$ of an $A$-bimodule map $A \to \pi_t(B)$. 
    The composite
    \[ A \to \pi_0(A) \to \pi_0(B) \xrightarrow{\cdot x} \pi_t(B)\]
    then does the job since the last map is a map of $\pi_0B$-bimodules which forgets to an $A$-bimodule map through the composite of the first two $\bE_1$-$R$-algebra maps. 
\end{rema}

In what follows, $A[X_t]$ denotes $R[X_t] \otimes_{R} A$ where $R[X_t]$ is the free $R$-algebra on the $R$-module $\Sigma^tR$ as before.

\begin{prop}\label{prop:maps-to-centralizer}
    Assume that $R$ and $A$ are connective. If 
    \begin{enumerate}
        \item $B$ is bounded below, even, $\pis(B)$ is graded commutative, and that $\thh_R(A,\pi_t(B))$ is even for all $t$, or
        \item $B$ is connective, $\pis(B)$ is concentrated in degrees divisible by some $n>0$, $\pi_0B$ lies in the center of $\pi_*B$ and for all $t$ we have $\thh_R(A,\pi_t(B)) \simeq \tau_{[-n,0]} \thh_R(A,\pi_t(B))$,
    \end{enumerate} 
    then for all $x \in \pi_t(B)$, there exists a map $A[X_t] \to B$ in $\Alg(R)_{A/}$ sending $X_{t}$ to $x$.
\end{prop}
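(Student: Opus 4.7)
My plan is to use the universal property of the $\bE_1$-centralizer $\Ztr^R(f)$ of the given map $f\colon A \to B$ making $B$ an object of $\Alg(R)_{A/}$. First, I would translate the existence of the desired $A$-algebra map into a surjectivity statement in homotopy. Since $A[X_t] \simeq A \otimes_R R[X_t]$ and the centralizer is characterized by the universal property recalled around diagram \eqref{eq definition of center of a map}, $\bE_1$-$R$-algebra maps $A[X_t] \to B$ under $A$ are identified with $\bE_1$-$R$-algebra maps $R[X_t] \to \Ztr^R(f)$ post-composed with the canonical evaluation $\Ztr^R(f) \otimes_R A \to B$. By the freeness of $R[X_t]$, the set of connected components of the latter space is $\pi_t(\Ztr^R(f))$, and the element of $\pi_t(B)$ that the resulting map $A[X_t] \to B$ sends $X_t$ to is precisely the image of the chosen class under the canonical map $\Ztr^R(f) \to B$. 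Producing the desired extension for a given $x \in \pi_t(B)$ is therefore equivalent to lifting $x$ along $\pi_t(\Ztr^R(f)) \to \pi_t(B)$, and it suffices to show this map is surjective for every $t$.

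Recalling the identification $\Ztr^R(f) \simeq \thh_R(A,B)$ of underlying $R$-modules and the convention $\thh_R^{-t}(A,B) = \pi_t\thh_R(A,B)$, the task reduces to proving that $\thh_R^{-t}(A,B) \to \pi_t(B)$ is surjective. This is precisely what Lemmas \ref{lemma:THH} and \ref{lemm centralizer for the odd degree top formality} are designed to deliver. In case (1), graded commutativity of $\pi_*(B)$ implies that $\pi_0(B)$ lies in the center of $\pi_*(B)$, so Remark \ref{rema grdd cmmtativ is suffcnt} gives surjectivity of $\thh_R^0(A,\pi_t(B)) \to \pi_t(B)$ for all $t$; combined with the hypothesized evenness of $\thh_R(A,\pi_t(B))$, both hypotheses of Lemma \ref{lemma:THH} are in place, and its second conclusion yields the desired surjectivity. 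In case (2), the assumption that $\pi_0(B)$ is central in $\pi_*(B)$ again provides the same $\thh^0$-surjectivity via Remark \ref{rema grdd cmmtativ is suffcnt}; together with the bounded-cohomology hypothesis $\thh_R(A,\pi_t(B)) \simeq \tau_{[-n,0]}\thh_R(A,\pi_t(B))$, Lemma \ref{lemm centralizer for the odd degree top formality} applies and delivers the required surjectivity.

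The main obstacle, I expect, is setting up the identification of $A$-algebra extensions $A[X_t] \to B$ with elements of $\pi_t(\Ztr^R(f))$ carefully: one needs to verify that the terminal property of the $\bE_1$-centralizer yields this bijection on $\pi_0$ of mapping spaces, and that under this correspondence the homotopy class representing the image of $X_t$ in $\pi_t(B)$ is literally the image of the chosen lift along the canonical map $\Ztr^R(f) \to B$. Once this identification is in hand, the proof amounts to a direct invocation of the two surjectivity lemmas already established in this section.
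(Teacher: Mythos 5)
Your proof is correct and takes essentially the same approach as the paper: reduce to surjectivity of $\thh_R(A,B) \to B$ on homotopy groups via Lemmas \ref{lemma:THH} and \ref{lemm centralizer for the odd degree top formality} together with Remark \ref{rema grdd cmmtativ is suffcnt}, then build the desired map from a chosen lift $\bar{x}$. The paper sidesteps the subtlety you flag at the end by not invoking a full bijection between $A$-algebra extensions and $\pi_t\Ztr^R(f)$: it simply writes down the composite $A[X_t] = R[X_t]\otimes_R A \to \Ztr^R(f)\otimes_R A \to B$, which by construction sends $X_t$ to $x$, so only the easy direction of the universal property is needed.
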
    
\begin{proof}
    By Lemma~\ref{lemma:THH} and Remark \ref{rema grdd cmmtativ is suffcnt} and Lemma~\ref{lemm centralizer for the odd degree top formality} either of the assumptions (1) and (2) imply that the map $\thh_R(A,B) \to B$ is surjective on homotopy groups. Pick a lift $\bar{x} \in \thh_R^{-t}(A,B)$ of $x$ and consider the induced map 
    \[ R[X_t] \to \thh_R(A,B) = \Ztr^R(f).\]
    Then the canonical composite
    \[ A[X_t] = R[X_t] \otimes_R A \to \Ztr^R(f)\otimes_R A \to B\]
    is the desired map.
\end{proof}
\begin{comment}
The same proof applies to the following. 
\begin{coro}\label{coro for the topological formality in odd degrees}
    Under the assumptions of Lemma \ref{lemm centralizer for the odd degree top formality}, for every $x \in \pi_t(B)$, there exists a map $A[X_t]\to B$ in $\Alg(R)_{A/}$ sending $X_{t}$ to $x$.
\end{coro}
\end{comment}

\begin{rema}\label{Rmk:choices}
    We emphasize that the map $A[X_t] \to B$ of Proposition~\ref{prop:maps-to-centralizer} may not be unique. 
    %In fact, the ambiguity of such maps is \emph{precisely} given by the space of lifts of the element $x$ to elements $\bar{x}$, so the ambiguity of homotopy classes of such maps is a torsor over $\ker((\thh_R^{-t}(A,B)) \to \pi_t(B))$. Typically, this kernel is non-trivial.
\end{rema}

Since we will use the following (well-known) fact several times, we record it here as a lemma. 
\begin{lemm}\label{lemma:THH-even}
    $\thh_\Z(\Z/m,\Z/m)$ is equivalent to $\Mor(\sph[\CP^\infty],\Z/m)$. In particular, it is even.
\end{lemm}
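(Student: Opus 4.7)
The plan is to reduce the computation to a Bar-construction-style calculation via $\Omega \CP^\infty \simeq S^1$, and then dualize. I proceed in four steps.

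First, I would identify the base ring spectrum $\Z/m \otimes_\Z \Z/m \simeq \Z/m[S^1]$. This can be seen in (at least) two ways. Directly, the cofibre sequence $\Z \xrightarrow{m} \Z \to \Z/m$ after tensoring with $\Z/m$ over $\Z$ shows $\Z/m \otimes_\Z \Z/m$ fits in a split cofibre sequence $\Z/m \to \Z/m \to \Z/m\otimes_\Z \Z/m$. More in line with the rest of the paper, one can use Construction~\ref{cons of the dgas s2km}: since $\colim_n S_{2n}^m \simeq \Z/m$ and the functors $\Z/m\otimes_\Z(-)$ and $\Z/m[-]$ preserve colimits, I get
\[ \Z/m \otimes_\Z \Z/m \simeq \colim_n \Z/m \otimes_\Z S_{2n}^m \simeq \colim_n \Z/m[\Omega \CP^n] \simeq \Z/m[\Omega \CP^\infty] \simeq \Z/m[S^1]. \]

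Second, I would compute $\thh^\Z(\Z/m) := \Z/m \otimes_{\Z/m \otimes_\Z \Z/m} \Z/m$. Since $\Z/m$ is commutative, $\thh^\Z(\Z/m, \Z/m) = \thh^\Z(\Z/m)$, and substituting the identification of the first step yields $\thh^\Z(\Z/m) \simeq \Z/m \otimes_{\Z/m[S^1]} \Z/m$. Now Corollary~\ref{coro bar const on cpn} (with $n=\infty$, or via passage to the colimit of the equivalences for finite $n$) gives
\[ \Z/m \otimes_{\Z/m[S^1]} \Z/m \simeq \Z/m[\CP^\infty]. \]

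Third, I dualize. Since $\Z/m$ is $\bE_\infty$ (so in particular $\bE_2$), Remark~\ref{Rmk:cohomology-dual-of-homology} applies and yields
\[ \thh_\Z(\Z/m, \Z/m) \simeq \Mor_{\Z/m}\bigl(\thh^\Z(\Z/m),\Z/m\bigr) \simeq \Mor_{\Z/m}\bigl(\Z/m[\CP^\infty], \Z/m\bigr) \simeq \Mor\bigl(\sph[\CP^\infty], \Z/m\bigr), \]
where the last equivalence is the standard base-change adjunction along $\sph \to \Z/m$. For the final assertion, the homotopy groups are
\[ \pi_t\, \Mor(\sph[\CP^\infty], \Z/m) \cong H^{-t}(\CP^\infty; \Z/m) \cong \Z/m[\beta_2], \]
which vanish in odd degrees, so $\thh_\Z(\Z/m, \Z/m)$ is even. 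The main (minor) obstacle is keeping the various change-of-rings adjunctions and the bimodule versus one-sided module descriptions of $\thh$ straight; everything else is a direct application of Corollary~\ref{coro bar const on cpn} and Remark~\ref{Rmk:cohomology-dual-of-homology}.
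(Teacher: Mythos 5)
Your proof is correct, and it takes a genuinely different route from the paper's. Both arguments begin with the identification $\Z/m \otimes_\Z \Z/m \simeq \Z/m[S^1]$ as $\bE_1$-algebras; the paper does this by noting both sides are $1$-truncations of the free $\bE_1$-$\Z/m$-algebra on a degree-$1$ generator, while you obtain it by passing to the colimit of the $S_{2n}^m$ (either method works, but note your first suggested method -- the split cofibre sequence -- only identifies the underlying $\Z/m$-module, not the $\bE_1$-algebra structure, which you do need for what follows; your second method closes that gap). From here the two proofs diverge. The paper works entirely with the Hochschild \emph{cohomology} spectrum: it invokes the equivalence $\Mod(\Z/m[S^1]) \simeq \Fun(\CP^\infty,\Mod(\Z/m))$, identifies the bimodule $\Z/m$ with the constant functor $r^*(\Z/m)$, and then reads off the mapping spectrum as a limit over $\CP^\infty$, i.e.\ $\Mor(\sph[\CP^\infty],\Z/m)$. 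You instead first compute the Hochschild \emph{homology} $\thh^\Z(\Z/m) \simeq \Z/m \otimes_{\Z/m[S^1]} \Z/m \simeq \Z/m[\CP^\infty]$ via Corollary~\ref{coro bar const on cpn}, and then dualize using Remark~\ref{Rmk:cohomology-dual-of-homology}. Your route is entirely assembled from ingredients that the paper already develops for other purposes (the Bar identification of Corollary~\ref{coro bar const on cpn} and the homology/cohomology duality of Remark~\ref{Rmk:cohomology-dual-of-homology}), so it is arguably better integrated with the paper's computational toolkit, while the paper's proof is slightly more direct in that it avoids the detour through $\thh^\Z$ and the dualization step. Both are fine; the choice is one of taste.
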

\begin{proof}
    First, we note that $\Z/m \otimes_\Z \Z/m = \Z/m[S^1]$ as $\bE_1$-algebras since both are the truncations of the free $\bE_1$-$\Z/m$-algebra on a generator of degree $1$. Under the equivalence of categories $\Mod(\Z/m[S^1]) \simeq \Fun(\CP^\infty,\Mod(\Z/m))$ the module $\Z/m$ corresponds to $r^*(\Z/m)$ where $r\colon \CP^\infty \to \ast$ is the unique map. Consequently, we find
    \[ \thh_\Z(\Z/m,\Z/m) \simeq \map_{\Fun(\CP^\infty,\Mod(\Z/m))}(r^*(\Z/m),r^*(\Z/m)) \simeq \Mor(\sph[\CP^\infty],\Z/m) \]
    as claimed.
\end{proof}
We finally obtain our main formality criterion for DGAs with polynomial homology.
\begin{coro}\label{cor:map-implies-formal}
    Let $B$ be a DGA with $\pis(B) \cong \Z/m[x_{2n}]/x_{2n}^k$ for some $1 \leq k \leq \infty$ and $n>0$. 
    \begin{enumerate}
        \item If there exists a map $\Z/m \to B$ of $\Z$-algebras, then $B$ is formal (under $\Z/m$).
        \item If $m=p$ is a prime and there exists a map $\fp \to B$ of $\sph$-algebras, then $B$ is topologically formal (under $\fp$).
        \item If $m=p$ is a prime and there exists a map $\fp \to B$ of $\MU$-algebras, then $B$ is formal as an $\MU$-algebra (under $\fp$).
    \end{enumerate}
\end{coro}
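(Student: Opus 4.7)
The plan is to deduce each part of the Corollary from Proposition~\ref{prop:maps-to-centralizer} by an appropriate choice of base $\bE_\infty$-ring $R$. For part (1), I take $R = \Z$ and $A = \Z/m$ with the given $\Z$-algebra map $A\to B$. For part (3), I take $R = \MU$ and $A = \fp$ with the given $\MU$-algebra map. For part (2), I first observe that the ring spectrum map $\fp \to B$ factors centrally through $B$ (since $\fp$ is $\bE_\infty$), canonically endowing $B$ with the structure of an $\fp$-algebra in ring spectra, and then apply the Proposition with $R = A = \fp$. In each case, taking $x = x_{2n}\in \pi_{2n}(B)$, the Proposition produces a map $A[X_{2n}]\to B$ in $\Alg(R)_{A/}$ sending $X_{2n}$ to $x_{2n}$.

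To verify the hypotheses, the conditions that $R, A$ are connective, $B$ is bounded below and even, and $\pi_* B$ is graded commutative are all immediate; the key check is that $\thh_R(A,\pi_t B)$ is even for all $t$. Since $\pi_t B$ is either zero or a shift of $A$, this reduces to showing that $\thh_R(A,A)$ is even. For part (1) this is Lemma~\ref{lemma:THH-even}. For part (3), I would compute via a Tor spectral sequence that $\pi_*(\fp \otimes_\MU \fp) \cong \Lambda_\fp[y_i]_{i \geq 0}$ with $|y_i| = 2i+1$, reflecting the regular sequence $p, x_1, x_2, \ldots$ in $\pi_*\MU$; by Koszul duality the centralizer $\thh_\MU(\fp,\fp)$ is then polynomial on classes $z_i$ with $|z_i| = -2i-2$ in negative even degrees, hence even. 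For part (2) the check is immediate since $\thh_\fp(\fp, M) \simeq M$ for any $\fp$-bimodule $M$.

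With the map $A[X_{2n}]\to B$ in hand, formality follows in two steps. A second application of the Proposition to the formal target $A[x_{2n}]$ (with its canonical $A$-algebra structure) produces a map $A[X_{2n}]\to A[x_{2n}]$ that is a quasi-isomorphism by inspection on polynomial homology, showing that $A[X_{2n}]$ is itself formal. In the case $k=\infty$ the map $A[X_{2n}]\to B$ is similarly a quasi-isomorphism, so $B \simeq A[X_{2n}] \simeq A[x_{2n}]$. For $k<\infty$, I apply the Postnikov truncation $\tau_{\leq 2n(k-1)}$ to the equivalence $A[X_{2n}] \simeq A[x_{2n}]$ to obtain $\tau_{\leq 2n(k-1)}A[X_{2n}] \simeq A[x_{2n}]/x_{2n}^k$, and compose its inverse with the truncated map $\tau_{\leq 2n(k-1)}A[X_{2n}] \to \tau_{\leq 2n(k-1)}B = B$; the resulting map $A[x_{2n}]/x_{2n}^k \to B$ is a quasi-isomorphism by inspection on homotopy groups.

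The most delicate step I anticipate is the centrality claim in part (2): producing a canonical $\fp$-algebra structure on $B$ from the ring spectrum map $\fp\to B$. This relies on the principle that maps out of $\bE_\infty$-rings factor through the $\bE_2$-center, exhibiting $B$ as an $\bE_1$-$\fp$-algebra; some care is needed to ensure the resulting formality as an $\fp$-algebra gives topological formality of the underlying ring spectrum of $B$.
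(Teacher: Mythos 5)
Your treatment of parts (1) and (3) matches the paper's argument: take $R=\Z$, $A=\Z/m$ (resp.\ $R=\MU$, $A=\fp$), apply Proposition~\ref{prop:maps-to-centralizer}, and verify evenness of $\thh_R(A,A)$ via Lemma~\ref{lemma:THH-even} (resp.\ the Koszul/Tor computation you sketch, which is a legitimate alternative to the paper's citations of Lazarev and Hahn--Wilson). The ``second application'' to show $A[X_{2n}]$ is formal is harmless but unnecessary, as $A[X_{2n}]$ is already the formal DGA.

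Part (2), however, contains a genuine gap, and you have flagged the right spot without fixing it. The claim that a map of $\sph$-algebras $\fp \to B$ ``factors centrally through $B$ (since $\fp$ is $\bE_\infty$), canonically endowing $B$ with the structure of an $\fp$-algebra'' is false, and the ``principle that maps out of $\bE_\infty$-rings factor through the $\bE_2$-center'' is not a theorem. A map of $\bE_1$-ring spectra $\fp \to B$ does make $B$ a left $\fp$-module, but an $\fp$-algebra structure on $B$ is precisely an $\bE_2$-map $\fp \to \Ztr^{\sph}(B)$, which by Hopkins--Mahowald exists if and only if $p=0$ in $\pi_0\Ztr^{\sph}(B)$. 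This condition is strictly stronger than $p=0$ in $\pi_0 B$: the map $\pi_0\Ztr^{\sph}(B) \to \pi_0 B$ can fail to reflect $p$-torsion, and in fact establishing $p=0$ in $\pi_0$ of a center is exactly the content of the non-trivial Lemma~\ref{lemm center has p equals zero} in the paper (there for the $\Z$-based center of $S_{2p-2}$). Using the sought-after topological formality to deduce the $\fp$-algebra structure would be circular, since Theorem~\ref{thm:topological-formality} itself invokes Corollary~\ref{cor:map-implies-formal}(2). The paper sidesteps this entirely by taking $R=\sph$ and $A=\fp$ in Proposition~\ref{prop:maps-to-centralizer} and checking that $\thh_{\sph}(\fp,\fp)$ is even, which follows from B\"okstedt's computation that $\thh_*^{\sph}(\fp)\cong\fp[u_2]$ together with Remark~\ref{Rmk:cohomology-dual-of-homology}. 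You should replace your part (2) with this argument.
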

\begin{proof}
    We begin with (1). We wish to apply Proposition~\ref{prop:maps-to-centralizer} to $R=\Z$ and the map $\Z/m \to B$ to obtain a map $\Z/m[X_{2n}] \to B$ (under $\Z/m$) sending $X_{2n}$ to $x_{2n}$, which therefore induces an equivalence after applying $\tau_{\leq 2n(k-1)}(-)$. We then need to show that $\thh_\Z(\Z/m, \Z/m)$ is even which is the content of Lemma~\ref{lemma:THH-even}.

    To prove (2) and (3), by the same argument, it suffices to show that $\thh_\sph(\fp,\fp)$ and $\thh_\MU(\fp,\fp)$ are even. By Remark~\ref{Rmk:cohomology-dual-of-homology} it suffices to show that $\thh^\sph(\fp)$ and $\thh^\MU(\fp)$ are even. For the former, this is B\"okstedts classical computation, and for the latter see \cite[Theorem 10.2]{Lazarev} or \cite[Remark 2.4.3]{hahn2020redshift}.
\end{proof}

\begin{rema}\label{rema generalizing to witt vectors}
Let $k$ be a perfect field of characteristic $p$. The first part of the corollary above generalizes to $W(k)$-DGAs with homology $k[x_{2n}]/x_{2n}^k$ where $1 \leq k\leq \infty$ and $n>0$ as before. Namely, a $W(k)$-DGA with homology $k[x_{2n}]/x_{2n}^k$  is formal if it receives a $W(k)$-DGA map from $k$. The proof follows in the same way by using that $k \otimes_{W(k)} k \simeq k[S^1]$ so that by the same argument as in Lemma~\ref{lemma:THH-even}, $\thh_{W(k)}^*(k,k)\cong k[x_2]$ is even.     
\end{rema}

\subsection{Detecting non-formality}\label{sect nonformality via maps from s guys}
In this section, let us fix a prime number $p$ and let us write $S_{2n}$ for $S_{2n}^p$. We aim to determine for DGAs $A$ the smallest number $l$ such that $\tau_{\leq 2l}(A)$ is formal in terms of maps from suitable $S_{2n}^m$'s to $A$.

To begin, we recall  a result of Dugger and Shipley on the classification of DGAs with homology $\Lambda_{\fp}[x_{s}]$ for $s>0$ \cite[Example 3.15]{dugger2007topological}.
Indeed, such DGAs are  type $(\z/p,s)$-Postnikov extension of $\z/p$ in the $\infty$-category of DGAs in the sense of \cite[Section 1.2]{dugger2006postnikov}. These extensions are classified by the quotient of the Hochschild cohomology group $\hh_{\z}^{s+2}(\fp,\fp)$ by the action of  $\textup{Aut}(\fp)$ \cite[Proposition 1.5]{dugger2006postnikov}. As $\hh^{*}_{\z}(\fp,\fp) \cong \fp[v_2]$, one obtains that there are two such DGAs for even $s$ and there is a unique such DGA when $s$ is odd. The same applies to the classification of ring spectra with homotopy ring $\Lambda_{\fp}[x_{s}]$ as the relevant topological Hochschild cohomology groups $\thh_{\sph}^{*}(\fp,\fp)$ are given by the $\fp$-dual of $\thh_*^{\sph}(\fp,\fp) \cong \fp[u_2]$ (Remark \ref{Rmk:cohomology-dual-of-homology}). One obtains that there is a unique ring spectrum with homotopy $\Lambda_{\fp}[x_{s}]$ for odd $s$ and there are two for even $s$. 

It was observed in \cite{dugger2007topological} that the underlying ring spectrum of the non-formal DGA with homology $\Lambda_{\fp}[x_{2n}]$ is equivalent to the underlying ring spectrum of the formal one if and only if $2n\geq2p-2$. To see this, one looks at the map
\[\hh^{*}_{\z}(\fp,\fp) \to \thh^{*}_{\sph}(\fp,\fp)\]
which is the $\fp$-dual of the ring map 
\[\fp[u_2]\to \Gamma_{\fp}[u_2]\]
given by $\thh^{\sph}_*(\fp)\to \hh^{\z}_*(\fp)$ that sends $u_2$ to $u_2$. This map is an isomorphism for $*<2p $ and trivial for $*\geq 2p$ as desired.

\begin{term}\label{T_2n}
    For $n>0$, we denote by $T_{2n}$ the (unique) non-formal DGA with homology $\Lambda_{\fp}[x_{2n}]$.
\end{term}

By Corollary~\ref{coro:truncations-of-S-not-formal}, we have $\tau_{\leq 2n}S_{2n}^p \simeq T_{2n}$.
To generalize  to DGAs with homology $\Lambda_{\z/p^l}[x_{s}]$, we argue as in \cite[Example 3.15]{dugger2007topological}. The equivalence classes of such DGAs are given by the set  
\[\hh^{s+2}_{\z}(\z/p^l,\z/p^l)/\textup{Aut}(\Z/p^l).\]
By Lemma~\ref{lemma:THH-even} we have
\[\hh^{s+2}_{\z}(\z/p^l,\z/p^l) \cong \begin{cases}
    \Z/p^l & \text{ when $s$ is even } \\ 0 & \text{ when $s$ is odd.}
\end{cases} \]

Since the orbits of   $\z/p^l$ under the action of its units is given by the set with $l+1$ elements $\{[0],[p^0],[p^1],\dots,[p^{l-1}]\}$ and since $[0]$ provides the formal DGA, we obtain the first statement in the following. The second statement is also a consequence of Lemma~\ref{lemma:THH-even}.

\begin{lemm}\label{coro classification of DGAs with exterior homology}
Let $n>0$. The set of quasi-isomorphism classes of non-formal DGAs with homology $\Lambda_{\z/p^l}[x_{2n}]$ comes with a preferred bijection to $\{[0],[p^0],\dots,[p^{l-1}]\}$. Every DGA with homology $\Lambda_{\z/m}[x_{2n-1}]$ is formal.   
\end{lemm}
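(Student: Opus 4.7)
The plan is to extend the Dugger--Shipley style Postnikov classification recalled just above from $R=\fp$ to $R=\z/p^l$, respectively $R=\z/m$. A DGA with homology $\Lambda_R[x_s]$ for $s>0$ is a type $(R,s)$-Postnikov extension of $R$ in the $\infty$-category of DGAs in the sense of \cite[Section~1.2]{dugger2006postnikov}, and by \cite[Proposition~1.5]{dugger2006postnikov} such extensions are classified by the orbit set $\hh^{s+2}_{\z}(R,R)/\textup{Aut}(R)$, with the zero class corresponding to the formal DGA.

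The required Hochschild cohomology computation is already contained in Lemma~\ref{lemma:THH-even}: the equivalence $\thh_\z(\z/m,\z/m)\simeq \Mor(\sph[\CP^\infty],\z/m)$ identifies $\hh^t_\z(\z/m,\z/m)$ with $H^t(\CP^\infty_+;\z/m)$, which is $\z/m$ in positive even degrees and vanishes in positive odd degrees. In particular, when $s$ is odd we have $\hh^{s+2}_\z(\z/m,\z/m)=0$ for every $m$, so every DGA with homology $\Lambda_{\z/m}[x_{2n-1}]$ is forced to be formal. This establishes the second statement.

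For the first statement, with $s=2n$ even, we have $\hh^{s+2}_\z(\z/p^l,\z/p^l)\cong \z/p^l$, and it remains to identify the orbits of the $\textup{Aut}(\z/p^l)$-action. Exactly as in the $\fp$ case reviewed in the text, this action is the natural one of $\z/p^l$-bimodule automorphisms of the coefficient module, i.e.\ multiplication by units; this transports through the naturality of Lemma~\ref{lemma:THH-even} in the coefficient $\z/m$. The multiplicative action of $(\z/p^l)^\times = \{a : p\nmid a\}$ on $\z/p^l$ partitions the latter by $p$-adic valuation, with representatives $0,p^0,p^1,\dots,p^{l-1}$, so the orbit set is exactly $\{[0],[p^0],\dots,[p^{l-1}]\}$, with $[0]$ providing the formal DGA.

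The only nontrivial point beyond the $\fp$ template in \cite{dugger2007topological} is checking that the $\textup{Aut}$-action matches the multiplicative action under the identification of $\hh^{s+2}_\z(\z/p^l,\z/p^l)$ with $\z/p^l$. This is, however, a direct consequence of the functoriality of $\thh_\z(\z/p^l,-)$ in its $\z/p^l$-bimodule argument together with the natural form of the equivalence in Lemma~\ref{lemma:THH-even}; the remaining orbit count is elementary arithmetic.
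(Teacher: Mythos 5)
Your proposal follows exactly the route of the paper: classify Postnikov extensions of $\z/p^l$ via \cite[Proposition~1.5]{dugger2006postnikov} by the orbit set $\hh^{s+2}_\z(\z/p^l,\z/p^l)/\textup{Aut}(\z/p^l)$, compute the Hochschild cohomology through Lemma~\ref{lemma:THH-even}, and count orbits of the unit action by $p$-adic valuation. The only difference is that you spell out the identification of the $\textup{Aut}$-action with the multiplicative action on the coefficient module, which the paper leaves implicit; the argument is otherwise identical and correct.
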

The following lemma will not be used in the rest of this work but we prove it here for the sake of completeness.

\begin{lemm}
    Under the bijection constructed above, we have $\tau_{\leq 2n}S_{2n}^{p^l}$ corresponds to $[p^0] \in \{[0],[p^0],\dots,[p^{l-1}]\}$. 
\end{lemm}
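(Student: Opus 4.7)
The plan is to establish a DGA map $\tau_{\leq 2n}S_{2n}^{p^l} \to T_{2n}$ via the iterated pushout structure of $S_{2n}^{p^l}$ and then compare Postnikov $k$-invariants through naturality. Under the isomorphism $\hh^{2n+2}_\Z(\Z/p^l,\Z/p^l) \cong \Z/p^l$, the claim becomes that the $k$-invariant of $\tau_{\leq 2n}S_{2n}^{p^l}$ is a unit.

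First I would construct a DGA map $f\colon S_{2n}^{p^l} \to S_{2n}^p$ by induction on $n$. For $n=1$, extending the unit $\Z \to \Z\sslash p$ along the pushout defining $\Z\sslash p^l$ is possible since the obstruction, namely the class $p^l \in \pi_0(\Z\sslash p)=\fp$, vanishes. For the inductive step, the composite $S_{2n-2}^{p^l} \to S_{2n-2}^p \to S_{2n}^p$ already kills $x_{2n-2}$ (as $\pi_{2n-2}S_{2n}^p = 0$ for $n\geq 2$), so it extends through the pushout defining $S_{2n}^{p^l} = S_{2n-2}^{p^l}\sslash x_{2n-2}$. Taking $\tau_{\leq 2n}$ produces $\bar f\colon \tau_{\leq 2n}S_{2n}^{p^l} \to T_{2n}$, whose effect on $\pi_0$ is necessarily the reduction $\phi\colon \Z/p^l \to \fp$ and on $\pi_{2n}$ is some $\phi$-semilinear map $\psi\colon \Z/p^l \to \fp$.

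Let $\kappa \in \Z/p^l$ and $u\in \fp$ denote the $k$-invariants of $\tau_{\leq 2n}S_{2n}^{p^l}$ and $T_{2n}$ respectively. Naturality of the Postnikov extension classification of \cite{dugger2006postnikov} gives the relation $\phi^*(u) = \psi_*(\kappa)$ in $\hh^{2n+2}_\Z(\Z/p^l,\fp)$. Repeating the argument of Lemma \ref{lemma:THH-even} with mixed coefficients, one verifies $\thh_\Z(\Z/m,\fp) \simeq \Mor(\sph[\CP^\infty],\fp)$ for any ring surjection $\Z/m \onto \fp$, so $\hh^{2n+2}_\Z(\Z/m,\fp)\cong \fp$ for every such $m$. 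Under these identifications $\phi^*\colon \fp \to \fp$ becomes the identity, while $\psi_*\colon \Z/p^l \to \fp$ is induced by $\psi$ on coefficients.

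Since $T_{2n}$ is non-formal, $u \neq 0$, whence $\psi(\kappa)=\phi^*(u)\neq 0$ in $\fp$. This forces $\kappa \not\equiv 0 \pmod{p}$, so $\kappa$ is a unit in $\Z/p^l$, placing it in the orbit $[1] = [p^0]$ as claimed. The main obstacle I anticipate is articulating the precise naturality statement for Postnikov $k$-invariants under DGA maps between two-stage truncations with differing $\pi_0$ and $\pi_{2n}$; this is essentially built into the obstruction-theoretic classification of \cite{dugger2006postnikov} but deserves careful verification.
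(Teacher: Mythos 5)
Your proposal is correct in outline but takes a genuinely different route from the paper's. The paper constructs, for each $i$, a map $f_i\colon C_{i-1} \to C_i$ between the DGAs corresponding to $[p^{i-1}]$ and $[p^i]$ by scaling the classifying derivation by $p$ (so $f_i$ sends $x_{2n}$ to $p\,x_{2n}$). It then produces a map $S_{2n}^{p^l}\to C_0$ hitting a nonzero class in $\pi_{2n}$ via Theorem~\ref{theo maps from snk to dgas with polynomial homology}, and derives a contradiction for $j\neq 0$ by noting that the composite $S_{2n}^{p^l}\to C_0 \to C_j \simeq \tau_{\leq 2n}S_{2n}^{p^l}$ carries $x_{2n}$ into $p^j\Z/p^l$, whereas the Postnikov truncation map is an isomorphism on $\pi_{2n}$; these cannot both be the unique DGA map $S_{2n}^{p^l}\to \tau_{\leq 2n}S_{2n}^{p^l}$ guaranteed by Corollary~\ref{coro s2k maps are unique up to homotopy}. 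In contrast, you build a map $S_{2n}^{p^l}\to S_{2n}^p$ directly by inductive extension (which is fine), truncate, and compare $k$-invariants across this map using a naturality statement of the form $\phi^*(u)=\psi_*(\kappa)$. Your identifications of the coefficient groups and the claim that $\phi^*$ is an isomorphism are correct, and if naturality holds, $u\neq 0$ does force $\kappa$ to be a unit as you say. What each buys: the paper's route stays entirely inside its own toolkit (uniqueness of maps out of $S_{2n}^m$, the odd-cell decomposition) and avoids any functoriality claim about the classification of Postnikov extensions; your route is conceptually cleaner and produces the required map from $S_{2n}^{p^l}$ without invoking Theorem~\ref{theo maps from snk to dgas with polynomial homology}, but it stands or falls on the naturality of the Hochschild-cohomological $k$-invariant under a DGA map between two-stage truncations with different $\pi_0$ and $\pi_{2n}$. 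You flag this yourself, and rightly so: that compatibility is plausible and morally ``built in,'' but it is not established in \cite{dugger2006postnikov} in the precise form you need (naturality in both the base ring and the coefficient bimodule simultaneously), so as written the argument has a gap that would need to be filled before it could replace the paper's proof. The paper's argument circumvents exactly this by replacing the $k$-invariant comparison with a concrete statement about where $x_{2n}$ is sent under the unique DGA map $S_{2n}^{p^l}\to C_j$.
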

\begin{proof}
     For $i\geq 0$, let $C_i$ denote the DGA corresponding to $[p^i]$ above. By inspection on the pullback squares defining these DGAs, one obtains maps 
     \[f_{i}\co C_{i-1} \to C_i\]
     sending $x_{2n}$ to $px_{2n}$ where $i\leq l-1$ by using the maps $\z/p^n \xrightarrow{\cdot p} \z/p^n$ that carry a derivation corresponding to $[p^i]$ to a derivation corresponding to $[p^{i+1}]$.

     Assume to the contrary that $\tau_{\leq 2n} S_{2n}^{p^l}\simeq C_j$ for some $j \neq 0$. By Theorem~\ref{theo maps from snk to dgas with polynomial homology} below, there is a map $S_{2n}^{p^l} \to C_0$ that carries $x_{2n}$ to a non-trivial element. Then the composite 
     \[S_{2n}^{p^l} \to C_0 \to C_j \simeq \tau_{\leq 2n}S_{2n}^{p^j}\]
     does not agree with the Postnikov section $S_{2n}^{p^l} \to \tau_{\leq 2n}S_{2n}^{p^j}$ since $C_0 \to C_j$ carries $x_{2n}$ to $p^j x_{2n}$. This contradicts the uniqueness of such maps, Corollary \ref{coro s2k maps are unique up to homotopy}.
\end{proof}

An essential component of our methods is our identification of formality through maps out of the DGAs $\snm$.

\begin{theo}\label{theo maps from snk to dgas with polynomial homology}
    Let $A$ be a DGA with homology $\z/m[x_{2n}]/x_{2n}^{k}$ and $l \geq 1$. Then there is a DGA-map $S_{2l}^m \to A$ carrying $x_{2l}\in \pis S_{2l}^m$ to a non-trivial element in $\pis A$ if and only if $l$ is the smallest integer for which $\tau_{\leq 2l}A$ is not formal.
\end{theo}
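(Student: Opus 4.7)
My plan is to prove both implications by combining uniqueness of maps out of the DGAs $S^m_{2j}$ into DGAs with even homology (Corollary~\ref{coro s2k maps are unique up to homotopy}), the identification $\colim_j S^m_{2j}\simeq\Z/m$ together with the absence of $\lim^1$-obstructions (Lemma~\ref{lemm mapping spaces for pushout dgas}), and the formality criterion of Corollary~\ref{cor:map-implies-formal}(1).

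For the \emph{forward direction}, I assume $f\co S^m_{2l}\to A$ sends $x_{2l}$ to a nonzero element; this forces $\pi_{2l}A\neq 0$, hence $l$ is a positive multiple of $n$. If $\tau_{\leq 2l}A$ were formal, there would be a DGA map $\Z/m\to\tau_{\leq 2l}A$, and the composite $S^m_{2l}\to\colim_j S^m_{2j}\simeq\Z/m\to\tau_{\leq 2l}A$ would kill $x_{2l}$ for degree reasons; yet by Corollary~\ref{coro s2k maps are unique up to homotopy} this composite agrees up to homotopy with $\tau_{\leq 2l}(f)$, contradicting $f(x_{2l})\neq 0$. To see that $\tau_{\leq 2l'}A$ is formal for $l'<l$, I compose $f$ with the truncation to produce $\bar f\co S^m_{2l}\to\tau_{\leq 2l'}A$; since $\pi_{2j}\tau_{\leq 2l'}A=0$ for all $j>l'$, Remark~\ref{remark:extensions-to-S2n} lets me extend $\bar f$ to the entire tower $\{S^m_{2j}\to\tau_{\leq 2l'}A\}_{j\geq l}$, and the $\lim^1$-vanishing of Lemma~\ref{lemm mapping spaces for pushout dgas} assembles the full compatible system (together with the maps $S^m_{2j}\to\tau_{\leq 2l'}A$ for $j<l$ obtained by restricting $f$) into a DGA map $\Z/m\to\tau_{\leq 2l'}A$. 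Formality follows from Corollary~\ref{cor:map-implies-formal}(1).

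For the \emph{backward direction}, I assume $l$ is minimal with $\tau_{\leq 2l}A$ non-formal. I first observe that $n\mid l$: otherwise $\pi_{2l}A=\pi_{2l-1}A=0$ and $\tau_{\leq 2l}A\simeq\tau_{\leq 2(l-1)}A$, violating minimality. I then construct maps $f_j\co S^m_{2j}\to A$ for $j=1,\dots,l$ by induction. The base case $f_1\co\Z\sslash m\to A$ exists and is essentially unique since $m=0$ in $\pi_0A$ and $\pi_1A=0$. For the inductive step, Remark~\ref{remark:extensions-to-S2n} provides a (unique) extension $f_j$ of $f_{j-1}$ provided $f_{j-1}(x_{2j-2})=0\in\pi_{2j-2}A$, which is automatic unless $n\mid(j-1)$. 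In the remaining case, formality of $\tau_{\leq 2(j-1)}A$ (which holds since $j-1<l$) provides a DGA map $\Z/m\to\tau_{\leq 2(j-1)}A$, and Corollary~\ref{coro s2k maps are unique up to homotopy} identifies the composite $S^m_{2(j-1)}\xrightarrow{f_{j-1}}A\to\tau_{\leq 2(j-1)}A$ with the composite $S^m_{2(j-1)}\to\Z/m\to\tau_{\leq 2(j-1)}A$, which kills $x_{2j-2}$ for degree reasons.

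Finally, to show $f_l(x_{2l})\neq 0$, I argue by contradiction: if $f_l(x_{2l})=0$, then the composite $\bar f_l\co S^m_{2l}\to\tau_{\leq 2l}A$ has $\bar f_l(x_{2l})=0$, and since $\pi_{2j}\tau_{\leq 2l}A=0$ for $j>l$ I can extend through the remainder of the tower and invoke $\lim^1$-vanishing to produce a DGA map $\Z/m\to\tau_{\leq 2l}A$; Corollary~\ref{cor:map-implies-formal}(1) then forces $\tau_{\leq 2l}A$ to be formal, contradicting the choice of $l$. The main technical subtlety throughout is verifying that the relevant generators $x_{2j-2}$ vanish before invoking Remark~\ref{remark:extensions-to-S2n}; in each case this is handled by funnelling $f_{j-1}$ through the colimit $\Z/m$, whose connectivity annihilates polynomial generators for degree reasons.
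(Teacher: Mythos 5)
Your proof is correct and follows essentially the same strategy as the paper's: uniqueness of maps out of the $S^m_{2j}$ (Corollary~\ref{coro s2k maps are unique up to homotopy}), the tower-extension mechanism of Remark~\ref{remark:extensions-to-S2n}, and the formality criterion of Corollary~\ref{cor:map-implies-formal}. The backward direction is virtually identical to the paper's. The one place where you take a more laborious route is in the forward direction, when proving formality of $\tau_{\leq 2l'}A$ for $l'<l$: you rebuild a map $\Z/m\to\tau_{\leq 2l'}A$ by extending $\bar f$ along the whole tower and invoking $\lim^1$-vanishing. The paper simply applies $\tau_{\leq 2(l-1)}(-)$ to \emph{both} sides of $f\co S^m_{2l}\to A$: since $\tau_{\leq 2(l-1)}(S^m_{2l})\simeq\Z/m$ (the homology of $S^m_{2l}$ is concentrated in degrees $0$ and $\geq 2l$), this immediately produces the required DGA map $\Z/m\to\tau_{\leq 2(l-1)}A$, and formality of the smaller truncations follows by truncating further. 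Your colimit/$\lim^1$ detour is valid and is secretly the same map by uniqueness, but it re-derives work already done in the proof of Corollary~\ref{prop map of dgas fp to a} when a one-line truncation would do.
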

\begin{proof}
    Let $l$ be the smallest integer for which $\tau_{\leq 2l}A$ is not formal and let $f\colon \Z/m \to \tau_{\leq 2(l-1)}A$ be the unit map of the formal DGA $\tau_{\leq 2(l-1)}A$. 
    First note that there is no map $S_{2s}^m \to A$ for $s>l$: Indeed, if there were, applying $\tau_{\leq 2l}(-)$ would result in a DGA map $\Z/m \to \tau_{\leq 2l}(A)$, which contradicts that $\tau_{\leq 2l}(A)$ is not formal by Corollary~\ref{cor:map-implies-formal}.
    
    Now, since $m=0$ in $\pi_0A$, there is a (unique) map of DGAs
    $S_{2}^m =\z \sslash m \to A$.
    Then we study extensions of this map to through the sequence
    \[\z \sslash m =S_2^m \to S_4^m \to S_6^m \to \cdots \to \colim_n S_{2n}^m \simeq \Z/m\]
    Let $1 \leq s\leq l-1$ and $g \co S_{2s}^m \to A$ be a DGA map. Then the two composites
    \[S_{2s}^m \xrightarrow{g} A \xrightarrow{\tau_{\leq 2(l-1)}} \tau_{\leq 2(l-1)}A \quad \text{ and } \quad S_{2s}^m \xrightarrow{\tau_{\leq0}} \Z/m \xrightarrow{f} \tau_{\leq 2(l-1)}(A)\]
    agree, by the uniqueness of such maps, Corollary~\ref{coro s2k maps are unique up to homotopy}. Hence $g$ induces the zero map on positive homotopy groups.
    Inductively, we deduce that for $s<l$ there is a (unique) map $S_{2s}^m \to A$ and that this map carries the generator $x_{2s} \in \pis S_{2s}^m$ to zero. Therefore, there is a (unique) DGA map $S_{2l}^m \to A$. By Remark \ref{remark:extensions-to-S2n}, This map is non-trivial on $\pi_{2l}$ as we have already observed that there is no DGA map $S_{2l+2}^m \to A$. 
    
    Conversely, assume that we have a DGA map $S_{2l}^m \to A$ that carries $x_{2l}$ to a non-trivial element. Then we obtain the map
    \[\z/m \simeq \tau_{\leq 2(l-1)}(S_{2l}^m) \to \tau_{\leq 2(l-1)}(A)\]
    which again implies that $\tau_{\leq 2(l-1)}(A)$ is formal. It remains to show that $\tau_{\leq 2l}(A)$ is not formal. Assume to the contrary that it is formal so that there is a map $\Z/m \to \tau_{\leq 2l}(A)$. Then by the uniqueness of DGA maps $S_{2s}^m \to \tau_{\leq 2l}(A)$, we deduce that the two composites
    \[S_{2l}^m \to A \xrightarrow{\tau_{\leq 2l}} \tau_{\leq 2l}A \quad \text{ and } \quad S_{2l}^m \to A \xrightarrow{\tau_{\leq 0}} \z/m \to \tau_{\leq 2l} A\]
    agree, contradicting the fact that $x_{2l}$ is mapped to a non-trivial element.
\end{proof}

\begin{theo}\label{thm:main-theorem}
For $1< k\leq \infty$,  $\tau_{\leq 2n(k-1)}S_{2n}^m$ is the unique DGA with homology $\Z/m[x_{2n}]/x_{2n}^k$ whose $2n$-Postnikov truncation is equivalent to $\tau_{\leq 2n}(S_{2n}^m)$. 
\end{theo}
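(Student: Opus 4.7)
The plan is to construct an explicit quasi-isomorphism $\bar f \colon \tau_{\leq 2n(k-1)}(S_{2n}^m) \to A$ by combining Theorem~\ref{theo maps from snk to dgas with polynomial homology} with a truncation comparison that pins down the image of the polynomial generator.

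First I would observe that, since $\pis(A) = \Z/m[x_{2n}]/x_{2n}^k$, the truncations $\tau_{\leq 2l}(A)$ for $0 \leq l < n$ are equivalent to $\Z/m$ and hence formal, while by hypothesis $\tau_{\leq 2n}(A) \simeq \tau_{\leq 2n}(S_{2n}^m)$, which is not formal by Corollary~\ref{coro:truncations-of-S-not-formal}. Thus $n$ is the smallest integer for which $\tau_{\leq 2n}(A)$ fails to be formal, so Theorem~\ref{theo maps from snk to dgas with polynomial homology} furnishes a DGA map $f \colon S_{2n}^m \to A$ with $f_\ast(x_{2n}) \neq 0$ in $\pi_{2n}(A) \cong \Z/m$.

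The main obstacle will be to upgrade this non-vanishing to the statement that $f_\ast(x_{2n})$ is in fact a generator of $\pi_{2n}(A)$, which is automatic when $m$ is prime but not for general $m$. I propose to argue as follows. Write $\tau_S \colon S_{2n}^m \to \tau_{\leq 2n}(S_{2n}^m)$ and $\tau_A \colon A \to \tau_{\leq 2n}(A)$ for the Postnikov truncation maps, and let $\phi \colon \tau_{\leq 2n}(S_{2n}^m) \xrightarrow{\simeq} \tau_{\leq 2n}(A)$ denote the assumed equivalence. The two DGA maps
\[ \phi \circ \tau_S, \qquad \tau_A \circ f \colon S_{2n}^m \rightrightarrows \tau_{\leq 2n}(A) \]
land in a target whose homotopy is concentrated in even degrees, so Corollary~\ref{coro s2k maps are unique up to homotopy} forces them to be homotopic. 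Evaluating on $\pi_{2n}$ and using that the two truncation maps are isomorphisms there, we deduce $f_\ast(x_{2n}) = \phi_\ast(x_{2n}) = \mu \cdot x_{2n}$ for some unit $\mu \in (\Z/m)^\times$, since $\phi_\ast$ on $\pi_{2n}$ is necessarily multiplication by a unit.

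To finish, since $A$ is $2n(k-1)$-truncated (interpreting this as no condition when $k=\infty$), the map $f$ factors as $S_{2n}^m \to \tau_{\leq 2n(k-1)}(S_{2n}^m) \xrightarrow{\bar f} A$, and the source has homology $\Z/m[x_{2n}]/x_{2n}^k$ matching that of $A$. On $\pi_{2nj}$ for $0 \leq j \leq k-1$, the map $\bar f$ sends $x_{2n}^j$ to $(\mu \cdot x_{2n})^j = \mu^j \cdot x_{2n}^j$, which is again a generator because $\mu^j \in (\Z/m)^\times$. Hence $\bar f$ is an isomorphism on all homotopy groups, thus a quasi-isomorphism, and we conclude $A \simeq \tau_{\leq 2n(k-1)}(S_{2n}^m)$ as desired.
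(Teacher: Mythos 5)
Your argument is correct and follows essentially the same route as the paper: apply Theorem~\ref{theo maps from snk to dgas with polynomial homology} to get a map $f \colon S_{2n}^m \to A$, use the uniqueness of maps out of $S_{2n}^m$ (Corollary~\ref{coro s2k maps are unique up to homotopy}) to pin down $f$ on $\pi_{2n}$ against the assumed equivalence of $2n$-Postnikov sections, and then use the multiplicative structure to conclude that the factorization through $\tau_{\leq 2n(k-1)}(S_{2n}^m)$ is a quasi-isomorphism. You spell out a couple of steps the paper leaves implicit (that $n$ is the least integer with non-formal $2n$-truncation, and the explicit unit $\mu$), but the underlying argument is identical.
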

\begin{proof}
By construction, $\tau_{\leq 2n(k-1)}S_{2n}^m$ is a DGA with homology $\z/m[x_{2n}]/x_{2n}^k$ and by Corollary~\ref{coro:truncations-of-S-not-formal}, it is not formal. Therefore, given another such DGA $A$, we need to show that $A \simeq \tau_{\leq 2n(k-1)} S_{2n}^m$.
By Theorem \ref{theo maps from snk to dgas with polynomial homology}, there is a (unique) map of DGAs $f \colon S_{2n}^m\to A$
 and this map carries $x_{2n} \in \pis \snm$ to a non-trivial element. Then the composite
 \[ S_{2n}^m \xrightarrow{f} A \to \tau_{\leq 2n}(A) \simeq \tau_{\leq 2n}(S_{2n}^m)\]
 is the canonical truncation map, again by uniqueness. Since the latter two maps induce isomorphisms on $\pi_{2n}$, so does the first. From the ring structure on the homotopy groups, we deduce that the induced map $\tau_{\leq 2n(k-1)}(S_{2n}^m) \to A$ is an equivalence as desired.
\end{proof}

\begin{coro}
    [Theorem \ref{ThmD}]\label{theo later uniquely defined nonformal dgas}
    For $1<k\leq \infty$, $\tau_{\leq 2n(k-1)}(S_{2n}^p)$ is the unique non-formal DGA with homology $\fp[x_{2n}]/x_{2n}^{k}$ having non-formal $2n$-Postnikov section. 
\end{coro}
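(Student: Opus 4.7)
The plan is to deduce this corollary directly from Theorem~\ref{thm:main-theorem} specialized to $m = p$, by identifying the hypothesis ``non-formal $2n$-Postnikov section'' with the condition ``$\tau_{\leq 2n}(-) \simeq \tau_{\leq 2n}(S_{2n}^p)$'' appearing in that theorem. The bridge between the two formulations is the Dugger--Shipley classification of DGAs with exterior homology $\Lambda_{\fp}[x_{2n}]$ recalled just before Terminology~\ref{T_2n}, which says that up to quasi-isomorphism there is a unique non-formal such DGA, namely $T_{2n}$.

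First I would verify that $\tau_{\leq 2n(k-1)}(S_{2n}^p)$ itself satisfies the claimed properties. Its homology is $\fp[x_{2n}]/x_{2n}^k$ directly from Construction~\ref{cons of the dgas s2km} by truncating $\fp[x_{2n}]$ in homological degree $2n(k-1)$; it is non-formal by Corollary~\ref{coro:truncations-of-S-not-formal} (applied with the integer $k-1 \geq 1$); and its $2n$-Postnikov section is $\tau_{\leq 2n}(S_{2n}^p) \simeq T_{2n}$, which is non-formal by the same corollary applied with $k=1$.

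For the uniqueness half, let $A$ be any non-formal DGA with homology $\fp[x_{2n}]/x_{2n}^k$ whose $2n$-Postnikov section is non-formal. Then $\tau_{\leq 2n}(A)$ is a non-formal DGA with homology $\Lambda_{\fp}[x_{2n}]$, since the only nonzero monomials in degrees $\leq 2n$ are $1$ and $x_{2n}$, and the truncation forces $x_{2n}^2 = 0$. By the classification recalled in Terminology~\ref{T_2n}, any such non-formal DGA is quasi-isomorphic to $T_{2n}$, and hence to $\tau_{\leq 2n}(S_{2n}^p)$ by Corollary~\ref{coro:truncations-of-S-not-formal}. This is precisely the hypothesis of Theorem~\ref{thm:main-theorem} with $m = p$, so we conclude $A \simeq \tau_{\leq 2n(k-1)}(S_{2n}^p)$. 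The proof is thus a short reduction to Theorem~\ref{thm:main-theorem}; the only point that requires any care is the identification of the ``non-formal $2n$-Postnikov section'' hypothesis with the condition in Theorem~\ref{thm:main-theorem}, and this is immediate from the uniqueness of $T_{2n}$. I do not anticipate any genuine obstacle beyond correctly invoking these earlier results.
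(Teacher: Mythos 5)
Your proof is correct and takes essentially the same approach as the paper: both reduce to Theorem~\ref{thm:main-theorem} via the observation that, by the uniqueness of the non-formal DGA with homology $\Lambda_{\fp}[x_{2n}]$, a non-formal $2n$-Postnikov section is automatically equivalent to $\tau_{\leq 2n}(S_{2n}^p)$. You merely spell out the existence half and the identification $\tau_{\leq 2n}(A)\simeq T_{2n}\simeq\tau_{\leq 2n}(S_{2n}^p)$ more explicitly than the paper does.
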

\begin{proof}
    This follows from Theorem~\ref{thm:main-theorem} since there is a unique non-formal DGA with homology $\Lambda_{\fp}[x_{2n}]$ and $\tau_{\leq 2n}(S_{2n}^p)$ is not formal.
\end{proof}

\subsection{Root adjunctions}\label{sec adj roots} In this section, we aim to adjoin roots to the polynomial generators in $\pis(S_{2n}^m)$. To do so, we will need to study the Hochschild homology of $S_{2n}^m$.

We begin with the following which is immediate from  \cite[Lemma 2.2]{angeltveit2010thhoflandko} and \cite[Remark 4.6.3.16]{lurie2016higher}. 

 \begin{lemm}\label{prop hh over z with coefficients is tor dual}
     Let $R$ be a connective $\bE_\infty$-ring spectrum and let $A \to B$ be a map of $R$-algebras with $B$ an $\bE_2$-$R$-algebra. Then there is a canonical equivalence:
     \[\thh^{R}(A,B) \simeq B \otimes_{B \otimes_{R} A } B. \]
 \end{lemm}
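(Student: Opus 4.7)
The strategy is a base change argument, invoking the $\bE_2$-structure on $B$ at two essential points.

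By Remark~\ref{Rmk:cohomology-dual-of-homology}, $\thh^R(A,B) = A \otimes_{A \otimes_R A^{\op}} B$, where $B$ carries the natural $A$-bimodule structure coming from $f\colon A \to B$. The first use of the hypothesis that $B$ is $\bE_2$---which is the content of the cited \cite[Remark~4.6.3.16]{lurie2016higher}---is that the multiplication $B \otimes_R B \to B$ is itself a map of $\bE_1$-$R$-algebras. Combined with $f$, this produces an $\bE_1$-$R$-algebra map $B \otimes_R A \to B$ and, using also the canonical $\bE_1$-equivalence $B \simeq B^{\op}$ (again from $\bE_2$), a ring map $B \otimes_R A^{\op} \to B$ as well.

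I then base-change $\thh^R(A,B)$ along the $\bE_1$-$R$-algebra map $f \otimes \id\colon A \otimes_R A^{\op} \to B \otimes_R A^{\op}$:
\[
\thh^R(A,B) \;=\; A \otimes_{A \otimes_R A^{\op}} B \;\simeq\; \bigl(A \otimes_{A \otimes_R A^{\op}} (B \otimes_R A^{\op})\bigr) \otimes_{B \otimes_R A^{\op}} B.
\]
A direct calculation---essentially the content of \cite[Lemma~2.2]{angeltveit2010thhoflandko} applied to the ring map $B \otimes_R A \to B$---identifies the inner tensor product with $B$ (as a right $B \otimes_R A^{\op}$-module, with the appropriate module structure supplied by $\mu$), giving $\thh^R(A,B) \simeq B \otimes_{B \otimes_R A^{\op}} B$.

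To replace $A^{\op}$ by $A$ on the right-hand side, I invoke the $\bE_2$-structure on $B$ one last time: the equivalence $B \simeq B^{\op}$ of $\bE_1$-$R$-algebras yields $B \otimes_R A^{\op} \simeq (B \otimes_R A)^{\op}$, and for any $\bE_1$-algebra $C$ and $C$-bimodule $M$ the swap of tensor factors provides $M \otimes_{C^{\op}} M \simeq M \otimes_C M$. Taking $C = B \otimes_R A$ and $M = B$ gives the desired conclusion. The main obstacle in this argument is book-keeping: one must ensure that the various bimodule structures remain compatible through each base change, and that the identifications of $B$ as a module over $B \otimes_R A^{\op}$ versus $B \otimes_R A$ agree under the $\bE_2$-structure. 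This is precisely what the two cited references guarantee in the $\infty$-categorical setting, justifying the description of the statement as ``immediate''.
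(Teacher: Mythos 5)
Your proposal is correct, and since the paper gives no actual argument for this lemma---only the bare claim that it is ``immediate'' from AHL Lemma 2.2 and Lurie's HA Remark 4.6.3.16---what you have written is precisely the elaboration that the citation gestures at: express $\thh^R(A,B)$ as $A\otimes_{A\otimes_R A^{\op}}B$, base-change along $A\otimes_R A^{\op}\to B\otimes_R A^{\op}$, collapse the inner tensor product (via the general identity $\thh^R(A,M\otimes_R N)\simeq N\otimes_A M$ for a left $A$-module $M$ and right $A$-module $N$, which here gives $A\otimes_A B\simeq B$), and then dispose of the $\op$ on the $A$-factor using the $\bE_1$-equivalence $B\simeq B^{\op}$ supplied by the $\bE_2$-structure. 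The two places the $\bE_2$-hypothesis enters---to make $\mu\colon B\otimes_R B\to B$ an $\bE_1$-map so that $B$ acquires a $(B\otimes_R A)$-module structure with the right properties, and to identify $B\otimes_R A^{\op}$ with $(B\otimes_R A)^{\op}$---are exactly right.

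Two minor remarks. First, I believe AHL Lemma~2.2 is essentially the whole statement of the present lemma (under a centrality hypothesis that the $\bE_2$-structure replaces), rather than just the identification of the inner tensor product, so the attribution in your middle paragraph is slightly misplaced; but that is a matter of citation hygiene, not of the mathematics. Second, as you note, the real work in a fully rigorous version is the bookkeeping of left versus right versus $\op$ at each step, particularly in checking that the residual right $B\otimes_R A^{\op}$-module structure on the inner tensor $A\otimes_{A\otimes_R A^{\op}}(B\otimes_R A^{\op})\simeq B$ agrees with the one needed for the outer tensor; this is routine but genuinely tedious, and your outline handles it correctly.
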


 \begin{lemm}\label{lemm hh cohomology of snm}
  There are isomorphisms of graded abelian groups:
\begin{align*}
\hh^{\z}_*(\snm,\z/m) & \cong \z/m[u_2]/u_2^{n+1} \quad \text{ and } \\ 
\hh_\Z^*(\snm,\z/m) & \cong \z/m[w_{2}]/w_{2}^{n+1}.
\end{align*}
\end{lemm}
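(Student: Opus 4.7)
The plan is to chain together three identifications: the Tor-description of THH, the pushout computation of $\Z/m\otimes_\Z \snm$ from Construction \ref{cons of the dgas s2km}, and the bar construction identity recalled just before Corollary \ref{coro bar const on cpn}. Applying Lemma \ref{prop hh over z with coefficients is tor dual} to the unit map $\snm \to \Z/m$ (noting that $\Z/m$ is an $\bE_\infty$-$\Z$-algebra, hence in particular an $\bE_2$-$\Z$-algebra) yields
\[
\thh^\Z(\snm,\Z/m) \simeq \Z/m \otimes_{\Z/m \otimes_\Z \snm} \Z/m \simeq \Z/m \otimes_{\Z/m[\Omega\CP^n]} \Z/m,
\]
where the second equivalence is the key input from Construction \ref{cons of the dgas s2km}. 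Invoking the functorial equivalence $R \otimes_{R[M]} R \simeq R[BM]$ for $M \in \Mon(\An)$ and using that $\CP^n$ is pointed connected (so $B\Omega\CP^n \simeq \CP^n$), this further simplifies to
\[
\thh^\Z(\snm,\Z/m) \simeq \Z/m[\CP^n].
\]

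Taking homotopy groups gives $\hh^\Z_*(\snm,\Z/m) \cong H_*(\CP^n;\Z/m) \cong \Z/m[u_2]/u_2^{n+1}$ as graded abelian groups, which is the first stated isomorphism.

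For the cohomological statement, I would apply Remark \ref{Rmk:cohomology-dual-of-homology} (noting again that $\Z/m$ is an $\bE_2$-$\Z$-algebra, so the hypothesis is satisfied) to obtain
\[
\thh_\Z(\snm,\Z/m) \simeq \Mor_{\Z/m}\bigl(\thh^\Z(\snm,\Z/m),\Z/m\bigr) \simeq \Mor_{\Z/m}(\Z/m[\CP^n],\Z/m).
\]
The homotopy groups of the right-hand side are $H^*(\CP^n;\Z/m)$. Since each $H_{2i}(\CP^n;\Z/m) \cong \Z/m$ is free over $\Z/m$ and concentrated in finitely many even degrees, the universal coefficient argument produces $H^*(\CP^n;\Z/m) \cong \Z/m[w_2]/w_2^{n+1}$ as graded abelian groups, giving the second isomorphism.

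There is no real obstacle: every ingredient has been set up in the preceding subsections, and the argument is essentially a verification that the pieces fit together. The only small care needed is in invoking Lemma \ref{prop hh over z with coefficients is tor dual} and Remark \ref{Rmk:cohomology-dual-of-homology}, both of which require the coefficient module to be an $\bE_2$-algebra — here this is automatic since $\Z/m$ is even commutative.
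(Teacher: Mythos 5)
Your proof is correct and follows the same route as the paper: reduce via Lemma~\ref{prop hh over z with coefficients is tor dual} and Construction~\ref{cons of the dgas s2km} to $\Z/m\otimes_{\Z/m[\Omega\CP^n]}\Z/m \simeq \Z/m[\CP^n]$, then dualize via Remark~\ref{Rmk:cohomology-dual-of-homology} and read off the (co)homology of $\CP^n$. You have merely spelled out a few steps the paper leaves implicit (the $R\otimes_{R[M]}R\simeq R[BM]$ identity and the $\bE_2$-hypothesis check), which is fine.
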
 
 \begin{proof}
     By Lemma~\ref{prop hh over z with coefficients is tor dual} and Construction~\ref{cons of the dgas s2km}, we have
     \[ \hh^\Z(S_{2n}^m,\Z/m) \simeq \Z/m \otimes_{\Z/m\otimes_\Z S_{2n}^m} \Z/m \simeq \Z/m \otimes_{\Z/m[\Omega \CP^n]} \Z/m \simeq \Z/m[\CP^n]\]
     so Remark~\ref{Rmk:cohomology-dual-of-homology} gives 
     \[\hh_\Z(S_{2n}^m,\Z/m) \simeq \map_{\Z/m}(\Z/m[\CP^n],\Z/m) \simeq \Mor(\sph[\CP^n],\Z/m).\]
     The claims then follow from the computations of the (co)homology of $\CP^n$.
 \end{proof}

We now move towards the proof of Theorem \ref{ThmC} (1), i.e.\ we construct infinitely many non-equivalent DGAs with homology $\z/m[x_{2n}]$. These DGAs are constructed by adjoining roots to the DGAs $S_{2n}^m$ (as in \cite[Construction 4.6]{ausoni2022adjroot}).

\begin{prop}\label{prop snm is a zxn algebra}
    The DGA $\snm$ admits the structure of a $\z[X_{2n}]$-algebra where $X_{2n}$ acts through $x_{2n}\in \pis \snm$.
\end{prop}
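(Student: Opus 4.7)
The natural strategy is to invoke Lurie's theory of $\bE_1$-centralizers together with the criterion already established in Proposition~\ref{prop:maps-to-centralizer}. Promoting the $\Z$-algebra structure on $S_{2n}^m$ to a $\Z[X_{2n}]$-algebra structure in which $X_{2n}$ acts as $x_{2n}$ amounts, by the universal property of the center recalled in \eqref{eq definition of center of a map}, to producing a suitable algebra map $\Z[X_{2n}] \to \Ztr^\Z(S_{2n}^m)$ lifting $x_{2n} \in \pi_{2n}(S_{2n}^m)$. This is exactly the kind of output that Proposition~\ref{prop:maps-to-centralizer}(1) is designed to produce when applied to an identity map.

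Concretely, I plan to apply Proposition~\ref{prop:maps-to-centralizer}(1) with $R = \Z$, $A = B = S_{2n}^m$, $f = \id_{S_{2n}^m}$, and $x = x_{2n}$. This will yield a map of $\Z$-algebras
\[ S_{2n}^m[X_{2n}] = \Z[X_{2n}] \otimes_\Z S_{2n}^m \longrightarrow S_{2n}^m \]
in $\Alg(\Z)_{S_{2n}^m/}$ sending $X_{2n}$ to $x_{2n}$, which by the formalism of centralizers encodes the sought-after $\Z[X_{2n}]$-algebra structure on $S_{2n}^m$ with $X_{2n}$ acting through $x_{2n}$.

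It then remains to verify the hypotheses of Proposition~\ref{prop:maps-to-centralizer}(1). The connectivity of $\Z$ and $S_{2n}^m$ is immediate from the inductive construction, and $\pi_*(S_{2n}^m) \cong \Z/m[x_{2n}]$ is concentrated in non-negative even degrees, so $S_{2n}^m$ is bounded below, even, and has graded commutative homotopy. The only nontrivial condition is the evenness of $\thh_\Z(S_{2n}^m,\pi_t(S_{2n}^m))$ for every $t\in\Z$. Since each $\pi_t(S_{2n}^m)$ is either zero or equal to $\Z/m$, with $S_{2n}^m$-bimodule structure pulled back along the augmentation $S_{2n}^m \to \Z/m$, it suffices to show that $\thh_\Z(S_{2n}^m,\Z/m)$ is even; but this is precisely Lemma~\ref{lemm hh cohomology of snm}, which identifies $\hh_\Z^*(S_{2n}^m,\Z/m) \cong \Z/m[w_2]/w_2^{n+1}$, visibly supported in even degrees. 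The essential content of the argument therefore lies in that Hochschild cohomology computation, which in turn rests on the pushout description $\Z/m \otimes_\Z S_{2n}^m \simeq \Z/m[\Omega\CP^n]$ from Construction~\ref{cons of the dgas s2km}; given this, the present proposition reduces to a direct application of the centralizer machinery, and I do not anticipate further obstacles.
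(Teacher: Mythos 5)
Your proposal correctly identifies the relevant machinery (the centralizer $\Ztr^\Z(S_{2n}^m)$) and the key input computation (the evenness of $\hh_\Z(S_{2n}^m,\Z/m)$ from Lemma~\ref{lemm hh cohomology of snm}), but it contains a genuine gap at the very end. Proposition~\ref{prop:maps-to-centralizer}(1) applied with $A=B=S_{2n}^m$ and $f=\id$ produces a map $S_{2n}^m[X_{2n}]\to S_{2n}^m$ in $\Alg(\Z)_{S_{2n}^m/}$, which by the universal property of the centralizer corresponds to an $\bE_1$-$\Z$-algebra map $\Z[X_{2n}]\to \Ztr^\Z(S_{2n}^m)$. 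This is \emph{not} the same data as a $\Z[X_{2n}]$-algebra structure on $S_{2n}^m$. As recalled right after \eqref{eq definition of center of a map}, extensions of the $\bE_1$-$\Z$-algebra structure on $S_{2n}^m$ to an $\bE_1$-$\Z[X_{2n}]$-algebra structure correspond to $\bE_{2}$-$\Z$-algebra maps $\Z[X_{2n}]\to \Ztr^\Z(S_{2n}^m)$, one level of commutativity higher than what your argument produces. In other words, the sentence ``which by the formalism of centralizers encodes the sought-after $\Z[X_{2n}]$-algebra structure'' is precisely where the proof breaks: an $\bE_1$-map $\Z[X_{2n}]\to\Ztr^\Z(S_{2n}^m)$ only encodes a choice of an element $\bar x_{2n}$ in the center lying over $x_{2n}$, not a module structure.

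The missing step is to upgrade the $\bE_1$-map $\Z[X_{2n}]\to \Ztr^\Z(S_{2n}^m)$ to an $\bE_2$-map. This is where the evenness of the center plays a second, independent role: because $\Ztr^\Z(S_{2n}^m)$ is even (which follows from Lemma~\ref{lemma:THH}, Remark~\ref{rema grdd cmmtativ is suffcnt}, and Lemma~\ref{lemm hh cohomology of snm}, just as you observed), one can invoke \cite[Proposition~3.15]{ausoni2022adjroot} to extend any $\bE_1$-$\Z$-algebra map out of $\Z[X_{2n}]$ with even target to an $\bE_2$-$\Z$-algebra map. Only after this promotion does the resulting map $\Z[X_{2n}]\to\Ztr^\Z(S_{2n}^m)$ equip $S_{2n}^m$ with the desired $\Z[X_{2n}]$-algebra structure in which $X_{2n}$ acts by $x_{2n}$. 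So your computation of the evenness of $\hh_\Z(S_{2n}^m,\Z/m)$ is exactly the needed ingredient, but you use it only to lift $x_{2n}$ to the center and stop short of the crucial $\bE_1\to\bE_2$ upgrade that makes the statement true.
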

\begin{proof}
    By Lemma~\ref{lemma:THH}, Remark \ref{rema grdd cmmtativ is suffcnt} and Lemma~\ref{lemm hh cohomology of snm} we find that \[\Ztr^\Z(S_{2n}^m) = \thh_\Z(S_{2n}^m,S_{2n}^m)\] is even and that the map $\Ztr^\Z(S_{2n}^m) \to S_{2n}^m$ is surjective on homotopy groups. Choose a lift $\bar{x}_{2n}$ of $x_{2n}$ and consider the associated $\bE_1$-$\Z$-algebra map \[\Z[X_{2n}] \to \Ztr^\Z(S_{2n}^m).\] Since its target is even, it follows from \cite[Proposition 3.15]{ausoni2022adjroot} that this map extends to an $\bE_2$-$\Z$-algebra map, making $S_{2n}^m$ into the desired $\Z[X_{2n}]$-algebra.
\end{proof}

\begin{rema}\label{remark:non-unqieness}
    The $\Z[X_{2n}]$-algebra structure on $S_{2n}^m$ is not canonical. In fact, in the above argument we have made two choices: that of a lift $\bar{x}_{2n}$ of $x_{2n}$ and that of an extension of the resulting $\bE_1$-map $\Z[X_{2n}] \to \Ztr^\Z(S_{2n}^m)$ to an $\bE_2$-map.

    Nevertheless, let us now fix a choice of a $\Z[X_{2n}]$-algebra structure on $S_{2n}^m$ as in Proposition~\ref{prop snm is a zxn algebra}. We will always use this choice unless we state otherwise.
\end{rema}

We now note that there are canonical $\bE_\infty$-$\z$-algebra maps
\[\z[X_{2nl}] \to \z[X_{2n}]\]
that carry $X_{2nl}$ to $X_{2n}^l$ in homotopy, because both sides are formal as $\bE_\infty$-$\Z$-algebras. 
Through this map, $- \otimes_{\z[X_{2nl}]} \z[X_{2n}]$ defines a functor from the $\infty$-category of $\bE_k$-$\z[X_{2nl}]$-algebras to $\bE_k$-$\z[X_{2n}]$-algebras for any $k\geq 1$.

\begin{cons}\label{cons adj root to snm}
Let $l>0$ and recall that we have fixed a choice of $\Z[X_{2n}]$-algebra structure on $S_{2nl}^m$ (Remark~\ref{remark:non-unqieness}). Following \cite[Construction 4.6]{ausoni2022adjroot} we define: 
\[ S_{2nl}^m \big[\sqrt[l]{x_{2nl}}\big] := S_{2nl}^m \otimes_{\z[X_{2nl}]} \z[X_{2n}]\]
so that $S_{2nl}^m [\sqrt[l]{x_{2nl}}] $ is a DGA equipped with a map of DGAs
\[f\co S_{2nl}^m \to S_{2nl}^m [\sqrt[l]{x_{2nl}}]\]
and the Tor spectral sequence shows that there is an isomorphism of graded rings 
\[\pis (S_{2nl}^m [\sqrt[l]{x_{2nl}}]) \cong \z/m[x_{2n}],\]
i.e.\ this construction adjoins an $l$ root to $x_{2nl}$ at the level of homotopy groups. In particular, the map $f$ above carries $x_{2nl}$ to $x_{2n}^l$ on the level of homotopy groups. 
\end{cons}

\begin{rema}\label{rema root adj is not unique}
As explained in Remark~\ref{remark:non-unqieness}, we know of no preferred choice for a $\Z[X_{2nl}]$-algebra structure on $S_{2nl}^m$. We do not know in what way the $\bE_1$-$\Z$-algebra structure on $S_{2nl}^m[\sqrt[l]{x_{2nl}}]$ depends on such choices.
\end{rema}

\begin{coro}\label{coro truncations of root adj to snm}
  The DGA  $\tau_{\leq t }S_{2nl}^m[\sqrt[l]{x_{2n}}]$ is non-formal if and only if $t \geq 2nl$.
\end{coro}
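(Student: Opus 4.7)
The plan is to use the canonical map $f\colon S_{2nl}^m \to A := S_{2nl}^m[\sqrt[l]{x_{2nl}}]$ from Construction \ref{cons adj root to snm}, which sends $x_{2nl}$ to $x_{2n}^l \in \pi_{2nl}(A)$. The proof splits cleanly into the two implications.

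For the ``only if'' direction, observe first that any further Postnikov truncation of a formal DGA is again formal (a formal DGA is equivalent to its homology viewed as a graded ring, and truncating a graded ring yields a graded ring). Hence it suffices to show $\tau_{\leq 2nl}(A)$ is non-formal. If it were formal, Corollary \ref{cor:map-implies-formal} would yield a map $\Z/m \to \tau_{\leq 2nl}(A)$, and by the uniqueness of maps $S_{2nl}^m \to \tau_{\leq 2nl}(A)$ (Corollary \ref{coro s2k maps are unique up to homotopy}), the two composites
\begin{equation*}
S_{2nl}^m \xrightarrow{f} A \to \tau_{\leq 2nl}(A) \quad\text{and}\quad S_{2nl}^m \to \Z/m \to \tau_{\leq 2nl}(A)
\end{equation*}
would have to agree. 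The first sends $x_{2nl}$ to $x_{2n}^l \neq 0$ while the second sends $x_{2nl}$ to $0$, a contradiction.

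For the ``if'' direction, fix $t < 2nl$. Since $\pi_*(A)$ is concentrated in multiples of $2n$, $\tau_{\leq t}(A)$ is a (possibly trivial) truncation of $B := \tau_{\leq 2n(l-1)}(A)$, so it suffices to show $B$ is formal. By Corollary \ref{cor:map-implies-formal}, we aim to produce a map $\Z/m \to B$. Using $\Z/m \simeq \colim_k S_{2k}^m$ and cofinality of the tail $\{S_{2k}^m\}_{k \geq nl}$, it is enough to assemble a compatible family of maps $S_{2k}^m \to B$ for $k \geq nl$. The composite $S_{2nl}^m \xrightarrow{f} A \to B$ sends $x_{2nl}$ into $\pi_{2nl}(B) = 0$, so by Remark \ref{remark:extensions-to-S2n} it extends to $S_{2(nl+1)}^m \to B$; iterating, at each subsequent step the new generator $x_{2k}$ lands in $\pi_{2k}(B) = 0$, so $\map_{\Alg(\Z)}(S_{2k}^m, B)$ is non-empty for every $k \geq nl$.

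The main obstacle is the coherence of these inductive extensions into a genuine compatible family, which is handled by the same Milnor $\lim$-$\lim^1$ argument as in Corollary \ref{prop map of dgas fp to a}: Lemma \ref{lemm mapping spaces for pushout dgas} ensures that each $\map_{\Alg(\Z)}(S_{2k}^m, B)$ is connected with homotopy concentrated in odd degrees, and that the restriction maps are surjective on $\pi_1$. Hence the relevant $\lim^1$ vanishes, the inverse limit is non-empty, and by cofinality it computes $\map_{\Alg(\Z)}(\Z/m, B)$, producing the required $\Z/m$-algebra structure on $B$ and completing the proof.
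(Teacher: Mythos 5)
Your first paragraph (note: you have swapped the labels — this is the ``if'' direction of the stated equivalence) is correct and mirrors the converse implication in the proof of Theorem \ref{theo maps from snk to dgas with polynomial homology}. In fact the paper's proof of this corollary is a one-liner: apply Theorem \ref{theo maps from snk to dgas with polynomial homology} to the map $f\colon S_{2nl}^m \to A$, which carries $x_{2nl}$ to the non-zero element $x_{2n}^l$; the theorem immediately says $nl$ is the smallest integer $j$ for which $\tau_{\leq 2j}(A)$ is not formal, and the corollary follows because $\pi_*(A)$ is concentrated in degrees divisible by $2n$.

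Your second paragraph, though, is both heavier than needed and contains a genuine gap. To get the map $\Z/m \to B := \tau_{\leq 2n(l-1)}(A)$, the proof of Theorem \ref{theo maps from snk to dgas with polynomial homology} uses the simple observation that $\tau_{\leq 2n(l-1)}(S_{2nl}^m)\simeq \Z/m$ (since $\pi_*(S_{2nl}^m)=\Z/m[x_{2nl}]$ is trivial in degrees $1,\dots,2nl-1$ and $2n(l-1)<2nl$); the composite $S_{2nl}^m\xrightarrow{f}A\to B$ therefore factors through $\Z/m$ by the universal property of truncation, and Corollary \ref{cor:map-implies-formal} finishes. Your version instead builds maps $S_{2k}^m\to B$ stage by stage and appeals to a Milnor $\lim$--$\lim^1$ argument to conclude $\Map(\Z/m,B)\neq\emptyset$. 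But the Milnor sequence requires a compatible system of base points, which is exactly what you are trying to produce; in Corollary \ref{prop map of dgas fp to a}, which you cite, the $\lim^1$-vanishing is used only for \emph{uniqueness} and the proof explicitly assumes a map $\Z/m\to B$ already exists. What is actually true, and would repair your route, is that a tower of non-empty \emph{connected} anima has non-empty limit (replace by a tower of fibrations; each map is then surjective because the source is non-empty and the target is connected), but that is a different statement from $\lim^1$-vanishing. In any case, the factorization through $\tau_{\leq 2n(l-1)}(S_{2nl}^m)\simeq\Z/m$ renders the whole coherence discussion unnecessary.
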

\begin{proof}
    This follows by applying Theorem \ref{theo maps from snk to dgas with polynomial homology} to the map $S_{2nl}^m \to S_{2nl}^m[\sqrt[l]{x_{2n}}]$ from  Construction \ref{cons adj root to snm}. 
\end{proof}
\begin{coro}\label{coro root adjs are distinct}
    In the situation of Construction \ref{cons adj root to snm}, we have 
    \[S_{2nl}^m [\sqrt[l]{x_{2nl}}] \not \simeq S_{2nl'}^m [\sqrt[l']{x_{2nl'}}]\]
    as DGAs whenever $l \neq l'$.
\end{coro}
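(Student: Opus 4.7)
The plan is to extract a quasi-isomorphism invariant from Corollary \ref{coro truncations of root adj to snm} that distinguishes the DGAs for different values of $l$. For any DGA $A$, define
\[ \nu(A) := \min\{t \geq 0 : \tau_{\leq t}A \text{ is non-formal}\} \in \Z_{\geq 0} \cup \{\infty\}. \]
This is manifestly a quasi-isomorphism invariant: Postnikov truncations are functorial and preserve quasi-isomorphisms, and formality is itself a quasi-isomorphism invariant. So if two DGAs are quasi-isomorphic, they must have the same value of $\nu$.

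Now I would invoke Corollary \ref{coro truncations of root adj to snm}, which states that $\tau_{\leq t}S_{2nl}^m [\sqrt[l]{x_{2nl}}]$ is non-formal if and only if $t \geq 2nl$. This translates directly into the equality
\[ \nu\bigl(S_{2nl}^m [\sqrt[l]{x_{2nl}}]\bigr) = 2nl. \]
Hence if $l \neq l'$, then $\nu\bigl(S_{2nl}^m [\sqrt[l]{x_{2nl}}]\bigr) = 2nl \neq 2nl' = \nu\bigl(S_{2nl'}^m [\sqrt[l']{x_{2nl'}}]\bigr)$, so the two DGAs cannot be quasi-isomorphic. This completes the proof.

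There is no real obstacle here since the substantive work has already been done in establishing Corollary \ref{coro truncations of root adj to snm}; the present corollary is essentially a formal consequence of the fact that the ``first degree of non-formality'' is a quasi-isomorphism invariant. The only thing to verify is the logical structure, namely that truncation and formality interact well with quasi-isomorphism, which is standard.
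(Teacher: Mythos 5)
Your proof is correct and is essentially the paper's own argument, just packaged by naming the invariant $\nu(A)$ explicitly: the paper compares $2nl'$-Postnikov truncations directly (assuming $l'<l$, one is formal and the other is not by Corollary \ref{coro truncations of root adj to snm}), which is the same comparison your $\nu$ encodes.
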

\begin{proof}
Assume $l'<l$, then the $2nl'$-Postnikov sections of these DGAs are not quasi-isomorphic by Corollary~\ref{coro truncations of root adj to snm}.
\end{proof}

We arrive at one of the main results of this paper (Theorem~\ref{ThmC} (1)), in particular that for all $n>0$ and $m>1$, there are  infinitely many quasi-isomorphism classes of DGAs with homology $\z/m[x_{2n}]$.

\section{Topological formality for DGAs with polynomial homology}\label{sect topological formality}

Here, our goal is to prove Theorem \ref{ThmB} and our other results on the topological formality/non-formality of DGAs with polynomial homology. 
\subsection{Topological formality for the $m=p$ case}\label{subsec topological formality for m is p}
We begin with proving that $S_{2p-2}:= S_{2p-2}^p$ is topologically formal, i.e.\ that it is equivalent to the formal DGA $\fp[x_{2p-2}]$ as a ring spectrum. Our approach will be built on analyzing $\pi_0(\Ztr^\Z(S_{2p-2}))$. To that end, we need some preliminary computations. Recall that we have $\tau_{\leq 2p-2}(S_{2p-2}) \simeq T_{2p-2}$ (Terminology~\ref{T_2n}) and that $T_{2p-2}$ is topologically formal, so that in particular, there is a ring spectrum map $\fp \to T_{2p-2}$.

\begin{lemm}\label{lemm technical for later hh of s to t}
We have the following connectivity estimates:
\begin{enumerate}
    \item\label{item:connectivity1} The map $\hh^\Z_*(S_{2p-2},\fp) \to \hh^\Z_*(T_{2p-2},\fp)$ is an isomorphism for $*< 4p-4$,
    \item\label{item:connectivity2} All maps in the following composite are isomorphisms for $* \leq 2p-2$
    \[ \thh_*(\fp) \to \thh_*(T_{2p-2},\fp) \to \hh^\Z_*(T_{2p-2},\fp) \to \hh^\Z(\fp,\fp).\]
\end{enumerate}
\end{lemm}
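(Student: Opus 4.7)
The plan is to prove both claims by analysing the two-sided bar construction underlying the formula $\thh^R(A,\fp) \simeq \fp \otimes_{\fp \otimes_R A} \fp$ (Lemma~\ref{prop hh over z with coefficients is tor dual}). For (\ref{item:connectivity1}), I would first observe that since $T_{2p-2} \simeq \tau_{\leq 2p-2} S_{2p-2}$ and $\pi_*(S_{2p-2}) \cong \fp[x_{2p-2}]$, the fibre of the truncation map $S_{2p-2} \to T_{2p-2}$ has its first non-trivial homotopy in degree $4p-4$. Base-changing along $\fp \otimes_\Z (-)$ preserves this connectivity via the cofibre sequence $F \xrightarrow{p} F \to \fp \otimes_\Z F$, so the map of augmented $\fp$-algebras $\fp \otimes_\Z S_{2p-2} \to \fp \otimes_\Z T_{2p-2}$ has $(4p-4)$-connective fibre, while both augmentation ideals are $1$-connective. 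The bar spectral sequence computing $\fp \otimes_{\fp \otimes_\Z (-)} \fp$ then yields an isomorphism on $\pi_k$ for $k < 4p-4$: the key estimate is that the $n$-fold tensor powers of the augmentation ideals have map-of-fibres of connectivity at least $4p+n-5$, and the bottleneck in the spectral sequence sits at $n=1$.

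For (\ref{item:connectivity2}), I would treat the three maps separately, using as a crucial extra input the topological formality of $T_{2p-2}$ (Terminology~\ref{T_2n}), which produces a ring spectrum map $\fp \to T_{2p-2}$ and identifies the underlying ring spectrum of $T_{2p-2}$ with the exterior $\fp$-algebra $\Lambda_\fp[x_{2p-2}]$. For the first map, this $\fp$-algebra structure allows us to decompose
\[\fp \otimes_\sph T_{2p-2} \simeq (\fp \otimes_\sph \fp) \otimes_\fp T_{2p-2},\]
and a K\"unneth identification for the bar construction (i.e.\ $\fp \otimes_{A \otimes_\fp B} \fp \simeq (\fp \otimes_A \fp) \otimes_\fp (\fp \otimes_B \fp)$ for augmented $\fp$-algebras $A,B$) yields
\[\thh^\sph(T_{2p-2}, \fp) \simeq \thh^\sph(\fp) \otimes_\fp \thh^\fp(T_{2p-2}, \fp),\]
where the second factor is the bar of an exterior algebra on a generator in degree $2p-2$, hence the divided power algebra $\Gamma_\fp[y_{2p-1}]$, which is $\fp$-concentrated in degree $0$ for $* \leq 2p-2$. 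Under this identification the first map is the inclusion of a tensor factor and is therefore an isomorphism in the stated range.

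For the second and third maps I would revert to the bar-construction connectivity argument from part (\ref{item:connectivity1}). The fibre of $\fp \otimes_\sph T_{2p-2} \to \fp \otimes_\Z T_{2p-2}$ is $\fib(\fp \otimes_\sph \Z \to \fp) \otimes_\Z T_{2p-2}$; the first factor is $(2p-2)$-connective because $\pi_*(\fp \otimes_\sph \Z) \cong H_*(H\Z;\fp)$ vanishes in positive degrees below $2p-2$ by a standard dual Steenrod algebra computation, and tensoring with the connective $\Z$-module $T_{2p-2}$ preserves this connectivity. For the third map, the fibre of $T_{2p-2} \to \fp$ is $\Sigma^{2p-2}\fp$, so $\fp \otimes_\Z T_{2p-2} \to \fp \otimes_\Z \fp$ has $(2p-2)$-connective fibre. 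In both cases the bar argument yields the desired isomorphism on $\pi_k$ for $k \leq 2p-2$.

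The main obstacle I anticipate is justifying the K\"unneth step of the first map in part (\ref{item:connectivity2}): the $\fp$-algebra structure used on $T_{2p-2}$ is only available at the level of ring spectra, not as a DGA, because a DGA map $\fp \to T_{2p-2}$ would by Corollary~\ref{cor:map-implies-formal} contradict the non-formality of $T_{2p-2}$. This is precisely why an analogous decomposition cannot be used over $\Z$, and the second and third maps require the direct connectivity argument. A secondary technical point is careful tracking of differentials in the bar spectral sequence to ensure the claimed iso ranges are not disrupted by incoming or outgoing differentials.
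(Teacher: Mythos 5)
Your proposal is correct, and while part~(1) is essentially the same argument as in the paper (observe that $S_{2p-2}\to T_{2p-2}$ has $(4p-4)$-connective fibre, then use that $\hh^\Z(-,\fp)$ preserves connectivity), your treatment of part~(2) takes a genuinely different route. The paper first identifies the \emph{composite} with the classical comparison $\fp[u_2]\to\Gamma_\fp[u_2]$, separately proves the \emph{last} map is an isomorphism for $*<2p$ via the identifications $\hh^\Z(S_{2p-2},\fp)\simeq\fp[\CP^{p-1}]$ and $\hh^\Z(\fp,\fp)\simeq\fp[\CP^\infty]$, and then handles the \emph{first} map by showing its retraction $\thh_\sph(T_{2p-2},\fp)\to\thh_\sph(\fp)$ is an isomorphism in the range; the middle map follows by a two-out-of-three argument. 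You instead attack all three maps directly by uniform bar-connectivity estimates, and you prove the first map is an isomorphism by a K\"unneth decomposition $\thh^\sph(T_{2p-2},\fp)\simeq\thh^\sph(\fp)\otimes_\fp\thh^\fp(T_{2p-2},\fp)$, which uses the $\fp$-algebra structure on the underlying ring spectrum of $T_{2p-2}$ in an essential way (whereas the paper only uses that $\fp\to T_{2p-2}\to\fp$ gives a retraction on THH). Both approaches are valid; yours is more uniform and avoids the $\CP^{p-1}\to\CP^\infty$ and $\fp[u_2]\to\Gamma_\fp[u_2]$ inputs entirely, at the cost of a slightly weaker (but still sufficient) bound for the last map ($*\leq 2p-2$ rather than $*<2p$). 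Your closing remark correctly identifies the subtlety that the K\"unneth step is available only at the ring-spectrum level and cannot be transported over $\Z$, which is precisely why the second and third maps require the direct connectivity argument.
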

\begin{proof}
\eqref{item:connectivity1} follows from the fact that Hochschild homology preserves connectivity and that $S_{2p-2}\to T_{2p-2}$ is a $(4p-5)$-Postnikov truncation. 
For \eqref{item:connectivity2}, we begin by noting that the whole composite identifies with the map $\fp[u_2] \to \Gamma_{\fp}[u_2]$ which is an isomorphism for degrees $<2p$.
Next, we show that the last map
$\hh^{\z}_*(T_{2p-2},\fp) \to \hh^{\z}_*(\fp,\fp)$
is an isomorphism for $*<2p$. By \eqref{item:connectivity1}, we may replace $T_{2p-2}$ with $S_{2p-2}$, after which, using Lemma~\ref{prop hh over z with coefficients is tor dual}, the map in question becomes equivalent to the map 
\begin{equation}\label{eq:map}
\pis (\fp \otimes_{\fp \otimes_{\z} S_{2p-2}} \fp ) \to \pis (\fp \otimes_{\fp \otimes_{\z} \fp} \fp)
\end{equation}
induced by $\fp \otimes_\Z S_{2p-2} \to \fp \otimes_\Z \fp$. This map is a $1$-Postnikov truncation as $\fp \otimes_\Z -$ preserves connectivity (and the target is 1-truncated).
By Lemma~\ref{lemma:1-truncation-CP}, it agrees up to an autoequivalence of $\fp \otimes_{\z} \fp$ with the $\fp$-algebra map $\fp[\Omega\CP^{p-1}] \to \fp[S^1]$. Using Corollary \ref{coro bar const on cpn}, we deduce that the map \eqref{eq:map} is equivalent to the map $\pis \fp[\CP^{p-1}] \to \pis \fp[\CP^\infty]$ which is an isomorphism for $*<2p$ as desired. 

It will then suffice to show that the map $\thh_*(T_{2p-2},\fp) \to \thh_*(\fp,\fp)$ (which is a retraction of the first map in the above composite) is an isomorphism for $*<2p-1$. Again using Lemma~\ref{prop hh over z with coefficients is tor dual}, the map is equivalent to the map induced on Bar constructions from the map $\fp\otimes_\sph T_{2p-2} \to \fp \otimes_\sph \fp$. This map induces an isomorphism on $\pis$ for $*<2p-2$ and a surjection on $\pi_{2p-2}$ as the functor $- \otimes_\sph \fp$ preserves connectivity. Now, the Bar construction raises connectivity by 1, so the map under investigation induces an isomorphism for $* < 2p-1$ as needed.
\end{proof}

 Taking $\fp$-duals, we obtain the following (see Remark~\ref{Rmk:cohomology-dual-of-homology}). 

\begin{coro}\label{coro technical map of hh cohomology}
     The map $\hh_\Z^*(T_{2p-2},\fp) \to \hh_\Z^*(S_{2p-2},\fp)$ is an isomorphism for $*< 4p-4$ and the map \[ \hh^*_{\z}(T_{2p-2},\fp) \to \thh_\sph^*(T_{2p-2},\fp) \] is an isomorphism for $*\leq 2p-2$.
\end{coro}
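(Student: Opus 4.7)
The proof strategy for both parts is $\fp$-dualization, via Remark~\ref{Rmk:cohomology-dual-of-homology}. That remark equates $\thh_R(A,\fp) \simeq \Mor_\fp(\thh^R(A,\fp),\fp)$, identifying Hochschild cohomology with the $\fp$-linear dual of the corresponding homology spectrum (for $R\in\{\Z,\sph\}$). Under this identification, both cohomology maps appearing in the corollary are realized as $\fp$-duals of homology maps already controlled by Lemma~\ref{lemm technical for later hh of s to t}(1) and~(2).

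For part~(1), let $\alpha\colon \hh^\Z(S_{2p-2},\fp)\to\hh^\Z(T_{2p-2},\fp)$ be the map from Lemma~\ref{lemm technical for later hh of s to t}(1), whose fiber $F$ satisfies $\pi_i F=0$ for $i<4p-4$. Applying $\Mor_\fp(-,\fp)$ produces $\alpha^\vee$ with fiber $F^\vee[-1]$, for which $\pi_n F^\vee[-1]=(\pi_{-n-1}F)^*$ (since we are dualizing $\fp$-modules). A short long-exact-sequence computation then shows that $\alpha^\vee$ is an isomorphism on $\pi_k$ for $k\geq 5-4p$, i.e., in cohomological degrees $*<4p-4$, as required.

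Part~(2) is obtained by the same dualization applied to the middle map $\beta\colon \thh^\sph(T_{2p-2},\fp)\to\hh^\Z(T_{2p-2},\fp)$ from the composite in Lemma~\ref{lemm technical for later hh of s to t}(2). The main obstacle is to retain the top degree $*=2p-2$: naive dualization of an iso on $\pi_*$ for $*\leq 2p-2$ only yields iso on cohomological $*<2p-2$, because the relevant fiber is only \emph{a priori} $(2p-2)$-connective. To close this one-degree gap, one uses that $\hh^\Z_*(T_{2p-2},\fp)$ is concentrated in even degrees for $*<4p-4$, which follows by combining part~(1) of Lemma~\ref{lemm technical for later hh of s to t} with the identification $\hh^\Z_*(S_{2p-2},\fp)\cong\fp[u_2]/u_2^{p}$ from Lemma~\ref{lemm hh cohomology of snm}. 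In particular $\pi_{2p-1}\hh^\Z(T_{2p-2},\fp)=0$, which by exactness upgrades the connectivity of the fiber of $\beta$ to $(2p-1)$, and hence the dualization argument delivers the desired iso range $*\leq 2p-2$.
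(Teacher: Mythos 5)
Your proposal is correct in its conclusions, but the fiber--connectivity route you take is noticeably more complicated than what ``Taking $\fp$-duals'' is really meant to invoke, and this costs you some care at the boundary of the ranges. The cleaner observation is that for $\fp$-module spectra $X$ (which all the spectra appearing here are, by Lemma~\ref{prop hh over z with coefficients is tor dual}) one has a natural isomorphism $\pi_{-j}\Mor_\fp(X,\fp)\cong(\pi_j X)^{*}$, so a map of $\fp$-module spectra inducing an isomorphism on $\pi_j$ for $j$ in a range $J$ induces an isomorphism on $\pi_{-j}$ of $\fp$-duals for $j\in J$, \emph{degree by degree and with no loss at the endpoints}. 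Applied to Lemma~\ref{lemm technical for later hh of s to t}(1) and the middle map in Lemma~\ref{lemm technical for later hh of s to t}(2), this directly yields both claimed ranges, and in particular there is no ``gap'' to close in part~(2); the apparent loss of one degree is an artifact of passing through the fiber.

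In part~(1) of your argument there is a genuine imprecision: from the statement of Lemma~\ref{lemm technical for later hh of s to t}(1) (isomorphism for $*<4p-4$, i.e.\ $*\leq 4p-5$), the long exact sequence only gives $\pi_i F=0$ for $i\leq 4p-6$; the group $\pi_{4p-5}F$ identifies with $\coker\big(\pi_{4p-4}\hh^\Z(S_{2p-2},\fp)\to\pi_{4p-4}\hh^\Z(T_{2p-2},\fp)\big)=\pi_{4p-4}\hh^\Z(T_{2p-2},\fp)$, which is not controlled. So your assertion that ``$\pi_iF=0$ for $i<4p-4$'' is unjustified as written. Running your argument with the weaker (correct) connectivity $\pi_iF=0$ for $i<4p-5$, one only concludes an isomorphism in cohomological degrees $*\leq 4p-6$, not $*<4p-4$. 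The top degree $*=4p-5$ must then be handled separately, e.g.\ by noting that it is odd, that $\hh^\Z_*(S_{2p-2},\fp)\cong\fp[u_2]/u_2^p$ is concentrated in even degrees, and that $\hh^\Z_{4p-5}(T_{2p-2},\fp)=0$ by the lemma, so both sides vanish in cohomological degree $4p-5$ and the map there is trivially an isomorphism --- exactly the same parity argument you use in part~(2). With that patch your proof is complete; but the degree-by-degree dualization makes both patches unnecessary.
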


\begin{lemm}\label{lemm center has p equals zero}
    We have $p=0$ in $\hh_\Z^0(S_{2p-2},S_{2p-2})$.
\end{lemm}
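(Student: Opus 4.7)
The plan is to prove $\hh_\Z^0(S, S) \cong \fp$, where $S := S_{2p-2}$, which immediately gives $p = 0$. First, I would perform a Postnikov-tower reduction: since $\pi_\ast S = \fp[x_{2p-2}]$ is concentrated in degrees divisible by $2(p-1)$, the successive Postnikov fibers are $\Sigma^{2(p-1)k}\fp$. Applying $\map_{S \otimes_\Z S^{op}}(S, -)$ to the cofiber sequences between consecutive Postnikov stages and using the identification $\hh_\Z^0(S, \Sigma^{2(p-1)k}\fp) \cong \hh_\Z^{2(p-1)k}(S, \fp)$, Lemma~\ref{lemm hh cohomology of snm} (which gives $\hh_\Z^\ast(S, \fp) \cong \fp[w_2]/w_2^p$, concentrated in cohomological degrees $\{0, 2, \dots, 2(p-1)\}$) implies that these contributing cohomology groups vanish for $k \geq 2$, as do the neighboring $\hh_\Z^{\pm 1}$-terms by parity. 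A Milnor $\lim$/$\lim^1$ argument (with $\lim^1$-vanishing deduced in the same way) therefore yields $\hh_\Z^0(S, S) \cong \hh_\Z^0(S, T_{2p-2})$.

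Next, the defining cofiber sequence $\Sigma^{2(p-1)}\fp \to T_{2p-2} \to \fp$ of $S$-bimodules produces a short exact sequence
\[
0 \to \fp \to \hh_\Z^0(S, T_{2p-2}) \to \fp \to 0,
\]
with vanishing connecting map since $\hh_\Z^{2p-1}(S, \fp) = 0$ by parity. It now suffices to show that this extension of abelian groups splits; equivalently, that scalar multiplication by $p$ on $T_{2p-2}$ is null as an $S$-bimodule self-map, since this would force $p \cdot c = 0$ in $\hh_\Z^0(S, T_{2p-2})$ for the canonical truncation map $c \colon S \to T_{2p-2}$.

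The main obstacle is establishing this $S$-bilinear nullhomotopy. The approach is to factor scalar multiplication by $p$ on $T_{2p-2}$ as the composite
\[
T_{2p-2} \xrightarrow{\eta(p)\otimes \id} T_{2p-2} \otimes_\Z T_{2p-2} \xrightarrow{\mu} T_{2p-2},
\]
where $\eta(p)\colon \Z \to T_{2p-2}$ is the unit map composed with multiplication by $p$, and $\mu$ is the multiplication of $T_{2p-2}$, which is $T_{2p-2}$-bilinear and hence $S$-bilinear via $S \to T_{2p-2}$. Because $p \cdot 1 = 0$ in $\pi_0 T_{2p-2} = \fp$, the map $\eta(p)$ is null as a $\Z$-module map, and this nullhomotopy propagates through $\mu$. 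The compatibility with the $S$-bimodule structure follows because the obstruction to upgrading the $\Z$-linear nullhomotopy to one that is $S$-bilinear — a graded commutator obstruction at the chain level involving elements of $S$ and a chain-level null-witness for $\eta(p)$ — lands in odd-degree homotopy groups of $T_{2p-2}$, all of which vanish. This splits the extension, giving $\hh_\Z^0(S, T_{2p-2}) \cong \fp \oplus \fp$ and hence $p = 0$ in $\hh_\Z^0(S, S)$.
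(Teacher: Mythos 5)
Your Postnikov reduction of $\hh_\Z^0(S,S)$ to $\hh_\Z^0(S,T_{2p-2})$, and the identification of the resulting two-step extension
\[
0 \to \fp \to \hh_\Z^0(S,T_{2p-2}) \to \fp \to 0,
\]
are correct and match the first part of the paper's argument. The gap is in the claim that multiplication by $p$ on $T_{2p-2}$ is null \emph{as a map of $S$-bimodules}. Your factorization through $\eta(p)\colon\Z\to T_{2p-2}$ only produces a nullhomotopy of $p\cdot\id$ as a right-module map; upgrading this to an $S$-\emph{bi}module nullhomotopy is governed not by a single ``graded commutator obstruction'' in $\pi_{\mathrm{odd}}T_{2p-2}$, but by a whole coherence tower living in Hochschild-type groups with coefficients in $T_{2p-2}$. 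The parity of $\pi_\ast T_{2p-2}$ is not enough to kill these.

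Here is the decisive sanity check: nothing in your obstruction argument uses anything specific to the degree $2p-2$. It invokes only that $\pi_\ast T_{2n}$ is even and concentrated in degrees $0$ and $2n$, and that $\hh^\ast_\Z(S_{2n},\fp)\cong\fp[w_2]/w_2^{n+1}$ is even. So, if valid, it would prove $p=0$ in $\hh_\Z^0(S_{2n},S_{2n})$ for \emph{every} $n>0$. By the Hopkins--Mahowald argument (as in Theorem~\ref{thm:topological-formality}), that would force $S_{2n}$ to be topologically formal for all $n$, contradicting Proposition~\ref{prop:S_2n-not-formal}, which shows $S_{2n}$ is \emph{not} topologically formal when $n<p-1$. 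Equivalently, for $n<p-1$ the extension
\[
0\to\fp\to\hh_\Z^0(S_{2n},T_{2n})\to\fp\to 0
\]
is genuinely nonsplit, so the claimed $S$-bilinear nullhomotopy does not exist there. The input your argument is missing, and which is precisely what the paper uses, is the topological formality of $T_{2p-2}$: the paper transports the $\pi_0$ computation through the chain
\[
\hh_\Z(S_{2p-2},S_{2p-2})\longrightarrow\hh_\Z(S_{2p-2},T_{2p-2})\longleftarrow\hh_\Z(T_{2p-2},T_{2p-2})\longrightarrow\thh_\sph(T_{2p-2},T_{2p-2}),
\]
shows each is a $\pi_0$-isomorphism (using Lemma~\ref{lemm technical for later hh of s to t} and Corollary~\ref{coro technical map of hh cohomology}), and then observes that $\thh_\sph^0(T_{2p-2},T_{2p-2})$ is an $\fp$-algebra because $T_{2p-2}$ is an $\bE_\infty$-$\fp$-algebra. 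That last step is exactly the feature that holds for $n=p-1$ and fails for $n<p-1$, and your proof has no surrogate for it.
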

\begin{proof}
    We will prove that all of the following maps induce isomorphisms on $\pi_0$.
    \begin{enumerate}
        \item $\hh_\Z(S_{2p-2},S_{2p-2}) \to \hh_\Z(S_{2p-2},T_{2p-2})$
        \item $\hh_\Z(T_{2p-2},T_{2p-2}) \to \hh_\Z(S_{2p-2},T_{2p-2})$
        \item $\hh_\Z(T_{2p-2},T_{2p-2}) \to \thh_\sph(T_{2p-2},T_{2p-2})$
    \end{enumerate}
Once this is established, we use that $T_{2p-2}$ is topologically formal, and hence equivalent to an $\bE_\infty$-$\fp$-algebra. It follows that $\thh_\sph^0(T_{2p-2},T_{2p-2})$ is an $\fp$-algebra, and hence the lemma.

Now let us recall from Lemma~\ref{lemm hh cohomology of snm} the isomorphism of graded abelian groups
\begin{equation}\label{eq recall hh cohomology of s}
\hh_\Z^*(S_{2p-2},\fp)\cong \fp[w_2]/w_2^p.
\end{equation}

For (1), we consider the filtration:
\[\cdots \to \hh_{\z}( S_{2p-2},\tau_{\leq 4(2p-2)}S) \to \hh_{\z}( S_{2p-2},\tau_{\leq 2(2p-2)}S) \to\hh_{\Z}(S_{2p-2},T_{2p-2}),\]
obtained from the Postnikov tower of $S_{2p-2}$ whose limit is $\hh_{\z}(S_{2p-2},S_{2p-2})$. Due to \eqref{eq recall hh cohomology of s}, all the maps above are $\pi_0$ (and $\pi_1$) isomorphisms and the relevant $\textup{lim}^1$ term vanishes, giving (1).   

For (2), we apply the fiber sequence \[\fp[2p-2] \to T_{2p-2} \to \fp\] on the coefficients (of the Hochschild cohomology spectra in (2)) and consider the induced map of long exact sequences by (2). It follows by  Corollary~\ref{coro technical map of hh cohomology} and \eqref{eq recall hh cohomology of s} that the $\pi_0$ of the map in (2) sits in the middle of a short exact sequence with outer terms given by   \[\hh_\z^k(S_{2p-2},\fp) \to \hh_\Z^k(T_{2p-2},\fp)\] for $k=2p-2$ (as the kernel term) and $k=0$ (as the cokernel term) which are isomorphisms, giving (2).

Likewise, the final map sits in the middle of an exact sequence with outer terms \[\hh_\Z^k(T_{2p-2},\fp) \to \thh_\sph^k(T_{2p-2},\fp)\] again for $k=2p-2,0$. It follows by Corollary \ref{coro technical map of hh cohomology} and \eqref{eq recall hh cohomology of s} that these maps are isomorphisms and that the snake lemma applies to prove (3). 
\end{proof}

\begin{theo}\label{thm:topological-formality}
    For all $n\geq p-1$, the DGA $S_{2n}$ is topologically formal.
\end{theo}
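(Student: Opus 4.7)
The proof will proceed in two steps: first handling the base case $n=p-1$ via the Hopkins--Mahowald theorem, then propagating to $n\geq p$ by constructing DGA maps out of $S_{2p-2}$.

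For the base case $n=p-1$, the strategy is to leverage Lemma~\ref{lemm center has p equals zero} together with the Hopkins--Mahowald theorem. Observe that $\Ztr^\Z(S_{2p-2})$ is an $\bE_2$-$\Z$-algebra, and hence in particular an $\bE_2$-ring spectrum, and the lemma asserts that $p=0$ in $\pi_0(\Ztr^\Z(S_{2p-2}))=\hh^0_\Z(S_{2p-2},S_{2p-2})$. The Hopkins--Mahowald theorem identifies $\fp$ with the free $\bE_2$-ring spectrum on a single $\bE_2$-cell killing $p$, so this vanishing produces a map of $\bE_2$-ring spectra $\fp \to \Ztr^\Z(S_{2p-2})$. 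Composing with the canonical $\bE_1$-$\Z$-algebra map $\Ztr^\Z(S_{2p-2}) \to S_{2p-2}$, which is in particular a ring spectrum map, yields a ring spectrum map $\fp \to S_{2p-2}$. Corollary~\ref{cor:map-implies-formal}(2) then immediately delivers topological formality of $S_{2p-2}$.

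For $n\geq p$, the plan is to construct a DGA map $S_{2p-2}\to S_{2n}$ and compose with the ring spectrum map $\fp\to S_{2p-2}$ above. This construction uses the odd cell decomposition of $S_{2p-2}$ from Construction~\ref{cons of the dgas s2km} together with the extension principle recorded in Remark~\ref{remark:extensions-to-S2n}: starting from the unit $\Z \to S_{2n}$, the extension to $S_2^p=\Z\sslash p \to S_{2n}$ exists since $p=0$ in $\pi_0(S_{2n})=\fp$. Subsequent extensions from $S_{2k}^p$ to $S_{2k+2}^p = S_{2k}^p\sslash x_{2k}$ exist whenever the image of the generator $x_{2k}$ in $\pi_{2k}(S_{2n})$ vanishes; since $\pi_*(S_{2n})\cong\fp[x_{2n}]$ is concentrated in degrees that are multiples of $2n$, and $n\geq p$ forces $0<2k<2p-2<2n$ for $0<k<p-1$, each such obstruction group is zero. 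Iterating up to $k=p-1$ produces a DGA map $S_{2p-2}\to S_{2n}$; precomposing with $\fp\to S_{2p-2}$ gives a ring spectrum map $\fp\to S_{2n}$, and a final appeal to Corollary~\ref{cor:map-implies-formal}(2) completes the proof.

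The main obstacle is the base case, whose substance is already encoded in Lemma~\ref{lemm center has p equals zero} and the connectivity analysis of Lemma~\ref{lemm technical for later hh of s to t} and Corollary~\ref{coro technical map of hh cohomology} preceding it. Once the base case is established, the passage to $n\geq p$ is formal obstruction theory in the tower $S_2^p \to S_4^p \to \cdots$, made possible by the sparsity of $\pi_*(S_{2n})$.
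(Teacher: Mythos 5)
Your proof is correct and follows the same approach as the paper: both establish the base case $n=p-1$ via Lemma~\ref{lemm center has p equals zero} plus Hopkins--Mahowald to produce a ring spectrum map $\fp \to S_{2p-2}$, and then push this map forward to $S_{2n}$ for larger $n$ before invoking Corollary~\ref{cor:map-implies-formal}. The only (harmless) difference is that you re-derive the DGA map $S_{2p-2}\to S_{2n}$ by obstruction theory, whereas the paper simply observes that this map already exists by construction, being a composite of the structure maps in the tower $S_2^p\to S_4^p\to\cdots$ from Construction~\ref{cons of the dgas s2km}.
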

\begin{proof}
 Since there are DGA maps $S_{2p-2} \to S_{2n}$ by construction, using Corollary~\ref{cor:map-implies-formal}, it suffices to construct an $\sph$-algebra map $\fp \to S_{2p-2}$. To that end, note that 
    by Lemma \ref{lemm center has p equals zero}, $p = 0$ in $\pi_0 \mathfrak{Z}^{\Z}(S_{2p-2})$. By the Hopkins-Mahowald theorem, see e.g.\ \cite[Theorem 5.1]{barthel}, $\fp$ is the free $\bE_2$-ring spectrum with $p=0$. We therefore obtain a map of $\bE_2$-ring spectra
    $\fp \to \mathfrak{Z}^{\z}(S_{2p-2})$
   which we can compose with the DGA map $\Ztr^\Z(S_{2p-2}) \to S_{2p-2}$.
\end{proof}

Let us point out that for $p=2$, this simply means that all $S_{2n}$ are topologically formal. For odd $p$, the same is not true: $S_2=\Z\sslash p$ is never topologically formal \cite{davis2023cyclic}. In fact, we also have the converse to Theorem~\ref{thm:topological-formality}. First, an observation:

\begin{observation}\label{ob:p-local-truncated-rings}
Let $p$ be a prime and $n<p-1$. Then the map $\sph \to \Z$ induces an equivalence \[\Alg(\sph)_{[0,2n]}^{(p)} \simeq \Alg(\Z)_{[0,2n]}^{(p)}\] between between $p$-local $\sph$-algebras and $p$-local $\Z$-algebras which are connective and $2n$-truncated. This follows simply from the fact that the map $\tau_{\leq 2n}(\sph)\to \Z$ is a $p$-local equivalence when $n<p-1$.
\end{observation}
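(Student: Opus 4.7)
The plan is to expand on the hint given at the end of the statement: first verify that the map $\tau_{\leq 2n}(\sph) \to \Z$ becomes an equivalence after $p$-localization under the hypothesis $n < p-1$, and then reduce the claim to a restriction-of-scalars argument on the level of connective $\bE_1$-algebras.

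For the first step, I would recall the classical computation of the low dimensional $p$-local stable stems: $\pi_0(\sph) = \Z$ and $\pi_k(\sph)_{(p)} = 0$ for $0 < k < 2p-3$, the first positive $p$-local stem being generated by $\alpha_1$ in degree $2p-3$ for odd $p$ (the case $p=2$ being vacuous since $n < p-1 = 1$ forces $n=0$). The hypothesis $n < p-1$ gives $2n \leq 2p-4 < 2p-3$, so the unit map $\sph \to \Z$ induces an isomorphism on $\pi_k$ for all $0 \leq k \leq 2n$ after $p$-localization. Hence $\tau_{\leq 2n}(\sph)_{(p)} \xrightarrow{\simeq} \Z_{(p)}$ as connective $\bE_\infty$-rings.

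For the main step, I would use that Postnikov truncation on connective $\bE_1$-rings is a reflection onto the full subcategory of $2n$-truncated objects (see e.g.\ \cite[7.1.3.14]{lurie2016higher}). Concretely, for any $\bE_1$-ring $B$ that is connective and $2n$-truncated, the unit map $\sph \to B$ factors uniquely through the truncation $\sph \to \tau_{\leq 2n}(\sph)$. Consequently, restriction of scalars along $\sph \to \tau_{\leq 2n}(\sph)$ induces an equivalence
\[ \Alg(\tau_{\leq 2n}(\sph))_{[0,2n]} \xrightarrow{\simeq} \Alg(\sph)_{[0,2n]}, \]
and similarly with $\sph$ replaced by $\Z$. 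Passing to $p$-local objects (which is a smashing localization and is detected on the underlying spectrum), we obtain equivalences
\[ \Alg(\sph)_{[0,2n]}^{(p)} \simeq \Alg(\tau_{\leq 2n}(\sph)_{(p)})_{[0,2n]} \quad \text{and} \quad \Alg(\Z)_{[0,2n]}^{(p)} \simeq \Alg(\Z_{(p)})_{[0,2n]}. \]
Combining these with the equivalence $\tau_{\leq 2n}(\sph)_{(p)} \simeq \Z_{(p)}$ established in the first step yields the claimed equivalence $\Alg(\sph)_{[0,2n]}^{(p)} \simeq \Alg(\Z)_{[0,2n]}^{(p)}$.

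The only mild subtlety is to make precise the reflection property of $\tau_{\leq 2n}$ on connective $\bE_1$-rings together with its compatibility with $p$-localization; but both are standard consequences of the existence of the connective/co-connective $t$-structure on $\Alg(\sph)$ and the fact that $p$-localization is smashing, so no real obstacle arises.
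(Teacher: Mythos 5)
Your proof is correct and follows exactly the route the paper intends: verify $\tau_{\leq 2n}(\sph)_{(p)} \simeq \Z_{(p)}$ using that the first positive $p$-local stem lies in degree $2p-3 > 2n$, and then deduce the equivalence of algebra categories via the reflection property of truncation together with the smashing nature of $p$-localization. The "consequently" step (that the unique factorization of the unit implies an equivalence of algebra categories) deserves the adjunction argument you allude to at the end, but you correctly flag this and point to the standard $t$-structure facts on $\Alg(\sph)$, so there is no gap.
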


\begin{prop}\label{prop:S_2n-not-formal}
    For $n<p-1$, the DGA $S_{2n}$ is not formal over $\sph$.
\end{prop}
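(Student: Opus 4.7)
The plan is to deduce non-formality of $S_{2n}$ from the non-topological-formality of the $2n$-Postnikov section, relying on the classification of ring spectra with homotopy $\Lambda_{\fp}[x_{2n}]$ recalled at the start of \S\ref{sect nonformality via maps from s guys}.

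First, I would identify $\tau_{\leq 2n}(S_{2n})$ as a DGA. Its homology is concentrated in degrees $0$ and $2n$ and is thus given by $\Lambda_{\fp}[x_{2n}]$. By Corollary~\ref{coro:truncations-of-S-not-formal} (with $k=1$), this DGA is not formal, so by the Dugger--Shipley classification stated in Terminology~\ref{T_2n}, we have a quasi-isomorphism $\tau_{\leq 2n}(S_{2n}) \simeq T_{2n}$.

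Next, I would argue by contradiction. Suppose $S_{2n}$ is topologically formal, i.e.\ its underlying ring spectrum is equivalent to that of $\fp[x_{2n}]$. Since Postnikov truncation is a functor of ring spectra, it would follow that the underlying ring spectrum of $T_{2n}$ is equivalent to that of $\tau_{\leq 2n}(\fp[x_{2n}]) \simeq \Lambda_{\fp}[x_{2n}]$. Here, $\Lambda_{\fp}[x_{2n}]$ denotes the formal DGA with the indicated homology.

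Finally, I would invoke the Dugger--Shipley result recalled in \S\ref{sect nonformality via maps from s guys}: the underlying ring spectrum of $T_{2n}$ is equivalent to that of the formal DGA $\Lambda_{\fp}[x_{2n}]$ if and only if $2n \geq 2p-2$. Since we assumed $n < p-1$, i.e.\ $2n < 2p-2$, this is a contradiction, and hence $S_{2n}$ cannot be topologically formal. The main content is really imported from the Dugger--Shipley computation of $\thh_\sph^*(\fp,\fp)$ versus $\hh_\Z^*(\fp,\fp)$; no additional obstacle arises, since Observation~\ref{ob:p-local-truncated-rings} (although not strictly needed for this direction) confirms that the comparison between DGA and ring spectrum structures on the truncation is harmless in this range.
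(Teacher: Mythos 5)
Your proof is correct, and it does reach the conclusion by a slightly different route than the paper. Both arguments start from Corollary~\ref{coro:truncations-of-S-not-formal} to see that $\tau_{\leq 2n}(S_{2n})$ is not formal over $\Z$. The paper then finishes in one stroke by appealing to Observation~\ref{ob:p-local-truncated-rings}, i.e.\ that $\tau_{\leq 2n}\sph \to \Z$ is a $p$-local equivalence for $n<p-1$, so that $\Z$-algebra structures and $\sph$-algebra structures (and hence formality) coincide for $p$-local objects concentrated in degrees $[0,2n]$. You instead identify $\tau_{\leq 2n}(S_{2n})$ with the non-formal DGA $T_{2n}$ and invoke the Dugger--Shipley classification, which compares $\hh_\Z^*(\fp,\fp)$ to $\thh_\sph^*(\fp,\fp)$ and shows the non-formal DGA with exterior homology in degree $2n$ becomes topologically formal exactly when $2n\geq 2p-2$. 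These two ingredients encode essentially the same connectivity fact about $\sph \to \Z$; the paper's Observation is the more structural formulation (and generalizes beyond the exterior-homology case), while your approach is the explicit Hochschild/THH calculation it ultimately rests on. Your final aside about Observation~\ref{ob:p-local-truncated-rings} being "not strictly needed" is accurate for your route, though it slightly undersells the fact that the paper's proof relies on it exclusively.
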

\begin{proof}
    We have recorded already that $\tau_{\leq 2n}S_{2n}$ is not formal over $\Z$ (Corollary \ref{coro:truncations-of-S-not-formal}). The claim then follows from Observation~\ref{ob:p-local-truncated-rings}.
\end{proof}

As a consequence of the topological formality mentioned  above, we also deduce:

\begin{theo}\label{theo topological formality even degrees}
    Let $n> 0$ and $1<k\leq \infty$. Every DGA with homology $\fp[x_{2n}]/x_{2n}^k$  and a topologically formal $(2p-4)$-Postnikov section is topologically formal. 
\end{theo}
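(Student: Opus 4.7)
The plan is to reduce topological formality of $A$ to the existence of a ring spectrum map $\fp \to A$: once such a map is produced, Corollary \ref{cor:map-implies-formal}\,(2) directly gives that $A$ is topologically formal. To construct $\fp \to A$, I intend to mimic the centralizer strategy used in the proof of Theorem \ref{thm:topological-formality}. Namely, it will suffice to prove that $p = 0$ in $\pi_0 \Ztr^{\Z}(A) = \hh^0_{\Z}(A, A)$; by the Hopkins--Mahowald theorem this yields an $\bE_2$-ring spectrum map $\fp \to \Ztr^{\Z}(A)$ which, composed with the canonical $\bE_1$-ring map $\Ztr^{\Z}(A) \to A$, furnishes the desired map $\fp \to A$.

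To establish $p = 0$ in $\hh^0_{\Z}(A, A)$, I plan to run a three-step reduction parallel to Lemma \ref{lemm center has p equals zero}, with $A$ playing the role of $S_{2p-2}$ and $\tau_{\leq 2p-4}(A)$ playing the role of $T_{2p-2}$. Specifically, I will show that each of the following natural maps is an isomorphism on $\pi_0$:
\begin{enumerate}
    \item $\hh_{\Z}(A, A) \to \hh_{\Z}(A, \tau_{\leq 2p-4}(A))$, induced by the Postnikov truncation on the coefficient;
    \item $\hh_{\Z}(\tau_{\leq 2p-4}(A), \tau_{\leq 2p-4}(A)) \to \hh_{\Z}(A, \tau_{\leq 2p-4}(A))$, induced by the truncation $A \to \tau_{\leq 2p-4}(A)$ on the source; and
    \item $\hh_{\Z}(\tau_{\leq 2p-4}(A), \tau_{\leq 2p-4}(A)) \to \thh_\sph(\tau_{\leq 2p-4}(A), \tau_{\leq 2p-4}(A))$, the base change map.
\end{enumerate}
Given these $\pi_0$-isomorphisms, the topological formality of $\tau_{\leq 2p-4}(A)$ endows it with an $\bE_\infty$-$\fp$-algebra structure (up to equivalence of ring spectra), so $\thh^0_\sph(\tau_{\leq 2p-4}(A), \tau_{\leq 2p-4}(A))$ contains $\pi_0\tau_{\leq 2p-4}(A) = \fp$ as a subring and in particular satisfies $p = 0$. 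Tracing back through (iii), (ii), (i) then yields $p = 0$ in $\hh^0_{\Z}(A, A)$.

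The principal obstacle is verifying the three $\pi_0$-isomorphism claims in the present generality, since the convenient direct computation of Lemma \ref{lemm hh cohomology of snm} is no longer available. For (i) and (ii), the plan is to filter using the Postnikov towers of $A$ and of $\tau_{\leq 2p-4}(A)$ and analyse the long exact sequences obtained by applying $\hh_{\Z}(A, -)$ and $\hh_{\Z}(-, \tau_{\leq 2p-4}(A))$, respectively. The needed low-degree vanishing/isomorphism statements will be extracted from a generalisation of the connectivity estimates in Lemma \ref{lemm technical for later hh of s to t}\,(1), combined with the observation that the fibre of $A \to \tau_{\leq 2p-4}(A)$ is $(2p-3)$-connective so that the relevant relative Hochschild cohomology groups vanish in degrees $\leq 0$. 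For (iii), the argument mirrors Lemma \ref{lemm technical for later hh of s to t}\,(2): via Lemma \ref{prop hh over z with coefficients is tor dual} and a Bar construction analysis, the comparison between $\Z$- and $\sph$-based Hochschild cohomology reduces to the low-degree behaviour of $\fp \otimes_\sph \tau_{\leq 2p-4}(A) \to \fp \otimes_{\Z} \tau_{\leq 2p-4}(A)$, where the connectivity boost coming from the Bar construction yields the isomorphism in degree zero.
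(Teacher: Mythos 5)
Your proposal follows a genuinely different route from the paper, and unfortunately the key steps are left unverified in a way that I do not think can be repaired without essentially redoing the paper's argument. The paper's proof is much shorter: it first invokes Observation~\ref{ob:p-local-truncated-rings}, the $p$-local equivalence $\Alg(\sph)_{[0,2p-4]}^{(p)} \simeq \Alg(\Z)_{[0,2p-4]}^{(p)}$, to upgrade the hypothesis ``$\tau_{\leq 2p-4}A$ is topologically formal'' to ``$\tau_{\leq 2p-4}A$ is formal as a $\Z$-algebra''. Then Theorem~\ref{theo maps from snk to dgas with polynomial homology} produces a DGA map $S_{2l} \to A$ for some $l \geq p-1$ (or $A$ is itself formal), Theorem~\ref{thm:topological-formality} supplies a ring-spectrum map $\fp \to S_{2l}$, and Corollary~\ref{cor:map-implies-formal} applied to the composite $\fp \to S_{2l} \to A$ finishes. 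Nothing about $\hh_\Z(A,A)$ is ever computed.

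Your plan instead tries to mimic Lemma~\ref{lemm center has p equals zero} verbatim with $A$ and $\tau_{\leq 2p-4}A$ in place of $S_{2p-2}$ and $T_{2p-2}$. This is where the gap lies. Lemma~\ref{lemm center has p equals zero} rests entirely on the explicit identification $\hh_\Z^*(S_{2p-2},\fp)\cong \fp[w_2]/w_2^p$ (Lemma~\ref{lemm hh cohomology of snm}), which is bounded above by degree $2p-2$, and on the connectivity estimates of Lemma~\ref{lemm technical for later hh of s to t} which in turn use $\fp \otimes_\Z S_{2p-2} \simeq \fp[\Omega\CP^{p-1}]$. For an arbitrary $A$ satisfying only your hypotheses you have no control on $\hh_\Z^*(A,\fp)$: the identification $\thh^\Z(A,\fp) \simeq \fp\otimes_{\fp\otimes_\Z A}\fp$ reduces the problem to a Bar construction over $\fp\otimes_\Z A$, whose homotopy ring is $\Lambda_\fp[\epsilon_1]\otimes\fp[x_{2n}]/x_{2n}^k$ but whose $\bE_1$-structure depends on $A$; in particular for $k<\infty$ there is no reason for $\hh_\Z^*(A,\fp)$ to be bounded. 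Since the fibre of $A \to \tau_{\leq 2p-4}A$ starts only in degree $\geq 2p-2$, your step (i) requires vanishing of $\hh_\Z^j(A,\fp)$ for $j\geq 2p-3$, and steps (ii) and (iii) require analogous control in precisely the range where no a priori bound is available. So the ``generalisation of the connectivity estimates'' you invoke is exactly the content that is missing, not a routine extension.

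A secondary remark: even within your own framework, once you know $\tau_{\leq 2p-4}A$ is \emph{$\Z$-formal} (which \emph{does} follow from topological formality, but only via Observation~\ref{ob:p-local-truncated-rings}, which you never invoke), the coefficient $\tau_{\leq 2p-4}A$ is already an $\fp$-module as a $\Z$-bimodule, so $\hh_\Z^0(A,\tau_{\leq 2p-4}A)$ is automatically $p$-torsion and steps (ii) and (iii) are superfluous — only step (i) would remain. That observation both simplifies your outline and reveals that Observation~\ref{ob:p-local-truncated-rings} is the crucial input you are missing; once you have it, the paper's route through $S_{2l}$ is both shorter and avoids the unbounded Hochschild cohomology problem entirely.
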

\begin{proof}
    Let $A$ be a DGA satisfying the hypothesis. If $A$ if formal as a DGA then it is topologically formal. Assume that $A$ is not formal as a DGA. By the equivalence $\Alg(\sph)_{[0,2p-4]}^{(p)} \simeq \Alg(\Z)_{[0,2p-4]}^{(p)}$ of Observation~\ref{ob:p-local-truncated-rings}, we deduce that $\tau_{\leq 2p-4} A$ is formal. Then there is a map of DGAs $S_{2l} \to A$ for some $2l>2p-4$ due to Theorem \ref{theo maps from snk to dgas with polynomial homology}. Since $S_{2l}$ is topologically formal due to Theorem \ref{thm:topological-formality}, there is a map of ring spectra $\fp \to S_{2l}$. Applying Corollary \ref{cor:map-implies-formal} to the composite $\fp \to S_{2l} \to A$ gives the desired result.  
\end{proof}

\begin{proof}[Proof of Theorem \ref{ThmA}]
Theorem \ref{ThmA} (1) and the first statement of Theorem \ref{ThmA} (2) is a consequence of Corollary \ref{cor:map-implies-formal}. The rest of the statements follow by Theorem \ref{theo topological formality even degrees}. 
\end{proof}

The following result also covers the case of odd degree generators. 
\begin{theo}[Theorem \ref{ThmB}]\label{theo topological formality odd degrees}
    Let $n \geq 2p-2$ and $1<k\leq \infty$, then every DGA with homology $\fp[x_{n}]/x_{n}^k$ is topologically formal. 
\end{theo}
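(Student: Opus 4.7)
When $n$ is even, the condition $n \geq 2p-2$ together with $\pi_i(A) = 0$ for $0 < i < n$ forces $\tau_{\leq 2p-4}(A) \simeq \fp$, which is trivially topologically formal, so Theorem \ref{theo topological formality even degrees} already applies. The substantive case is therefore $n$ odd, for which $n \geq 2p-2$ means $n \geq 2p-1$, and in particular $\pi_i(A) = 0$ for $1 \leq i \leq 2p-2$. My plan mirrors the three-step blueprint of the even case: first build a DGA map from $S_{2p-2}^p$ into $A$, then promote it to an $\sph$-algebra map $\fp \to A$ via a centralizer/Hopkins--Mahowald argument, and finally use this $\fp$-structure to conclude topological formality.

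For the first step, I construct a DGA map $f\colon S_{2p-2}^p \to A$ by induction along the odd cell decomposition $\Z \sslash p = S_2^p \to S_4^p \to \cdots \to S_{2p-2}^p$: the base case uses $p = 0$ in $\pi_0(A) = \fp$, and each subsequent extension has its obstruction in $\pi_{2k}(A) = 0$ for $1 \leq k \leq p-2$ (see Remark \ref{remark:extensions-to-S2n}). The map is far from unique, but any choice suffices.

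Next, I analyse the centralizer $\Ztr^\Z(f) = \thh_\Z(S_{2p-2}^p, A)$, with the goal of showing $p = 0$ in $\pi_0(\Ztr^\Z(f))$. The argument follows the template of Lemma \ref{lemm center has p equals zero}, using the explicit computation $\hh_\Z^*(S_{2p-2}^p, \fp) \cong \fp[w_2]/w_2^p$ of Lemma \ref{lemm hh cohomology of snm}, the topological formality $S_{2p-2}^p \simeq \fp[x_{2p-2}]$ from Theorem \ref{thm:topological-formality}, and the Postnikov filtration of $A$ in the coefficient slot to reduce to a computation over $\fp$. Once the vanishing is in hand, Hopkins--Mahowald produces an $\bE_2$-$\sph$-algebra map $\fp \to \Ztr^\Z(f)$, which composed with the canonical evaluation $\Ztr^\Z(f) \to A$ yields a ring spectrum map $g\colon \fp \to A$.

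Finally, I derive topological formality of $A$ from $g$. The scheme parallels Corollary \ref{cor:map-implies-formal}(2): construct a ring spectrum map $\fp[X_n] \to A$ extending $g$ and sending $X_n$ to $x_n$, which will induce an equivalence on homotopy groups modulo $x_n^k$. By the universal property of the centralizer, this reduces to lifting $x_n$ to $\pi_n(\Ztr^\sph(g))$. The main difficulty is that $\thh_\sph(\fp,\fp)$ is not bounded below, so the concentration hypothesis of Lemma \ref{lemm centralizer for the odd degree top formality} fails as stated. To circumvent this, I plan to work relative to the larger centralizer $\Ztr^\sph(f)$ from the first step, exploiting that $\thh_\sph(S_{2p-2}^p, \fp)$ has a more controlled range (thanks to $S_{2p-2}^p \simeq \fp[x_{2p-2}]$) and that $n \geq 2p-1$ prevents layers in the Postnikov filtration of $A$ from interfering with the lift. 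Executing this filtration analysis cleanly is the technical heart of the argument and the principal obstacle.
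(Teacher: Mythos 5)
Your first step (the odd cell decomposition giving $S_{2p-2}^p\to A$) matches the paper's opening, and the even-case reduction is correct, if unnecessary: the paper handles both parities at once by using the two cases of Proposition~\ref{prop:maps-to-centralizer}. But the last step of your plan has a genuine gap, and your proposed workaround does not close it. The spectrum $\thh_\sph(S_{2p-2}^p,\fp)$ is \emph{not} bounded below: since $\fp$ is a ring-spectrum retract of $S_{2p-2}^p$ (the composite $\fp\to S_{2p-2}^p\to\fp$ is an equivalence), $\thh_\sph(\fp,\fp)$ is a retract of $\thh_\sph(S_{2p-2}^p,\fp)$, and the former is unbounded below by B\"okstedt's theorem together with Remark~\ref{Rmk:cohomology-dual-of-homology}. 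So the concentration hypothesis fails for $\Ztr^\sph(f)$ just as it does for $\Ztr^\sph(g)$; the topological formality $S_{2p-2}^p\simeq\fp[x_{2p-2}]$ is not what shrinks the $\sph$-centralizer, it is precisely what guarantees it inherits all of B\"okstedt's unboundedness.

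The idea you are missing is that the extension to a polynomial algebra must be performed entirely \emph{over $\Z$}, where boundedness does hold: Lemma~\ref{lemm hh cohomology of snm} places $\thh_\Z(S_{2p-2}^p,\fp)$ in degrees $[-(2p-2),0]\subseteq[-n,0]$ (this is exactly where $n\geq 2p-2$ enters), so Proposition~\ref{prop:maps-to-centralizer}(2) with $R=\Z$ produces a \emph{DGA} map $S_{2p-2}^p\otimes_\Z\Z[X_n]\to A$ under $S_{2p-2}^p$ sending $X_n$ to $x_n$. One passes to the sphere only at the very end:
\[\fp[X_n]\simeq\fp\otimes_\sph\sph[X_n]\to S_{2p-2}^p\otimes_\sph\sph[X_n]\simeq S_{2p-2}^p\otimes_\Z\Z[X_n]\to A,\]
the first arrow induced by the ring spectrum map $\fp\to S_{2p-2}^p$ of Theorem~\ref{thm:topological-formality}, and this composite then induces an equivalence after truncation. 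In particular your intermediate detour through $\pi_0\Ztr^\Z(f)$ and Hopkins--Mahowald is unnecessary (and would also be problematic as written: the centralizer of a non-identity map is only $\bE_1$, so Hopkins--Mahowald does not apply to $\Ztr^\Z(f)$ but only to the center $\Ztr^\Z(S_{2p-2}^p)$). The ring spectrum map $\fp\to A$, should you want it, is simply the composite $\fp\to S_{2p-2}^p\to A$.
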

\begin{proof}
    Let $B$ be a DGA as in the theorem, then there is a DGA map $S_{2p-2} \to B$ (Remark \ref{remark:extensions-to-S2n}). By Lemma \ref{lemm hh cohomology of snm}, we may apply  Proposition \ref{prop:maps-to-centralizer} to the DGA map $S_{2p-2}\to B$ to obtain a map $f \co S_{2p-2}\otimes_\Z \Z[X_n] \to B$ of DGAs where $X_n$ is mapped to $x_n$. Then the canonical composite, induced by a map of ring spectra $\fp \to S_{2p-2}$ which exists due to Theorem \ref{thm:topological-formality},
    \[\fp[X_n] \simeq \fp \otimes_{\sph} \sph[X_n] \to S_{2p-2} \otimes_{\sph} \sph[X_{n}] \xrightarrow{\simeq} S_{2p-2} \otimes_{\z} \z[X_n] \xrightarrow{f} B\]
    induces on homotopy groups the map $\fp[X_n] \to \fp[x_n]/x_n^k$, and therefore an equivalence upon applying $\tau_{\leq n(k-1)}$ as needed.
\end{proof}

Similarly, we have:
\begin{coro}\label{cor:root-adjunction-topologically-formal}
    Let $nl\geq p-1$. Then $S_{2nl}[\sqrt[l]{x_{2nl}}]$ is topologically formal.
\end{coro}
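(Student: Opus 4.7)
The proof plan is short because all the work has been done: I would argue by transporting topological formality along a ring spectrum map from $\F_p$.

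First, I would invoke Theorem \ref{thm:topological-formality} with the hypothesis $nl \geq p-1$ to obtain a map of ring spectra $\F_p \to S_{2nl}$. Next, I would use the canonical DGA map
\[ S_{2nl} \longrightarrow S_{2nl}\bigl[\sqrt[l]{x_{2nl}}\bigr] \]
coming from Construction \ref{cons adj root to snm}, whose underlying morphism of ring spectra can be postcomposed with the previous one to yield a ring spectrum map $\F_p \to S_{2nl}[\sqrt[l]{x_{2nl}}]$.

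Finally, since by Construction \ref{cons adj root to snm} the homology of $S_{2nl}[\sqrt[l]{x_{2nl}}]$ is $\F_p[x_{2n}]$ with $n>0$ (this is forced, as $nl\geq p-1\geq 1$ in positive integers means $n\geq 1$), I would apply Corollary \ref{cor:map-implies-formal}(2), which says exactly that a DGA with homology $\F_p[x_{2n}]$ admitting a ring spectrum map from $\F_p$ is topologically formal. This gives the desired conclusion.

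There is essentially no obstacle here: both ingredients have already been established in the paper, and the statement is really a corollary in the strict sense — the only thing to verify is that the map $\F_p \to S_{2nl}$ produced by Theorem \ref{thm:topological-formality} can indeed be composed with the structure map of the root adjunction, which is automatic since both live in the category of ring spectra.
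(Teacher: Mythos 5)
Your proposal is correct and follows exactly the paper's own argument: obtain $\fp \to S_{2nl}$ from the topological formality of $S_{2nl}$ (Theorem~\ref{thm:topological-formality}), compose with the structure map $S_{2nl} \to S_{2nl}[\sqrt[l]{x_{2nl}}]$ from Construction~\ref{cons adj root to snm}, and conclude via Corollary~\ref{cor:map-implies-formal}. No differences of substance; you merely spell out a few steps the paper leaves implicit.
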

\begin{proof}
There are maps (Theorem~\ref{thm:topological-formality})
    \[ \fp \to S_{2nl} \to S_{2nl}[\sqrt[l]{x_{2nl}}]\]
    so that we may apply Corollary~\ref{cor:map-implies-formal} to deduce formality over $\sph$.
\end{proof}

\begin{proof}[Proof of Theorem \ref{ThmC}]
    Theorem \ref{ThmC} (1) follows by Corollary \ref{coro root adjs are distinct} and Theorem \ref{ThmC} (2) follows by Corollary \ref{cor:root-adjunction-topologically-formal} above. 
\end{proof}

\begin{rema}
    To the best of our knowledge, the above provides the first explicit examples of infinitely many non quasi-isomorphic $\z$-algebra structures on a single $\sph$-algebra. We will use this later, to construct infinitely many $\Z$-linear structures on a fixed stable $\infty$-category, and in particular to construct exotic dg-enhancements of certain triangulated categories, see Section~\ref{sec:exotic}.
\end{rema}

Conversely, we also find:
\begin{prop}\label{prop root adj guys that are topologically distinct for all m}
    For fixed $n$ and $l'<l< \frac{p-1}{n}$, the DGAs $S_{2nl}^m[\sqrt[l]{x_{2nl}}]$ and $S_{2nl'}^m[\sqrt[l']{x_{2nl}}]$ are not topologically  equivalent and neither are topologically formal.
\end{prop}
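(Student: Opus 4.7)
The plan is to distinguish these DGAs by passing to low-degree Postnikov truncations, where Observation~\ref{ob:p-local-truncated-rings} identifies $p$-local $\sph$-algebra structures with $p$-local $\Z$-algebra structures, and then to apply Corollary~\ref{coro truncations of root adj to snm}, which precisely controls when such truncations are (non-)formal as DGAs. Since $l, l' < (p-1)/n$ forces $nl, nl' \leq p-2$, the truncations in degree $2nl$ and $2nl'$ lie within the range $[0, 2(p-2)]$ to which the observation applies.

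For the non-topological-formality part, I would suppose that $A := S_{2nl}^m[\sqrt[l]{x_{2nl}}]$ is topologically formal and derive a contradiction. Topological formality of $A$ passes to the Postnikov truncation $\tau_{\leq 2nl}(A)$; upon $p$-localization (with $p \mid m$, which is tacit in the hypothesis), Observation~\ref{ob:p-local-truncated-rings} then promotes topological formality to DGA formality. This contradicts Corollary~\ref{coro truncations of root adj to snm}. The same strategy handles $S_{2nl'}^m[\sqrt[l']{x_{2nl'}}]$ verbatim, with $l$ replaced by $l'$.

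For the non-topological-equivalence part, I would suppose an equivalence of underlying ring spectra $S_{2nl}^m[\sqrt[l]{x_{2nl}}] \simeq S_{2nl'}^m[\sqrt[l']{x_{2nl'}}]$ and pass to the $\tau_{\leq 2nl'}$-truncations. By the same application of Observation~\ref{ob:p-local-truncated-rings}, these truncations become equivalent as DGAs after $p$-localization. But Corollary~\ref{coro truncations of root adj to snm} gives that $\tau_{\leq 2nl'} S_{2nl}^m[\sqrt[l]{x_{2nl}}]$ is formal (since $2nl' < 2nl$) while $\tau_{\leq 2nl'} S_{2nl'}^m[\sqrt[l']{x_{2nl'}}]$ is not formal, providing the desired contradiction.

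The hard part will be the bookkeeping between $p$-local and integral (non-)formality. The non-formality supplied by Corollary~\ref{coro truncations of root adj to snm} is witnessed, via Theorem~\ref{theo maps from snk to dgas with polynomial homology}, by a non-trivial DGA map from $S_{2nl}^m$, and I would need to verify that this witness remains non-trivial after $p$-localizing at any prime $p \mid m$, so that the $p$-local argument suffices to obtain the global contradiction. Once this is checked, everything else is a clean application of the functoriality of Postnikov truncations together with Observation~\ref{ob:p-local-truncated-rings}.
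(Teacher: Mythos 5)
Your approach is exactly the one in the paper: pass to Postnikov truncations below degree $2p-2$ and use Observation~\ref{ob:p-local-truncated-rings} to transfer the $\Z$-algebra (non-)formality and (non-)equivalence supplied by Corollary~\ref{coro truncations of root adj to snm} to the topological setting. The paper's proof is terse and does not spell out the $p$-locality bookkeeping you flag, so let me confirm that the verification you defer does go through. Since $\Z_{(p)}\sslash m \simeq \Z_{(p)}\sslash p^s$ (with $p^s = p^{v_p(m)}$, because $m$ and $p^s$ differ by a unit of $\Z_{(p)}$ and one can precompose the defining pushout square by the evident automorphism of $\Z_{(p)}[X_0]$) and since $(-)\sslash x$ commutes with localization, one finds $(S_{2nl}^m)_{(p)} \simeq S_{2nl}^{p^s}$, and the $p$-localized witness map $S_{2nl}^{p^s} \to \big(S_{2nl}^m[\sqrt[l]{x_{2nl}}]\big)_{(p)}$ still carries $x_{2nl}$ to a generator of $\pi_{2nl}\cong\Z/p^s$. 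Applying Theorem~\ref{theo maps from snk to dgas with polynomial homology} with $m$ replaced by $p^s$ therefore gives the $p$-local analogue of Corollary~\ref{coro truncations of root adj to snm}, namely that $\tau_{\leq t}\big(S_{2nl}^m[\sqrt[l]{x_{2nl}}]\big)_{(p)}$ is non-formal over $\Z$ precisely when $t\geq 2nl$, which is what the argument via Observation~\ref{ob:p-local-truncated-rings} requires. You are also right that $p \mid m$ is tacit in the hypothesis; otherwise $p$-localization annihilates everything and the statement carries no content.
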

\begin{proof}
   By Corollary \ref{coro truncations of root adj to snm}, the two DGAs in question remain non-equivalent over $\Z$ after applying $\tau_{\leq 2nl'}$ and remain non-formal over $\Z$ after applying $\tau_{\leq 2nl}$. The result then follows from the canonical equivalence $\Alg(\sph)_{[0,2p-4]}^{(p)} \simeq \Alg(\Z)_{[0,2p-4]}^{(p)}$ from Observation \ref{ob:p-local-truncated-rings}.
\end{proof}

\subsection{Topological non-formality in the $m=p^s$ case}

For the rest of the section, let $s \geq 3$ for $p=2$ and let $s \geq 2$ for an odd prime $p$. This ensures that $\sph/p^s$ is an $\bE_1$-algebra (i.e.\ a ring spectrum) \cite{burklund2022multiplicative}. From this, we prove that topological equivalences agree with quasi-isomorphisms in many cases we considered so far. 

\begin{prop}\label{prop topological formality is formality for s geq 2}
    Let $n>0$, $1<k\leq \infty$ and let $s$ be as above. Then a DGA with homology $\z/p^s[x_{2n}]/x_{2n}^k$ is  formal if and only if it is topologically formal. 
\end{prop}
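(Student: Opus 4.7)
The plan is to prove this proposition by combining the formality criterion of Theorem \ref{ThmA}(1) with Burklund's multiplicative structure on $\sph/p^s$. One direction—formal implies topologically formal—is immediate: a quasi-isomorphism of DGAs is in particular an equivalence of the underlying ring spectra. So the entire content is in the converse direction.

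Assume then that $A$ is topologically formal, i.e.\ that there is an equivalence of ring spectra $A \simeq \Z/p^s[x_{2n}]/x_{2n}^k$. Pre-composing with the unit of the formal ring spectrum on the right-hand side produces a ring spectrum map $\Z/p^s \to A$. The key external input is now Burklund's theorem \cite{burklund2022multiplicative}, which, under the assumed lower bounds on $s$, provides an $\bE_1$-algebra structure on $\sph/p^s$ together with a canonical $\sph$-algebra map $\sph/p^s \to \Z/p^s$. Composing yields an $\sph$-algebra map $\sph/p^s \to A$.

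I would then pass through the free–forget adjunction between ring spectra and DGAs, whose left adjoint is $\Z\otimes_\sph-\colon \Alg(\sph)\to \Alg(\Z)$. Since $\Z\otimes_\sph \sph/p^s \simeq \Z/p^s$, the ring spectrum map $\sph/p^s \to A$ produced above is adjoint to a DGA map $\Z/p^s \to A$. At this point I would invoke Theorem \ref{ThmA}(1), using its extension to truncated polynomial homology $\Z/m[x_{2n}]/x_{2n}^k$ recorded in the Remark following Theorem \ref{ThmB}, to conclude that $A$ is formal as a DGA.

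The main (non-)obstacle here is really just correctly assembling the adjunction argument: the only serious technical input is Burklund's theorem, which is precisely what forces the hypothesis on $s$ (so that $\sph/p^s$ actually admits a ring spectrum structure and the adjoint trick is available). Once the DGA map $\Z/p^s\to A$ is in hand, formality is a direct consequence of the previously established formality criterion, so there is no further computation required.
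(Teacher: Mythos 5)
Your proof is correct and follows essentially the same route as the paper: both obtain a ring spectrum map $\mathbb{S}/p^s \to \mathbb{Z}/p^s \to A$ via Burklund's $\mathbb{E}_1$-structure on $\mathbb{S}/p^s$, pass through the adjunction for $\mathbb{Z}\otimes_{\mathbb{S}}-$ to produce a DGA map $\mathbb{Z}/p^s \to A$, and then apply the formality criterion (Corollary~\ref{cor:map-implies-formal}, of which Theorem~\ref{ThmA}(1) is a restatement). The only cosmetic difference is that you cite Theorem~\ref{ThmA}(1) plus the accompanying remark rather than the corollary directly.
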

\begin{proof}
    Let $A$ be a DGA as above. One direction is immediate. Now assume that $A$ is topologically formal, then there is a composite map of ring spectra 
    \[\sph/p^s \to \z \otimes_{\sph} \sph/p^s \simeq \z/p^s \to A.\]
    By adjunction, one obtains a map of $\z$-algebras 
    \[\z \otimes_{\sph} \sph/p^s \simeq \z/p^s \to A\]
    which implies the formality of $A$ by Corollary \ref{cor:map-implies-formal}.
\end{proof}
The same proof gives the following, using that $\MU/m$ is an $\bE_1$-$\MU$-algebra for all $m$ \cite{angeltveit2008thhofainftyringspectra}.
\begin{coro}\label{coro mu formality is formality}
     Let $n>0$ and $1<k\leq \infty$.  Then a DGA with homology $\z/m[x_{2n}]/x_{2n}^k$ is formal if and only if it is formal as an $\MU$-algebra. 
\end{coro}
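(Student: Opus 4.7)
The plan is to mimic the argument of Proposition \ref{prop topological formality is formality for s geq 2} verbatim, but with the base ring $\sph$ replaced by $\MU$. One direction is free: formality as a DGA implies formality as an $\MU$-algebra by restriction of scalars along $\MU \to \z$. So the content is the converse.

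Suppose $A$ is a DGA with $\pis(A) \cong \z/m[x_{2n}]/x_{2n}^k$, and assume that as an $\MU$-algebra (via the unit $\MU \to \z \to A$) it is equivalent to the formal model $R := \z/m[x_{2n}]/x_{2n}^k$ (with its canonical $\MU$-algebra structure coming from $\MU \to \z \to \z/m \to R$). In particular, the inclusion $\z/m \to R$ is an $\MU$-algebra map, so composing with the equivalence $R \simeq A$ yields an $\MU$-algebra map $\z/m \to A$. Next, invoke the cited result \cite{angeltveit2008thhofainftyringspectra} that $\MU/m$ carries an $\bE_1$-$\MU$-algebra structure for every $m>1$. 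The unit of the base change adjunction $\z \otimes_{\MU} - \colon \Alg(\MU) \leftrightarrows \Alg(\z)$ then supplies an $\MU$-algebra map
\[ \MU/m \longrightarrow \z \otimes_{\MU} \MU/m \simeq \z/m, \]
where the displayed equivalence is the standard identification (from $\MU/m \simeq \MU \otimes_{\sph} \sph/m$ and hence $\z \otimes_{\MU} \MU/m \simeq \z \otimes_{\sph} \sph/m \simeq \z/m$). Composing, we obtain an $\MU$-algebra map $\MU/m \to A$.

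Applying the base change adjunction once more (using that $A$ is genuinely a $\z$-algebra), this $\MU$-algebra map corresponds to a $\z$-algebra map $\z \otimes_{\MU} \MU/m \simeq \z/m \to A$. Now Corollary \ref{cor:map-implies-formal} (1) applies and yields that $A$ is formal as a DGA, finishing the proof. No new ingredients are required beyond those already assembled for Proposition \ref{prop topological formality is formality for s geq 2}; the only substantive verification is that the identification $\z \otimes_{\MU} \MU/m \simeq \z/m$ is valid at the level of $\bE_1$-$\z$-algebras, and this is immediate from the associativity of base change and the fact that it respects the $\bE_1$-algebra structure on cofibers of the multiplication-by-$m$ map. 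The expected main obstacle is purely notational, namely being careful that each map is tracked in the correct $\infty$-category of algebras, since the Burklund ring structure hypothesis needed over $\sph$ is waived here thanks to the unconditional $\bE_1$-$\MU$-algebra structure on $\MU/m$.
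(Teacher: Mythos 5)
Your argument is correct and is exactly the adaptation the paper has in mind: the paper proves this by noting that "the same proof" as Proposition~\ref{prop topological formality is formality for s geq 2} applies, replacing $\sph/p^s$ (which requires Burklund's $\bE_1$-structure and the restriction on $s$) with $\MU/m$ (which is unconditionally an $\bE_1$-$\MU$-algebra by \cite{angeltveit2008thhofainftyringspectra}), producing the $\MU$-algebra map $\MU/m \to A$ and then the adjoint $\z$-algebra map $\z/m \simeq \z\otimes_{\MU}\MU/m \to A$ to feed into Corollary~\ref{cor:map-implies-formal}(1). Your intermediate step of first producing an $\MU$-algebra map $\z/m \to A$ and then precomposing with the unit $\MU/m \to \z\otimes_{\MU}\MU/m$ is a harmless reorganization of the same composite.
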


\begin{prop}\label{prop infty many dgas up to topological equiv}
    Let $s$ be as above and $n>0$, then for every $l \neq l'$
    \[S_{2nl}^{p^s}[\sqrt[l]{x_{2nl}}]\not \simeq  S_{2nl'}^{p^s}[\sqrt[l]{x_{2nl'}}]
    \]
    as $\sph$-algebras and 
    \[S_{2nl}^{m}[\sqrt[l]{x_{2nl}}]\not \simeq  S_{2nl'}^{m}[\sqrt[l]{x_{2nl'}}]\]
    as $\MU$-algebras. 
\end{prop}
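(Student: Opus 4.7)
The plan is to reduce the statement to Corollary~\ref{coro root adjs are distinct} by a Postnikov truncation argument, combined with the fact that for DGAs with truncated polynomial homology over $\Z/p^s$ (or $\Z/m$), topological (or $\MU$-algebra) formality coincides with formality (Proposition~\ref{prop topological formality is formality for s geq 2} and Corollary~\ref{coro mu formality is formality}). Without loss of generality assume $l'<l$, and write $A_l = S_{2nl}^{p^s}\big[\sqrt[l]{x_{2nl}}\big]$ and $A_{l'} = S_{2nl'}^{p^s}\big[\sqrt[l']{x_{2nl'}}\big]$. Suppose for contradiction that $A_l \simeq A_{l'}$ as $\sph$-algebras. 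Since Postnikov truncation is functorial on ring spectra, applying $\tau_{\leq 2nl'}(-)$ gives an equivalence of ring spectra $\tau_{\leq 2nl'} A_l \simeq \tau_{\leq 2nl'} A_{l'}$; note that both sides are DGAs with homology $\Z/p^s[x_{2n}]/x_{2n}^{l'+1}$.

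Next I would invoke Corollary~\ref{coro truncations of root adj to snm}: since $2nl'<2nl$, the DGA $\tau_{\leq 2nl'}A_l$ is formal, hence in particular its underlying ring spectrum is equivalent to the formal ring spectrum on $\Z/p^s[x_{2n}]/x_{2n}^{l'+1}$. Through the assumed $\sph$-algebra equivalence, it follows that $\tau_{\leq 2nl'}A_{l'}$ is topologically formal. Now Proposition~\ref{prop topological formality is formality for s geq 2} applies to $\tau_{\leq 2nl'}A_{l'}$ and upgrades topological formality to formality as a DGA. This contradicts Corollary~\ref{coro truncations of root adj to snm} applied to $A_{l'}$, since $\tau_{\leq 2nl'}A_{l'}$ is non-formal as $2nl'\geq 2nl'$. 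The $\MU$-algebra version follows by the same argument, replacing Proposition~\ref{prop topological formality is formality for s geq 2} with Corollary~\ref{coro mu formality is formality}; here the full generality in $m$ is allowed because the latter corollary holds for all $m>1$.

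The key point is that nothing non-trivial has to be checked beyond verifying that Postnikov truncation preserves the relevant equivalences and that the truncated homotopy rings still fall into the regime of the formality$\Leftrightarrow$topological/$\MU$-formality equivalence. I do not anticipate a serious obstacle: the argument is essentially a bootstrap from Corollary~\ref{coro root adjs are distinct} (quasi-isomorphism distinctness) using the rigidity results already established.
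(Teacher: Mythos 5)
Your proof is correct and follows essentially the same route as the paper's: both arguments truncate at degree $2nl'$, apply Corollary~\ref{coro truncations of root adj to snm} to see that the $l$-truncation is formal while the $l'$-truncation is not, and then use Proposition~\ref{prop topological formality is formality for s geq 2} (resp.\ Corollary~\ref{coro mu formality is formality}) to transport (non-)formality across the topological (resp.\ $\MU$-algebra) equivalence. The only difference is that you phrase it as a proof by contradiction, whereas the paper states directly that the two truncations differ in topological formality; this is a cosmetic difference.
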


\begin{proof}
    Assume $l'<l$, then  by Corollary \ref{coro truncations of root adj to snm}, the $2nl'$-truncation of $S_{2nl}^{p^s}[\sqrt[l]{x_{2nl}}]$ is formal but of $ S_{2nl'}^{p^s}[\sqrt[l]{x_{2nl'}}]$ is not formal. It follows by  Proposition \ref{prop topological formality is formality for s geq 2} that $2nl'$-truncation of $S_{2nl}^{p^s}[\sqrt[l]{x_{2nl}}]$ is \textit{topologically} formal but of $ S_{2nl'}^{p^s}[\sqrt[l]{x_{2nl'}}]$ is not \textit{topologically} formal. This proves the first statement. The second statement follows similarly by using Corollary \ref{coro mu formality is formality}. 
\end{proof}

\begin{coro}\label{coro infty many dgas up to topological equivalences}
    Let $n>0$ and $s$ above. Up to topological equivalence, there are infinitely many DGAs with homology $\z/p^s[x_{2n}]$. 
    
    Similarly, up to $\MU$-algebra equivalence, there are infinitely many DGAs with homology $\Z/m[x_{2n}]$.
\end{coro}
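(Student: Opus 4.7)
The plan is to deduce this corollary directly from Proposition \ref{prop infty many dgas up to topological equiv} by exhibiting the infinite family $\{S_{2nl}^{p^s}[\sqrt[l]{x_{2nl}}]\}_{l\geq 1}$ as witnesses. First I would observe that, by Construction \ref{cons adj root to snm}, each member of this family is a DGA with homology $\Z/p^s[x_{2n}]$ (the root adjunction collapses the generator in degree $2nl$ to a generator in degree $2n$ at the level of homotopy groups), so the entire collection consists of candidate DGAs for the first claim. Then Proposition \ref{prop infty many dgas up to topological equiv} asserts that for $l \neq l'$ the underlying ring spectra of $S_{2nl}^{p^s}[\sqrt[l]{x_{2nl}}]$ and $S_{2nl'}^{p^s}[\sqrt[l']{x_{2nl'}}]$ are inequivalent, so the family gives infinitely many topological equivalence classes.

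For the $\MU$-algebra statement, the argument is formally identical: the family $\{S_{2nl}^{m}[\sqrt[l]{x_{2nl}}]\}_{l\geq 1}$ again consists of DGAs with homology $\Z/m[x_{2n}]$ by Construction \ref{cons adj root to snm}, and the second half of Proposition \ref{prop infty many dgas up to topological equiv} shows these are pairwise inequivalent as $\MU$-algebras for distinct values of $l$. Hence there are infinitely many $\MU$-algebra equivalence classes.

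There is no real obstacle here, since all the work has been done in Proposition \ref{prop infty many dgas up to topological equiv} and the supporting results on formality detected through the Postnikov truncations (Corollary \ref{coro truncations of root adj to snm}), together with the bridge results identifying topological formality with formality in the relevant range (Proposition \ref{prop topological formality is formality for s geq 2}, respectively Corollary \ref{coro mu formality is formality}). The only mild care is to spell out that the DGAs $S_{2nl}^{p^s}[\sqrt[l]{x_{2nl}}]$ indeed have the correct homology, which is immediate from Construction \ref{cons adj root to snm}. Thus the proof amounts to a two-line invocation of the preceding proposition, and the corollary follows.
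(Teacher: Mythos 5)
Your proposal is correct and matches the paper's argument: the corollary is indeed a direct consequence of Proposition \ref{prop infty many dgas up to topological equiv}, using Construction \ref{cons adj root to snm} to see the members of the families have the stated homology. The paper gives precisely this one-line invocation.
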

\begin{proof}
   Proposition \ref{prop infty many dgas up to topological equiv} provides infinite families of such DGAs.
\end{proof}

\section{DGAs with exterior homology}

In \cite[Theorem 1.1]{DGI} the authors classify DGAs whose homology is an exterior algebra over $\fp$ on a  generator in degree $-1$ in terms of CDVRs with residue field $\fp$. They finish the paper with the statement ``We do not know how to classify DGAs with exterior homology over $\fp$ in a degree $-n$ generator'' where $n>1$. To the best of our knowledge, they in fact did not know of any non-formal examples. 

However, in \cite[Proposition 6.1]{DGI} it was observed that for $n>1$ by Koszul duality, quasi-isomorphism classes of DGAs with exterior homology in a negative degree $-n$ generator (over a commutative ring $R$) correspond bijectively to quasi-isomorphism classes of DGAs with polynomial homology in a positive degree $n-1$ generator. Indeed, given a DGA $A$ with homology $\Z/m[x_{n-1}]$, one may consider the Koszul dual algebra $KD(A) =\Mor_A(\Z/m,\Z/m)$. This is then a $\Z$-algebra with homology $\Lambda_{\Z/m}[e_{-n}]$ and Koszul duality says that $A$ can be recovered as the Koszul dual of $KD(A)$, i.e.\ one has $A \simeq \Mor_{KD(A)}(\Z/m,\Z/m)$.

\begin{coro}\label{coro exterior inftly many dgas}
    Let $n<-1$ be odd. Up-to quasi-isomorphisms, there are  infinitely many  DGAs with homology $\Lambda_{\Z/m}[x_n]$.
\end{coro}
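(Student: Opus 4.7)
The plan is to combine the Koszul duality correspondence recalled immediately before the statement with Theorem~\ref{ThmC} (1). The idea is that a DGA with exterior homology in a negative odd degree is Koszul dual to a DGA with polynomial homology in a positive even degree, and the latter already admits an infinite family of pairwise non-quasi-isomorphic representatives.

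Given an odd $n<-1$, I would set $k=-n-1$, which is a positive even integer. The first step is to invoke the $\Z/m$-version of Proposition~\ref{prop intro Koszul duality poly to exterior}: the Koszul duality functor $A\mapsto \Mor_A(\Z/m,\Z/m)$ induces a bijection between quasi-isomorphism classes of DGAs with homology $\Lambda_{\Z/m}[x_n]$ and quasi-isomorphism classes of DGAs with homology $\Z/m[x_k]$. As observed in the paragraph preceding the corollary, this bijection is precisely \cite[Proposition~6.1]{DGI}, which was established over an arbitrary commutative ring and applies in particular with coefficients in $\Z/m$.

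Writing $k=2n'$ with $n'>0$, the second step is to apply Theorem~\ref{ThmC} (1), realized concretely by Corollary~\ref{coro root adjs are distinct}, to produce the infinite family $\{S_{2n'l}^m[\sqrt[l]{x_{2n'l}}]\}_{l\geq 1}$ of pairwise non-quasi-isomorphic DGAs with homology $\Z/m[x_k]$. Finally, transporting this family across the Koszul duality bijection yields infinitely many pairwise non-quasi-isomorphic DGAs with homology $\Lambda_{\Z/m}[x_n]$, as required.

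There is essentially no serious obstacle: given that the $\Z/m$-version of Koszul duality has been invoked by the authors just before the statement, and that Theorem~\ref{ThmC} (1) has already been proved in full, the corollary is an immediate assembly of these two inputs. The parity assumption on $n$ is exactly what makes the argument run: it guarantees that the Koszul dual polynomial generator lies in a positive \emph{even} degree, which is the setting in which Theorem~\ref{ThmC} (1) provides an infinite family; in even negative degrees the Koszul dual generator would sit in positive odd degree, where the constructions of Section~\ref{sec adj roots} do not directly apply.
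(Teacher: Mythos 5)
Your proof is correct and is essentially the paper's own proof, only written out in more detail: the paper's proof is the single sentence "By the above Koszul duality argument, this follows from Theorem~\ref{ThmC}," and you have unpacked both the Koszul duality step (from \cite[Proposition 6.1]{DGI}, as recalled in the paragraph preceding the corollary) and the invocation of Theorem~\ref{ThmC}~(1) via Corollary~\ref{coro root adjs are distinct}. Your remark on why the parity hypothesis on $n$ is needed — so that the Koszul dual polynomial generator lands in a positive \emph{even} degree where the root-adjunction constructions of Section~\ref{sec adj roots} apply — is a helpful observation that the paper leaves implicit.
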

\begin{proof}
    By the above Koszul duality argument, this follows from Theorem~\ref{ThmC}.
\end{proof}

\begin{rema}
However, we find that the infinitely many quasi-isomorphism classes coming from the DGAs $S_{2nl}^m[\sqrt[l]{x_{2nl}}]$ collapse to only finitely many equivalence classes of ring spectra for $m=p$: Indeed, if $A \simeq B$ as ring spectra, then one obtains an induced equivalence of Koszul duals $KD(A) \simeq KD(B)$, so that we may appeal to Corollary~\ref{cor:root-adjunction-topologically-formal}.    
\end{rema}

\begin{coro}\label{coro later exterior dgas are topologically formal}
    Let $A$ be a DGA with homology $\Lambda_{\fp}[e]$ with $|e|<-(2p-2)$. Then $A$ is topologically formal. 
\end{coro}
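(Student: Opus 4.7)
The plan is to reduce the statement to Theorem \ref{ThmB} via Koszul duality. Set $n = -|e|$, so by hypothesis $n > 2p-2$, equivalently $n-1 \geq 2p-2$. The target of the reduction is the statement that every DGA with homology $\fp[x_{n-1}]$ is topologically formal, which is exactly Theorem \ref{ThmB} applied to the generator of degree $n-1$.

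To carry this out, I would invoke Proposition \ref{prop intro Koszul duality poly to exterior}, which asserts that for $k \neq 0, 1$ there is a canonical bijection between quasi-isomorphism classes of DGAs with homology $\Lambda_{\fp}[x_k]$ and quasi-isomorphism classes of DGAs with homology $\fp[x_{-k-1}]$, and that the same bijection holds at the level of topological equivalence classes. Taking $k = -n$, so that $-k-1 = n-1$, this says that topological equivalence classes of DGAs with homology $\Lambda_{\fp}[e_{-n}]$ correspond to topological equivalence classes of DGAs with homology $\fp[x_{n-1}]$. Concretely, the correspondence is implemented by the Koszul duality functor $B \mapsto \Mor_B(\fp, \fp)$ as recalled just before Corollary \ref{coro exterior inftly many dgas}.

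Now, since $n-1 \geq 2p-2$, Theorem \ref{ThmB} guarantees that every DGA with homology $\fp[x_{n-1}]$ is topologically formal. In particular, there is a unique topological equivalence class among such DGAs. Transporting across the Koszul duality bijection, there is then a unique topological equivalence class of DGAs with homology $\Lambda_{\fp}[e_{-n}]$, and since the formal DGA $\Lambda_{\fp}[e_{-n}]$ is one representative, every such DGA — in particular our $A$ — must be topologically equivalent to the formal one.

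The only non-routine ingredient is the fact that the Koszul duality bijection of Proposition \ref{prop intro Koszul duality poly to exterior} also descends to topological equivalence classes; this is already asserted as part of the proposition and so no further work is needed here. In fact the whole content of the corollary is to combine this bijection with the topological formality theorem just established, so there is no serious obstacle beyond correctly matching the degree shift $k = -n$, $-k-1 = n-1$ and checking that the hypothesis $|e| < -(2p-2)$ translates precisely into the hypothesis $n-1 \geq 2p-2$ required by Theorem \ref{ThmB}.
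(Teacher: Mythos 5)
Your proof is correct and follows essentially the same route as the paper's: both reduce the statement to Theorem~\ref{ThmB} via the Koszul duality between DGAs with exterior homology in degree $-n$ and DGAs with polynomial homology in degree $n-1$. The only cosmetic difference is that you invoke Proposition~\ref{prop intro Koszul duality poly to exterior} as a black box giving a bijection of topological equivalence classes, whereas the paper's proof re-derives the needed facts directly (that $KD$ preserves topological equivalence and that the Koszul dual of a formal $\fp$-DGA is formal); the substance and the degree arithmetic $|x| = -|e|-1 \geq 2p-2$ are identical.
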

\begin{proof}
    As we have just noted, if $A$ and $B$ are topologically equivalent, then so are their Koszul duals $KD(A)$ and $KD(B)$. By the endomorphism description of the Koszul dual, it is clear that the Koszul dual of an $\fp$-DGA is also an $\fp$-DGA. In particular, we have $KD(\Lambda_{\fp}[e]) = \fp[x]$ with $|x|=-|e|-1$, i.e.\ the Koszul dual of the formal DGA $\Lambda_{\fp}[e]$ is the formal DGA $\fp[x]$. 
    The statement of the Corollary is therefore equivalent to the statement that every DGA with homology $\fp[x]$ with $|x| = -|e|-1 > 2p-3$ is topologically formal, which is the statement of Theorem \ref{ThmB}.
\end{proof}

By the same arguments, we have the following corollaries of Corollary \ref{coro infty many dgas up to topological equivalences}. 
\begin{coro}\label{coro exterior inftly many dgas up to topological equiv}
    Let $n<-1$ be odd, $s\geq 3$ ($s\geq 2$ if $p$ is odd). Up to topological equivalence, there are infinitely many DGAs with homology $\Lambda_{\z/p^s}[x_{n}]$. 
    
    Similarly, up to $\MU$-algebra equivalence, there are infinitely many DGAs with homology $\Lambda_{\Z/m}[x_{n}]$.
\end{coro}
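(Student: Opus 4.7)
The plan is to mirror the derivation of Corollary \ref{coro exterior inftly many dgas} from Theorem \ref{ThmC}, now starting from Corollary \ref{coro infty many dgas up to topological equivalences} instead of Theorem \ref{ThmC}, and upgrading quasi-isomorphism to topological (respectively $\MU$-algebra) equivalence. Since $n<-1$ is odd, the integer $-n-1$ is positive and even; this is the numerical input that lets the polynomial-side results apply.

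First I would invoke Corollary \ref{coro infty many dgas up to topological equivalences} with $2k := -n-1$ to obtain a collection $\{A_l\}_{l\geq 1}$ of DGAs with homology $\Z/p^s[x_{-n-1}]$ that are pairwise topologically inequivalent. I would then take Koszul duals $B_l := KD(A_l) = \Mor_{A_l}(\Z/p^s,\Z/p^s)$; by the Koszul duality bijection of Proposition \ref{prop intro Koszul duality poly to exterior}, each $B_l$ is a DGA with homology $\Lambda_{\Z/p^s}[x_n]$.

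The remaining step is to argue that the $B_l$ are pairwise topologically inequivalent. This proceeds exactly as in the proof of Corollary \ref{coro later exterior dgas are topologically formal}: the underlying ring spectrum of $KD(A)$ depends only on the underlying ring spectra of $A$ and of $\Z/p^s$, so an equivalence $B_l\simeq B_{l'}$ of ring spectra induces an equivalence of ring spectra $KD(B_l)\simeq KD(B_{l'})$, and composing with the double-duality identification $KD(KD(A))\simeq A$ at the level of ring spectra would contradict the choice of $\{A_l\}$. For the $\MU$-algebra statement, I would run the identical argument, but compute $\Mor_{A}(\Z/m,\Z/m)$ inside $\MU$-modules (using that $\Z/m$ is an $\MU$-algebra by \cite{angeltveit2008thhofainftyringspectra}), so that Koszul duality becomes a functor on $\MU$-algebras that preserves $\MU$-equivalences.

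The main obstacle is making the compatibility of Koszul duality with $\MU$-algebra structures precise, together with the double-dual identity in that setting. In the topological case this is already implicit in the proof of Corollary \ref{coro later exterior dgas are topologically formal}, and in the $\MU$-case it reduces to the standard fact that the endomorphism ring construction in a stable symmetric monoidal category is functorial and satisfies Koszul biduality under suitable finiteness, which applies here since all the DGAs in sight have finitely generated, bounded-below homotopy in each degree.
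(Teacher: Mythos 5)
Your proposal is correct and follows exactly the route the paper intends: it applies the Koszul duality mechanism (already used in Corollary \ref{coro exterior inftly many dgas} and Corollary \ref{coro later exterior dgas are topologically formal}) to the input Corollary \ref{coro infty many dgas up to topological equivalences}, using that $KD(A)=\Mor_A(\Z/p^s,\Z/p^s)$ depends only on the underlying ring spectrum (resp.\ $\MU$-algebra) of $A$ together with biduality to conclude that the $B_l$ remain pairwise inequivalent. The paper compresses all of this into the single line ``By the same arguments, we have the following corollaries of Corollary \ref{coro infty many dgas up to topological equivalences}''; you have simply unfolded that sentence correctly, including the sign check that $n<-1$ odd gives $-n-1>0$ even and the observation that $\Z/m$ being an $\MU$-algebra is what lets the Koszul dual be formed $\MU$-linearly.
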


\section{Exotic DG-enhancements}\label{sec:exotic}
In this section, let us fix a prime $p$ and for ease of notation write $S_{2n} := S_{2n}^p$.
We recall that we have fixed a $\z[X_{2n}]$-algebra structure on $S_{2n}$ for each $n$ (Remark~\ref{remark:non-unqieness}) to define the root adjunctions $S_{2nl}[\sqrt[l]{x_{2nl}}]$. Let us then consider the following family of DGAs: 
\[F_{2n}(l) := S_{2nl}[\sqrt[l]{x_{2nl}}][x_{2nl}^{-1}]; \]
Note that $\pis(S_{2nl}[\sqrt[l]{x_{2nl}}]) \cong \fp[t_{2n}]$ is graded commutative, so such a localisation exists and satisfies 
$\pis(F_{2n}(l)) \cong \fp[t_{2n}^{\pm 1}]$.
\begin{prop}\label{prop:different-module-categories}
    The $\Z$-linear $\infty$-categories $\Mod(F_{2n}(l))$ and $\Mod(F_{2n}(l'))$ are $\Z$-linearly equivalent if and only if $l=l'$.
\end{prop}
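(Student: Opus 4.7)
The ``if'' direction is immediate. For ``only if'', I would suppose $F\colon \Mod(F_{2n}(l)) \xrightarrow{\simeq} \Mod(F_{2n}(l'))$ is a $\Z$-linear equivalence and aim to deduce first that $F_{2n}(l) \simeq F_{2n}(l')$ as $\Z$-DGAs, and then pass to connective covers to invoke Corollary~\ref{coro root adjs are distinct}. Since $F$ is a $\Z$-linear equivalence, it preserves compact generators, and passing to endomorphism $\Z$-algebras yields an equivalence
\[ F_{2n}(l) \simeq \End_{F_{2n}(l)}(F_{2n}(l)) \simeq \End_{F_{2n}(l')}(M) \]
of $\Z$-DGAs, where $M := F(F_{2n}(l))$ is a compact generator of $\Mod(F_{2n}(l'))$.

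Next I would exploit that $F_{2n}(l')$ is a DGDR: since $\pi_\ast F_{2n}(l') \cong \fp[t_{2n}^{\pm 1}]$ is a graded field, every module over $F_{2n}(l')$ is free, and the invertibility of $t_{2n}\in \pi_{2n}$ means every compact module is equivalent to a finite direct sum $M \simeq \bigoplus_{i=1}^{k} \Sigma^{\epsilon_i} F_{2n}(l')$ with $\epsilon_i \in \{0,1,\dots,2n-1\}$. Computing homotopy, we find
\[ \pi_0 \End_{F_{2n}(l')}(M) \cong \bigoplus_{(i,j)\,:\, \epsilon_i \equiv \epsilon_j \bmod 2n} \fp. \]
Since the diagonal pairs always contribute, and $\pi_0 F_{2n}(l) = \fp$, this forces $k=1$, and hence $M \simeq \Sigma^\epsilon F_{2n}(l')$. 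The suspension is invisible on the endomorphism $\Z$-algebra, so we obtain $F_{2n}(l) \simeq F_{2n}(l')$ as $\Z$-DGAs.

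Finally, I would take connective covers. Since $F_{2n}(l) = S_{2nl}[\sqrt[l]{x_{2nl}}][x_{2nl}^{-1}]$ is realized as the filtered colimit of iterated multiplication by $x_{2nl}$, the canonical map $S_{2nl}[\sqrt[l]{x_{2nl}}] \to F_{2n}(l)$ is an isomorphism on non-negative homotopy groups, and hence exhibits $S_{2nl}[\sqrt[l]{x_{2nl}}] \simeq \tau_{\geq 0}F_{2n}(l)$ as $\Z$-DGAs. Applying $\tau_{\geq 0}$ to the equivalence $F_{2n}(l)\simeq F_{2n}(l')$ then yields $S_{2nl}[\sqrt[l]{x_{2nl}}] \simeq S_{2nl'}[\sqrt[l']{x_{2nl'}}]$ as $\Z$-DGAs, and Corollary~\ref{coro root adjs are distinct} forces $l=l'$. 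I expect the main obstacle to be the compact-generator classification for DGDRs and, in particular, the $\pi_0$ counting argument that rules out $k\geq 2$; once this is in place, the connective-cover step is routine.
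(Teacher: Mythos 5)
Your proof is correct. The first half (passing to endomorphism algebras of the generator, decomposing $\Phi(F_{2n}(l))$ as a finite wedge of shifts of $F_{2n}(l')$, and then using a rank count on $\pi_0$ to pin down a single summand) is essentially the paper's argument, just with the $\pi_0$ count spelled out more explicitly; the paper gets the same conclusion by simply noting that the endomorphism DGA must have the homotopy groups of $F_{2n}(l)$. Where you diverge is the last step. The paper keeps working with the periodic rings: it picks the canonical maps $S_{2nl}\to F_{2n}(l)\simeq F_{2n}(l')\leftarrow S_{2nl'}\leftarrow S_{2nl}$ and appeals directly to the uniqueness of DGA maps $S_{2nl}\to F_{2n}(l')$ from Corollary~\ref{coro s2k maps are unique up to homotopy} to force $l=l'$. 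You instead pass to connective covers, identify $\tau_{\geq 0}F_{2n}(l)\simeq S_{2nl}[\sqrt[l]{x_{2nl}}]$ (which is right: the localization map is a $\Z$-algebra map inducing an isomorphism on $\pi_{\geq 0}$, so it factors as an equivalence through the connective cover in $\Alg(\Z)$), and then simply cite Corollary~\ref{coro root adjs are distinct}. Both are valid; your route has the small advantage of reusing an already-established inequivalence result rather than rerunning the uniqueness argument, while the paper's route avoids the (routine) connective-cover identification. Since Corollary~\ref{coro root adjs are distinct} itself rests on Theorem~\ref{theo maps from snk to dgas with polynomial homology}, which in turn rests on Corollary~\ref{coro s2k maps are unique up to homotopy}, the two proofs ultimately use the same machinery.
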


\begin{proof}
    For the non-trivial implication, let $\Phi\colon \Mod(F_{2n}(l)) \to \Mod(F_{2n}(l'))$ be a $\Z$-linear equivalence. Since $\Phi$ is fully faithful, the induced map 
    \[ F_{2n}(l) = \mathrm{end}_{F_{2n}(l)}(F_{2n}(l))\to \mathrm{end}_{F_{2n}(l')}(\Phi(F_{2n}(l)))\]
    is an equivalence of DGAs. Since $\pis F_{2n}(l')$ is a field, we find that $\Phi(F_{2n}(l))$ %$$= F_{2n}(l')^{\oplus k_0} \oplus \Sigma F_{2n}(l')^{\oplus k_1}$ for some $k_0,k_1 \geq 0$.  
    is a coproduct of shifted copies $F_{2n}(l')$ and since the map above is an equivalence, we deduce that it is just a shift of a single copy of $F_{2n}(l')$. In particular, the right hand side above is given by $F_{2n}(l')$; so the equivalence above is an equivalence of DGAs $F_{2n}(l) \simeq F_{2n}(l')$.
   
  Without loos of generality assume $l'\geq l$ and consider the canonical maps out of $S_{2nl}$:
    \[ S_{2nl} \to F_{2n}(l) \simeq F_{2n}(l') \leftarrow S_{2nl'}\leftarrow S_{2nl}\]
    where the left map carries $x_{2nl}$ to a non-trivial element. By Corollary~\ref{coro s2k maps are unique up to homotopy}, the same is true for the right composite, so that for degree reasons, we find $l'=l$ as claimed. In particular, for $l'\neq l$, we deduce that $\Mod(F_{2n}(l))$ and $\Mod(F_{2n}(l'))$ are not $\Z$-linearly equivalent.
\end{proof}

Note that in the following, $\fp[t_{2n}^{\pm 1}]$ denotes the formal DGA with homology $\fp[t_{2n}^{\pm 1}]$.

\begin{coro}[Corollary \ref{CorE}]
    The family \[\{\Mod(F_{2n}(l))\}_{l\geq \frac{p-1}{n}}\] consists of pairwise distinct $\Z$-linear $\infty$-categories that are all equivalent as stable $\infty$-categories to $\Mod(\fp[t_{2n}^{\pm 1}])$. In particular, the triangulated category $\textup{Ho}(\Mod(\fp[t_{2n}^{\pm}])$ admits infinitely many DG-enhancements.
\end{coro}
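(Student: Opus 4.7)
The plan is to assemble Theorem~\ref{ThmC}(2), which yields a topological equivalence $S_{2nl}[\sqrt[l]{x_{2nl}}] \simeq \fp[x_{2n}]$ for $l \geq \frac{p-1}{n}$, with Proposition~\ref{prop:different-module-categories}, which already classifies $\Z$-linear equivalences between the $\Mod(F_{2n}(l))$. The only new bookkeeping is to transport the topological formality statement through localization, and to translate $\Z$-linear equivalence into the language of DG-enhancements.

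First I would upgrade Theorem~\ref{ThmC}(2) to a statement about the localizations. The forgetful functor $\Alg(\Z) \to \Alg(\sph)$ is (equivalent to) the forgetful functor out of the slice $\Alg(\sph)_{\Z/}$ and hence preserves colimits; in particular it preserves the telescope presenting the localization at a homogeneous element of positive even degree. Applying it to the equivalence $S_{2nl}[\sqrt[l]{x_{2nl}}] \simeq \fp[x_{2n}]$ of ring spectra and inverting $x_{2n}$ yields an equivalence of ring spectra $F_{2n}(l) \simeq \fp[t_{2n}^{\pm 1}]$. Passing to module categories, one obtains an equivalence of stable $\infty$-categories
\[ \Mod(F_{2n}(l)) \;\simeq\; \Mod(\fp[t_{2n}^{\pm 1}]) \]
for every $l \geq \frac{p-1}{n}$, witnessing the common underlying stable $\infty$-category claimed in the corollary.

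Next, the pairwise non-equivalence of the $\Z$-linear $\infty$-categories in the family $\{\Mod(F_{2n}(l))\}_{l \geq (p-1)/n}$ is exactly the content of Proposition~\ref{prop:different-module-categories}, and hence requires no further argument. To conclude the DG-enhancement statement, I would invoke the standard identification of $\Z$-linear presentable stable $\infty$-categories with pre-triangulated DG-categories (via Lurie's $\infty$-categorical avatar of DG-enhancements): a $\Z$-linear structure on a stable $\infty$-category $\mathcal{C}$ equips $\textup{Ho}(\mathcal{C})$ with a DG-enhancement, and two such enhancements are equivalent precisely when the underlying $\Z$-linear $\infty$-categories are. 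Taking homotopy categories in the equivalences of the first step exhibits each $\Mod(F_{2n}(l))$ as a DG-enhancement of $\textup{Ho}(\Mod(\fp[t_{2n}^{\pm 1}]))$, and the second step guarantees that these enhancements are pairwise distinct, yielding infinitely many.

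The main obstacle I foresee is mild: one must justify that localization at a homogeneous element commutes with the forgetful functor to ring spectra. This is not deep---it is a formal consequence of the colimit-preservation noted above, or alternatively of the universal property characterising $R \to R[x^{-1}]$ as the initial ring map under which $x$ becomes invertible, which is inherited from $\Alg(\sph)$ to $\Alg(\Z)$. Beyond this minor verification, the corollary is a direct assembly of Theorem~\ref{ThmC}(2) and Proposition~\ref{prop:different-module-categories}.
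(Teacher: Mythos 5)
Your proposal is correct and follows essentially the same route as the paper: invoke Corollary~\ref{cor:root-adjunction-topologically-formal} (equivalently Theorem~\ref{ThmC}(2)) for topological formality of $S_{2nl}\big[\sqrt[l]{x_{2nl}}\big]$, pass to the localization, and then cite Proposition~\ref{prop:different-module-categories} for pairwise $\Z$-linear inequivalence. The only point where you go beyond the paper's wording is in justifying why the localization preserves the topological equivalence; the paper passes over this with a single ``therefore,'' while you supply the telescope/colimit-preservation argument. One small caveat: your parenthetical identification of $\Alg(\Z)\to\Alg(\sph)$ with a slice-category forgetful functor $\Alg(\sph)_{\Z/}\to\Alg(\sph)$ is not quite right (an $\bE_1$-ring under $\Z$ need not be a $\Z$-algebra, since centrality of the unit is extra data), but your alternative justification via the universal property of localization, or simply the fact that restriction of scalars preserves filtered colimits, suffices and makes the argument go through.
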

\begin{proof}
    That the described $\Z$-linear categories are pairwise inequivalent is the content of Proposition~\ref{prop:different-module-categories}. To see that the underlying stable $\infty$-categories are all equivalent we recall from Corollary~\ref{cor:root-adjunction-topologically-formal} that each $S_{2nl}[\sqrt[l]{x_{2nl}}]$ is topologically equivalent to the formal DGA $ \fp[t_{2n}]$ whenever $nl\geq p-1$ and therefore each $F_{2n}(l)$ above is topologically equivalent to the formal DGA $\fp[t_{2n}^{\pm 1}]$. In particular,  $\Mod(F_{2n}(l)) \simeq \Mod(\fp[t_{2n}^{\pm 1}])$ is independent of $l$ as stable $\infty$-categories as claimed.
\end{proof}

\section{Prime fields in DGAs}\label{sect prime DG fields}
In this section, we discuss the notion of DG-division rings and DG-fields and obtain a classification of what we call the prime DG-fields. 

\begin{defi}\label{defi fields in DGAs}
    We say a DGA $A$ is a DG-division ring (DGDR) if $\pis A$ is a graded division ring in the sense that every non-zero homogeneous element in $\pis A$ is invertible. If furthermore $\pis A$ is a graded commutative ring, then we say $A$ is a DG-field (DGF). 
\end{defi}

The obvious examples of DGDRs are ordinary division rings like $\fp$ and $\mathbb{Q}$.

    Consider the DGA
    $F_{2n}^p:= F_{2n}(1) = S_{2n}[x_{2n}^{\pm 1}]$
    obtained from $S_{2n}:=S_{2n}^p$ by inverting the generator $x_{2n}$ and $F_\infty^p := \fp$. 
    By the universal property of localizations, for another DGA $A$, the restriction map
    \[\Map_{\Alg(\Z)}(F_{2n}^p,A) \to \Map_{\Alg(\Z)}(S_{2n}, A)\]
    is the inclusion of those components corresponding to maps $S_{2n} \to A$ carrying $x_{2n}$ to an invertible element, see \cite[Propositions 7.2.3.19 \& 7.2.3.27]{lurie2016higher}. 
    
By construction and definition, for all $1\leq n\leq \infty$ and all primes $p$, $F_{2n}^p$ is a DGF. However, much more is true, $F_{2n}^p$ are prime fields in the following sense.

\begin{defi}
    We say a DGDR $A$ is a prime DG-division ring if for every DGDR $B$, every map $B\to A$ of DGAs is an equivalence. If a prime DGDR $A$ has graded commutative homology, we call it a prime DGF.
\end{defi}

 Indeed $\fp$ and $\mathbb{Q}$ are examples of prime DG-fields (since every map of DGDRs is injective in homology).

\begin{prop}\label{prop f2n are prime}
    Each $F_{2n}^p$ is a prime DGF.
\end{prop}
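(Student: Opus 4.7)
The plan is to show that for any DGDR $B$ and any DGA map $\phi \co B \to F_{2n}^p$, the map $\phi$ is a quasi-isomorphism. I first analyze $\phi$ on homotopy. Since $\pis B$ is a graded division ring and $\phi_*(1) = 1 \neq 0$, the ring map $\phi_*\co \pis B \to \fp[x_{2n}^{\pm 1}]$ is injective. In particular, the support $D := \{k : \pi_k B \neq 0\}$ is a subgroup of $\Z$ contained in $2n\Z$, so either $D = \{0\}$ or $D = 2nd\Z$ for some integer $d \geq 1$. Moreover, $\pi_0 B$ is a division subring of $\fp$ containing $1$, whence $\pi_0 B = \fp$.

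The key observation is that there is no DGA map $\fp \to T_{2n}$. Indeed, $T_{2n}$ has homology $\Lambda_{\fp}[x_{2n}] \cong \fp[x_{2n}]/x_{2n}^2$, so by Corollary \ref{cor:map-implies-formal} (taking $k=2$) any such map would force $T_{2n}$ to be formal, contradicting Terminology \ref{T_2n}. I would combine this with two Postnikov identifications: the localization map $S_{2n} \to F_{2n}^p$ is an isomorphism on non-negative homotopy groups, so $\tau_{\geq 0} F_{2n}^p \simeq S_{2n}$; and by Corollary \ref{coro:truncations-of-S-not-formal} together with the classification of DGAs with exterior homology in Lemma \ref{coro classification of DGAs with exterior homology}, $\tau_{\leq 2n} S_{2n} \simeq T_{2n}$.

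Using these, I would rule out $D = \{0\}$ and $d \geq 2$. In both cases $\pi_k B = 0$ for $0 < k \leq 2n$, so $\tau_{\leq 2n}\tau_{\geq 0} B$ has homology concentrated in degree $0$ and is therefore equivalent to $\fp$. Applying $\tau_{\leq 2n}\tau_{\geq 0}$ to $\phi$ would then produce a DGA map $\fp \simeq \tau_{\leq 2n}\tau_{\geq 0} B \to \tau_{\leq 2n}\tau_{\geq 0} F_{2n}^p \simeq T_{2n}$, contradicting the key observation. We are therefore left with $D = 2n\Z$, so $\pis B \cong \fp[y^{\pm 1}]$ with $|y| = 2n$; then $\phi_*(y) = c \cdot x_{2n}$ for some $c \in \fp^\times$, and by multiplicativity $\phi_*$ is an isomorphism, so $\phi$ is a quasi-isomorphism.

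The main technical point I anticipate is the Postnikov-bookkeeping step, in particular carefully identifying $\tau_{\geq 0} F_{2n}^p \simeq S_{2n}$ and $\tau_{\leq 2n} S_{2n} \simeq T_{2n}$ as DGAs (not merely on homology), and applying Corollary \ref{cor:map-implies-formal} in the exterior-algebra case $k = 2$ to forbid the map $\fp \to T_{2n}$. Once this is in place, the rest of the argument is essentially algebraic, reducing the problem to the classification of graded sub-division-rings of $\fp[x_{2n}^{\pm 1}]$.
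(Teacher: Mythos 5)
Your proof is correct and follows essentially the same route as the paper's: reduce via injectivity of $\phi_*$ to the case $\pis B \cong \fp[x_{2nl}^{\pm 1}]$ for some $1 \le l \le \infty$, then for $l > 1$ apply $\tau_{\leq 2n}\tau_{\geq 0}$ to produce a DGA map $\fp \to \tau_{\leq 2n} S_{2n}$ and derive a contradiction from Corollary~\ref{coro:truncations-of-S-not-formal} via Corollary~\ref{cor:map-implies-formal}. You spell out a few steps the paper leaves implicit (the subgroup structure of the support, the identifications $\tau_{\geq 0} F_{2n}^p \simeq S_{2n}$ and $\tau_{\leq 2n} S_{2n} \simeq T_{2n}$), but the core argument and the key contradiction are identical.
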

\begin{proof}
    Let $A\to F_{2n}^p$ be a map of DGAs where $A$ is a DGDR. Since it is a map of division rings, $\pis A \to \pis F_{2n}^p$ is injective.  From this, we see that $\pis A \cong \fp[x_{2nl}^{\pm 1}]$ for some $l \geq 1$ (for here, $l = \infty$ case meaning $A \simeq \fp$). Therefore, it is sufficient to show that $l = 1$. Assume $l>1$, we take connective covers and Postnikov sections to obtain a map 
    \[\fp \simeq \tau_{\leq 2n}\tau_{\geq 0} A \to  \tau_{\leq 2n}\tau_{\geq 0} F_{2n}^p \simeq \tau_{\leq 2n} S_{2n}.\]
    This contradicts the fact that $\tau_{\leq 2n} S_{2n}$ is non-formal (Corollary \ref{coro:truncations-of-S-not-formal}) due to Corollary \ref{cor:map-implies-formal}.
\end{proof}

In fact, we already mentioned all examples of prime DGDRs. 
\begin{coro}[Corollary \ref{CorF}]
    The collection of all prime DGDRs is given by 
    \[ \{ \Q,\fp, F_{2n}^p \;|\; p \  \textrm{prime and } n>0 \}.\]
    Every DGDR receives a map from at least one of the prime DGDRs. Furthermore, every DGDR with even homology receives a map from exactly one of the prime DGDRs and this map is unique up to homotopy.
\end{coro}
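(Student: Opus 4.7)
The plan is to complement Proposition \ref{prop f2n are prime}, which already establishes the primality of the $F_{2n}^p$, with analogous primality arguments for $\Q$ and $\fp$, and then to produce maps from one of these prime DGDRs into an arbitrary DGDR $A$ by running the tower $\Z \to S_2^p \to S_4^p \to \cdots \to \fp$ from Construction \ref{cons of the dgas s2km}. Once existence is settled, the classification of prime DGDRs follows tautologically from the primality property itself.

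First I would observe that $\Q$ and $\fp$ are prime DGDRs: any DGA map $B \to \Q$ (resp.\ $B \to \fp$) out of a DGDR induces a unital graded ring map $\pis B \to \Q$ (resp.\ $\fp$), and since $\pis B$ is a graded division ring this map is injective (its kernel is a two-sided graded ideal, hence zero); consequently $\pis B$ sits inside the target as a graded subring, forcing $\pis B \cong \Q$ (resp.\ $\fp$) concentrated in degree zero, and the fully faithful Eilenberg--Mac Lane embedding of discrete rings into DGAs upgrades this to $B \simeq \Q$ (resp.\ $B \simeq \fp$).

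Next, given any DGDR $A$, I would construct a map from one of the prime DGDRs. In characteristic zero every non-zero integer acts invertibly on $\pis A$, so by the universal property of localization the initial map $\Z \to A$ factors through $\Q \to A$. In characteristic $p$ the vanishing $p = 0$ in $\pi_0 A$ yields $S_2^p = \Z \sslash p \to A$, and I would attempt to extend this along the tower $S_2^p \to S_4^p \to \cdots$. The pushout presentation $S_{2k+2}^p = S_{2k}^p \sslash x_{2k}$ (Remark \ref{remark:extensions-to-S2n}) shows that an extension from $S_{2k}^p$ to $S_{2k+2}^p$ exists iff the image of $x_{2k}$ in $\pi_{2k} A$ vanishes. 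Either at some smallest $n$ the image of $x_{2n}$ is non-zero, hence invertible by the DGDR hypothesis, so that the map factors through $F_{2n}^p = S_{2n}^p[x_{2n}^{-1}] \to A$ by the universal property of localization; or the extensions continue indefinitely and assemble into $\fp \simeq \colim_n S_{2n}^p \to A$. Applying this to a prime DGDR $A$ and invoking primality of $A$ then identifies $A$ with one of $\Q$, $\fp$, $F_{2n}^p$.

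For the even-homology refinement, uniqueness of the map up to homotopy follows from the initiality of $\Z$ for $\Q$, from Corollary \ref{prop map of dgas fp to a} for $\fp$, and from Corollary \ref{coro s2k maps are unique up to homotopy} combined with the universal property of localization for $F_{2n}^p$. Mutual exclusion I would verify by cross-checks: $\Q \to A$ forces characteristic zero and rules out the $p$-primary primes; $\fp \to A$ combined with Corollary \ref{cor:map-implies-formal} forces $\tau_{\ge 0} A$ to be formal, so Theorem \ref{theo maps from snk to dgas with polynomial homology} forbids any map $S_{2n}^p \to A$ that sends $x_{2n}$ to a non-zero (hence invertible) element, excluding $F_{2n}^p \to A$; and two distinct $F_{2n}^p, F_{2m}^p$ cannot both map into $A$ because the critical index, pinned down by the unique inductive construction (via Corollary \ref{coro s2k maps are unique up to homotopy}), is intrinsic to $A$. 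I expect the main obstacle to be this mutual-exclusion step, where one must marshal the full force of the non-formality dichotomy of Section \ref{sect nonformality via maps from s guys}.
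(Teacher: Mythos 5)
Your proposal follows the paper's strategy in all the essential steps: you establish that $\Q$ and $\fp$ are prime via injectivity of maps of DGDRs, you invoke Proposition~\ref{prop f2n are prime} for the $F_{2n}^p$, you run the tower $S_2^p \to S_4^p \to \cdots$ to produce a map from a prime into an arbitrary DGDR of characteristic $p$ (with the dichotomy: at some finite stage $x_{2n}$ maps to a non-zero hence invertible element, so localize to get $F_{2n}^p\to A$, or the tower never terminates and you land on $\fp\simeq \colim_n S_{2n}^p \to A$), and you deduce the classification tautologically from primality. All of this matches the paper.

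The one place you go astray is the mutual-exclusion step for $\fp$ versus $F_{2n}^p$. You propose to show that a map $\fp\to A$ forces $\tau_{\geq 0}A$ to be formal via Corollary~\ref{cor:map-implies-formal}, and then to rule out $F_{2n}^p\to A$ via Theorem~\ref{theo maps from snk to dgas with polynomial homology}. But both of those results are stated under the hypothesis that the homology of the ambient DGA is of the specific form $\Z/m[x_{2n}]/x_{2n}^k$, and a general DGDR $A$ with even homology need not have homology of that shape (for instance $\pis A$ could be $\mathbb{F}_q[x^{\pm 1}]$ for $q$ a proper power of $p$, or even a non-commutative graded division ring). So those two results simply do not apply, and this branch of the argument is a genuine gap. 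The fix is to drop the formality detour entirely and argue exactly as you already do in the $F_{2n}^p$ vs.~$F_{2n'}^p$ case: a map $\fp\to A$ restricts along $S_{2n}^p\to\fp$ (through the colimit presentation) to a map $S_{2n}^p\to A$ that annihilates $x_{2n}$, while a map $F_{2n}^p\to A$ restricts along $S_{2n}^p\to F_{2n}^p$ to a map $S_{2n}^p\to A$ that sends $x_{2n}$ to a unit; these are necessarily distinct maps, contradicting the uniqueness guaranteed by Corollary~\ref{coro s2k maps are unique up to homotopy} since $A$ has even homology. This is precisely what the paper does, and it requires no hypothesis on $\pis A$ beyond evenness.
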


\begin{proof}
     It follows from Proposition \ref{prop f2n are prime} that the DGAs given above are all DG-prime fields.  
    
    We first show that for a given a DGDR $A$, there is a DGA-map $B\to A$ from one of the DGAs given above. If $\pi_0A$ has characteristic $0$, then $A \simeq \mathbb{Q} \otimes_{\z} A$ is a $\mathbb{Q}$-algebra and it receives a map from $\mathbb{Q}$. 
    
    Assume that $\pi_0A$ has characteristic $p$. We need to show that there exists an $n$ and a map $F_{2n}^p \to A$. Since $A$ is a DGDR, for $n>0$, such maps are the same as maps $S_{2n} \to A$ which are non-trivial on $\pi_{2n}$. Since $A$ has characteristic $p$, we know that there is a map $S_2 \to A$. Now, either this map is non-trivial on $\pi_2$, in which case we are done, or it is trivial, in which case we obtain a map $S_4 \to A$. Repeating this argument eventually yields a map $F_{2n}^p \to A$ or a map $\fp \simeq \colim_n S_{2n} \to A$.

    For the first statement, let $B$ be a prime DG-division ring. Then $B$ receives a map from one of the DGAs listed in the theorem (as we just proved) but this implies that $B$ is equivalent to that DGA as $B$ is prime. As we already proved that every DGDR receives a map from one of the DGAs listed, this also proves the second statement.

     Now we prove the third statement.  If $\pi_0A$ has characteristic $0$, then $A$ is a $\mathbb{Q}$-algebra so it receives a unique DGA map from $\mathbb{Q}$ and no maps from the other prime DGDRs as they have finite characteristic.
    If $\pi_0A$ has characteristic $p$, then we already proved that there is a map $F_{2n}^p \to A$  for some $1\leq n\leq \infty$. Assume that there is another map $F_{2n'}^p \to A$ for some $n'$. Assume $n'>n$, this would provide two maps $S_{2n}\to A$ one factoring through $F_{2n}^p$ and the other factoring through $F_{2n'}^p$. This first sends $x_{2n}$ to a non-trivial element and the second sends it to a trivial element. This contradicts the uniqueness of maps $S_{2n}\to A$ given by Corollary \ref{coro s2k maps are unique up to homotopy}. 

    The up to homotopy uniqueness of this map follows by the universal property of localizations and Corollary \ref{coro s2k maps are unique up to homotopy} and the $m=p$ case of Corollary \ref{prop map of dgas fp to a}.
\end{proof}

\begin{comment}
\begin{coro}\label{coro prime division rings map uniquely into dgdvrs}
    Let $A$ be a DGDR whose homology is concentrated in even degrees, e.g.\ a DGF of characteristic $\neq 2$. Then $A$ receives a map from one and only one of the prime DG-division rings. Furthermore, this map is unique up to homotopy. 
\end{coro}
\begin{proof}
    If $\pi_0A$ has characteristic $0$, then $A$ is a $\mathbb{Q}$-algebra so it receives a unique DGA map from $\mathbb{Q}$ and no maps from the other prime DGDRs as they have finite characteristic.
    If $\pi_0A$ has characteristic $p$, then we know that there is a map $F_{2n}^p \to A$ (by Corollary \ref{CorE}) for some $1\leq n\leq \infty$. Assume that there is another map $F_{2n'}^p \to A$ for some $n'$. Assume $n'>n$, this would provide two maps $S_{2n}\to A$ one factoring through $F_{2n}^p$ and the other factoring through $F_{2n'}^p$. This first sends $x_{2n}$ to a non-trivial and the second sends it to a trivial element. This contradicts the uniqueness of maps $S_{2n}\to A$ given by Corollary \ref{coro s2k maps are unique up to homotopy}. 

    The up-to homotopy uniqueness of this map follows by the universal property of localizations and Corollary \ref{coro s2k maps are unique up to homotopy} and the $m=p$ case of Corollary \ref{prop map of dgas fp to a}.
\end{proof}
\end{comment}

\section{Applications to algebraic $K$-theory}\label{sect motivic pullback}
 First, we recall the terminology from \cite{land2023kthrypushouts} that a square of ring spectra is called a motivic pullback square, if it is sent to a pullback by any localizing invariant. For instance, by \cite[Corollary 4.28]{land2023kthrypushouts}, there is a motivic pullback square
\begin{equation}\label{eq coordinate axis mtvc pullback}
    \begin{tikzcd}
        \z[x,y]/xy \ar[r] \ar[d] & \z[y] \ar[d] \\
        \z[x] \ar[r] & \z[t_2].
    \end{tikzcd}
\end{equation}
with  $\lv x \rv = \lv y \rv = 0$ and $\lv t_2 \rv  = 2$. This is a diagram of $\bE_1$-$\z[x,y]$-algebras and 
\[\z[t_2] \simeq \z[x] \amalg_{\z[x,y]} \z[y]\]
where the pushout may be calculated in the category of $\bE_1$-$\Z[x,y]$-algebras.

Now, given a commutative ring $R$ and $x,y \in R$, we equip $R$ with a $\z[x,y]$-algebra structure in the evident way. If $(x,y)$ forms a regular sequence in $R$, applying the base change functor $-\otimes_{\z[x,y]} R$ gives another motivic pullback square 
\begin{equation}\label{eq quotient motivic pullback square}
    \begin{tikzcd}
    R/xy \ar[r] \ar[d] & R/x \ar[d] \\
    R/y \ar[r] & \odot
\end{tikzcd}
\end{equation}
where the DGA $\odot$ is similarly given by the pushout $ R/y \amalg_{R} R/x$ of $R$-DGAs and $\pis(\odot) \cong R/(x,y)[t]$, see \cite[Lemma 4.30]{land2023kthrypushouts}. 

In loc.\ cit.\ it was observed that in this situation, the DGA $\odot$ may well not be formal and it was noted that it would be interesting to find sufficient conditions that it is formal (as a ring spectrum). Here, we use our earlier results to give some cases where formality can indeed be shown and thereby obtain new relative algebraic $K$-theory computations. For the formal DGA $\fp[t_2]$, the $K$-theory $K_*(\fp[t_2])$ is computed in \cite{BM} and independently in \cite[Example 4.29]{land2023kthrypushouts} in terms of $\mathbb{W}_n(-)$, the ring of big Witt vectors of length $n$, see e.g.\ \cite[\S 1]{Hesselholt}.

\begin{coro}[Corrolary \ref{CorG}]\label{cor:first-example-formality}
    Consider the ring $\Z[X]$ with the two elements $X$ and $m$. Then the ring $\odot$ associated to the above situation is the formal DGA $\Z/m[t_2]$. In particular, for a prime $p$, we have
    \begin{equation*}
    \begin{split}
        K(\Z[X]/pX) \simeq& \ K(\Z) \oplus \fib(K(\fp) \to K(\fp[t_2])), \,\textrm{and} \\
        K_{2r}(\z[X]/pX) \cong &K_{2r}(\Z)\oplus \mathbb{W}_r(\fp), \textrm{\ and \ }   K_{2r+1}(\z[X]/pX) \cong K_{2r+1}(\Z).
    \end{split} 
    \end{equation*}
\end{coro}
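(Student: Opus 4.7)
The plan is to (i) identify the DGA $\odot$ appearing in the pullback square \eqref{eq quotient motivic pullback square} for the present situation and prove its formality via Theorem \ref{ThmA} (1), and then (ii) unwind the consequences of the pullback square for $K$-theory.

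\textbf{(i) Formality of $\odot$.} The elements $X, m \in \Z[X]$ form a regular sequence, so \eqref{eq quotient motivic pullback square} (i.e.\ \cite[Lemma 4.30]{land2023kthrypushouts}) produces a motivic pullback square whose corners are $\Z[X]/mX$, $\Z[X]/X = \Z$, $\Z[X]/m = \Z/m[X]$, and the DGA $\odot = \Z \amalg_{\Z[X]} \Z/m[X]$, and by \emph{loc.\ cit.} one has $\pi_*\odot \cong \Z/m[t_2]$. Composing the canonical structure map $\Z/m[X] \to \odot$ with the inclusion of constants $\Z/m \to \Z/m[X]$ produces a DGA map $\Z/m \to \odot$. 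Since $\pi_*\odot \cong \Z/m[x_2]$ and such a map of DGAs exists, Theorem \ref{ThmA} (1) (with $n=1$) immediately yields $\odot \simeq \Z/m[t_2]$ as DGAs. This is the sole nontrivial input.

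\textbf{(ii) $K$-theory consequences for $m=p$.} Applying $K(-)$ to the motivic pullback square and using the formality above gives a pullback square of spectra
\[\begin{tikzcd}
K(\Z[X]/pX) \ar[r] \ar[d] & K(\Z) \ar[d] \\
K(\fp[X]) \ar[r] & K(\fp[t_2]).
\end{tikzcd}\]
The ring inclusion $\Z \hookrightarrow \Z[X]/pX$ is retracted by the augmentation $X \mapsto 0$, so $K(\Z[X]/pX) \to K(\Z)$ is split. Writing $F$ for its fiber, we obtain $K(\Z[X]/pX) \simeq K(\Z) \oplus F$, and by the pullback property $F \simeq \fib\bigl(K(\fp[X]) \to K(\fp[t_2])\bigr)$. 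Since $X$ is sent to $0$ along $\fp[X] \to \odot$ by construction, this bottom map factors through the augmentation $\fp[X] \to \fp$; the latter is a $K$-theory equivalence by homotopy invariance for the regular ring $\fp$. Hence $F \simeq \fib\bigl(K(\fp) \to K(\fp[t_2])\bigr)$, yielding the first asserted equivalence. The homotopy group formulas are then extracted from the computation $K_{2r}(\fp[t_2]) \cong K_{2r}(\fp) \oplus \W_r(\fp)$ and $K_{2r+1}(\fp[t_2]) \cong K_{2r+1}(\fp)$ of \cite{BM}, independently recovered in \cite[Example 4.29]{land2023kthrypushouts}, combined with the vanishing of $K_n(\fp)$ in positive even degrees.

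\textbf{Anticipated obstacle.} With Theorem \ref{ThmA} (1) as a black box, the argument is essentially mechanical; the one point requiring care is verifying that the map $K(\fp[X]) \to K(\fp[t_2])$ produced by the base change actually factors as $K(\fp[X]) \simeq K(\fp) \to K(\fp[t_2])$ via the unit of the formal DGA $\fp[t_2]$. This is automatic from the description of the pushout $\odot$, under which $X$ is identified with $0$, so no additional work is required.
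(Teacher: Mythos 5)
Your proof is correct and essentially the same as the paper's. Both rely on the observation that the composite $\Z/m \to \Z/m[X] \to \odot$ gives a DGA map under $\Z/m$, then appeal to the formality criterion (Corollary~\ref{cor:map-implies-formal}, i.e.\ Theorem~\ref{ThmA}(1)) to conclude $\odot \simeq \Z/m[t_2]$, and finally extract the $K$-theory statement from the resulting pullback square using the splitting of $K(\Z[X]/pX)\to K(\Z)$, the $K$-theory equivalence $K(\fp)\simeq K(\fp[X])$, and the known computation of $K(\fp[t_2])$ from \cite{BM} (or \cite[Example 4.29]{land2023kthrypushouts}). Your extra remark that the map $\fp[X]\to\odot$ factors through the augmentation $\fp[X]\to\fp$ up to homotopy (since $X\mapsto 0$ in $\pi_0\odot$) is a slightly more explicit way of recording what the paper handles tersely via ``$K(\fp)\to K(\fp[X])$ is an equivalence''; both are needed to identify the lower horizontal map with the unit $K(\fp)\to K(\fp[t_2])$, and both arguments accomplish this correctly.
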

\begin{proof}
    There is a DGA map $\Z/m \to \Z/m[X] \to \odot$. Hence we may appeal to Corollary~\ref{cor:map-implies-formal}. We conclude that there is a pullback square
    \[\begin{tikzcd}
        K(\Z[X]/mX) \ar[r] \ar[d] & K(\Z) \ar[d] \\
        K(\Z/m[X]) \ar[r] & K(\Z/m[t_2])
    \end{tikzcd}\]
    The ``in particular'' follows from observing that the top horizontal map splits and that the canonical map $K(\fp) \to K(\fp[X])$ is an equivalence and using \cite{BM} or \cite[Example 4.29]{land2023kthrypushouts}. 
\end{proof}

\begin{rema}
    One can think of $\Z[X]/pX$ as half arithmetic and half geometric coordinate axes, as it is the pullback of $\Z$ and $\fp[X]$ over $\fp$. 
\end{rema}

\begin{rema}
For a perfect field $k$ of characteristic $p$, one obtains $K_*(W(k)[x]/px,W(k))$ using the same methods. For this, we note that one may replace $\z$ in $\eqref{eq coordinate axis mtvc pullback}$ with $W(k)$ using \cite[Proposition 2.17]{land2023kthrypushouts}. Formality of the resulting $W(k)$-DGA $\odot$ follows by Remark \ref{rema generalizing to witt vectors}.
\end{rema}

\begin{rema}\label{rema motivic square with grading}
    Following the discussion in \cite[Example 4.10]{burklund2023kthryofcoconnectiverings} we may let  $x$ have an arbitrary positive even degree in \eqref{eq coordinate axis mtvc pullback} (and $|y|=0$) in which case one finds $\lv t \rv = \lv x \rv+2$. Furthermore, we can take the pushout defining $\z[t]$ at the level of $\z$-graded $\z[x,y]$-algebras with $x$ and $y$ of weight $1$ and $0$ respectively. In this situation, $t$ is also of weight $1$ as this is the only weight that allows for the compatibility of the pushout defining $\z[t]$ with the shearing functor considered in \cite[Example 4.10]{burklund2023kthryofcoconnectiverings}. In this situation, we write $x_{2k}$ for $x$ where $\lv x \rv = 2k$ and $t_{2k+2}$ for $t$.
\end{rema}

Let $f\co \z[y] \to \z \to \z[X_{2k}]$ denote the composite of the map of $\z$-graded $\bE_\infty$ $\z$-algebras carrying $y$ to $m$ and the unit map of $\z[X_{2k}]$; here, $m>0$ as before. We consider $\z[X_{2k}]$ as a $\z[x_{2k},y]$-algebra through the composite $\z$-graded $\bE_\infty$-map
\[\z[x_{2k},y] \xrightarrow{id \otimes f} \Z[X_{2k}] \otimes_{\z} \z[X_{2k}] \to \z[X_{2k}]\]
where  the last map is the multiplication map. Applying the base change functor  $- \otimes_{\z[x_{2k},y]} \z[X_{2k}]$ to the motivic pullback square mentioned in Remark \ref{rema motivic square with grading}, we obtain the following motivic pullback square. 

\begin{coro}
    Let $k>0$, then there is a motivic pullback square 
    \begin{equation*}
        \begin{tikzcd}
            \z[X_{2k}]/mX_{2k} \ar[r] \ar[d] & \ar[d] \z\\
            \z/m[X_{2k}] \ar[r] & \z/m[t_{2k+2}],
         \end{tikzcd}
    \end{equation*}
    where each entry above denotes the corresponding formal DGA and the left vertical map is the canonical map between formal DGAs that carries $X_{2k}$ to $X_{2k}$.
\end{coro}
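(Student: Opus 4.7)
The plan is to apply the base change functor $-\otimes_{\z[x_{2k},y]}\z[X_{2k}]$ to the graded motivic pullback square from Remark \ref{rema motivic square with grading} and identify each of the four resulting entries with the claimed formal DGA. The three easy identifications come from direct tensor product computations using $x_{2k}\mapsto X_{2k}$ and $y\mapsto m$: the upper left becomes $\z[X_{2k}]/mX_{2k}$, the upper right becomes $\z[X_{2k}]/X_{2k}\simeq\z$, and the lower left becomes $\z[X_{2k}]/m\simeq\z/m[X_{2k}]$, each identified with the evident formal DGA.

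For the lower right, since base change preserves pushouts, the entry is $\odot:=\z\amalg_{\z[X_{2k}]/mX_{2k}}\z/m[X_{2k}]$. This square is precisely the one that \cite[Lemma 4.30]{land2023kthrypushouts} produces when applied to $R=\z[X_{2k}]$ with the regular sequence $(X_{2k},m)$; the lemma therefore gives both the motivic pullback property and, via the graded refinement from Remark \ref{rema motivic square with grading}, the homology computation $\pi_*(\odot)\cong R/(X_{2k},m)[t]\cong\z/m[t_{2k+2}]$ with $|t|=|X_{2k}|+|m|+2=2k+2$. Alternatively, this homology can be computed directly using a Koszul-type resolution of $\z[X_{2k}]$ as a $\z[x_{2k},y]$-module: after noting that $y$ acts as zero on $\z[t_{2k+2}]$ and $x_{2k}$ acts trivially as well, the Tor spectral sequence collapses to yield $\z/m[t_{2k+2}]$.

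To upgrade the homology identification to an equivalence of formal DGAs, I would construct a DGA map $\z/m\to\odot$ as the composite of the unit map $\z/m\to\z/m[X_{2k}]$ of the formal DGA $\z/m[X_{2k}]$ with the canonical map $\z/m[X_{2k}]\to\odot$ from the pullback square. Since $\pi_*(\odot)\cong\z/m[t_{2k+2}]$ is concentrated in even degrees, Corollary \ref{cor:map-implies-formal} (in the $k=\infty$ case) then yields $\odot\simeq\z/m[t_{2k+2}]$ as a formal DGA. The main obstacle is confirming that the graded version of Lemma 4.30 applies and produces the correct degree shift so that $|t|=2k+2$; once this bookkeeping is settled, the formality conclusion is automatic from the already established criterion.
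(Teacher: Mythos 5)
The main gap is in the sentence ``each identified with the evident formal DGA.'' After base change, the corners are derived tensor products, and while the Tor/Koszul computations readily give their homology rings, it is \emph{not} automatic that these DGAs are equivalent to the corresponding formal DGAs: derived quotients of formal DGAs can be non-formal (this is precisely the phenomenon behind $\z\sslash p = S_2^p$). The paper's proof spends most of its effort exactly here. For the lower-left corner $\z[x_{2k}]\otimes_{\z[x_{2k},y]}\z[X_{2k}]$ (and for the pushout $\odot$) the paper uses the weight grading: the inclusion of the weight-$0$ component produces a DGA map from $\z/m$, and Corollary~\ref{cor:map-implies-formal} then gives formality. For the upper-left corner $(\z[x_{2k},y]/x_{2k}y)\otimes_{\z[x_{2k},y]}\z[X_{2k}]$ this strategy is not available, since $m\ne0$ in its $\pi_0$, so the paper instead identifies this corner as the Milnor-type pullback $\z/m[X_{2k}]\times_{\z/m}\z$ (using the pullback property of the pre-base-change square of modules), and then constructs an explicit comparison map from the formal DGA $\z[X_{2k}]/mX_{2k}$ by lifting the two canonical maps and invoking the uniqueness of the DGA map to $\z/m$; this map is checked to be a homology isomorphism. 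You do have the right idea for $\odot$ (produce a map $\z/m\to\odot$ and use Corollary~\ref{cor:map-implies-formal}), which matches the paper; but it presupposes the formality of the lower-left corner, and you do not justify the lower-left nor supply any argument at all for the upper-left corner.

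Two smaller issues. First, your pushout formula for $\odot$ is taken over the wrong base: $\odot$ is the pushout of the lower-left and upper-right corners over $\z[X_{2k}]$ (the base change of $\z[x] \amalg_{\z[x,y]} \z[y]$), not over the upper-left corner $\z[X_{2k}]/mX_{2k}$; the original square is a motivic pullback square, not a pushout square. Second, invoking \cite[Lemma 4.30]{land2023kthrypushouts} ``applied to $R=\z[X_{2k}]$ with the regular sequence $(X_{2k},m)$'' is not quite right, since that lemma is stated for discrete commutative rings; what is actually used here is the graded refinement of the degree-$0$ motivic square from Remark~\ref{rema motivic square with grading}, followed by base change along the graded $\bE_\infty$-map $\z[x_{2k},y]\to\z[X_{2k}]$.
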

\begin{proof}
    We need to show that the motivic pullback square constructed before the corollary is as stated. The bottom left corner is given by the $\z$-graded DGA $\z[x_{2k}] \otimes_{\z[x_{2k},y]} \Z[X_{2k}]$. As $\z[x_{2k}] \simeq \z[x_{2k},y]/y$,   the homology of this DGA is given by $\z/m[X_{2k}]$ with $X_{2k}$ of weight $1$.  In particular, it receives a map from $\z/m$ given by the inclusion of the weight $0$  component (see \cite[\S 2.2]{ausoni2022adjroot}) which shows that this DGA is formal by Corollary \ref{cor:map-implies-formal}.

   The bottom right corner of the motivic pullback square is given by $\z[t_{2k+2}] \otimes_{\z[x_{2k},y]} \z[X_{2k}]$ and a simple Tor computation ensures that the homotopy ring of this DGA is given by $\z/m[t_{2k+2}]$. Furthermore, the composite of $\z/m\to \z/m[X_{2k}]$ with the bottom horizontal map implies that this DGA is formal as desired (Corollary \ref{cor:map-implies-formal}). 

    The top left corner of this motivic pullback square  is given by the (homotopy) pullback  of DGAs 
   \[\z/m[X_{2k}] \times_{\z/m} \z.\]
We need to show that this is the formal DGA $\z[X_{2k}]/mX_{2k}$. The long exact sequence corresponding to this pullback shows that its homotopy ring is given by $\z[X_{2k}]/mX_{2k}$.   There are canonical DGA maps $\z[X_{2k}]/mX_{2k} \to \z/m[X_{2k}]$ and $\z[X_{2k}]/mX_{2k} \to \z$ and since there is an up-to homotopy unique map of DGAs $\z[X_{2k}]/mX_{2k} \to \z/m$, these maps lift to a map to the pullback above which can be seen to be an isomorphism in homology as desired. 
\end{proof}

\begin{rema}
    All the maps in this motivic pullback square are DGA maps. The only mysterious map here is the bottom horizontal map which we do not expect to identify with the canonical map between the corresponding formal DGAs; we do not pursue this matter here. However, the authors and Tamme are planning to compute the algebraic $K$-theory of the formal DGA $\fp[X_{2k}]$ for $k>0$ generalizing the main result of \cite{BM} or equivalently of \cite[Example 4.29]{land2023kthrypushouts}, giving the relevant computation of $K(\odot)$ in the above example. 
    \end{rema}

Let us now give a generalization of the Corollary \ref{cor:first-example-formality} in a different direction; we will use a special case of \eqref{eq quotient motivic pullback square} but we need to clarify the gradings we have. We begin with the motivic pullback square \eqref{eq coordinate axis mtvc pullback}. We consider the gradings mentioned in Remark \ref{rema motivic square with grading} (with $\lv x \rv = 0$) but in $\z/l$-grading in the canonical way, (i.e.\ by left Kan extending through the canonical surjection $\z \to \z/l$), see \cite[\S 2.2]{ausoni2022adjroot}. Furthermore, we consider the ring $\Z[X]$ with the two elements $X$ and $f$ where $f(X) = g(X^l)$ is a polynomial in $X^l$, for some $l\geq 1$, with constant term $f(0) = p$.  By placing $X$ in weight $1$, 
we equip $\Z[X]$ with a  $\Z/l$-grading; in this way, $f$ is of weight $0$ and $\z[X]$ is an algebra over $\z[x,y]$ (in $\z/l$-graded $\z$-modules) where $x$ and $y$ act through $X$ and $f$ respectively. Extending scalars through $- \otimes_{\z[x,y]} \z[X]$ (on \eqref{eq coordinate axis mtvc pullback}), we obtain the motivic pullback square \cite[Proposition 2.17]{land2023kthrypushouts}: 
\[ \begin{tikzcd}
    \Z[X]/Xf \ar[r] \ar[d] & \Z \ar[d] \\
    \Z[X]/f \ar[r] & \odot.
\end{tikzcd}\]
The DGA $\odot\simeq \z[t_2] \otimes_{\z[x,y]}\z[X]$ has homology $\fp[t_2]$ with $t_2$ in weight $1$. Hence the grading $0$ piece $\gr_0(\odot)$ of $\odot$ has homotopy ring given by $\fp[t_2^l]$. Consequently, if $l\geq p-1$,  $\gr_0(\odot)$ is topologically formal due to Theorem \ref{ThmB}. Therefore, we have the composite map  
\[\fp \to \gr_0(\odot) \to \odot\]
of ring spectra; the last map above is the inclusion of the zero component (see \cite[\S 2.2]{ausoni2022adjroot}).  Applying Corollary~\ref{cor:map-implies-formal}, we deduce that $\odot$ is equivalent, as a ring spectrum, to $\fp[t_2]$. In particular, we find:
\begin{coro}\label{coro mtvc quotients with general polynomials}
Let $f \in \Z[X]$ be a polynomial in $X^l$ with constant term $p$. If $l\geq p-1$, there is a pullback diagram
\[\begin{tikzcd}
    K(\Z[X]/Xf) \ar[r] \ar[d] & K(\Z) \ar[d] \\
    K(\Z[X]/f) \ar[r] & K(\fp[t_2]).
\end{tikzcd}\]
\end{coro}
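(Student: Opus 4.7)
The plan is to apply the general motivic pullback square construction from \cite[Corollary 4.28]{land2023kthrypushouts}, combined with a grading trick that makes the hypothesis $l \geq p-1$ usable through Theorem \ref{ThmB}.

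First, I would upgrade the motivic pullback square \eqref{eq coordinate axis mtvc pullback} to a $\Z/l$-graded diagram of $\bE_\infty$-$\Z$-algebras as in Remark \ref{rema motivic square with grading}, placing $x$ in weight $1$ and $y$ in weight $0$, so that the pushout corner $\Z[t_2]$ naturally inherits a grading with $t_2$ in weight $1$. Next, I would regard $\Z[X]$ as a $\Z/l$-graded ring with $X$ in weight $1$, so that $f = g(X^l)$ sits in weight $0$. This makes $\Z[X]$ into a $\Z[x,y]$-algebra in $\Z/l$-graded $\Z$-modules via $x \mapsto X$, $y \mapsto f$. Since $(X,f)$ is a regular sequence in $\Z[X]$, base change along $-\otimes_{\Z[x,y]} \Z[X]$ produces (applying \cite[Proposition 2.17]{land2023kthrypushouts} and \cite[Lemma 4.30]{land2023kthrypushouts}) the motivic pullback square in the statement, with $\odot := \Z[t_2] \otimes_{\Z[x,y]} \Z[X]$ a $\Z/l$-graded ring spectrum whose homotopy ring is $\fp[t_2]$ with $t_2$ in weight $1$; a short Tor spectral sequence computation confirms this.

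The central step would then be to show $\odot$ is topologically formal, i.e.\ equivalent as a ring spectrum to the formal DGA $\fp[t_2]$. For this I would pass to the weight-$0$ summand $\gr_0(\odot)$, which is itself a ring spectrum (the inclusion of the zero component of a graded ring spectrum is a ring spectrum map, see \cite[\S 2.2]{ausoni2022adjroot}) with homotopy ring $\fp[t_2^l]$. Since $l \geq p-1$, the generator $t_2^l$ sits in degree $2l \geq 2p-2$, so Theorem \ref{ThmB} applies and provides a ring spectrum map $\fp \to \gr_0(\odot)$. Composing with the inclusion $\gr_0(\odot) \hookrightarrow \odot$ of the weight-$0$ piece yields a ring spectrum map $\fp \to \odot$, and Corollary \ref{cor:map-implies-formal}(2) then upgrades this to an equivalence $\odot \simeq \fp[t_2]$ of ring spectra. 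Applying $K$-theory (or any localizing invariant) to the motivic pullback square now gives the claimed pullback diagram.

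I expect the only subtle point to be the grading bookkeeping: one must check that the relative tensor product defining $\odot$ can indeed be formed in the $\Z/l$-graded category so that $\odot$ really does split as a direct sum indexed by its weights, and that the inclusion of the weight-$0$ summand is a map of $\bE_1$-ring spectra compatible with the composite $\fp \to \gr_0(\odot) \to \odot$ feeding into Corollary \ref{cor:map-implies-formal}. Once this formalism is in place, the invocation of Theorem \ref{ThmB}, the formality conclusion, and the final application of a localizing invariant are all essentially mechanical.
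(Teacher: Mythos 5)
Your proposal is correct and follows the paper's argument essentially verbatim: the same $\Z/l$-grading trick on \eqref{eq coordinate axis mtvc pullback} with $X$ in weight $1$ and $f=g(X^l)$ in weight $0$, base change along $\Z[x,y]\to\Z[X]$, identification of $\pi_*\gr_0(\odot)\cong\fp[t_2^l]$, invocation of Theorem~\ref{ThmB} using $2l\geq 2p-2$, and the composite $\fp\to\gr_0(\odot)\to\odot$ fed into Corollary~\ref{cor:map-implies-formal}. The grading bookkeeping you flag as a potential subtlety is handled in the paper exactly as you suggest, by citing the graded ring-spectrum formalism of \cite[\S 2.2]{ausoni2022adjroot}.
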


\begin{rema}
    In the situation described above, the ring $\gr_0(\Z[X]/f)$ is isomorphic to $\Z[X]/g$ and hence need not be an $\fp$-algebra, contrary to the situation in Corollary~\ref{cor:first-example-formality} and we really do need to investigate $\gr_0(\odot)$ instead. Moreover, the assumption that $l\geq p-1$ cannot be relaxed too much: For instance, if $l=1$, we may consider the case $f=X+p$. In this case, the resulting ring $\odot$ is given by $S_2^p = \Z\sslash p$ \cite[Example 4.31]{land2023kthrypushouts}, which for $p$ odd is not formal as a ring spectrum.
\end{rema}

Finally, we consider motivic pullback square associated to the Rim square \cite[Example 4.32]{land2023kthrypushouts}.
\begin{equation}\label{eq mtvc rim square}
\begin{tikzcd}
    \Z[C_p] \ar[r] \ar[d] & \Z[\zeta_p] \ar[d] \\
    \Z \ar[r] & \odot
\end{tikzcd}
\end{equation}
This is \eqref{eq quotient motivic pullback square} with $R = \z[v]$ and chosen elements $v-1$ and $1+v +\cdots v^{p-1}$. The resulting DGA $\odot$ is $\Z[\zeta_p]\sslash (\zeta_p-1)$ or equivalently, as was shown in \cite{land2023kthrypushouts} by comparing to a construction of Krause--Nikolaus, $\tau_{\geq0}\Z^{tC_p}$. In this case, there is an equivalence of ring spectra $\odot \simeq \fp[t_2]$ \cite[Example 4.32]{land2023kthrypushouts}. In the following, $\Phi_{p^l}(X)$ denotes the $p^l$ cyclotomic polynomial; we have  $\z[\zeta_{p^l}] \cong \z[X]/\Phi_{p^l}(X)$  where $\Phi_{p^l}(X) = \Phi_{p}(X^{p^{l-1}})$ and $\Phi_p(Y) = 1+Y + \dots + Y^{p-1}$.

\begin{coro}\label{coro motivic for second cyclotomic extensions}
Let $0\leq k<l$, then there is a motivic pullback square:
\begin{equation*}
        \begin{tikzcd}
            \z[X](X^{p^k}-1)\Phi_{p^l(X)} \ar[d]\ar[r] & \z[\zeta_{p^l}] \ar[d]\\
            \z[X]/(X^{p^k}-1) \ar[r] & \fp[t_2].
        \end{tikzcd}
    \end{equation*}
\end{coro}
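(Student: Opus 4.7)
The plan is to apply the motivic pullback formalism \eqref{eq quotient motivic pullback square} with $R = \Z[X]$, $x = \Phi_{p^l}(X)$, and $y = X^{p^k}-1$, following the template used in the proof of Corollary~\ref{coro I}, and then to identify the resulting DGA $\odot$ in the bottom-right corner with the formal DGA $\fp[t_2]$. The first step is to verify that $(x,y)$ is a regular sequence in $\Z[X]$: the polynomial $\Phi_{p^l}(X)$ is monic and hence a non-zero-divisor in $\Z[X]$, and its image $\zeta_{p^l}^{p^k}-1$ in $\Z[\zeta_{p^l}] = \Z[X]/\Phi_{p^l}(X)$ is nonzero since $k<l$ forces $\zeta_{p^l}^{p^k}$ to be a primitive $p^{l-k}$-th root of unity (in particular distinct from $1$), and is a non-zero-divisor in the domain $\Z[\zeta_{p^l}]$. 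With this regularity in hand, \eqref{eq quotient motivic pullback square} produces a motivic pullback square with the correct corners: $R/xy = \Z[X]/((X^{p^k}-1)\Phi_{p^l}(X))$, $R/x = \Z[\zeta_{p^l}]$, $R/y = \Z[C_{p^k}]$, and some DGA $\odot$ in the bottom-right.

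For the remaining identification $\odot \simeq \fp[t_2]$ as ring spectra, the strategy is to construct a ring spectrum map $\fp \to \odot$ and then invoke Theorem~\ref{ThmA}(2). The $(k,l) = (0,1)$ case is the Rim square, whose $\odot$-ring is $\fp[t_2]$ by \cite[Example 4.32]{land2023kthrypushouts}. The base-data map $\Z[v] \to \Z[X]$ sending $v \mapsto X^{p^k}$ sends $(v-1, \Phi_p(v))$ to $(X^{p^k}-1, \Phi_{p^{k+1}}(X))$, producing a morphism of motivic pullback squares from the Rim case to the $(k, k+1)$-instance and, in particular, a ring spectrum map $\fp[t_2] \to \odot$ when $l = k+1$; composition with the unit of $\fp[t_2]$ then gives $\fp \to \odot$. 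For general $l > k+1$, an analogous chain of base-change maps through intermediate instances (or a more direct comparison at the level of base data) produces the desired ring spectrum map $\fp \to \odot$. Theorem~\ref{ThmA}(2) then concludes that $\odot$ is topologically formal.

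The step I expect to be the main obstacle is producing the ring spectrum map $\fp \to \odot$ uniformly for all $l > k+1$, since the simple map $v \mapsto X^{p^k}$ cleanly treats only the case $l = k+1$ and interpolating $\Phi_{p^{k+1}}$ with $\Phi_{p^l}$ via further base changes between polynomial rings requires some care. Once the map $\fp \to \odot$ is secured, the invocation of Theorem~\ref{ThmA}(2) is automatic and completes the proof.
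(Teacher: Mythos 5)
Your overall route coincides with the paper's: invoke the motivic pullback square \eqref{eq quotient motivic pullback square} for $R=\Z[X]$ and the elements $\Phi_{p^l}(X)$ and $X^{p^k}-1$, verify regularity using that $\Z[\zeta_{p^l}]$ is a domain in which $\zeta_{p^l}^{p^k}\neq 1$, and then produce an $\sph$-algebra map $\fp\to\odot$ by comparison with the Rim square's $\odot$-ring so that Corollary~\ref{cor:map-implies-formal} applies.

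The obstacle you flag for $l>k+1$ is real but has a one-line fix rather than requiring a chain of intermediate base changes: send $v\mapsto X^{p^{l-1}}$ instead of $v\mapsto X^{p^k}$ (these agree exactly when $l=k+1$). With this choice the comparison diagram
\begin{equation*}
\begin{tikzcd}
    \z \ar[d]& \z[v]\ar[r] \ar[d] \ar[l] & \z[\zeta_p]\ar[d]\\
    \z[X]/(X^{p^k}-1) & \ar[l]  \z[X] \ar[r]& \z[\zeta_{p^l}]
\end{tikzcd}
\end{equation*}
commutes for all $0\leq k<l$: on the left, $X^{p^{l-1}}\mapsto 1$ in $\Z[X]/(X^{p^k}-1)$ because $p^k\mid p^{l-1}$, and on the right, $\zeta_{p^l}^{p^{l-1}}$ is a primitive $p$-th root of unity, so $\Phi_p(v)$ is killed by $\z[v]\to\z[\zeta_{p^l}]$ and $\zeta_p\mapsto\zeta_{p^l}^{p^{l-1}}$ is a well-defined ring map. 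By contrast, with $v\mapsto X^{p^k}$ and $l>k+1$ the element $\zeta_{p^l}^{p^k}$ is a primitive $p^{l-k}$-th root of unity, so $\Phi_p(\zeta_{p^l}^{p^k})\neq 0$ in the domain $\Z[\zeta_{p^l}]$ and no ring map $\Z[\zeta_p]\to\Z[\zeta_{p^l}]$ makes the right square commute; so the step you were worried about would indeed fail as stated. Taking pushouts of the rows over $\z[v]\to\z[X]$ yields a DGA map $\Z\amalg_{\Z[v]}\Z[\zeta_p]\to\odot$, and precomposing with an $\sph$-algebra map $\fp\to\Z\amalg_{\Z[v]}\Z[\zeta_p]$ (from the topological formality of the Rim $\odot$-ring) and applying Corollary~\ref{cor:map-implies-formal} completes the proof. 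This is exactly the paper's own argument; once you replace the exponent $p^k$ by $p^{l-1}$, your proposal is complete.
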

\begin{proof}
 This is the  motivic pullback square in \eqref{eq quotient motivic pullback square} with $R= \z[X]$ and the chosen elements $(X^{p^{k}}-1)$ and  $\Phi_{p^l}(X)$. Since $\Phi_{p^l}(X) = \Phi_{p}(X^{p^{l-1}})$, $\Phi_p(X) = 1+ X + \cdots +X^{p-1}$ and since $\z[\zeta_{p^l}]$ is a domain and $\zeta_{p^l}^{p^k}\not = 1$ in $\z[\zeta_{p^l}]$, we find that $\Phi_{p^l}(X)$ and $(X^{p^{k}}-1)$ form a regular sequence as desired.  Therefore, this provides the stated motivic pullback square except for the identification of $\odot$ with $\fp[t_2]$ as a ring spectrum.

   Again by the discussion on \eqref{eq quotient motivic pullback square}, $\pis \odot \cong\fp[t_2]$ and there is an equivalence of DGAs 
    \[\odot \simeq \z[X]/(X^{p^k}-1)\amalg_{\z[X]} \z[\zeta_{p^l}].\]

     We consider the commuting diagram of rings:
\begin{equation*}
\begin{tikzcd}
    \z \ar[d]& \z[v]\ar[r] \ar[d] \ar[l] & \z[\zeta_p]\ar[d]\\
    \z[X]/(X^{p^k}-1) & \ar[l]  \z[X] \ar[r]& \z[\zeta_{p^l}],  
\end{tikzcd}
\end{equation*}
where the middle vertical map carries $v$ to $X^{p^{k}}$, the map $\z[v] \to \z$ carries $v$ to $1$ and the top horizontal map on the right hand side is the quotient map to  $\z[v]/\Phi_p(v) \cong \z[\zeta_p]$. This gives a map of DGAs 
    \[\z \amalg_{\z[v]} \z[\zeta_{p}] \to \z[X]/(X^{p^k}-1) \amalg_{\z[X]} \z[\zeta_{p^l}]\simeq \odot.\]
     The first DGA above is the circle dot for the motivic square in \eqref{eq mtvc rim square} and as stated above, it is topologically equivalent to $\fp[t_2]$. Precomposing the DGA map above with a map  of ring spectra $\fp \to \z \amalg_{\z[v]} \z[\zeta_{p}]$, we deduce that $\odot$ is also topologically formal (Corollary \ref{cor:map-implies-formal}).

\end{proof}

The $k=l-1$ case of this corollary  generalizes the motivic pullback square in \eqref{eq mtvc rim square} as follows.

\begin{coro}[Corollary \ref{coro I}]\label{coro mtvc generalizing the rim square}
There is a motivic pullback square
 \begin{equation*}
        \begin{tikzcd}
            \z[C_{p^l}] \ar[d]\ar[r] & \z[\zeta_{p^l}] \ar[d]\\
            \z[C_{p^{l-1}}] \ar[r] & \fp[t_2].
        \end{tikzcd}
    \end{equation*}
\end{coro}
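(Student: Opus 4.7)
The plan is to deduce this from Corollary~\ref{coro motivic for second cyclotomic extensions} by specializing to $k=l-1$ and identifying the resulting ring entries. First, I would record the elementary algebraic identity
\[ X^{p^l}-1 = (X^{p^{l-1}})^p-1 = (X^{p^{l-1}}-1)\cdot\Phi_p(X^{p^{l-1}}) = (X^{p^{l-1}}-1)\cdot\Phi_{p^l}(X), \]
where the last equality uses $\Phi_{p^l}(X)=\Phi_p(X^{p^{l-1}})$. This identity identifies the upper-left corner of the square of Corollary~\ref{coro motivic for second cyclotomic extensions} with $k=l-1$, namely $\z[X]/(X^{p^{l-1}}-1)\Phi_{p^l}(X)$, with $\z[X]/(X^{p^l}-1)\cong \z[C_{p^l}]$. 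Similarly, the lower-left corner $\z[X]/(X^{p^{l-1}}-1)$ is by definition $\z[C_{p^{l-1}}]$. The upper-right and lower-right corners $\z[\zeta_{p^l}]$ and $\fp[t_2]$ require no reidentification.

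Next I would verify that the maps match the ones implicit in the stated square. The right vertical map $\z[\zeta_{p^l}]\to\fp[t_2]$ is the one from Corollary~\ref{coro motivic for second cyclotomic extensions} and is not renamed. The bottom horizontal map $\z[C_{p^{l-1}}]\to\fp[t_2]$ is the same as the bottom map in that Corollary. The top horizontal map $\z[C_{p^l}]\to\z[\zeta_{p^l}]$ is the quotient induced by $X^{p^l}-1 \mapsto (X^{p^{l-1}}-1)\Phi_{p^l}(X)\mapsto 0$, which coincides (up to the algebraic identification above) with the map in Corollary~\ref{coro motivic for second cyclotomic extensions}; under the identification $\z[C_{p^l}]\cong\z[X]/(X^{p^l}-1)$ it is precisely the quotient map sending $X$ to $\zeta_{p^l}$. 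Similarly, the left vertical map $\z[C_{p^l}]\to\z[C_{p^{l-1}}]$ is the quotient map $X\mapsto X$ induced by the divisibility $(X^{p^{l-1}}-1)\mid(X^{p^l}-1)$, which matches the canonical group algebra surjection $\z[C_{p^l}]\to\z[C_{p^{l-1}}]$.

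With all four corners and all four maps identified, the square of Corollary~\ref{coro mtvc generalizing the rim square} is exactly the $k=l-1$ instance of the square of Corollary~\ref{coro motivic for second cyclotomic extensions}, hence is a motivic pullback square. I do not expect any genuine obstacle here; the only point requiring care is the bookkeeping that the identification of rings respects the maps, which is immediate from the factorization $X^{p^l}-1=(X^{p^{l-1}}-1)\Phi_{p^l}(X)$. In particular, the case $l=1$ recovers the motivic Rim square \eqref{eq mtvc rim square}, as expected.
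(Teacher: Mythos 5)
Your proposal is correct and takes essentially the same approach as the paper: the paper states this corollary as the $k=l-1$ specialization of Corollary~\ref{coro motivic for second cyclotomic extensions} with no further argument, and you supply precisely the factorization $X^{p^l}-1=(X^{p^{l-1}}-1)\Phi_{p^l}(X)$ needed to identify the corners and maps.
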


\subsection{On $\bE_\infty$-structures on $S_{2n}^p$}
We finish this paper with an observation about $\bE_\infty$-structures on the DGAs $S_{2n}:= S_{2n}^p$, possibly of independent interest. We consider the $\bE_\infty$-$\Z$-algebras $\z^{tC_p}$ and $\Z_{(p)}^{t\Sigma_p}$ as DGAs. We note that the inclusion $C_p \subseteq \Sigma_p$ induces an $\bE_\infty$-$\Z$-algebra map $\Z_{(p)}^{t\Sigma_p} \to \Z^{tC_p}$, which induces the map $\fp[u_{2p-2}^{\pm 1}] \to \fp[u_2^{\pm 1}]$ sending $u_{2p-2}$ to $u_2^{p-1}$ on homotopy groups. 

\begin{prop}\label{prop:truncation-of-tate-cp-not-formal}
    The DGA $\tau_{[0,2p-2]}\Z^{tC_p}$ is not formal.
\end{prop}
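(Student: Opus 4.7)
My plan is to construct a DGA map $S_{2p-2}^p \to \tau_{[0,2p-2]}\Z^{tC_p}$ carrying $x_{2p-2}$ to a non-zero element, and then invoke Theorem~\ref{theo maps from snk to dgas with polynomial homology} (applied with $n=1$, $k=p$, $l=p-1$, and $A=\tau_{[0,2p-2]}\Z^{tC_p}$) to conclude that $A$---being its own $2(p-1)$-Postnikov truncation---is not formal.

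To construct such a map, I will route through the $\bE_\infty$-$\Z$-algebra map $\Z_{(p)}^{t\Sigma_p}\to \Z^{tC_p}$ highlighted above. Since $\pis(\Z_{(p)}^{t\Sigma_p})\cong \fp[u_{2p-2}^{\pm 1}]$, the connective cover has only $\pi_0$ and $\pi_{2p-2}$ non-zero in the range $[0,2p-2]$. Starting from the unit $\Z\to \Z_{(p)}^{t\Sigma_p}$, the vanishing of $p$ in $\pi_0=\fp$ and Remark~\ref{remark:extensions-to-S2n} produce a DGA map $S_2^p\to \Z_{(p)}^{t\Sigma_p}$; iterating the remark, the vanishing of the intermediate homotopy groups $\pi_{2k}(\Z_{(p)}^{t\Sigma_p})$ for $0<k<p-1$ extends this uniquely up to homotopy (Corollary~\ref{coro s2k maps are unique up to homotopy}) to a DGA map $\phi\co S_{2p-2}^p\to \Z_{(p)}^{t\Sigma_p}$. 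Composing with $\Z_{(p)}^{t\Sigma_p}\to \Z^{tC_p}$ produces $\tilde\phi\co S_{2p-2}^p\to \Z^{tC_p}$; since $\pi_{2p-2}\Z_{(p)}^{t\Sigma_p}\to \pi_{2p-2}\Z^{tC_p}$ is the injection $\fp u_{2p-2}\hookrightarrow \fp u_2^{p-1}$, non-triviality of $\tilde\phi$ on $\pi_{2p-2}$ reduces to showing $\phi(x_{2p-2})\neq 0$.

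The main obstacle is proving $\phi(x_{2p-2})\neq 0$. Assume for contradiction $\phi(x_{2p-2})=0$. Then Remark~\ref{remark:extensions-to-S2n} permits further extensions $S_{2k}^p\to \Z_{(p)}^{t\Sigma_p}$ for all $k\geq p-1$: each new obstruction lands in a vanishing $\pi_{2k}(\Z_{(p)}^{t\Sigma_p})$ until the next non-zero group at $k=2(p-1)$, where the same dichotomy repeats. Passing to the colimit $\fp\simeq \colim_k S_{2k}^p$ would yield a DGA map $\fp\to \Z_{(p)}^{t\Sigma_p}$, whose composite with the Postnikov section gives a DGA map $\fp\to \tau_{[0,2p-2]}\Z_{(p)}^{t\Sigma_p}$. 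Since the target has homology $\Lambda_\fp[u_{2p-2}]$, Corollary~\ref{cor:map-implies-formal} would force $\tau_{[0,2p-2]}\Z_{(p)}^{t\Sigma_p}$ to be formal. The proof thus reduces to the reverse assertion: $\tau_{[0,2p-2]}\Z_{(p)}^{t\Sigma_p}$ is not formal---equivalently, it is the non-formal DGA $T_{2p-2}$ of Terminology~\ref{T_2n}.

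I expect this final reduction to be the crux of the argument. I plan to attack it by identifying the $k$-invariant of the two-stage Postnikov extension $\Sigma^{2p-2}\fp\to \tau_{[0,2p-2]}\Z_{(p)}^{t\Sigma_p}\to \fp$, which lives in $\hh^{2p}_\Z(\fp,\fp)\cong \fp$ (non-zero by Lemma~\ref{lemma:THH-even} and the Dugger--Shipley classification recalled around Terminology~\ref{T_2n}), and showing this class is non-zero. Informally, the class detects the fact that the canonical null-homotopy of $p\in \pi_0\Z$ afforded by the Tate construction at $\Sigma_p$ first witnesses a non-trivial higher coherence datum precisely in degree $2(p-1)$, matching the $(\zeta_p-1)$-adic valuation $p-1$ of $p$ in $\Z[\zeta_p]$ that appears in the Krause--Nikolaus identification $\tau_{\geq 0}\Z^{tC_p}\simeq \Z[\zeta_p]\sslash(\zeta_p-1)$. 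Once this is in place, $\phi(x_{2p-2})\neq 0$, hence $\tilde\phi(x_{2p-2})\neq 0$, and Theorem~\ref{theo maps from snk to dgas with polynomial homology} completes the argument.
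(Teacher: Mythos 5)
Your overall strategy is reasonable and in the spirit of the paper's earlier sections: you correctly identify that $\tau_{[0,2p-2]}\Z^{tC_p}$ has homology $\fp[u_2]/u_2^p$, route a (unique up to homotopy) DGA map $\phi\colon S_{2p-2}^p \to \tau_{\geq 0}\Z_{(p)}^{t\Sigma_p}$ via the odd cell decomposition, observe that $\Z_{(p)}^{t\Sigma_p}\to\Z^{tC_p}$ is injective on $\pi_{2p-2}$, and then invoke Theorem~\ref{theo maps from snk to dgas with polynomial homology}. The reduction to ``$\phi(x_{2p-2})\neq 0$'', and its further equivalence with the non-formality of $\tau_{[0,2p-2]}\Z_{(p)}^{t\Sigma_p}$, is correct.

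However, this is where the proposal has a genuine gap. You reduce Proposition~\ref{prop:truncation-of-tate-cp-not-formal} to the non-formality of $\tau_{[0,2p-2]}\Z_{(p)}^{t\Sigma_p}$, but you supply no proof of that claim: the paragraph about the $k$-invariant and the $(\zeta_p-1)$-adic valuation is a heuristic, not an argument, and you concede as much. Worse, in the paper the non-formality of $\tau_{[0,2p-2]}\Z_{(p)}^{t\Sigma_p}$ is Corollary~\ref{cor:truncation-of-tate-sigmap-not-formal}, and it is \emph{deduced from} Proposition~\ref{prop:truncation-of-tate-cp-not-formal} via the very map $\Z_{(p)}^{t\Sigma_p}\to\Z^{tC_p}$ you use (together with Corollary~\ref{cor:map-implies-formal}). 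So your reduction runs the implication in the opposite direction to the paper's, and without an independent proof of the $\Sigma_p$-case it is effectively circular. Note also that the $\Sigma_p$-case is not obviously ``easier'': $\tau_{[0,2p-2]}\Z_{(p)}^{t\Sigma_p}$ is topologically formal (it is $T_{2p-2}$ with $2p-2\geq 2p-2$), so the obstruction to formality lives purely in the difference between $\hh^{2p}_\Z(\fp,\fp)$ and $\thh^{2p}_\sph(\fp,\fp)$, and pinning down which class the Tate construction produces is exactly the hard computation you would need to carry out.

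The paper's actual proof sidesteps any $k$-invariant analysis: it applies $\hh^\Z(-)\otimes_\Z\fp$ to the Rim motivic pullback square, imports the explicit Hochschild homology of $\fp[C_p]$ and $\fp[\zeta_p]$ from the Buenos Aires Cyclic Homology Group computation, and deduces that $\pi_{2p-1}(\hh^\Z(\tau_{\geq 0}\Z^{tC_p})/p)$ has $\fp$-dimension at most $2p-2$. It then computes that for the formal DGA $\fp[u_2]$ the corresponding dimension is $2p-1$, and a connectivity argument shows both quantities are already detected at the Postnikov stage $\tau_{[0,2p-2]}$. This is a direct numerical contradiction, needing nothing about $k$-invariants or the $\Sigma_p$-Tate construction. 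To repair your proposal along your own lines, you would need to replace the heuristic last paragraph with a genuine computation showing the $k$-invariant of $\tau_{[0,2p-2]}\Z_{(p)}^{t\Sigma_p}$ in $\hh^{2p}_\Z(\fp,\fp)\cong\fp$ is nonzero --- or else adopt the paper's Hochschild homology dimension count.
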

\begin{proof}
    Applying $\hh^\Z(-)\otimes_{\z}\fp$ to the  motivic pullback square \eqref{eq mtvc rim square}, we obtain a fibre sequence 
    \[ \hh^\Z(\Z[C_p])\otimes_{\z}\fp \to \big(\z \oplus \hh^\Z(\Z[\zeta_p])\big)\otimes_{\z} \fp \to \hh^\Z(\tau_{\geq0}\Z^{tC_p})/p\]
    or equivalently
    \[ \hh^{\fp}(\fp[C_p]) \to \fp \oplus \hh^{\fp}(\fp[\zeta_p]) \to \hh^\Z(\tau_{\geq 0}\Z^{tC_p})/p\]
    where $\fp[\zeta_p]$ is notation for $\Z[\zeta_p] \otimes_\Z \fp$. Recalling that $\fp[C_p]\cong \fp[x]/(x^p-1)$ and that $\fp[\zeta_p] \cong \fp[x]/(1+\cdots+x^{p-1})$, the results of \cite[pg.\ 54]{BuenosAires} apply to give an exact sequence of $\fp$-vector spaces
    \[ \dots \to \fp^{\oplus p} \to \fp^{\oplus p-2} \to \pi_{2p-1}(\hh^\Z(\tau_{\geq0}\Z^{tC_p})/p) \to \fp^{\oplus p} \to  \cdots \]
    showing that the middle term has $\fp$-dimension at most $2p-2$. Since $\hh^\Z(-)/p$ preserves connectivity, we find that the map
    \[ \hh^\Z(\tau_{\geq0}\Z^{tC_p})/p \to \hh^\Z(\tau_{[0,2p-2]}\Z^{tC_p})/p\]
    is an isomorphism in homotopical degrees $\leq 2p-1$ as  $\tau_{[0,2p-2]}\Z^{tC_p} \simeq \tau_{[0,2p-1]}\Z^{tC_p}$. The same is true for $\fp[u_2] \to \tau_{\leq 2p-2}\fp[u_2]$ in place of $\tau_{\geq 0}\Z^{tC_p}\to \tau_{[0,2p-2]}\Z^{tC_p}$. Therefore, it suffices to show that $\pi_{2p-1}(\hh^\Z(\fp[u_2])/p)$ has $\fp$-dimension larger than $2p-2$.  
    
    Additively, we have:
    \begin{equation*}
    \begin{split}
         \pis(\hh^\Z(\fp[u_2])/p) \cong& \pis \big(\hh^{\z}(\fp) \otimes_{\z} \hh^{\z}(\z[u_2])/p\big ) \\
         \cong &\fp[x_2,u_2]\otimes_{\fp} \Lambda_{\fp}[e_3,f_1] .
         \end{split}
    \end{equation*}
   The first equivalence follows since $\hh^{\z}(-)$ is symmetric monoidal; the second follows by standard computations and by noting that applying $\pis (-/p)$ on an $\fp$-module corresponds to applying $\pis (-) \otimes_{\fp}\Lambda_{\fp}(f_1)$. An $\fp$-basis of the degree $2p-1$ part is then given by $x_2^iu_2^{p-1-i}f_1$, with $i=0,\dots, p-1$ and $x_2^iu_2^{p-2-i}e_3$ with $i=0,\dots,p-2$. This shows that $\pi_{2p-1}(\hh^\Z(\fp[u_2])/p)$ has $\fp$-dimension $2p-1$ which is larger than $2p-2$ as desired.
\end{proof}

 \begin{coro}\label{cor:truncation-of-tate-sigmap-not-formal}
     The DGA $\tau_{[0,2p-2]} \Z_{(p)}^{t\Sigma_p}$ is not formal.
 \end{coro}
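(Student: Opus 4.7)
The plan is to deduce this from Proposition \ref{prop:truncation-of-tate-cp-not-formal} using the $\bE_\infty$-$\Z$-algebra map $\Z_{(p)}^{t\Sigma_p} \to \Z^{tC_p}$ induced by the inclusion $C_p \subseteq \Sigma_p$, which (as recalled in the excerpt just before Proposition \ref{prop:truncation-of-tate-cp-not-formal}) realizes the homomorphism $\fp[u_{2p-2}^{\pm1}] \to \fp[u_2^{\pm1}]$, $u_{2p-2}\mapsto u_2^{p-1}$, on homotopy groups. First I would record that, since $\pi_*(\Z_{(p)}^{t\Sigma_p}) \cong \fp[u_{2p-2}^{\pm1}]$ is concentrated in degrees divisible by $2p-2$,
\[ \pi_*\bigl(\tau_{[0,2p-2]}\Z_{(p)}^{t\Sigma_p}\bigr) \;\cong\; \fp[u_{2p-2}]/u_{2p-2}^{\,2}, \]
which is of the form $\fp[x_{2n}]/x_{2n}^k$ (with $n=p-1$, $k=2$) covered by Corollary \ref{cor:map-implies-formal}(1).

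Next, I would argue by contradiction: assume that $A := \tau_{[0,2p-2]}\Z_{(p)}^{t\Sigma_p}$ is formal. Then $A \simeq \pis(A)$ as DGAs, and the inclusion $\fp \hookrightarrow \pis(A)$ of the degree-zero part is automatically a map of formal DGAs; transporting through the formality equivalence yields a DGA map $\fp \to A$. Composing with the truncation of the $\bE_\infty$-map above produces a DGA map
\[ \fp \;\longrightarrow\; A \;\longrightarrow\; \tau_{[0,2p-2]}\Z^{tC_p}. \]
Since $\pi_*\bigl(\tau_{[0,2p-2]}\Z^{tC_p}\bigr) \cong \fp[u_2]/u_2^{\,p}$, Corollary \ref{cor:map-implies-formal}(1) then forces $\tau_{[0,2p-2]}\Z^{tC_p}$ to be formal, contradicting Proposition \ref{prop:truncation-of-tate-cp-not-formal}.

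I do not anticipate any serious obstacle; the only step that warrants care is the passage from the formality of $A$ to a genuine DGA map $\fp \to A$, which is immediate from the fact that $\fp \to \pis(A)$ is a map of ordinary graded $\Z$-algebras and hence of the associated formal DGAs. Everything else is a direct invocation of Corollary \ref{cor:map-implies-formal}(1) together with the homotopy computations already provided before Proposition \ref{prop:truncation-of-tate-cp-not-formal}.
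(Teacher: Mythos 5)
Your proof is correct and is essentially identical to the paper's own argument: assume formality of $\tau_{[0,2p-2]}\Z_{(p)}^{t\Sigma_p}$ to get a DGA map $\fp \to \tau_{[0,2p-2]}\Z_{(p)}^{t\Sigma_p}$, compose with the truncated $\bE_\infty$-map to $\tau_{[0,2p-2]}\Z^{tC_p}$, and then invoke Corollary~\ref{cor:map-implies-formal}(1) to contradict Proposition~\ref{prop:truncation-of-tate-cp-not-formal}. The additional remarks you make (identifying $\pi_*$ of the truncation as $\fp[u_{2p-2}]/u_{2p-2}^2$ and explaining why formality yields the unit DGA map) are both correct and harmless elaborations of steps the paper treats as immediate.
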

 \begin{proof}
     As noted earlier, there is a map $\tau_{[0,2p-2]} \Z_{(p)}^{t\Sigma_p} \to \tau_{[0,2p-2]}\Z^{tC_p}$, so if the domain is formal, we in particular obtain a map $\fp \to \tau_{[0,2p-2]}\Z^{tC_p}$ which, by Corollary~\ref{cor:map-implies-formal} contradicts Corollary~\ref{prop:truncation-of-tate-cp-not-formal}.
 \end{proof}

As a consequence of the uniqueness result we proved in Theorem~\ref{ThmD}, we obtain:
\begin{coro}\label{cor:S2p-2=tatesigma}
    The unique map $S_{2p-2} \to \tau_{\geq0}\Z_{(p)}^{t\Sigma_p}$ is an equivalence of DGAs.
\end{coro}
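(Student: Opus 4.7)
The plan is to recognize this as an instance of the uniqueness statement in Theorem~\ref{thm:main-theorem}, applied to $A := \tau_{\geq 0}\Z_{(p)}^{t\Sigma_p}$ with $m=p$, $n=p-1$, $k=\infty$. So the first step is to verify that $\pi_*A \cong \fp[x_{2p-2}]$: this is immediate from the computation $\pi_*\Z_{(p)}^{t\Sigma_p} \cong \fp[u_{2p-2}^{\pm 1}]$ recalled just before Proposition~\ref{prop:truncation-of-tate-cp-not-formal}, by passing to the connective cover.

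Next I would verify that $\tau_{\leq 2p-2}A \simeq \tau_{\leq 2p-2}S_{2p-2}$. By Lemma~\ref{coro classification of DGAs with exterior homology} there are, up to equivalence, only two DGAs with homology $\Lambda_{\fp}[x_{2p-2}]$: the formal one and the non-formal one $T_{2p-2} \simeq \tau_{\leq 2p-2}S_{2p-2}$. Corollary~\ref{cor:truncation-of-tate-sigmap-not-formal} states exactly that $\tau_{[0,2p-2]}\Z_{(p)}^{t\Sigma_p}$ is not formal, so it must be $T_{2p-2}$, matching $\tau_{\leq 2p-2}S_{2p-2}$. This is the only nontrivial verification; everything else has essentially already been proved.

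With these two inputs in hand, Theorem~\ref{thm:main-theorem} supplies an equivalence $S_{2p-2} \xrightarrow{\simeq} A$. Inspection of the proof of that theorem shows the equivalence is realized by the DGA map produced by Theorem~\ref{theo maps from snk to dgas with polynomial homology} (once one checks that $p-1$ is the smallest $l$ with $\tau_{\leq 2l}A$ non-formal, which is clear since for $l<p-1$ the truncation $\tau_{\leq 2l}A$ is concentrated in degree $0$ with value $\fp$ and hence tautologically formal). Finally, since $\pi_*A$ is concentrated in even degrees, Corollary~\ref{coro s2k maps are unique up to homotopy} ensures that any DGA map $S_{2p-2}\to A$ is unique up to homotopy; in particular the equivalence just produced is ``the'' unique map referred to in the statement.
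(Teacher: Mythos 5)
Your proof is correct and takes essentially the same approach as the paper: the paper simply invokes Theorem~\ref{ThmD} (whose proof, Corollary~\ref{theo later uniquely defined nonformal dgas}, runs through Theorem~\ref{thm:main-theorem} plus the classification of exterior DGAs), while you apply Theorem~\ref{thm:main-theorem} directly after spelling out the identification $\tau_{\leq 2p-2}A \simeq T_{2p-2}$ via Corollary~\ref{cor:truncation-of-tate-sigmap-not-formal}; the uniqueness of the map via Corollary~\ref{coro s2k maps are unique up to homotopy} is also the intended argument.
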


\begin{rema}
    As a consequence of Corollary~\ref{cor:S2p-2=tatesigma}, we find that $S_{2p-2}$ admits an $\bE_\infty$-$\Z$-algebra structure. By the Hopkins--Mahowald theorem \cite[Theorem 5.1]{barthel}, there is in particular a map of ring spectra $\fp \to S_{2p-2}$. Together with Corollary~\ref{cor:map-implies-formal}, this gives another proof of the topological formality of $S_{2n}$ for $n\geq p-1$.
\end{rema}

\begin{rema}\label{remark:tate-fp}
    As a consequence of Corollary~\ref{cor:S2p-2=tatesigma} we have the equivalence of $\bE_1$-$\fp$-algebras.
    \[ \fp[\Omega\CP^{p-1}] \simeq \fp\otimes_\Z S_{2p-2} \simeq \fp\otimes_\Z \tau_{\geq0}\Z_{(p)}^{t\Sigma_p} \simeq \tau_{\geq0}\fp^{t\Sigma_p}\]
\end{rema}

Earlier, in Remark \ref{remark:non-unqieness}, we fixed $\z[X_{2n}]$-algebra structures on $S_{2n}$ through which we defined $S_{2nl}[\sqrt[l]{x_{2nl}}]$ in Construction \ref{cons adj root to snm}. Since we now know that $S_{2p-2}$ is an $\bE_\infty$-$\Z$-algebra by Corollary \ref{cor:S2p-2=tatesigma}, we can choose an $\bE_2$-$\z$-algebra map $\z[X_{2p-2}]\to S_{2p-2}$  \cite[Proposition 3.15]{ausoni2022adjroot} which provides a possibly different $\z[X_{2p-2}]$-algebra structure on $S_{2p-2}$ than the one we fixed earlier. Through this, we obtain (again a possibly different) $S_{2p-2}[\sqrt[p-1]{x_{2p-2}}]$ through Construction \ref{cons adj root to snm}.  

\begin{coro}
        For $S_{2p-2}[\sqrt[p-1]{x_{2p-2}}]$ as  above, there is an equivalence of DGAs
        \[S_{2p-2}[\sqrt[p-1]{x_{2p-2}}] \simeq \tau_{\geq 0} \Z^{tC_p}.\] 
\end{coro}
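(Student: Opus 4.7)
The plan is to exhibit a DGA equivalence $\alpha\colon S_{2p-2}[\sqrt[p-1]{x_{2p-2}}] \xrightarrow{\sim} \tau_{\geq 0}\Z^{tC_p}$ using the universal property of the pushout defining the left-hand side. Throughout, write $R := \tau_{\geq 0}\Z^{tC_p}$ and let $\phi\colon S_{2p-2} \to R$ be the $\bE_\infty$-$\Z$-algebra map from Corollary~\ref{cor:S2p-2=tatesigma}, which sends $x_{2p-2}$ to $u_2^{p-1}$ on $\pi_{2p-2}$.

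First, I would equip $R$ with an $\bE_2$-$\Z[X_2]$-algebra structure: by \cite[Proposition 3.15]{ausoni2022adjroot} applied to the even $\bE_\infty$-$\Z$-algebra $R$ (as in the proof of Proposition~\ref{prop snm is a zxn algebra}), there is an $\bE_2$-$\Z$-algebra map $g\colon \Z[X_2] \to R$ with $g(X_2) = u_2$. Composing with the canonical $\bE_\infty$-$\Z$-algebra map $\iota\colon \Z[X_{2p-2}] \to \Z[X_2]$, $X_{2p-2}\mapsto X_2^{p-1}$, yields an $\bE_2$-$\Z$-algebra map $g\circ \iota\colon \Z[X_{2p-2}] \to R$ with $X_{2p-2}\mapsto u_2^{p-1}$. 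Next, I would choose the $\bE_2$-$\Z$-algebra map $h\colon \Z[X_{2p-2}] \to S_{2p-2}$ defining the $\Z[X_{2p-2}]$-algebra structure on $S_{2p-2}$ used in forming $S_{2p-2}[\sqrt[p-1]{x_{2p-2}}]$ (as in the remark preceding the corollary) so that $\phi \circ h \simeq g \circ \iota$ as $\bE_1$-$\Z$-algebra maps.

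With this compatibility arranged, the universal property of the pushout $L := S_{2p-2}[\sqrt[p-1]{x_{2p-2}}] = S_{2p-2}\otimes_{\Z[X_{2p-2}]}\Z[X_2]$ in $\Alg(\Z)$ produces a DGA map $\alpha\colon L \to R$ extending both $\phi$ and $g$. The map $\alpha$ is a quasi-isomorphism: both $L$ and $R$ have homology $\fp[u_2]$ (the former by Construction~\ref{cons adj root to snm}, the latter by computation of $\pi_*\tau_{\geq 0}\Z^{tC_p}$) and $\alpha$ sends the generator $x_2 \in \pi_2(L)$ (the image of $X_2$) to $u_2\in \pi_2(R)$, so $\alpha$ is a ring isomorphism on homotopy.

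The main obstacle is arranging the compatibility $\phi\circ h \simeq g\circ \iota$. I would handle it by analyzing the centralizer $\Ztr^\Z(\phi) \simeq \thh_\Z(S_{2p-2}, R)$: by arguments analogous to Lemma~\ref{lemma:THH} and Remark~\ref{rema grdd cmmtativ is suffcnt}, using that $R$ is connective and even and that $\pi_0(R) = \fp$ is central in $\pi_* R$, one expects $\Ztr^\Z(\phi)$ to be even and the canonical map $\Ztr^\Z(\phi)\to R$ to be surjective on homotopy. Then $g\circ \iota$ lifts through this surjection to an $\bE_2$-$\Z$-algebra map $\Z[X_{2p-2}]\to \Ztr^\Z(\phi)$ (again by \cite[Proposition 3.15]{ausoni2022adjroot} applied to the even $\bE_2$-$\Z$-algebra $\Ztr^\Z(\phi)$), which by the universal property of the centralizer \cite[Theorem 5.3.1.30]{lurie2016higher} provides the desired $h\colon \Z[X_{2p-2}]\to S_{2p-2}$ together with a homotopy $\phi\circ h \simeq g\circ \iota$.
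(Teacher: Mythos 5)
Your overall strategy — constructing the equivalence directly via the universal property of the tensor product defining the root adjunction — is genuinely different from the paper's. The paper proves the corollary by (i) identifying $S_{2p-2}\simeq\tau_{\geq0}\Z_{(p)}^{t\Sigma_p}$, (ii) forming the root adjunction as a $\tau_{\geq0}\Z_{(p)}^{t\Sigma_p}$-algebra and inverting the polynomial generator, and (iii) appealing to the \'etale rigidity theorem of Hesselholt--Pstragowski (\cite[Theorem 1.10]{hesselholt2022dirac}) to identify the localization with $\Z^{tC_p}$ as $\Z_{(p)}^{t\Sigma_p}$-algebras, after which taking connective covers finishes. This neatly sidesteps the compatibility problem your proof has to confront head-on.

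And it is precisely on that compatibility step that your argument has a real gap. You claim that an $\bE_2$-map $\Z[X_{2p-2}]\to\Ztr^{\Z}(\phi)$ lifting $g\circ\iota$ ``provides the desired $h\colon\Z[X_{2p-2}]\to S_{2p-2}$ together with a homotopy $\phi\circ h\simeq g\circ\iota$.'' This is not what the universal property of $\Ztr^{\Z}(\phi)$ gives. By \cite[Theorem 5.3.1.30]{lurie2016higher}, a map $Z\to\Ztr^{\Z}(\phi)$ corresponds to a $\Z$-algebra map $Z\otimes_\Z S_{2p-2}\to R$ extending $\phi$ — that is, to a $Z$-linear structure on the \emph{target} $R$ compatible with $\phi$. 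It does not produce any map $Z\to S_{2p-2}$, nor a $Z$-algebra structure on the \emph{source}. The structure map $h$ you need to define the root adjunction $S_{2p-2}\otimes_{\Z[X_{2p-2}]}\Z[X_2]$ must come from an $\bE_2$-map into the center $\Ztr^{\Z}(S_{2p-2})$, not into $\Ztr^{\Z}(\phi)$.

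To make your approach work one would have to produce $\bE_2$-maps $\alpha\colon\Z[X_{2p-2}]\to\Ztr^{\Z}(S_{2p-2})$ and $\beta\colon\Z[X_2]\to\Ztr^{\Z}(R)$ with the prescribed effect on homotopy, and then verify that the two induced composites $\Z[X_{2p-2}]\to\Ztr^{\Z}(\phi)$ (via the canonical maps $\Ztr^{\Z}(S_{2p-2})\to\Ztr^{\Z}(\phi)\leftarrow\Ztr^{\Z}(R)$) agree — this is the statement that $\phi$ upgrades to a $\Z[X_{2p-2}]$-algebra map. That comparison in $\Ztr^{\Z}(\phi)$ is the genuine content; it is not free, and your proof does not address it. (Evenness of $\Ztr^{\Z}(\phi)$ — which you do establish correctly — helps, but does not by itself produce $\alpha$ or the homotopy matching the two composites.) A further subtlety: even if the $\bE_1$-level homotopy $\phi\circ h\simeq g\circ\iota$ were established, obtaining a map out of the relative tensor product $S_{2p-2}\otimes_{\Z[X_{2p-2}]}\Z[X_2]$ requires the $\Z[X_{2p-2}]$-\emph{linear} structure on $\phi$, not merely agreement of the two $\bE_1$-ring maps into $R$. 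In short: the mechanism you propose for constructing $h$ and the required compatibility is incorrect, and without it your appeal to the pushout universal property does not go through.
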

\begin{proof}
    By Corollary \ref{cor:S2p-2=tatesigma} and the discussion above, the claim will follow once we show that there is an equivalence \[\tau_{\geq0}\Z_{(p)}^{t\Sigma_p}[\sqrt[p-1]{u_{2p-2}}] \simeq \tau_{\geq0}\Z^{tC_p}.\]Since we started with an $\bE_2$-map $\z[X_{2p-2}]\to \tau_{\geq0}\Z_{(p)}^{t\Sigma_p}$,
    \[\tau_{\geq0}\Z_{(p)}^{t\Sigma_p}[\sqrt[p-1]{u_{2p-2}}]:= \tau_{\geq0}\Z_{(p)}^{t\Sigma_p} \otimes_{\z[X_{2p-2}]}\z[X_2] \]
     admits the structure of a $\tau_{\geq0}\Z_{(p)}^{t\Sigma_p}$-algebra. Upon inverting $u_{2p-2}$, we have two $\Z_{(p)}^{t\Sigma_p}$-algebras $\tau_{\geq0}\Z_{(p)}^{t\Sigma_p}[\sqrt[p-1]{u_{2p-2}}][u_{2p-2}^{\pm 1}]$ and $\Z^{tC_p}$ whose homotopy rings are isomorphic as $\pis \Z_{(p)}^{t\Sigma_p}$-algebras. Furthermore, their homotopy rings are  \'etale over $\pis \Z_{(p)}^{t\Sigma_p}$. It follows by \cite[Theorem 1.10]{hesselholt2022dirac}  that these two $\Z_{(p)}^{t\Sigma_p}$-algebras are equivalent. Taking connective covers gives the desired equivalence $\tau_{\geq0}\Z_{(p)}^{t\Sigma_p}[\sqrt[p-1]{u_{2p-2}}] \simeq \tau_{\geq0}\Z^{tC_p}$ of DGAs.
\end{proof}
\begin{comment}
\begin{rema}
    
As discussed earlier, different choices of $\z[x_{2p-2}]$-algebra structures on $S_{2p-2}$ might possibly lead to non-equivalent DGAs $S_{2p-2}[\sqrt[p-1]{x_{2p-2}}]$. The corollary above only states that the equivalence  holds for a particular choice of a $\z[x_{2p-2}]$-algebra structure on $S_{2p-2}$.
\end{rema} 
\end{comment}
It follows by Corollary \ref{cor:S2p-2=tatesigma} that $S_{2p-2}$ can be refined to an $\bE_\infty$-DGA. In fact, we conjecture below that for all $n\geq 1$, $S_{2p^l-2}$ can be refined to an $\bE_\infty$-DGA. We thank Oscar Randal-Williams for pointing out the following:
\begin{lemm}\label{lemm einfty structures on s2n}
    $\fp[\Omega\CP^{k}]$ refines to an $\bE_\infty$-$\fp$-algebra if $k=p^l-1$ and does not refine to an $\bE_2$-$\fp$-algebra if $k\neq p^l-1$.
\end{lemm}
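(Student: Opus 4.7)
My plan decouples the existence and non-existence directions of Lemma~\ref{lemm einfty structures on s2n}.

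For the existence direction ($k=p^l-1$), I would generalize the identification of Remark~\ref{remark:tate-fp}, which handles $l=1$ via the equivalence $\fp[\Omega\CP^{p-1}]\simeq\tau_{\geq 0}\fp^{t\Sigma_p}$ of $\bE_\infty$-$\fp$-algebras. For arbitrary $l\geq 1$, the natural candidate is to identify $\fp[\Omega\CP^{p^l-1}]$ with the connective cover of $\fp^{tG}$, where $G$ is a Sylow $p$-subgroup of $\Sigma_{p^l}$ (an iterated wreath product $C_p\wr\cdots\wr C_p$) or $\Sigma_{p^l}$ itself at odd primes. The key computation is to match the Pontryagin ring of $\Omega\CP^{p^l-1}$ determined by Lemma~\ref{lemma:pontryagin-ring-for-CPn} with the Tate cohomology ring of $G$. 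Once established as an equivalence of $\bE_1$-$\fp$-algebras, the lax symmetric monoidality of the Tate construction transfers the $\bE_\infty$-$\fp$-algebra structure from $\fp^{tG}$ to $\fp[\Omega\CP^{p^l-1}]$ via the appropriate connective cover and truncation.

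For the non-existence direction ($k\neq p^l-1$), I would argue by contradiction. If $\fp[\Omega\CP^k]$ admits an $\bE_2$-$\fp$-algebra refinement, then since the Bar construction sends $\bE_{n+1}$-$\fp$-algebras to $\bE_n$-$\fp$-algebras, the spectrum $\fp[\CP^k]\simeq\fp\otimes_{\fp[\Omega\CP^k]}\fp$ would inherit an $\bE_1$-$\fp$-algebra structure lifting the truncated polynomial ring $\fp[c_2]/c_2^{k+1}$ on homotopy. The key claim is then that $\fp[c_2]/c_2^{k+1}$ is realizable as an $\bE_1$-$\fp$-algebra only when $k+1$ is a prime power. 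I would prove this realisability obstruction by comparing with the universal $\bE_\infty$-$\fp$-algebra $\fp^{BS^1}\simeq\fp[[c_2]]$: any putative $\bE_1$-refinement should receive a compatible algebra map from $\fp^{BS^1}$, and the $(k+1)$-fold Massey product $\langle c_2,c_2,\ldots,c_2\rangle$ can be computed on both sides. The requirement that this product vanish modulo indeterminacy in $\fp[c_2]/c_2^{k+1}$ is only consistent with the Dyer--Lashof structure of $\fp^{BS^1}$ when $k+1=p^l$.

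The hardest step is the non-existence direction: making the Massey-product obstruction above explicit enough to rule out \emph{all} $\bE_1$-$\fp$-algebra refinements of $\fp[c_2]/c_2^{k+1}$ when $k+1$ is not a prime power. An alternative strategy would be to proceed by Postnikov obstruction theory, tracking the $k$-invariants in $\bE_1$-Hochschild cohomology level by level; in either case, the decisive input is the Dyer--Lashof and power-operation structure on $\fp^{BS^1}$, which distinguishes prime powers among the admissible truncation degrees.
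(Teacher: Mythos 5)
Your proposal decouples the two directions and takes a substantially different route from the paper's, and both halves have gaps.

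For the existence direction, the identification of $\fp[\Omega\CP^{p^l-1}]$ with a connective Tate construction $\tau_{\geq 0}\fp^{tG}$ for $G$ a Sylow $p$-subgroup of $\Sigma_{p^l}$ does not work for $l\geq 2$. By Lemma~\ref{lemma:pontryagin-ring-for-CPn}, the Pontryagin ring of $\Omega\CP^{p^l-1}$ is $\fp[u_{2(p^l-1)}]\otimes\Lambda[e_1]$, and its periodic version has period $2(p^l-1)$. But a finite group with $p$-periodic Tate cohomology must have cyclic (or, for $p=2$, generalized quaternion) Sylow $p$-subgroups, whereas the Sylow $p$-subgroup of $\Sigma_{p^l}$ is the iterated wreath product $C_p\wr\cdots\wr C_p$, non-abelian for $l\geq 2$; its Tate cohomology is nowhere near $\fp[u^{\pm 1}]\otimes\Lambda[e]$. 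The same holds for $\Sigma_{p^l}$ itself. So this identification is simply false beyond $l=1$.

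For the non-existence direction, your Bar-construction starting point is sound, but the subsequent steps raise problems. First, the claim that the resulting $\bE_1$-$\fp$-algebra structure on $\fp[\CP^k]$ has homotopy ring $\fp[c_2]/c_2^{k+1}$ needs justification: the multiplication coming from $\Barc$ of a putative $\bE_2$-structure is additional data, and a priori you know only the coalgebra structure on $\fp[\CP^k]$ (the diagonal of $\CP^k$). Second, the proposed Massey-product and Dyer--Lashof obstruction is not made concrete enough to assess, and it appears to overcomplicate what is really a discrete algebra question. The paper instead dualizes: the coalgebra structure on $\fp[\CP^k]$ is dual to the cup product on the cochain algebra $\Mor(\CP^k,\fp)$, which is formal (this is the crucial input, from \cite{Westerland}), hence $\simeq \fp[x]/x^{k+1}$. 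A compatible coproduct on this formal algebra is forced to be $\Delta(x) = 1\otimes x + x\otimes 1$, and such $\Delta$ is a ring map if and only if $(1\otimes x + x\otimes 1)^{k+1}=0$, i.e., all binomial coefficients $\binom{k+1}{i}$ with $1\leq i\leq k$ vanish mod $p$, which by Lucas' theorem happens exactly when $k+1=p^l$. This single bialgebra condition settles both directions at once and avoids any appeal to group-theoretic Tate constructions, Massey products, or power operations. I recommend reconsidering your approach along these lines.
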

\begin{proof}
    For every $n\geq 1$, using Dunn-additivity, there is the Bar-Cobar adjunction
    \[ \begin{tikzcd}
        \Alg_{\bE_n}^{\mathrm{aug}}(\fp) \simeq \Alg_{\bE_1}^{\mathrm{aug}}(\Alg_{\bE_{n-1}}(\fp)) \ar[r, shift left, "\Barc"] & \mathrm{CoAlg}^{\mathrm{coaug}}(\Alg_{\bE_{n-1}}(\fp)) \ar[l,shift left, "\Cobar"]
    \end{tikzcd}\]
    Since $\fp[\Omega \CP^k]$ is, as an augmented $\fp$-algebra, connected and finite, Bar-Cobar duality gives an equivalence of $\bE_1$-$\fp$-algebras
    \[ \Cobar(\Barc(\fp[\Omega \CP^{k}])) \simeq \fp[\Omega \CP^{k}].\]
    It hence suffices to analyse when, as an $\bE_1$-$\fp$-coalgebra, $\Barc(\fp[\Omega \CP^{k}]) \simeq \fp[\CP^{k}]$ admits the structure of a (commutative) biaugmented bialgebra. By $\fp$-linear duality, this is in turn equivalent to analysing when the $\bE_1$-$\fp$-algebra $\Mor(\CP^{k},\fp)$, i.e.\ the usual $\fp$-valued cochain algebra of $\CP^k$, admits the structure of a (cocommutative) biaugmented bialgebra. Now we observe that this $\bE_1$-algebra is formal, i.e.\ $\Mor(\CP^k,\fp) \simeq \fp[x]/x^{k+1}$ for $|x|=-2$. This is for instance proven in \cite[Prop.\ 2.1]{Westerland}, the proof in loc.\ cit.\ applies in fact integrally.  A coproduct on $\fp[x]/x^{k+1}$ is determined by its effect on the element $x$, which for formal reasons must be $1\otimes x + x \otimes 1$ (and is in particular coassociative if it exists). This is a coproduct if and only if $(1\otimes x+x\otimes 1)^{k+1} = 0$.
    But
    \[ 0= (1\otimes x+x\otimes 1)^{k+1} = \sum\limits_{i=0}^{k+1}{k+1 \choose i}x^i\otimes x^{k+1-i} = \sum_{i=1}^k {k+1 \choose i} x^i\otimes x^{k+i-1}\]
    implies that all binomial coefficients have to vanish modulo $p$, and this can be shown to be the case if and only if $k+1=p^l$ as a consequence of Lucas' theorem.
\end{proof}

From the equivalence $\fp\otimes_\Z S_{2n} \simeq \fp[\Omega \CP^n]$, we deduce that the $\fp$-algebra $\fp \otimes_{\z} S_{2p^l-2}$ is $\bE_\infty$ for all $l$ and that for $n\neq p^l-1$, the DGA $S_{2n}$ does not refine to an $\bE_2$-DGA.

\begin{coro}\label{coro s2n are not always e2}
    Let $n \neq p^l-1$, then $S_{2n}$ does not admit the structure of an $\bE_2$-DGA.
\end{coro}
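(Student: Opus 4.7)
The plan is to deduce this corollary directly from Lemma~\ref{lemm einfty structures on s2n} by base change to $\fp$. Suppose, for contradiction, that $S_{2n}$ admits the structure of an $\bE_2$-$\Z$-algebra. Since $\fp$ is an $\bE_\infty$-$\Z$-algebra, the base change functor $\fp \otimes_{\Z} (-) \colon \Alg_{\bE_2}(\Z) \to \Alg_{\bE_2}(\fp)$ is well-defined, so $\fp \otimes_{\Z} S_{2n}$ inherits the structure of an $\bE_2$-$\fp$-algebra whose underlying $\bE_1$-$\fp$-algebra agrees with the $\bE_1$-$\fp$-algebra produced by the corresponding base change functor on $\bE_1$-algebras.

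First, I would recall that by Construction~\ref{cons of the dgas s2km} there is a canonical equivalence of $\bE_1$-$\fp$-algebras
\[ \fp \otimes_{\Z} S_{2n} \simeq \fp[\Omega \CP^n]. \]
Combining this with the previous observation, we would conclude that $\fp[\Omega \CP^n]$ admits the structure of an $\bE_2$-$\fp$-algebra refining its given $\bE_1$-structure. However, by Lemma~\ref{lemm einfty structures on s2n}, this is impossible whenever $n \neq p^l - 1$, giving the desired contradiction.

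Thus the only conceptual content beyond previously established results is the functoriality of base change along the $\bE_\infty$-map $\Z \to \fp$ on $\bE_2$-algebras, which is standard. I do not anticipate any technical obstacle; the argument is short and essentially formal, relying entirely on the non-existence statement in Lemma~\ref{lemm einfty structures on s2n}, which is where all the real work was done (via the Bar-Cobar duality and the binomial-coefficient computation using Lucas' theorem).
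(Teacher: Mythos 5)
Your argument is correct and is essentially the same as the paper's: the paper likewise deduces the corollary by applying $\fp\otimes_{\Z}(-)$ to a hypothetical $\bE_2$-$\Z$-algebra structure on $S_{2n}$, identifying the result with $\fp[\Omega\CP^n]$ via Construction~\ref{cons of the dgas s2km}, and invoking Lemma~\ref{lemm einfty structures on s2n}. The only thing worth saying explicitly (which you do) is that the symmetric monoidal base-change $\fp\otimes_\Z(-)$ sends $\bE_2$-$\Z$-algebras to $\bE_2$-$\fp$-algebras compatibly with the $\bE_1$-level identification, so there is no gap.
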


The evidence we have so far leads us to the following conjecture. 

\begin{conj}
    For each $l>0$, the DGA $S_{2p^l-2}$ admits the structure of an $\bE_\infty$-DGA.
\end{conj}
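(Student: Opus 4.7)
The strategy is to exhibit, for each $l \geq 1$, an explicit $\bE_\infty$-$\Z$-algebra $A_l$ with $\pi_*(A_l) \cong \fp[x_{2p^l-2}]$ whose $(2p^l-2)$-Postnikov truncation is non-formal as a DGA. By Theorem \ref{ThmD}, any such $A_l$ is equivalent as an $\bE_1$-DGA to $S_{2p^l-2}$, and this equivalence transfers the $\bE_\infty$-$\Z$-algebra structure on $A_l$ to $S_{2p^l-2}$. Thus the entire problem reduces to constructing $A_l$ and verifying these two properties.

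Generalizing Corollary \ref{cor:S2p-2=tatesigma}, my primary candidate is
\[ A_l := \tau_{\geq 0} \Z_{(p)}^{tW_l}, \]
where $W_l = \Sigma_p \wr \Sigma_p \wr \cdots \wr \Sigma_p$ is the $l$-fold wreath product, i.e.\ a Sylow $p$-subgroup of $\Sigma_{p^l}$. This is canonically an $\bE_\infty$-$\Z$-algebra (via the norm/Tate framework), and the $l=1$ case recovers $S_{2p-2}$ by Corollary \ref{cor:S2p-2=tatesigma}. The group inclusion $W_{l-1} \hookrightarrow W_l$ induces a canonical $\bE_\infty$-map $A_{l-1} \to A_l$, giving in particular an $\bE_\infty$-map $S_{2p-2} \to A_l$ for all $l$. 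An alternative candidate is the iterated Tate construction obtained by setting $B_1 := \tau_{\geq 0}\Z_{(p)}^{t\Sigma_p}$ and $B_{k+1} := \tau_{\geq 0}(B_k)^{t\Sigma_p}$; one expects the two constructions to agree, but either would suffice.

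The main obstacle is the homotopy computation $\pi_*(A_l) \cong \fp[x_{2p^l-2}]$; this is where the work lies. I would proceed by induction on $l$, using a descent spectral sequence for the normal filtration of $W_l$ by iterated $p$-th power wreath factors, with the $l=1$ base case from Section \ref{subsec topological formality for m is p}. The prediction of a single polynomial generator in degree $2p^l-2$ matches the degree $|v_l|$ in $BP$-theory, so the combinatorics of the spectral sequence should be governed by $BP$-theoretic data and collapse onto the invariants of the $W_l$-action on an iterated Tate of $\fp[u_2^{\pm 1}]$. Assuming this computation succeeds, non-formality of $\tau_{\leq 2p^l-2}(A_l)$ follows via Theorem \ref{theo maps from snk to dgas with polynomial homology}: one inductively extends the $\bE_1$-map $S_{2p-2} \to A_l$ through the tower $S_{2p-2} \to S_4^p \to \cdots \to S_{2p^l-2}$ of Remark \ref{remark:extensions-to-S2n}, using that the previous generators land in zero homotopy degrees of $A_l$, and one checks the final map is non-trivial on $\pi_{2p^l-2}$ using that the generator originates from a non-zero Tate class.

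Should the homotopy computation above prove intractable, a complementary approach is obstruction-theoretic: by Lemma \ref{lemm einfty structures on s2n}, $\fp[\Omega\CP^{p^l-1}] \simeq \fp \otimes_\Z S_{2p^l-2}$ admits an $\bE_\infty$-$\fp$-algebra structure, and one could try to descend it through the unit $S_{2p^l-2} \to \fp\otimes_\Z S_{2p^l-2}$. The obstructions live in topological André-Quillen cohomology of $S_{2p^l-2}$ with coefficients in the fibre of the unit map; showing all such obstructions vanish would give the $\bE_\infty$-refinement directly, bypassing the need to identify a specific group-theoretic model. The hard part in either approach is the same phenomenon viewed from two sides: controlling the higher coherence data that distinguishes $S_{2p^l-2}$ from the formal DGA $\fp[x_{2p^l-2}]$.
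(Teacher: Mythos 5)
This statement is explicitly labelled a \emph{conjecture} in the paper and is presented as open; the paper offers no proof, only the $l=1$ case (Corollary~\ref{cor:S2p-2=tatesigma}) and the $\bE_\infty$-$\fp$-algebra structure on $\fp[\Omega\CP^{p^l-1}]$ (Lemma~\ref{lemm einfty structures on s2n}) as evidence. So there is no proof in the paper to compare against, and your submission should be judged as an independent attempt.

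Your reduction is correct and genuinely useful: by Theorem~\ref{ThmD} it suffices to produce \emph{any} $\bE_\infty$-$\Z$-algebra $A_l$ with $\pi_*A_l\cong\fp[x_{2p^l-2}]$ and non-formal $(2p^l-2)$-truncation. The problem is that your primary candidate $A_l=\tau_{\geq0}\Z_{(p)}^{tW_l}$ almost certainly does not have the required homotopy. By the Artin--Swan theorem, a finite group has periodic $p$-local Tate cohomology if and only if its $p$-rank is $\leq 1$. The Sylow $p$-subgroup of $W_l=\Sigma_p\wr\cdots\wr\Sigma_p$ is the iterated wreath product $C_p\wr\cdots\wr C_p$, whose $p$-rank is $p^{l-1}$; for $l>1$ this exceeds $1$, so $\hat H^*(W_l;\Z_{(p)})$ is not periodic and in particular is not a graded field $\fp[u^{\pm1}]$ on a single generator. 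Already for $p=2$, $l=2$ one has $W_2=D_8$, whose integral cohomology has two degree-$2$ generators and a degree-$4$ generator; the Tate cohomology is far from $\f_2[u_6^{\pm1}]$. So the main pillar of the argument collapses at $l=2$, and this is a genuine error, not merely an unfinished computation.

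The iterated Tate candidate $B_{k+1}=\tau_{\geq0}(B_k)^{t\Sigma_p}$ evades the rank objection, but you give no argument for the homotopy calculation, and the naive degree count does not match: the Tate spectral sequence has periodicity class in filtration degree $2(p-1)$ and an internal polynomial generator in degree $2(p-1)$, so in the absence of a precisely identified pattern of differentials there is no reason the connective cover should come out as a single polynomial generator in degree $2(p^2-1)=2(p-1)(p+1)$ rather than $2(p-1)p$ or something with extra exterior or polynomial classes. You would need to identify differentials explicitly and you have not even stated a conjectural pattern. The complementary obstruction-theoretic route also has a concrete gap: the map $S_{2p^l-2}\to\fp\otimes_\Z S_{2p^l-2}$ is not $\fp$-linear on the source and the $\bE_\infty$-structure on the target is constructed purely over $\fp$, so there is no descent mechanism in place; without identifying the relevant obstruction groups (and checking they vanish, which is exactly the hard coherence data you gesture at in your last sentence) this is a heuristic, not a proof. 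In summary: the reduction to exhibiting an explicit $\bE_\infty$-model is correct and reasonable, but the specific group-theoretic candidate is wrong for $l>1$, and neither of the two fallback routes has been carried out.
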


\begin{rema}
    We have observed in Remark~\ref{remark:tate-fp} that there is an equivalence of $\fp$-algebras $\fp[\Omega \CP^{p-1}] \simeq \tau_{\geq0} \fp^{t\Sigma_p}$. The target of this equivalence is an $\bE_\infty$-$\fp$-algebra, and we have just argued that the source also admits an $\bE_\infty$-structure.  We have no reason to believe that these two $\bE_\infty$-structures are equivalent, but do not pursue this matter here.
\end{rema}

\providecommand{\bysame}{\leavevmode\hbox to3em{\hrulefill}\thinspace}
\providecommand{\MR}{\relax\ifhmode\unskip\space\fi MR }
% \MRhref is called by the amsart/book/proc definition of \MR.
\providecommand{\MRhref}[2]{%
  \href{http://www.ams.org/mathscinet-getitem?mr=#1}{#2}
}
\providecommand{\href}[2]{#2}

\end{document}